\newtheorem{theorem}{Theorem}[section]
\newtheorem{lemma}[theorem]{Lemma}
\newtheorem{proposition}[theorem]{Proposition}
\newtheorem{corollary}[theorem]{Corollary}
\newtheorem*{claim}{Claim}
\theoremstyle{definition}
\newtheorem{definition}[theorem]{Definition}
\newtheorem*{notation}{Notation}
\newtheorem*{limcon*}{Limit Conditions}
\newtheorem{example}{Example}[section]
\newtheorem{remark}{Remark}
\newcommand{\R}{\mathbb{R}}
\newcommand{\C}{\mathbb{C}}
\newcommand{\E}{\mathbb{E}}
\newcommand{\HH}{\mathbb{H}}
\newcommand{\Y}{\mathbb{Y}}
\newcommand{\MP}{\mathbb{MP}}
\renewcommand{\P}{\mathbb{P}}
\newcommand{\Z}{\mathbb{Z}}
\renewcommand{\l}{\lambda}
\renewcommand{\L}{\Lambda}
\renewcommand{\vec}[1]{\boldsymbol{#1}}
\newcommand{\cov}{\mathrm{Cov}}
\newcommand{\e}{\varepsilon}
\newcommand{\x}{\mathbf{x}}
\newcommand{\y}{\mathbf{y}}
\newcommand{\1}{\mathbf{1}}
\newcommand{\fB}{\mathfrak{B}}
\newcommand{\fC}{\mathfrak{C}}
\newcommand{\fD}{\mathfrak{D}}
\newcommand{\fH}{\mathfrak{H}}
\newcommand{\fP}{\mathfrak{P}}
\newcommand{\fZ}{\mathfrak{Z}}
\newcommand{\fa}{\mathfrak{a}}
\newcommand{\fp}{\mathfrak{p}}
\newcommand{\fq}{\mathfrak{q}}
\newcommand{\ft}{\mathfrak{t}}
\newcommand{\cA}{\mathcal{A}}
\newcommand{\cB}{\mathcal{B}}
\newcommand{\cC}{\mathcal{C}}
\newcommand{\cD}{\mathcal{D}}
\newcommand{\cE}{\mathcal{E}}
\newcommand{\cG}{\mathcal{G}}
\newcommand{\cH}{\mathcal{H}}
\newcommand{\cI}{\mathcal{I}}
\newcommand{\cJ}{\mathcal{J}}
\newcommand{\cK}{\mathcal{K}}
\newcommand{\cL}{\mathcal{L}}
\newcommand{\cM}{\mathcal{M}}
\newcommand{\cP}{\mathcal{P}}
\newcommand{\cQ}{\mathcal{Q}}
\newcommand{\cS}{\mathcal{S}}
\newcommand{\cT}{\mathcal{T}}
\newcommand{\sC}{\mathscr{C}}
\newcommand{\sL}{\mathscr{L}}
\newcommand{\sZ}{\mathscr{Z}}
\renewcommand{\i}{\mathbf{i}}
\newcommand{\hatR}{\widehat{\R}}
\newcommand{\wzeta}{\widetilde{\zeta}}
\newcommand{\defeq}{:=}
\newcommand{\Ups}{\Upsilon}
\DeclareMathOperator{\supp}{\mathrm{supp}}
\newcommand{\cl}{\mathrm{cl}}
\newcommand{\intr}{\mathrm{int}}
\newcommand{\vast}{\bBigg@{4}}
\newcommand{\Vast}{\bBigg@{5}}
\renewcommand{\vec}[1]{\boldsymbol{#1}}
\newcommand{\ldeg}{\mathrm{ldeg}}
\newcommand{\cotimes}{\,\widehat{\otimes}\,}
\newcommand{\hloz}{\mathbin{\rotatebox[origin=c]{90}{$\Diamond$}}}
\newcommand{\rloz}{\mathbin{\rotatebox[origin=c]{33}{$\Diamond$}}}
\newcommand{\lloz}{\mathbin{\rotatebox[origin=c]{-33}{$\Diamond$}}}
\newcommand{\cpar}{%
  \raisebox{0.1em}{\rlap{\rotatebox{-45}{\rule[.10ex]{.4pt}{.5657em}}}%
  \kern.04em%
  \rlap{\kern.38em\raisebox{0.39em}{\rule{.4em}{.4pt}}}%
  \rule{.4em}{.4pt}\kern-.05em%
  \rotatebox{-45}{\rule[.1ex]{.4pt}{.5657em}}}}
\newcommand{\lpar}{%
  \raisebox{-0.1em}{\rlap{\rule[.05ex]{.4pt}{.45em}}%
  \kern-.0em%
  \rlap{\kern.0em\raisebox{0.45em}{\rotatebox{45}{\rule{.5657em}{.4pt}}}}%
  \rotatebox{45}{\rule{.5657em}{.4pt}} \kern-.37em%
  \raisebox{.4em}{\rule[.05ex]{.4pt}{.45em}}}}
\newcommand{\rpar}{%
  \raisebox{0.1em}{\rlap{\rule[.05ex]{.4pt}{.45em}}%
  \kern.0em%
  \rlap{\kern.0em\raisebox{0.45em}{\rule{.4em}{.4pt}}}%
  \rule{.4em}{.4pt}\kern-.03em%
  \rule[.05ex]{.4pt}{.45em}}}
\numberwithin{equation}{section}
\title{\uppercase{Global Universality of Macdonald Plane Partitions}}
\author{Andrew Ahn}
\date{}
\let\runtitle\@title
\begin{document}
\maketitle

\begin{abstract}
We study scaling limits of periodically weighted skew plane partitions with semilocal interactions and general boundary conditions. The semilocal interactions correspond to the Macdonald symmetric functions which are $(q,t)$-deformations of the Schur symmetric functions. We show that the height functions converge to a deterministic limit shape and that the global fluctuations are given by the $2$-dimensional Gaussian free field as $q,t\to 1$ and the mesh size goes to $0$. Specializing to the noninteracting case, this verifies the Kenyon-Okounkov conjecture for the case of the $r^{\mathrm{volume}}$ measure under general boundary conditions. Our approach uses difference operators on Macdonald processes.\\

\noindent
\emph{AMS 2000 Mathematics Subject Classification: 33D52, 82B23} \\

\noindent
\emph{Keywords: Macdonald symmetric function; Macdonald process; plane partition; Gaussian free field}
\end{abstract}

\section{Introduction} \label{sec:intro}

Given Young diagrams $\mu \subset \lambda$, a \textit{skew plane partition} supported in the skew diagram $\lambda/\mu$ is an array of nonnegative numbers $(\pi_{i,j})_{(i,j) \in \lambda/\mu}$ weakly decreasing in each index. For the purposes of this article we have $\lambda = N^M = (\underbrace{N,\ldots,N}_{M ~\footnotesize \mbox{times}})$. By viewing $\pi_{i,j}$ as the number of unit cubes on $(i,j)$, we may interpret a skew plane partition as a discrete, stepped surface in $\R^3$. The \textit{volume} of a skew plane partition is the number of unit cubes, that is $\sum_{i,j} \pi_{i,j}$. The projected image of this stepped surface further admits the interpretation of a skew plane partition as a lozenge tiling; a tiling of the triangular lattice by rhombi of three types. A fourth alternative perspective is that a skew plane partition can be viewed as a dimer covering of the honeycomb lattice.

\begin{figure}[h]
\centering
  \includegraphics[scale=0.5]{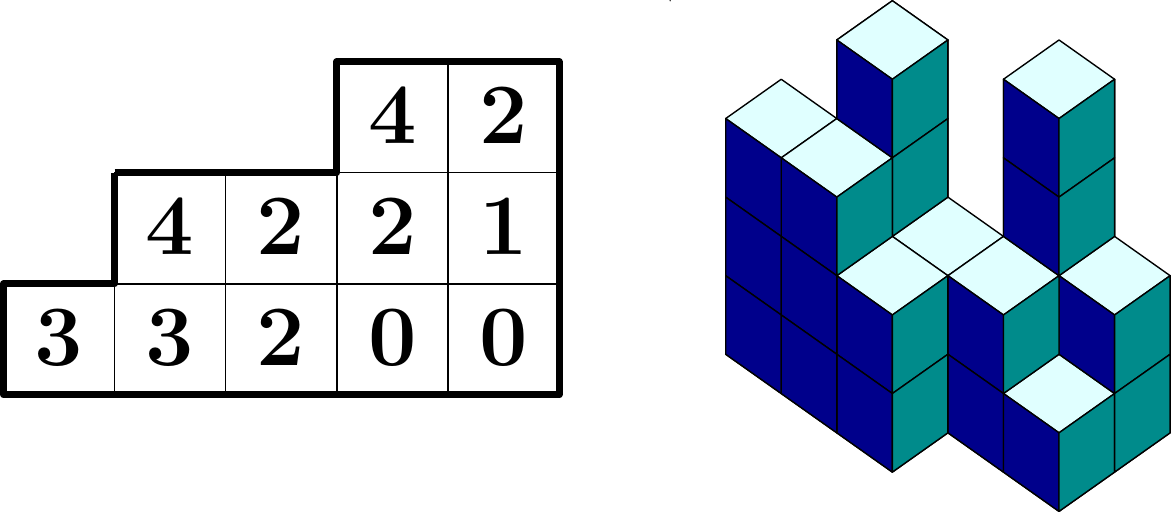}
  \caption{Skew plane partition with support $(5,5,5)/(3,1,0)$.}
\end{figure}

The central objects of this article are \textit{Macdonald plane partitions}, a broad class of measures on skew plane partitions which are also \emph{Macdonald processes}; stochastic processes with special algebraic properties. Macdonald processes were introduced in \cite{BC}, with asymptotics accessible through the method of difference operators. Arising in directed polymers, random matrices, and dimer models to name a few, these stochastic processes and their degenerations have found applications in a variety of probabilistic models, e.g. \cite{BC}, \cite{BG}, \cite{Di}. More recently, a class of difference operators for Macdonald symmetric functions which directly accesses moments of Macdonald processes was discovered by Negut \cite{N} and applied to the study of a finite-difference limit of the $\beta$-Jacobi corners process in \cite{GZ} and \cite[Appendix 1]{FLD} by Borodin, Gorin and Zhang.

From the methods perspective, the aim of this article was to further develop the machinery of Negut’s difference operators for the extraction of global asymptotics of Macdonald processes. One achievement is the extension of Negut’s difference operators to general Macdonald processes with multiply-peaked boundaries; this is essential to analyze skew plane partitions whenever $\mu$ is not the empty diagram. Yet another is that we access observables at \emph{singular points} of Macdonald processes; distinguished points where the model exhibits unbounded and singular behavior. Altogether, our analysis provides a unified framework for the study of a general class of Macdonald processes.

While the application of this method to Macdonald plane partitions illustrates the breadth of the approach, the focus on Macdonald plane partitions is motivated in part by the long-standing conjecture of Kenyon and Okounkov (KO conjecture) \cite[Section 1.5, page 15] {KO} on Gaussian free field fluctuations of periodic dimer models which we recall below. More specifically, the Macdonald plane partitions provide a rich family of \emph{non-uniform} models which are situated in a space extending the domain of KO conjecture. Our goal was to demonstrate that (the appropriate extension of) KO conjecture continues to hold for the broadest class of non-uniform models which are accessible via the Macdonald processes approach. Yet another point of interest for Macdonald plane partitions is in their connection to random matrices. In particular, they may be viewed as discrete realizations of eigenvalues processes for products of random matrices; we provide more details below.

KO conjecture was stated in their seminal paper \cite{KO} which established a general limit shape theorem for dimers on $\Z \times \Z$ periodic, bipartite graphs (see also \cite{KOS}); in more detail, one can associate a natural height function to periodic, bipartite dimer models and the limit shape theorem states that the height function converges, as the mesh size goes to $0$, to the solution of some variational problem. We note that \cite{KO} was preceded by a history of works which was initiated by Cohn, Kenyon and Propp in \cite{CKP} where the limit shape phenomenon was established for uniform domino tilings (i.e. square lattice dimer models). Complementing the limit shape theorem, Kenyon and Okounkov conjectured that the height function of uniform dimer models exhibit Gaussian free field fluctuations in the limit as the mesh size goes to $0$. Moreover, they gave a conjectural description of the complex coordinates which in the case of lozenge tilings admits a nice geometric interpretation in terms of the local proportions of lozenges $\lloz, \rloz, \hloz$ (see Section \ref{sec:modelresults}). Though a general proof of KO conjecture remains undiscovered, the conjecture has been verified for \emph{uniform} domino and lozenge tiling models for an assortment of domains, see \cite{K1}, \cite{K2}, \cite{P}, \cite{BuG}, \cite{BuG2}.

While KO conjecture was stated for uniform dimer models, the conjecture can be readily extended to non-uniform models which emulate a volume constraint (see \cite[Section 2.4]{BGR} and \cite{CK}). The simplest such model is the $r^{\mathrm{volume}}$ measure which is a measure on skew plane partitions with fixed support $\lambda/\mu$ and probability $\P(\pi) \propto r^{\mbox{volume of $\pi$}}$. Originally introduced by Vershik \cite{Ver} when $\mu$ is the empty partition, the limit shape and local asymptotics of the $r^{\mathrm{volume}}$ measure have been thoroughly studied (see \cite{OR03}, \cite{OR07}, \cite{BMRT}, \cite{M}). Despite the abundance of literature on this simple model, there are no results on the global fluctuations in the literature even for the case of ordinary (when $\mu$ is empty) partitions. Since this gap in the literature is unfortunate, the present article fills this vacancy and proves KO conjecture for $r^{\mathrm{volume}}$ using the approach of Macdonald processes. Moreover, as far as the author is aware, this article provides the first non-uniform lozenge tiling model for which KO conjecture is true.

However, the Macdonald processes approach applies to a far more general family of measures beyond the $r^{\mathrm{volume}}$ measures. To demonstrate this generality, we consider the most inclusive set of measures on random plane partitions for which the Macdonald processes approach applies. Simultaneously, we sought to push the boundaries for which KO conjecture holds. From this investigation, we find that KO conjecture encompasses a menagerie of models exhibiting a variety of features such as \emph{periodic weighting} and \emph{semilocal interactions}; we note that periodically weighted variants of $r^{\mathrm{volume}}$ were expected to satisfy KO conjecture but the inclusion of models with semilocal interactions is a novelty for lozenge tiling models. In other words, we find \emph{universality} in the global fluctuations of Macdonald plane partitions. Furthermore, the Macdonald processes approach provides an explicit description of the limit shape in terms of its moments going beyond the general description given in \cite{KO}.

We now comment further on some of the aforementioned features in Macdonald plane partitions. In one direction of generality, Macdonald plane partitions contain \emph{periodically weighted} variants of the $r^{\mathrm{volume}}$ measure. These measures have weights with $p\Z \times \Z$ periodicity rather than the $\Z \times \Z$ periodicity of the $r^{\mathrm{volume}}$ measure. The class of models studied in this article supports general skew diagrams which lead to exotic limit shapes, see Section \ref{sec:frzbdry} and Figure \ref{fig:limshape}. The presence of periodically varying weights produce cusps in the frozen boundary whose placement is determined by the changes in slope of the boundary. We note that the limit shape phenomenon and local asymptotics for the two-periodic case, near special cusp points, were studied for fairly specific boundaries in \cite{M2}. This is the first work to consider general boundary conditions for arbitrary period lengths. The analysis of $p$-periodically weighted models also introduces new phenomenon in which the integral formulas of the moments contain $p$th roots of rational functions, see e.g. Section \ref{sec:cpx}.

\begin{figure}[h]
\centering
\begin{subfigure}{.5\textwidth}
  \centering
  \includegraphics[width=.8\linewidth]{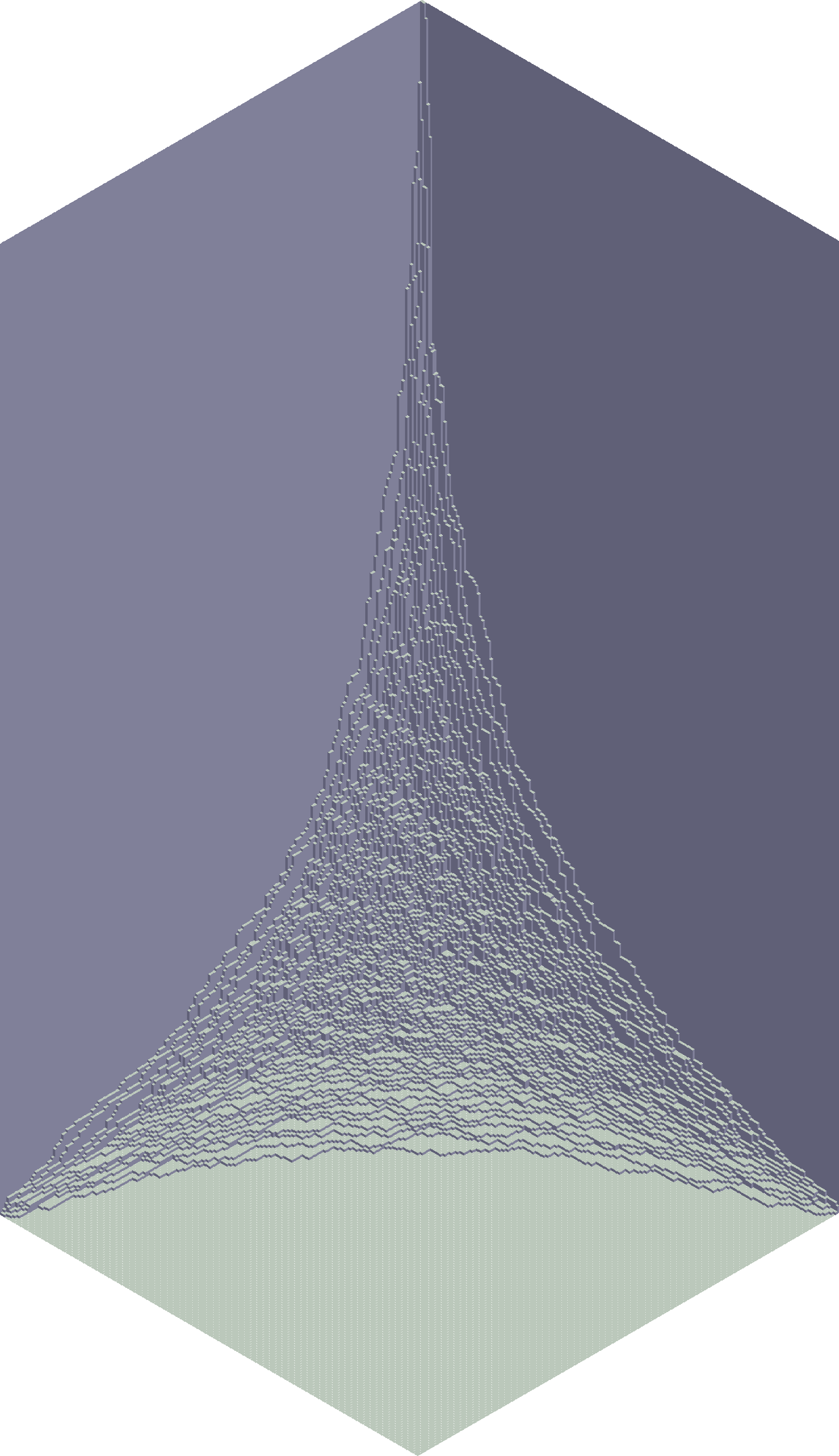}
  \caption{Limit shape for $r^{\mathrm{volume}}$ ordinary partition.}
\end{subfigure}%
\begin{subfigure}{.5\textwidth}
  \centering
  \includegraphics[width=.8\linewidth]{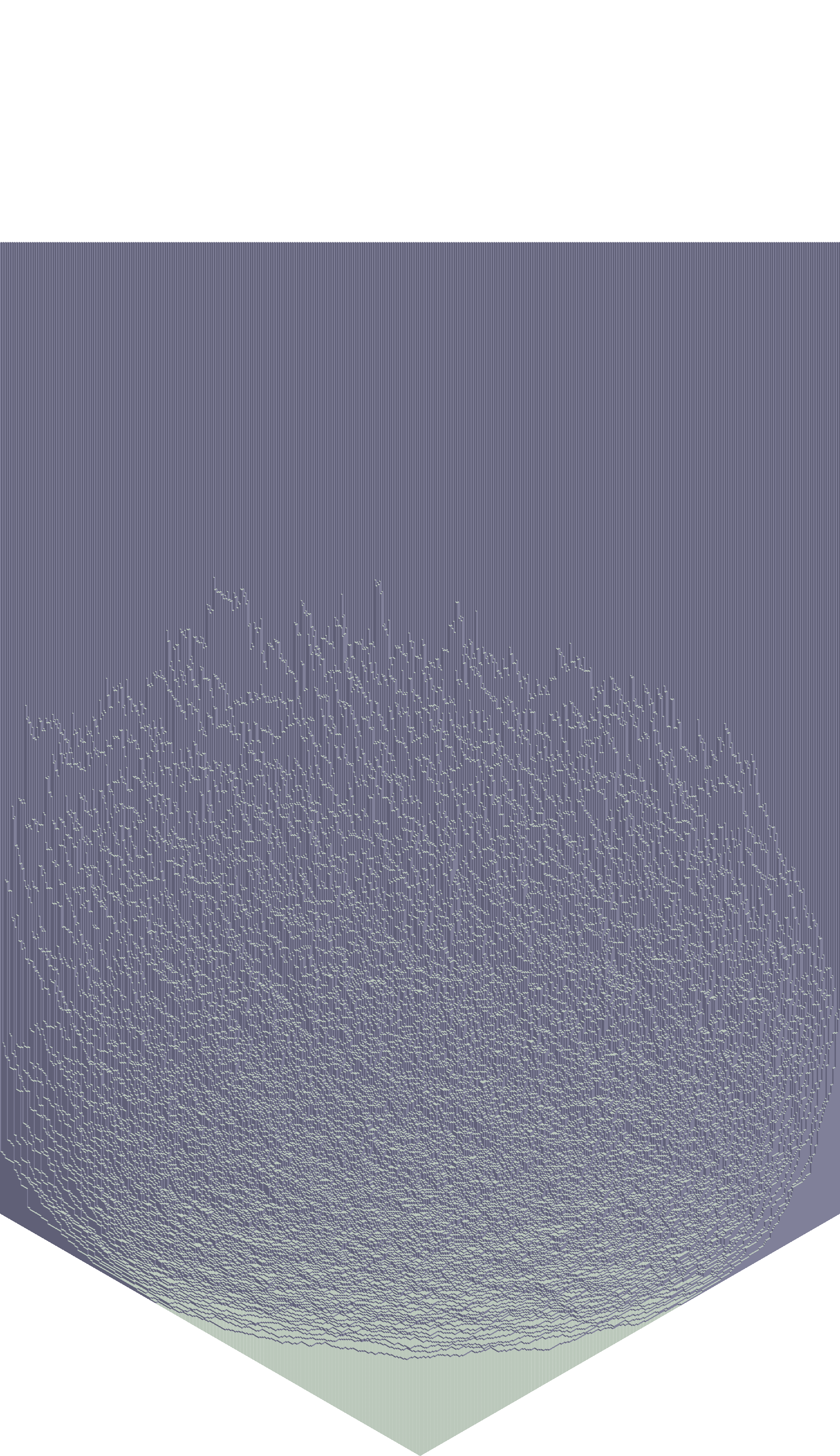}
  \caption{Limit shape for $6$-periodic model}
\end{subfigure}
\caption{}
\label{fig:limshape}
\end{figure}

In another direction of generality, the Macdonald plane partitions exhibit semilocal interactions of varying strengths. By semilocal interactions, we mean that the Macdonald plane partitions are a family of \emph{interacting dimer models} on the honeycomb lattice where semilocality refers to the interaction being longer-range in one of the coordinate directions. For our models, a deformation parameter pair $(q,t)$ modulates this interaction with $q = t$ corresponding to the non-interacting models and the interaction parameter $\frac{\log q}{\log t}$ exaggerating the strength of the interaction as it deviates away from $1$. In this direction, there is the related work of Giuliani, Mastropietro and Toninelli on global fluctuations for interacting dimers on the square lattice in \cite{GMT}. A common feature of our results is that the fluctuations depend on the interaction parameter only by a scaling factor. Let us also note that our model is \emph{non-determinantal} when $q \ne t$, and in particular the method of Macdonald processes is the only approach available presently to access KO conjecture for general Macdonald plane partitions.

Apart from their generality and variety, Macdonald plane partitions are also of interest due to their deep connection with random matrix theory. Let $\beta = 2\frac{\log q}{\log t}$ be the \emph{interaction type}. By degenerating (one-periodic) Macdonald plane partitions via the Heckman-Opdam limit which fixes the interaction type, one can obtain the eigenvalue distribution of certain products of random matrices. For $\beta = 2$, this connection is explored in \cite{BGS} where the singular values of products of truncated unitary matrices have correlation kernels obtained via limits of random plane partitions with certain boundary conditions that correspond to the truncation sizes. The $\beta = 1,4$ cases correspond to products of real symmetric and quaternion Hermitian matrices respectively. Thus the Macdonald plane partitions can be viewed as discrete realizations of product matrix processes. We further explore this connection in a future publication. In another similar connection to random matrices, the interaction type $\beta$ for our skew plane partitions behaves as the $\beta$ log-gas parameter in random matrix theory. This is manifested in the usual $\beta$-dependence in global fluctuations, namely the height functions have to be renormalized in a characteristic manner depending on $\beta$ in order to converge to the (properly scaled) Gaussian free field. We note the limit shape and global fluctuations of the so-called discrete $\beta$-ensembles were studied in \cite{BGG} which are yet another discrete system exhibiting random matrix $\beta$-type interactions.

We finally note that this is not the first work which considers Macdonald deformations of the $r^{\mathrm{volume}}$ measure. By taking $q = 0$ in the $(q,t)$ parameter pair above, one obtains the Hall-Littlewood plane partitions, parametrized by $t$, which were studied by Vuleti\'c in \cite{Vul} and Dimitrov in \cite{Di}. Vuleti\'c studied the case $t = -1$ and showed that the underlying point process is given by a Pfaffian point process. Dimitrov considered general Hall-Littlewood plane partitions, and showed that the lower boundary of the limit shape was independent of the parameter $t$, along with finding Tracy-Widom and KPZ-type fluctuations. In a similar spirit, our limit shape and fluctuation results are independent of $q,t$ except for a scaling factor given by the log-ratio of $q$ and $t$.

The remainder of the article is organized as follows. Section \ref{sec:modelresults} provides a more detailed background on random skew plane partitions, introduces the Macdonald plane partitions, and states the main results of this article: limit shape theorems and the verification of KO conjecture (i.e. global fluctuations) for Macdonald plane partitions. In Section \ref{sec:obs}, we extend the difference operators of Negut to formal equalities for joint moments of general Macdonald processes, then specialize to obtain contour integral formulas for the joint moments of random skew plane partitions. In Section \ref{sec:asymp}, we perform asymptotics on the contour integral formulas for the joint moments. Section \ref{sec:limgff} concludes the article by proving the main results on the limit shape and Gaussian free field fluctuations, relying on properties of (the complex structure on) the liquid region and frozen boundary obtained in Section \ref{sec:cpx}.

\begin{notation}
Let $\i$ denote the imaginary unit, i.e. the square root of $-1$ in the upper half plane. Given an interval $[a,b] \subset \R$, we write $[[a,b]] := [a,b] \cap \Z$. Given a set $K \subset \C$, we denote the interior of $K$ by $\intr(K)$ and the closure of $K$ by $\cl(K)$. Let $\R_{>0}$ ($\R_{\ge 0}$) denote the positive (nonnegative) real numbers and $\Z_{> 0}$ ($\Z_{\ge 0}$) denote the positive (nonnegative) integers. 
\end{notation}

\section*{Acknowledgments}
I would like to thank my advisor Vadim Gorin for suggesting this project, for many useful discussions, and for carefully looking over several drafts. I thank Sevak Mkrtchyan for helpful discussions and giving access to his random tiling sampler. I also thank Alexei Borodin for helpful suggestions. The author was partially supported by NSF Grant DMS-1664619.

\section{Model and Results} \label{sec:modelresults}
We now introduce our models and results with greater detail. For clarity, we begin by introducing the non-interacting models and results, corresponding to Subsections \ref{sec:tiling} and \ref{sec:results}. In Subsection \ref{sec:MPP}, we parallel the preceding discussion for more general interacting models.

\subsection{Plane Partitions and Lozenge Tilings} \label{sec:tiling}
We interchangeably say Young diagrams and partitions. Let $\mu \subset N^M = ( \underbrace{N,\ldots,N}_{M~\footnotesize \mbox{times}} )$ be a Young diagram. By the \emph{back wall} of $N^M/\mu$ we mean the upper boundary of the skew diagram $N^M/\mu$ (see Figure \ref{fig:skewpp}). Let $\pi = (\pi_{i,j})$ be a skew plane partition with support $N^M/\mu$. For $-M < v < N$, the diagonal section $\pi^v = (\pi_{a,v+a}, \pi_{a+1,v+a+1},\ldots)$ is an ordinary partition, where $a$ is the least integer such that $(a,v+a)$ is a box in $N^M/\mu$.

\begin{figure}[ht]
    \centering
    \includegraphics[scale=0.7]{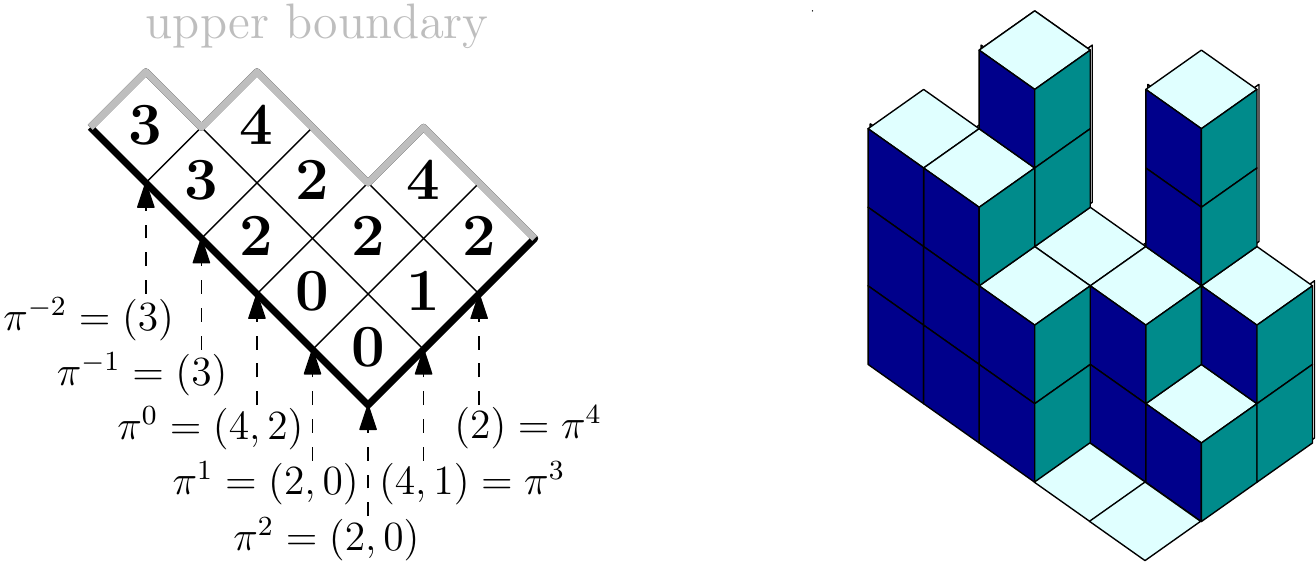}
    \caption{(Left) A skew plane partition with support $5^3 / (3,1,0)$. The grey upper boundary is the back wall. We label the partitions along the diagonal sections from $v = -2$ to $3$ from left to right. (Right) The skew plane partition as a $3$-dimensional object.}
    \label{fig:skewpp}
\end{figure}

A skew plane partition can be viewed as a $3$-dimensional object by stacking $\pi_{i,j}$ cubes above the box $(i,j)$ as in Figure \ref{fig:skewpp}. The resulting (projected) image is a tiling of lozenges $\hloz$, $\rloz$, $\lloz$. For our purposes, we transform the lozenges by the affine transformation taking $\hloz \mapsto \cpar$, $\rloz \mapsto \rpar$, $\lloz \mapsto \lpar$. Take the standard basis of $\R^2$ for the resulting image with lengths so that the transformed lozenge $\rpar$ is the unit square, see Figure \ref{fig:tiling}. This gives a unique \textit{projected coordinate} system for the tiling, up to the choice of origin. The back wall is then the graph of some function $B:I \to \R$ which is piecewise linear with slopes $0$ or $1$. The domain $I$ of $B$ is an interval of length $M+N$. For convenience, choose the origin in projected coordinates so that $I = (-M,N)$. Then the centers of the projected horizontal lozenges $\cpar$ corresponding to the diagonal section $\pi^v$ have $x$-coordinate $v$, see Figure \ref{fig:tiling}. Denote by $\cP_B$ the set of plane partitions with back wall $B$. We may also consider semi-infinite or infinite back walls by taking $M$ or $N$ to $\infty$.

\begin{figure}[ht]
    \centering
    \includegraphics[scale=0.7]{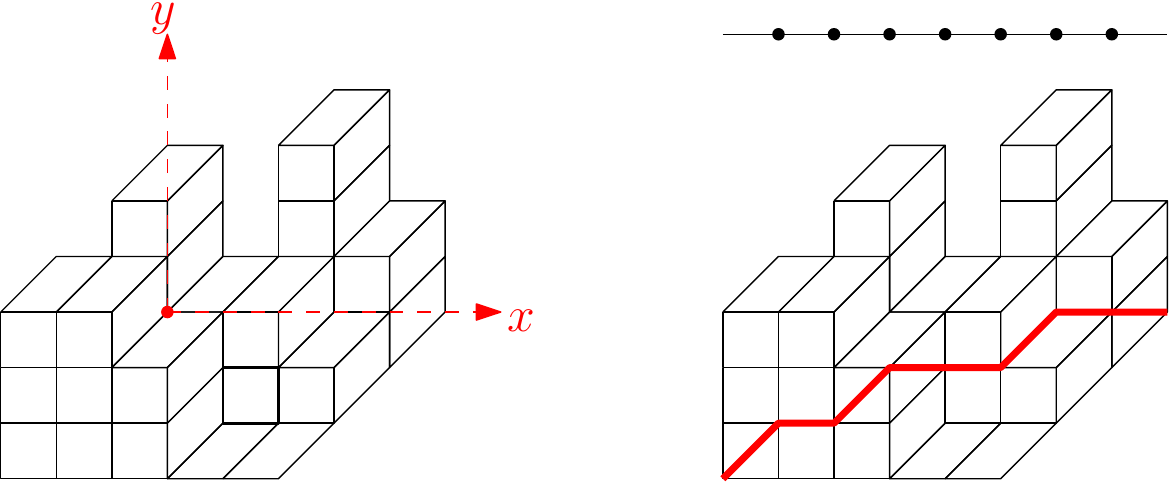}
    \caption{The skew plane partition in Figure \ref{fig:skewpp} after the affine transformation. (Left) The projected coordinate axes in red. (Right) The back wall in red, the line above represents the domain $(-3,5)$ of the back wall, and the dots correspond to coordinates of the diagonal sections.}
    \label{fig:tiling}
\end{figure}

Fix a skew plane partition $\pi \in \cP_B$. We define a height function which takes a point $(x,y)$ and gives the height at that point. More precisely, the \emph{height function} $h:(I\cap \Z) \times \R \to \R$ is the piecewise linear function which reports the total length of vertical line segments below the point $(x,y)$ in projected coordinates, see Figure \ref{fig:htfunction}.

\begin{figure}[ht]
    \centering
    \includegraphics[scale=0.7]{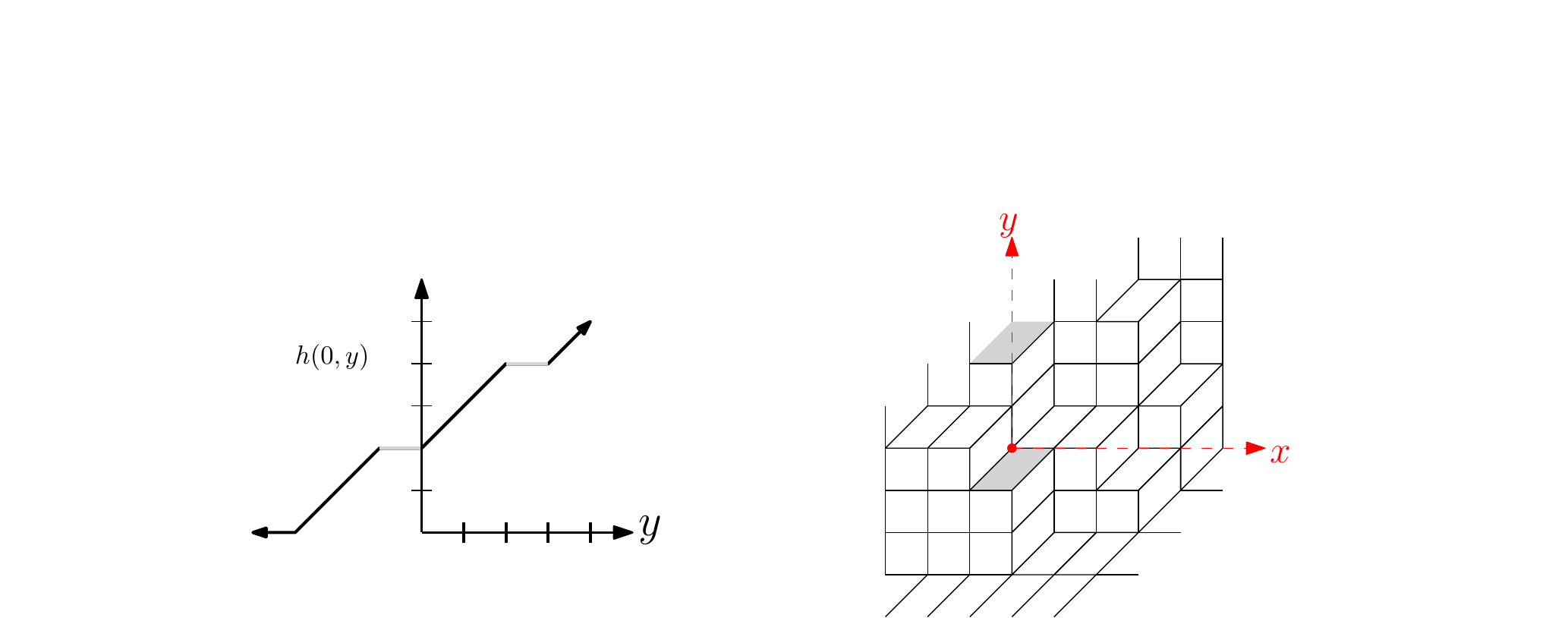}
    \caption{Graph of the height function at $x = 0$. The gray filled tiles correspond to the flat, gray parts of the graph.}
    \label{fig:htfunction}
\end{figure}

For a partition $\lambda = (\lambda_1,\lambda_2,\ldots)$ define $|\lambda| = \sum_{i\ge 1} \lambda_i$. Consider the random (skew) plane partition (RPP) with probability distribution on $\cP_B$ defined by
\begin{equation} \label{eq:rv}
\P(\pi) \propto \prod_{-M < v < N} r_v^{|\pi^v|}
\end{equation}
for a sequence of weights $r_v > 0$ such that the weights above are summable. When $r_v$ is constant in $v$, this is the $r^{\mathrm{vol}}$ measure studied in \cite{OR03}, \cite{OR07}, \cite{M}, \cite{BMRT}.

\begin{definition}
Let $\vec{s} = (\ldots,s_{-1},s_0,s_1,\ldots)$ be a $p$-periodic, bi-infinite sequence of positive numbers. Denote by $\P^{B,r,\vec{s}}$ the probability measure on $\cP_B$ defined by (\ref{eq:rv}) where
\[ r_v = s_v r, \]
given that the weights are summable.
\end{definition}

We note that local limits for a specific class of back walls $B$ are studied for $p = 2$ in \cite{M2}.

\subsection{Results} \label{sec:results}
Our main result is an explicit description of the global fluctuations, in terms of a Gaussian free field, of the measures $\P^{B,r,\vec{s}}$ as $r \to 1$ and $B$ converges to some limiting $\cB$ after rescaling. More precisely, we consider the following limit regime.

\begin{limcon*} \label{limreg}
Fix a $p$-periodic, bi-infinite sequence $\vec{s} = (\cdots,s_{-1},s_0,s_1,\cdots) \in \R_{>0}^\infty$ such that $s_0 \cdots s_{p-1} = 1$. Let $\P^{B,r,\vec{s}}$ be parametrized by a small parameter $\e > 0$ where $B:I^\e \to \R$ and $r \defeq e^{-\e}$ vary with $\e$ such that
\begin{enumerate}
    \item \label{limreg:1} there exist integers
    \[ \inf I^\e = v_0(\e) < \cdots < v_n(\e) = \sup I^\e \]
    such that for each $1 \le \ell \le n$, $B'$ is $p$-periodic on $(v_{\ell-1}, v_\ell) \cap (\Z + \frac{1}{2})$;
    \item \label{limreg:2} there exists an interval $I \subset \R$ and a piecewise linear $\cB:I \to \R$ with non-differentiable points
    \[ \inf I = V_0 < \cdots < V_n = \sup I \]
    such that
    \[ \e v_\ell(\e) \to V_\ell \quad (0 \le \ell \le n), \quad \e B^\e(x/\e) \to \cB(x) \]
    as $\e \to 0$, where the latter convergence is uniform over any compact subset of $I$.
\end{enumerate}
\end{limcon*}

\begin{remark}
The condition $s_0 \cdots s_{p-1} = 1$ is to ensure the existence of a non-trivial limit shape. If $s_0 \cdots s_{p-1} < 1$, then the limit shape becomes trivial; $0$-volume upon rescaling. If $s_0 \cdots s_{p-1} > 1$, then for $r$ close to $1$ the weights of $\P^{B,r,\vec{s}}$ are no longer summable.
\end{remark}

\begin{figure}[ht]
    \centering
    \includegraphics[scale=0.6]{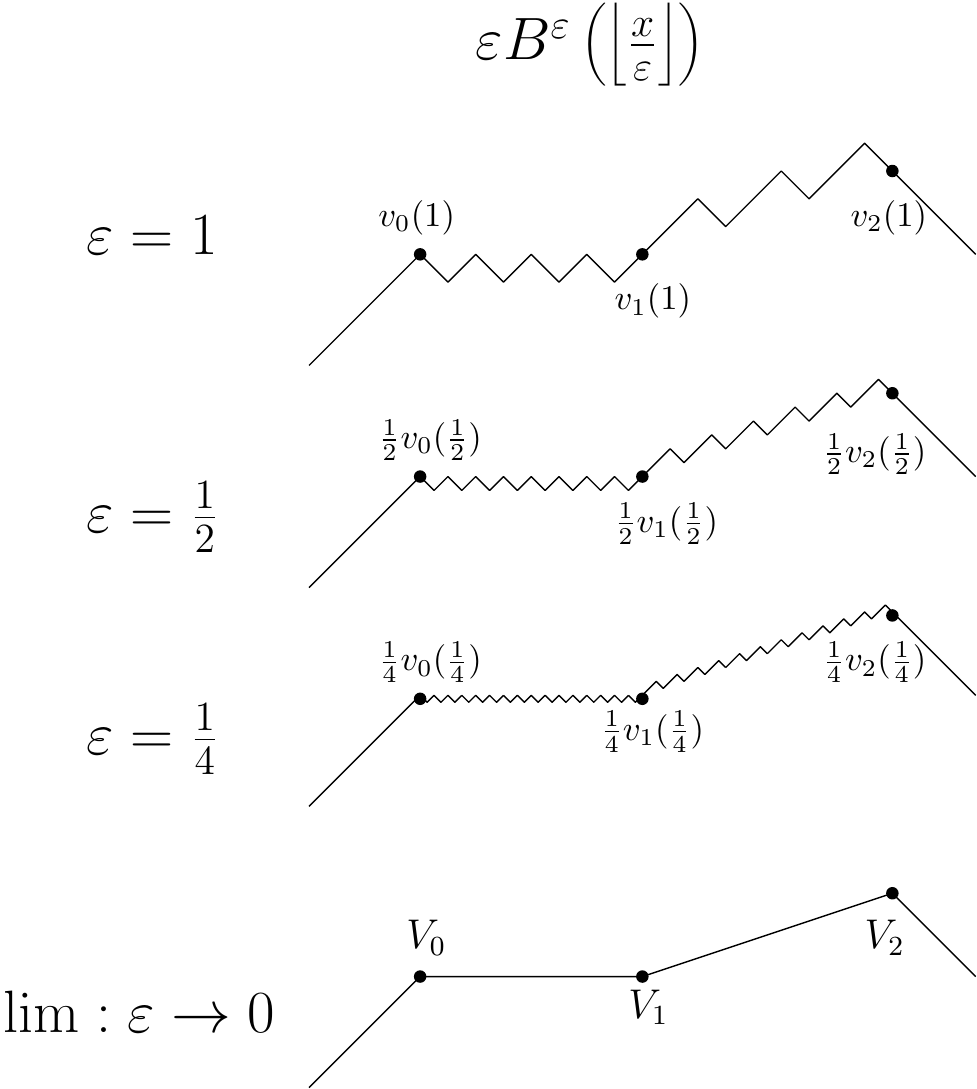}
    \caption{An example Limit Condition (\ref{limreg:2}) illustrated by the graphs of $\e B^\e(\left( \lfloor \frac{x}{\e} \rfloor \right)$ at $\e = 1,\frac{1}{2}, \frac{1}{4}$ and the limit $\e \to 0$, along with transition points $\e v_i(\e)$.}
    \label{fig:blim}
\end{figure}

In Section \ref{sec:backwall}, we classify the set $\fB(\vec{s})$ of possible limits of back walls $\cB$ attained by $\P^{B,r,\vec{s}}$ satisfying the Limit Conditions. Our law of large numbers and fluctuations results are restricted to a dense subset $\fB^\Delta(\vec{s}) \subset \fB(\vec{s})$, defined in Section \ref{sec:backwall}. The reason for this restriction is related to the presence of \emph{singular points}; a concept further explained in Section \ref{sec:backwall}. Elements $\cB \in \fB^\Delta(\vec{s})$ correspond to RPP limits with only finitely many singular points whereas $\cB \in \fB(\vec{s}) \setminus \fB^\Delta(\vec{s})$ correspond to RPP limits with a continuum's worth of singular points. Our methods in general are limited to accessing models with finitely many singular points, thus this restriction is necessary. 

Before proceeding to the main result, it is convenient to state the following limit shape result under our limit regime. Let $h$ denote the random height function of $\P^{B,r,\vec{s}}$.

\begin{theorem} \label{thm:LLN}
Suppose $\P^{B,r,\vec{s}}$ satisfies the Limit Conditions such that $\cB \in \fB^\Delta(\vec{s})$. Then there exists a deterministic Lipschitz $1$ function $\cH: I \times \R \to \R$ such that we have the convergence
\[ \e h\left( \left\lfloor \frac{x}{\e} \right\rfloor, \frac{y}{\e}\right) \to \cH(x,y) \]
of measures on $y \in \R$, weakly in probability as $\e \to 0$ for all $x\in I$. An explicit description of this height function is given in Section \ref{sec:limgff} in terms of its exponential moments.
\end{theorem}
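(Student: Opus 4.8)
The plan is to exploit the Schur process structure of $\P^{B,r,\vec{s}}$ and to extract the limit shape from the asymptotics of a separating family of observables --- the exponential moments of the height function --- computed via the difference operators developed in Section \ref{sec:obs}. First I would record the standard fact that a random element of $\cP_B$ under $\P^{B,r,\vec{s}}$, read along its diagonal slices $\pi^{v}$, is a Schur process: the back wall $B$ prescribes an ascending/descending pattern of interlacing partitions, and the geometric weights $r_v^{|\pi^v|}$ are absorbed into Plancherel-type specializations attached to each step. For a fixed slice $x=v$, the value $h(v,y)$ equals, up to an affine function of $(v,y)$ determined by $B$, a linear functional of the particle configuration $\{\pi^{v}_i - i : i \ge 1\}$, namely a normalization of the count of particles below level $y$. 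Hence it suffices to identify the $\e\to 0$ limit of the rescaled empirical measure of $\e\cdot(\pi^{\lfloor x/\e\rfloor}_i - i)$ and to show it is deterministic.

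Next I would invoke the contour integral formulas of Sections \ref{sec:obs}--\ref{sec:asymp} for the joint moments
\[
\E\!\left[\prod_{k=1}^{m}\ \sum_{i\ge 1} z_k^{\,\pi^{v_k}_i - i}\right],
\]
i.e. the mixed exponential moments of the height functions at slices $v_1,\dots,v_m$: Negut's difference operators, extended to the multiply-peaked boundary $B$, express these as nested contour integrals of explicit rational (and, in the $p$-periodic case, $p$-th-root) integrands. A steepest-descent analysis --- deforming the contours through the critical points of the relevant action, whose location is governed by the complex structure of the liquid region from Section \ref{sec:cpx} --- shows that each single integral concentrates, $\E\big[\sum_i z^{\pi^v_i - i}\big] = \exp\!\big(\tfrac1\e\Phi_v(z) + o(\tfrac1\e)\big)$ for an explicit action $\Phi_v$, while the joint moments factor to leading order; consequently the covariances of the normalized exponential moments are of lower order, so their variances vanish as $\e\to 0$. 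This is exactly the estimate on first and second moments, obtained here as the leading order of the same analysis that in Section \ref{sec:limgff} is pushed one order further to produce the Gaussian free field.

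Given deterministic limits for a separating family of exponential moments, and since the relevant measures have uniformly bounded support after rescaling (the back wall grows at speed at most $1$, confining each slice to a bounded window), a moment-problem argument upgrades moment convergence to weak convergence in probability of $\e\,h(\lfloor x/\e\rfloor,\cdot/\e)$ to a deterministic measure, which we declare to be $\partial_y\cH(x,\cdot)$; integrating in $y$ defines $\cH$. The Lipschitz-$1$ bound is inherited from the prelimit, where heights of skew plane partitions satisfy $|h(x,y)-h(x,y')|\le|y-y'|$ together with the corresponding control in $x$, and this passes to the limit. The promised explicit description is then the identity $\int e^{\xi y}\,\partial_y\cH(x,\mathrm{d}y) = \Phi_x(e^{\xi})$ with $\Phi_x$ the limiting action above.

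The main obstacle is the steepest-descent step at the \emph{singular points} --- the slices where $\cB$ changes slope and the rescaled particle density is unbounded. There the critical-point equation degenerates, the contours must be routed around newly created singularities of the integrand (including those from the $p$-th roots introduced by periodic weighting), and one must show the error terms remain subleading uniformly; this is precisely why the statement is restricted to $\cB\in\fB^\Delta(\vec{s})$, for which there are only finitely many such points. Controlling the combinatorics of the extended Negut operators for the multiply-peaked $B$, and matching the resulting critical-point data with the geometry established in Section \ref{sec:cpx}, is the technical heart of the proof.
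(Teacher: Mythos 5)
Your overall architecture (exponential moments computed via Negut-type difference operators, variance bound, moment problem) matches the paper, and you correctly identify the singular points as the technical crux of the restriction to $\fB^\Delta$. The genuine divergence is in the asymptotics step. You propose a steepest-descent analysis of the observable $\E\big[\sum_i z^{\pi^v_i-i}\big]$ with $z$ fixed, which is exponentially large in $1/\e$ and therefore requires locating saddle points of an action $\Phi_v$, deforming contours through them, and arguing that joint moments factor to leading exponential order. The paper takes a shorter route: the observable $\wp_k(\pi^x;r^{\ft\alpha},r^\ft)$ is already normalized to be $O(1)$ because the evaluation parameter is scaled as $t=e^{-\e\ft}$ and the factor $(1-t^{-k})=O(\e)$ absorbs the $\sim 1/\e$ number of particles. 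Its contour-integral formula (Theorem \ref{thm:obs}) then converges pointwise on a fixed contour family (Propositions \ref{prop:dist<>}, \ref{prop:Gconv}), with no critical-point deformation, and dimension reduction (Theorem \ref{thm:dimred}) collapses the $k$-fold integral. The deterministic limit is not a concentration estimate from factorization of exponentials; it comes from the fact that $\e^{-1}(\wp_k-\E\wp_k)$ has a Gaussian limit with finite variance (Theorem \ref{thm:gauss}), so $\mathrm{Var}(\wp_k)=O(\e^2)$. The complex structure of the liquid region is used to \emph{identify} $\nabla\cH$ after the fact (Section \ref{sec:LLN} via the Stieltjes transform and companion equation), not as input to a saddle-point location.

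Two smaller issues. First, your justification for compact support of the rescaled slice distribution — ``the back wall grows at speed at most $1$, confining each slice to a bounded window'' — is not valid at singular points, where the frozen boundary has unbounded tentacles; the paper instead deduces compact support of the auxiliary measure $\mu_x^\ft$ from the growth rate of its exponential moments via a Markov-inequality argument (Lemma \ref{lem:bdblw}), which is robust in the presence of singular points. Second, Theorem \ref{thm:LLN} asserts convergence for \emph{all} $x\in I$ in the Schur case, with no separation from singular points, whereas the general statement Theorem \ref{thm:MACLLN} requires a separation condition; your proposal does not explain why the Schur case permits dropping this condition, a point the paper flags but treats as a separate refinement. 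Neither issue is fatal to a steepest-descent program, but both indicate that the error control you would need is more delicate than sketched.
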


\begin{remark}
We note that there exists an approach to Theorem \ref{thm:LLN} through the variational principle \cite{CKP}, \cite{KOS}, \cite{KO}. Our approach is different with the benefit of giving explicit formulas for exponential moments and being generalizable to the Macdonald plane partitions introduced in Section \ref{sec:MPP}.
\end{remark}

Let $p_{\hloz},p_{\lloz},p_{\rloz}$ denote the local proportions of the subscripted lozenges, if they exist. Given the deterministic limit $\cH$, the local proportions of lozenges at $(x,y) \in I \times \R$ \emph{are well-defined} and given by
\begin{align*}
\nabla \cH(x,y) & = (1 - p_{\hloz}, -p_{\lloz}) \\
p_{\hloz} + p_{\lloz} + p_{\rloz} &= 1
\end{align*}
It is convenient to encode the local proportions by a complex parameter $z \in \overline{\HH}$ so that
\begin{equation} \label{eq:locprop}
p_{\hloz} = \arg z, ~~~ p_{\lloz} = \frac{1}{p} \sum_{i=0}^{p-1} \arg (1 - s_0 \cdots s_i z)
\end{equation}
where the argument is chosen to be $0$ on the positive reals. There is a unique such choice of $z \in \overline{\HH}$ for any given triple $(p_{\hloz},p_{\lloz}, p_{\rloz})$. In the case where the period is $1$ the parameter $z$ admits a nice geometric interpretation: the triangle $(0,1,z)$ has angles $\pi (p_{\hloz}, p_{\lloz}, p_{\rloz})$, see Figure \ref{fig:tri}. For higher periods $p$, the author is unaware of a simple geometric alternative to (\ref{eq:locprop}).
\begin{figure}[ht] 
\centering
\includegraphics[scale=0.4]{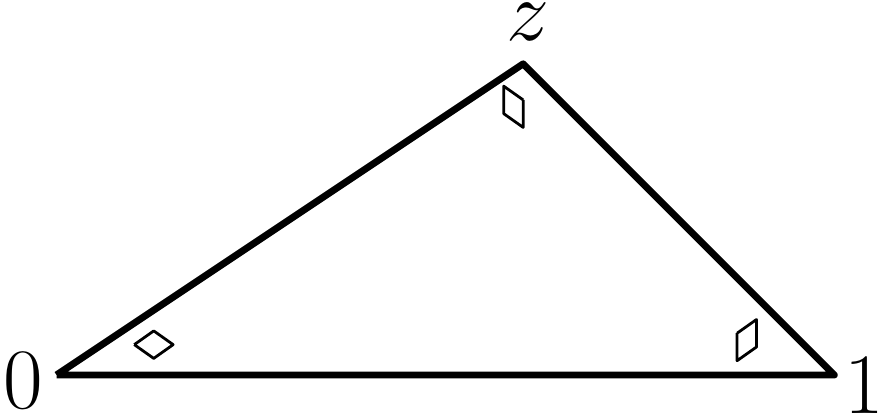} 
\caption{Geometric description for parameter $z$ in $1$-periodic case.} \label{fig:tri}
\end{figure}

We briefly recall the pullback of the Gaussian free field. Detailed discussions of the $2$-dimensional Gaussian free field can be found in \cite{S}, \cite[Section 4]{D}.
\begin{definition}
The Gaussian free field $\fH$ (with Dirichlet boundary conditions) on $\HH$ is defined to be the generalized centered Gaussian field on $\HH$ with covariance
\[ \E\, \fH(z) \fH(w) = - \frac{1}{2\pi} \log \left| \frac{z - w}{z - \bar{w}} \right| \]
Given a domain $D$ and a homeomorphism $\Omega: D \to \HH$, the $\Omega$-pullback of the Gaussian free field $\fH \circ \Omega$ is a generalized centered Gaussian field on $D$ with covariance
\[ \E\, \fH(\Omega(u))  \fH(\Omega(v)) = - \frac{1}{2\pi} \log \left| \frac{\Omega(u) - \Omega(v)}{\Omega(u) - \overline{\Omega}(v)} \right|. \]
\end{definition}

\begin{definition}
Let $\cI$ be some indexing set and some family $\{\xi_i\}_{i \in \cI}$ of random variables. Moreover, for each $\e > 0$, define a family of random variables $\{\xi_i^\e\}_{i \in \cI}$. We say that $\{\xi_i^\e\}_{i \in \cI} \to \{\xi_i\}_{i \in \cI}$ as $\e \to 0$ in distribution if for any finite collection $i_1,\ldots,i_k \in I$ the random vector $(\xi_{i_1}^\e,\ldots,\xi_{i_k}^\e)$ converges in distribution to $(\xi_{i_1},\ldots,\xi_{i_k})$.
\end{definition}

Let $\fH$ denote the Gaussian free field with Dirichlet boundary conditions on $\HH$, and denote by
\[ \overline{h}(x,y) = h(x,y) - \E h(x,y) \]
the centered height function. Let the \emph{liquid region} be defined to be the set of $(x,y)$ such that all the local proportions $p_{\lloz}, p_{\rloz}, p_{\hloz}$ are positive. We are now ready to state the main result for the ``Schur case''.

\begin{theorem} \label{thm:GFF}
Suppose $\P^{B,r,\vec{s}}$ satisfies the Limit Conditions such that $\cB \in \fB^\Delta(\vec{s})$. The map $\zeta(x,y) = e^x z(x,y)$, where $z$ is defined by (\ref{eq:locprop}), is a homeomorphism from the liquid region to $\HH$. Moreover, the centered, rescaled height function $\sqrt{\pi}\,\overline{h}\left(\lfloor \frac{x}{\e} \rfloor,\frac{y}{\e} \right)$ converges to the $\zeta$-pullback of the GFF in the sense that we have the following convergence in distribution
\[ \left\{ \sqrt{\pi} \int \overline{h}\left( \left\lfloor \frac{x}{\e} \right\rfloor,\frac{y}{\e} \right) e^{-ky} \, dy \right\}_{x\in I, k \in \Z_{>0}} \to \left\{\int \fH(\zeta(x,y)) e^{-ky} \, dy \right\}_{x \in I, k \in \Z_{>0}}.\]
\end{theorem}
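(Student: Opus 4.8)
The plan is to establish the convergence of the moment generating functionals $\int \overline h(\lfloor x/\e\rfloor, y/\e) e^{-ky}\,dy$ jointly across $x \in I$ and $k \in \Z_{>0}$ by the method of moments, leveraging the exact contour integral formulas for joint moments of random skew plane partitions developed in Section~\ref{sec:obs}. Recall that under the $r^{\mathrm{volume}}$-type weighting, the diagonal sections $(\pi^v)$ form a Schur process, and Negut's difference operators — suitably extended to multiply-peaked boundaries as promised in the introduction — give, for each tuple of diagonal positions and powers, an integral over nested contours of an explicit integrand built from the boundary data $B$ and the weights $\vec s$. The first step is therefore to translate $\int \overline h(\lfloor x/\e\rfloor, y/\e)e^{-ky}\,dy$ into a linear combination of centered moments of the form $\E\prod_j p_{m_j}(\pi^{v_j}) - (\text{product of expectations})$, using the elementary fact that integrating the height function against $e^{-ky}$ relates to power sums (or exponential moments) of the corresponding partition; the factor $e^{-ky}$ picks out the $k$-th power sum after an Abel-summation/integration-by-parts in $y$.

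The second step, which is the analytic heart, is the asymptotic analysis of these contour integrals as $\e \to 0$. Following the standard Macdonald-process philosophy (as in \cite{BC}, \cite{BG}, \cite{GZ}), I would expand the integrand, extract the leading exponential term $e^{S(w)/\e}$ governed by an action $S$ determined by $\cB$ and $\vec s$, and deform contours to steepest descent. The critical-point equation of $S$ is precisely the equation cutting out the complex slope $z(x,y)$ via \eqref{eq:locprop}, so the saddle lies on the boundary of the liquid region's image, and $\zeta(x,y) = e^x z(x,y)$ emerges as the natural conformal coordinate. The expectation $\E h$ comes from the leading-order (first-order) term, giving Theorem~\ref{thm:LLN}; the Gaussian fluctuations and the GFF covariance come from the \emph{second-order} term: for products of $\ell \ge 2$ moments the contributions factor into pairings of saddle points, and the pairwise interaction term evaluates — after residue computations at coincident contours — to $-\frac{1}{2\pi}\log\left|\frac{\zeta(x,y)-\zeta(x',y')}{\zeta(x,y)-\overline{\zeta(x',y')}}\right|$, i.e.\ exactly the pulled-back GFF covariance, with the cumulants of order $\ge 3$ vanishing in the limit. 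The normalization $\sqrt\pi$ is tuned precisely so that this constant matches the definition of $\fH$. That the limiting joint moments are those of a Gaussian family then identifies the limit, since the GFF functionals $\int \fH(\zeta(x,y))e^{-ky}\,dy$ are jointly Gaussian with the stated covariance (finiteness of which must be checked, using that $\zeta$ extends continuously to the boundary and the logarithmic singularity is integrable).

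The third step is to verify that $\zeta$ is a homeomorphism from the liquid region onto $\HH$. This is a property of the complex structure on the liquid region, and I would invoke the analysis of Section~\ref{sec:cpx}: one shows the critical-point equation has, for each $(x,y)$ in the liquid region, a unique root $z \in \HH$ (this is where positivity of all three local proportions is used — it guarantees $z$ is genuinely in the open upper half plane rather than on $\R$), that the map is continuous and proper, and that it is surjective by a degree/boundary-behavior argument tracking $\zeta$ along $\partial(\text{liquid region})$, which maps to $\R \cup \{\infty\}$. Injectivity follows from the uniqueness of the saddle together with monotonicity in the $e^x$ factor. The restriction $\cB \in \fB^\Delta(\vec s)$ enters here and throughout: finitely many singular points means the action $S$ has finitely many singularities on the relevant contours, so the steepest-descent deformation is legitimate and the residue bookkeeping is finite.

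The main obstacle I expect is the steepest-descent analysis at and near the \emph{singular points} of the Macdonald/Schur process — the distinguished points where, as the introduction flags, the model is unbounded and the integrand develops poles colliding with the contour or with each other. Away from these points the analysis is the familiar one; the novelty (and the difficulty) is a uniform treatment near the singular points, ensuring the contributions there are either negligible or contribute in a controlled way to the covariance, and that interchanging limit and integration in $\int(\cdots)e^{-ky}\,dy$ is justified despite the growth of $h$ near singularities (the exponential damping $e^{-ky}$ for $k > 0$ is what saves this, and is presumably why the theorem is phrased with $k \in \Z_{>0}$ rather than as pointwise convergence of $\overline h$). A secondary technical point is bookkeeping the multiply-peaked boundary: each peak of $B$ contributes its own family of poles, and one must check the extended Negut operators produce the correct combined integrand and that the steepest-descent contours can be chosen compatibly across all peaks simultaneously.
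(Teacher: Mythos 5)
Your high-level program — translate $\int\overline h\,e^{-ky}dy$ into observables of the diagonal partitions, get contour-integral formulas for joint moments via the (suitably extended) Negut operators, show the higher cumulants vanish and the covariance matches the GFF, and separately prove $\zeta$ is a homeomorphism — is the right one and matches the paper's structure. But the analytic mechanism you propose for the asymptotics would not work here, and this is a genuine gap rather than a stylistic variant.

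You frame the asymptotics as steepest descent: extract a leading factor $e^{S(w)/\e}$, deform to steepest-descent contours, and read off the LLN from the first order and the covariance from pairing of saddle points. The contour integrals in Theorem~\ref{thm:obs} do not have this structure. After rescaling, the factors $G^B_{<x}$, $G^B_{>x}$ are ratios of $q$-Pochhammer symbols that converge directly to the \emph{finite} limits $\cG_{<\x}^{\ft}$, $\cG_{>\x}^{\ft}$ (Proposition~\ref{prop:Gconv}, via Lemma~\ref{lem:pochbd}), and the observable $\wp_k$ itself is $O(1)$; there is no $e^{O(1/\e)}$ growth to balance and hence no saddle to find. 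The paper's proof of Theorems~\ref{thm:mom} and \ref{thm:gauss} is instead a \emph{dominated convergence} argument on the fixed-size contour integrals: one chooses contours that respect the separation conditions near singular points, bounds the integrand uniformly (the delicate point being the $O(\e)$ proximity of contours near singular $\x$, handled by Lemma~\ref{lem:raybd} and the pole estimates of Proposition~\ref{prop:dist<>}), and then passes to the limit inside the integral. The dimension reduction from $k$ nested contours to a single contour is an exact identity (Theorem~\ref{thm:dimred} from \cite{GZ}), not a saddle-point collapse. Similarly, the identification of $\partial_y\cH$ with $\frac{1}{\pi}\arg\wzeta(x,y)$ is obtained in Section~\ref{sec:LLN} by inverting a Stieltjes transform via the auxiliary function $\Ups_x$, not by locating a saddle — the companion equation $\cG_x(\zeta)=e^{-y}$ is the right equation, but it arises from residue/root bookkeeping for the Stieltjes transform, and its relevant root lives in the interior of $\HH$, not on the boundary of the image as you state.

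The covariance computation is also described loosely: you say it comes from ``residue computations at coincident contours.'' In the paper, the two contours in (\ref{eq:thmcov}) are nested (coinciding only at a single point when $\rho_<(\x_i)=\rho_>(\x_j)$); the GFF kernel appears after deforming the inner contour to $\partial\cD_{\x_i}^{\C}$ (legitimate by Lemma~\ref{lem:deform}), parametrizing by $\zeta(\x_i,\cdot)$ and its conjugate, and integrating by parts twice in $\y_1,\y_2$. The $\sqrt\pi$ normalization is tuned to that integration-by-parts, not to residues. Your instinct that singular points are the main obstacle, and that the $k\ft$-separation condition is what controls them, is correct; but the reason the integrals stay finite is the Lipschitz bound on $h$ and the controlled pole structure, not the damping $e^{-ky}$ (which is fixed while $\e\to0$). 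For the homeomorphism claim your plan is closer to the paper's: the paper's Theorem~\ref{thm:homeo} proceeds via a root count for the companion equation (Proposition~\ref{prop:2roots}) plus a spectral-curve bijection (Lemma~\ref{lem:spec}), which is essentially the existence/uniqueness/surjectivity argument you sketch, though the surjectivity step uses the explicit argument identities (\ref{eq:eta1})--(\ref{eq:eta2}) rather than a topological degree argument.
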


In \cite{KO}, Kenyon and Okounkov conjectured that the fluctuations of the height function for $\Z \times \Z$ periodic, bipartite dimer models are given by a Gaussian free field. Theorem \ref{thm:GFF} confirms this conjecture for periodically weighted skew plane partitions.

\begin{remark}
We note that modifying the Limit Conditions so that $r = \exp(-c\e)$ for some constant $c$ amounts to scaling the coordinates of the plane partition. For this reason, we consider $c = 1$ to reduce the number of parameters. In this case $\zeta(x,y) = e^x z(x,y)$.
\end{remark}

\begin{remark}
We emphasize that in the uniform lozenge tiling models studied in previous works, the uniformization map from $\sL$ onto $\HH$ is \emph{not} given by the complex slope $\zeta(x,y)$, e.g. in \cite{BuG}, \cite{BuG2}, \cite{P}. For the uniform models, the parameter $c$ in the remark above is taken to be $0$ so that $\zeta(x,y) = z(x,y)$, and this gives a covering map from $\sL$ onto $\HH$ with degree $>1$. As an example, $z(x,y)$ gives a $2$-sheeted covering for the uniform lozenge tilings of a regular hexagon due to rotational symmetry. The reason $\zeta(x,y)$ gives a uniformization map for our models can be related to the fact that there is only one connected component for the frozen region corresponding to $\hloz$, which is a consequence of our models having no ``ceiling''. 
\end{remark}

\begin{remark}
The map $\zeta$ depends continuously on the back wall $\cB$. Thus Theorem \ref{thm:GFF} provides a continuous family of GFFs parametrized by $\cB$ corresponding to asymptotic RPPs.
\end{remark}

\subsection{Macdonald Plane Partitions} \label{sec:MPP}
We now introduce a two-parameter family of deformation for the RPP defined by (\ref{eq:rv}). These deformations correspond to the $(q,t)$-parameter family of Macdonald symmetric functions with (\ref{eq:rv}) corresponding to the Schur case $q = t$. Instead of $q$, we will take a parameter $\alpha > 0$ so that $q = t^\alpha$. Fix $0 < t < 1$, $\alpha > 0$ and define the corresponding probability distribution on $\cP_B$
\begin{equation} \label{eq:macrv}
\P(\pi) = w_{\alpha,t}(\pi) \prod_{-M < v < N} r_v^{|\pi^v|}
\end{equation}
where $w_{\alpha,t}$ is an $r$-independent $(\alpha,t)$-\emph{Macdonald weight}, and the summability of the weights (\ref{eq:macrv}) coincides with the summability of the weights (\ref{eq:rv}). The Macdonald weights $w_{\alpha,t}(\pi)$ can be described in terms of semilocal contributions of the plane partition. We explain this in greater detail below, after introducing the coordinate system.

\begin{figure}[ht]
    \centering
    \includegraphics[scale=0.7]{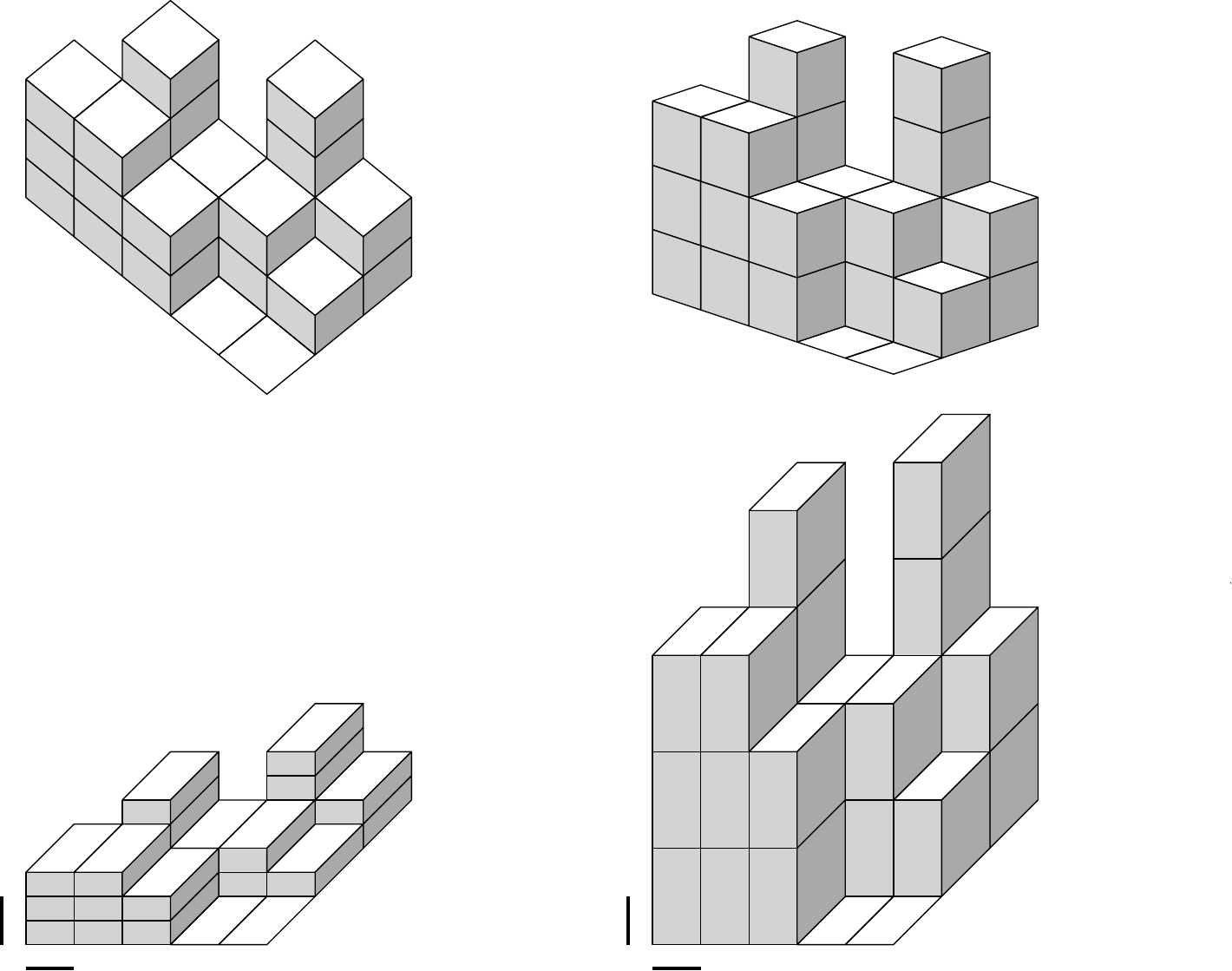}
    \caption{Left: $\alpha = 1/2$; Right: $\alpha = 2$. Top: $3$-d partition projected onto $(1,\alpha,\alpha)$-plane. Bottom: Transformed tiling with scaling so that the line segments by the lower left corner denote unit lengths.}
    \label{fig:mac}
\end{figure}

For the Macdonald plane partitions, it will be convenient to consider a different tiling and set of coordinates which we call the $\alpha$-\textit{coordinates}. We transform $\hloz, \rloz, \lloz$ to $\cpar, \rpar, \lpar$ where the widths of $\rpar,\lpar$ are $1$, the height of $\rpar$ is $\alpha$, and the height of $\cpar$ is $1$. This transformation is not affine since the height of $\cpar$ does not scale by $\alpha$ for $\alpha \neq 1$, see Figure \ref{fig:mac}. To understand where the coordinates come from, consider the $3$-dimensional plane partition. If the height corresponds to the third coordinate, then the projection in Figure \ref{fig:skewpp} is onto the $(1,1,1)$-plane. If instead we project onto the $(\alpha,\alpha,1)$-plane, then after choosing the basis parallel to the edges of the $\rloz$ lozenge we obtain the $\alpha$-coordinates (up to sign of direction), see top row of Figure \ref{fig:mac}.

Fix a plane partition $\pi \in \cP_B$. We define the height function as before which gives the height at $(x,y)$. More precisely, the \emph{height function} $h:(I \cap \Z) \times \R$ is defined as $\frac{1}{\alpha}$ times the total length of vertical line segments beneath a point $(x,y)$ in $\alpha$-coordinates, see Figure \ref{fig:macht}. The $\frac{1}{\alpha}$ term is included because the $\alpha$-coordinates \emph{contracts} the height from $\R^3$ by $\alpha$.

Consider further these vertical segments which are formed by intersections of an adjacent pair of lozenges $\lpar, \rpar$. We say that the vertical segment formed by the intersection of a such a pair of lozenges is a \emph{turn}. If the pair goes from $\lpar$ to $\rpar$ ($\rpar$ to $\lpar$) from left to right, then we call it an \emph{internal turn} (\emph{external turn}), see Figure \ref{fig:level}. The height function 

\begin{figure}[ht]
    \centering
    \includegraphics[scale=0.6]{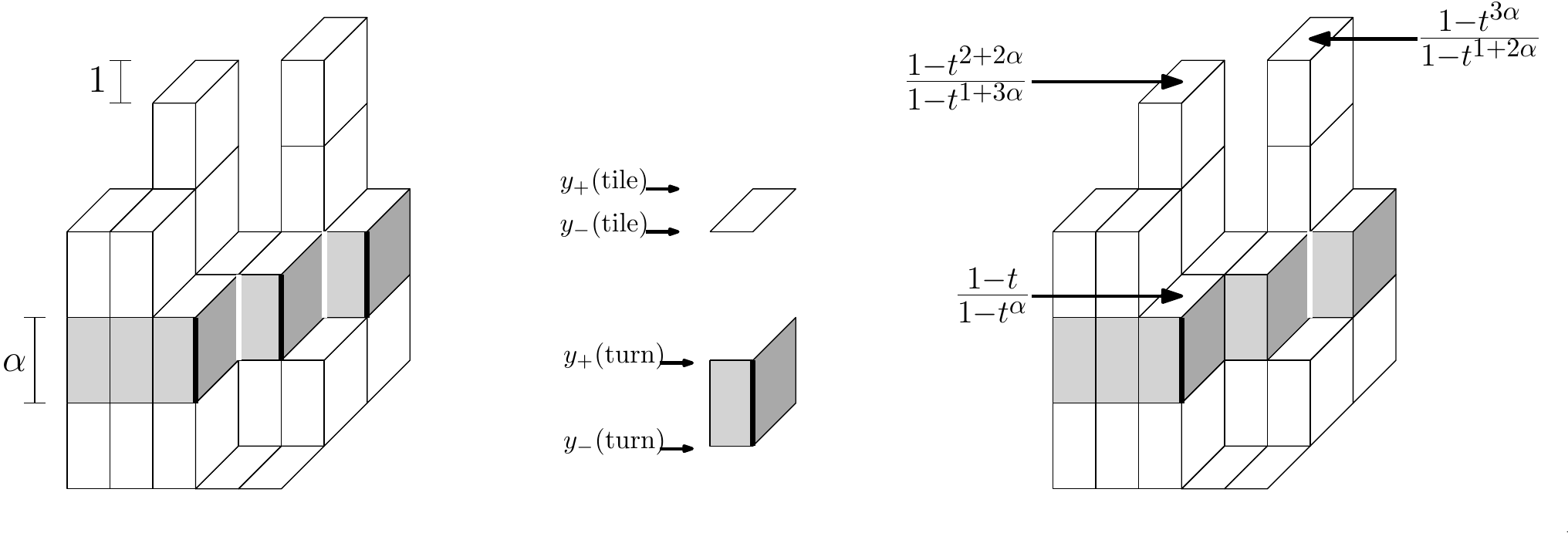}
    \caption{Left: We mark external and internal turns black and white resp. along the grey band. Middle: $y_+,y_-$ indicated for tiles and turns. Right: Weights contributed by the flat tiles above the highlighted turns.}
    \label{fig:level}
\end{figure}

\begin{figure}[ht]
    \centering
    \includegraphics[scale=0.6]{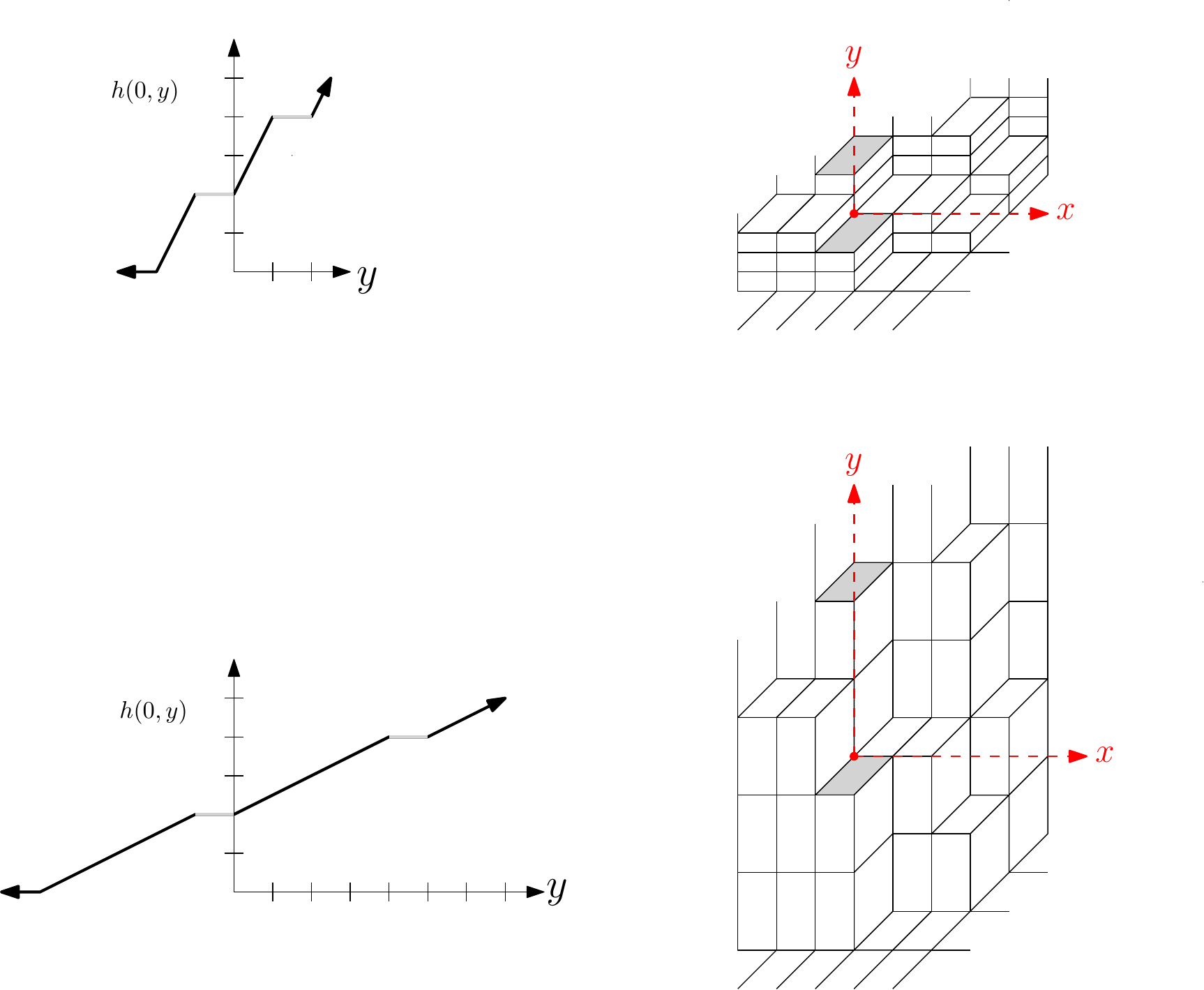}
    \caption{For $\alpha = 1/2$ (top) and $\alpha = 2$ (bottom), the graph of the height function at $x = 0$ and the associated tiling where the gray filled tiles correspond to the flat, gray parts of the graph.}
    \label{fig:macht}
\end{figure}

Denote the set of turns of $\pi$ by $\cT(\pi)$. A turn $T$ is a vertical segment $\{x_0\} \times [y_0, y_0 + \alpha]$ and we set $x(T) = x_0$, $y_-(T) = y_0$, $y_+(T) = y_0 + \alpha$. Similarly, given a lozenge $\cpar$ along the diagonal section $x = v$ it spans a set of $y$-coordinates of the form $[y_1,y_1 + 1]$ in which case we let $x(\cpar) = v$, $y_-(\cpar) = y_1$, $y_+(\cpar) = y_1 + 1$. We now introduce an interaction between a turn $T \in \cT(\pi)$ and horizontal lozenges $\cpar$ which lie directly above it, given by the weight
\begin{align} \label{eq:turnwt}
v_{\alpha,t}(\pi,T) = \prod\limits_{\substack{\cpar: x(\cpar) = x(T) \\ y_-(\cpar) \ge y_+(T)}} \frac{1 - t^{y_+(\cpar) - y_+(T)}}{1 - t^{y_-(\cpar) - y_-(T)}}.
\end{align}
Note that if $\alpha = 1$, then the weight is identically $1$. If $\alpha < 1$ ($> 1$), then each fraction in (\ref{eq:turnwt}) is $> 1$ ($< 1$). With this setup, we now define the Macdonald weight:
\[ w_{\alpha,t}(\pi) = \prod_{\substack{T \in \cT(\pi) \\ T ~\mbox{\footnotesize is external}}} v_{\alpha,t}(\pi,T) \prod_{\substack{T \in \cT(\pi) \\ T ~\mbox{\footnotesize is internal}}} v_{\alpha,t}(\pi,T)^{-1}.\]
In words, if $\alpha < 1$ then the weight $w_{\alpha,t}$ favors external turns over internal turns, and the strength of the preference is amplified by the presence of horizontal lozenges directly above the turn. Decreasing $\alpha$ further exaggerates this interaction. For $\alpha > 1$, this preference is reversed for internal and external turns, and increasing $\alpha$ exaggerates the interaction.

Analogues of Theorems \ref{thm:LLN} and \ref{thm:GFF} exist for the Macdonald RPPs.

\begin{definition}
Let $\vec{s} = (\ldots,s_{-1},s_0,s_1,\ldots)$ be a $p$-periodic, bi-infinite sequence of positive numbers such that $s_0 \cdots s_p = 1$. Denote by $\P^{B,r,\vec{s}}_{\alpha,\ft}$ the probability measure on $\cP_B$ defined by (\ref{eq:macrv}) where
\[ r_v = s_v r, \quad t = r^\ft, \quad q = t^\alpha \]
given that the weights are summable.
\end{definition}

This generalizes the family $\P^{B,r\vec{s}}$ defined earlier which corresponds to $\alpha = 1$ (in which case the value of $\ft$ is immaterial). The limit regime we consider is a generalization of the Limit Conditions for $\P^{B,r,\vec{s}}_{\alpha,\ft}$ where we fix $\vec{s},\alpha,\ft$.

\begin{theorem} \label{thm:macLLN}
Suppose $\P^{B,r,\vec{s}}_{\alpha,\ft}$ satisfy Limit Conditions with fixed $\vec{s},\alpha,\ft$ such that $\cB \in \fB^\Delta(\vec{s})$. Then there is a deterministic Lipschitz $1$ function $\cH: I \times \R \to \R$ \emph{independent of $\alpha,\ft$} such that we have the convergence
\[ \e h\left( \left\lfloor \frac{x}{\e} \right\rfloor, \frac{y}{\e}\right) \to \frac{1}{\alpha}\cH(x,y) \]
of measures on $y \in \R$, weakly in probability as $\e \to 0$ for all $x \in I \setminus \{V_\ell\}_{\ell=0}^n$ (recall these are the differentiable points of the continuous, piecewise linear limit $\cB$ of back walls). An explicit description of this height function is given in Section \ref{sec:limgff}.
\end{theorem}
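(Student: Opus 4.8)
The plan is to deduce Theorem \ref{thm:macLLN} from asymptotics of the joint moments of the diagonal slices $(\pi^v)_v$, viewed as a Macdonald process, via the contour integral formulas developed in Section \ref{sec:obs}. First I would reduce the statement to a statement about exponential moments in the $y$-variable. Since $\cH$ is to be Lipschitz-$1$ and the volume weights decay, the positive measure $\e\,\partial_y h\!\left(\lfloor x/\e\rfloor, y/\e\right)dy$ is, with probability tending to $1$, supported in a bounded window; hence weak convergence in probability is equivalent to the convergence in probability of $\e \int h\!\left(\lfloor x/\e\rfloor, y/\e\right) e^{-ky}\,dy = \tfrac{\e}{k}\int \partial_y h\!\left(\lfloor x/\e\rfloor, y/\e\right) e^{-ky}\,dy$ for all $k \in \Z_{>0}$ and all $x \in I \setminus \{V_\ell\}$, the family $\{e^{-ky}\}_{k\in\Z_{>0}}$ being enough to characterize a compactly supported positive measure in $y$. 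Because the limit is deterministic, it then suffices to prove (a) the convergence of expectations $\e\,\E \int h e^{-ky}\,dy \to \tfrac1\alpha \int \cH(x,y) e^{-ky}\,dy$, and (b) that $\e^2\,\mathrm{Var}\!\left(\int h e^{-ky}\,dy\right) \to 0$. The Lipschitz-$1$ bound on $\cH$ is inherited from the deterministic Lipschitz-$1$ bound on $h$ in the $y$-direction valid for any lozenge tiling.

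Second, I would rewrite $\partial_y h(v,\cdot)$ as an explicit combination of observables of the form $\sum_i f_k(\pi^v_i; t, \alpha)$, where $f_k$ is a $(q,t)$-deformed geometric series dictated by the turn/lozenge weights \eqref{eq:turnwt}; these are precisely the quantities to which Negut's difference operators, extended in Section \ref{sec:obs} to Macdonald processes with multiply-peaked boundary, apply. Substituting the boundary data $B$ and the parameters $r = e^{-\e}$, $t = r^\ft$, $q = t^\alpha$ into those formulas expresses $\E[\,\cdot\,]$ and the pair correlations $\E[\,\cdot\,\cdot\,]-\E[\,\cdot\,]\E[\,\cdot\,]$ as, respectively, single and double contour integrals with integrands given by explicit products over the linear pieces of $B$; for $p$-periodic $\vec{s}$ these products involve $p$-th roots of rational functions.

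Third, I would carry out the steepest-descent analysis of Section \ref{sec:asymp}. As $\e \to 0$ one has $r,t,q \to 1$ with $\log q/\log t = \alpha$ fixed, so each factor $\tfrac{1-q^{a}}{1-t^{a}}$ in the integrand tends to $\alpha$ while the bulk of the integrand concentrates as $\exp\!\left(\e^{-1}S(\,\cdot\,;\cB) + o(\e^{-1})\right)$; the crux is that the action $S$ depends on $\cB$ but not on $\alpha$ or $\ft$, the $\ft$-dependence entering only at subleading order and the $\alpha$-dependence collecting into the overall prefactor $\alpha^{-1}$ that matches the $\tfrac1\alpha$-normalization of the $\alpha$-coordinate height function. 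Locating the critical points of $S$ — here the hypothesis $\cB \in \fB^\Delta(\vec{s})$ is used, to keep the critical-point structure finite and to license the contour deformations near the singular points — the leading term of $\e\,\E\int h e^{-ky}\,dy$ is identified with a $k$-dependent integral, which I would then match, via the computations of Section \ref{sec:limgff}, to $\tfrac1\alpha\int \cH(x,y) e^{-ky}\,dy$ for the $\cH$ defined there through its exponential moments. The same steepest descent applied to the double contour integral shows the pair correlations are $O(1)$, consistent with the GFF scaling of the Macdonald analogue of Theorem \ref{thm:GFF}, so (b) follows after multiplying by $\e^2$.

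The main obstacle is this third step: controlling the steepest-descent asymptotics with enough uniformity to (i) verify rigorously that all $\alpha,\ft$-dependence of the limit collapses to the single prefactor $\alpha^{-1}$, which requires tracking how the surviving $(q,t)$-factors and the $p$-th roots interact with the saddle, and (ii) deform the contours through the singular points of the model, where the integrand is genuinely singular and where the restriction $\cB \in \fB^\Delta(\vec{s})$ to finitely many singular points is indispensable. By contrast, the reduction in the first step and the algebraic bookkeeping in the second step are routine.
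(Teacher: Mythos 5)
Your high-level decomposition matches the paper's: reduce weak convergence in probability to (a) convergence of expectations of $\ft$-exponential moments plus (b) vanishing of variances, express the moments via Proposition \ref{prop:aht} and the contour-integral formulas of Theorem \ref{thm:obs}, take $\e\to 0$ using the asymptotics of Section \ref{sec:asymp}, and identify the limit with $\cH$ from Section \ref{sec:limgff}. The use of $\cB\in\fB^\Delta(\cS)$ to control the contour geometry near singular points is the right technical crux, and the deduction of (b) from the $O(\e^2)$ scaling of the cumulants in Theorem \ref{thm:gauss} is exactly what the paper does.

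However, two of your characterizations of the asymptotic mechanism are off. First, there is no steepest-descent or saddle-point localization of the form $\exp(\e^{-1}S + o(\e^{-1}))$ in this step. The observables $\wp_k(\pi^x;q,t)$ are built so that their expectations converge to a \emph{finite} contour integral: in Theorem \ref{thm:mom} the prelimit integrand $G^B_{<x}G^B_{>x}$, a product of $\asymp\e^{-1}$ factors each of size $1+O(\e)$, converges uniformly on suitable fixed contours to the fixed function $[\cG_{<\x}\cG_{>\x}]^{k\ft}$; the integral then converges by dominated convergence, with the delicate point being the behavior near singular points where the contours pinch at $O(\e)$ scale. The complex slope $\zeta(x,y)$ enters only later, when $\partial_y\cH$ is recovered from its Stieltjes transform and $\Ups_x$ by a residue computation in terms of roots of the companion equation $\cG_x(\zeta)=e^{-y}$, not as a saddle point of a prelimit action. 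Second, the $\ft$-dependence in (\ref{eq:hemom}) is not ``subleading''; it appears at leading order through $e^{-\ft ky}$, the prefactor $(\ft k)^{-2}$, and the exponent $k\ft$ on $\cG_\x$. The assertion that $\cH^{\alpha,\ft}=\frac{1}{\alpha}\cH^{1,1}$ is proved via Lemma \ref{lem:bdblw}: one must first show the limit measure has compact support and that its exponential moments, given by a formula analytic in $k$, determine it uniquely, so that comparing (\ref{eq:hemom}) at integer $k$ and exponent $k\ft$ with the $(1,1)$-formula at the (in general non-integer) exponent $k' = k\ft$ forces the identification. Your proposal would not close without articulating this analytic-continuation-plus-compact-support step, which is the genuine content of the ``independence of $\alpha,\ft$'' claim.
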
 

\begin{theorem} \label{thm:macGFF}
Suppose $\P^{B,r,\vec{s}}_{\alpha,\ft}$ satisfy Limit Conditions with fixed $\vec{s},\alpha,\ft$ such that $\cB \in \fB^\Delta(\vec{s})$. Then the map $\zeta(x,y) = e^x z(x,y)$, where $z$ is defined by (\ref{eq:locprop}), is a homeomorphism from the liquid region to $\HH$ \emph{independent of $\alpha,\ft$}. Moreover, the centered, rescaled height function $\alpha \ft\sqrt{\pi}\,\overline{h}\left(\lfloor \frac{x}{\e} \rfloor,\frac{y}{\e} \right)$ converges to the $\zeta$-pullback of the GFF in the sense that we have the following convergence in distribution of the random family
\[ \left\{ \alpha \ft \sqrt{\pi} \int \overline{h}\left( \left\lfloor \frac{x}{\e} \right\rfloor,\frac{y}{\e} \right) e^{-k\ft y} \, dy \right\}_{x\in I, k \in \Z_{>0}} \to \left\{\int \fH(\zeta(x,y)) e^{-k\ft y} \, dy \right\}_{x \in I, k \in \Z_{>0}}\]
for all $x \in I \setminus \{V_\ell\}_{\ell=0}^n$.
\end{theorem}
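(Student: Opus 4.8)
The plan is to run the moment method for Macdonald processes: translate the observables in the statement into joint moments of the diagonal slices, compute those moments exactly via the difference operators of Section \ref{sec:obs}, and extract Gaussian asymptotics by steepest descent. \textbf{Step 1 (from height moments to Macdonald observables).} For fixed $x$, the height at $(\lfloor x/\e\rfloor, y/\e)$ counts horizontal lozenges below level $y$ along a single diagonal slice, so its $e^{-k\ft y}$-transform is, up to an explicit deterministic prefactor, a linear combination over partial sums of power-sum type quantities $\sum_i (\cdot)^{\lambda^{(v)}_i}$ of the Macdonald process formed by the slices $\{\pi^v\}$ (recall $t=r^\ft$, $q=t^\alpha$, $r=e^{-\e}$). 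Hence the $m$-point functions of the transforms in the statement are governed by the joint moments $\E\prod_j\big(\sum_i(\cdot)^{\lambda^{(v_j)}_i}\big)$. \textbf{Step 2 (exact contour integrals).} I would then invoke the extension of Negut's difference operators from Section \ref{sec:obs}, which is designed exactly for Macdonald processes with multiply-peaked boundaries (needed because the back wall $B$ has several ascending/descending runs). Applying one operator per marked slice gives each joint moment as a multidimensional contour integral whose integrand is built from the specialization data encoding $B$ and the weights $r_v=s_vr$, together with the characteristic cross terms between integration variables; the $p$-periodicity of $\vec{s}$ introduces $p$-th roots of rational functions, as flagged in Section \ref{sec:cpx}, and the contours are nested circles with radii dictated by the pole structure.

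\textbf{Step 3 (asymptotics: limit shape and covariance).} Next I would carry out the steepest descent analysis of Section \ref{sec:asymp}. The single-integral asymptotics produce an action whose critical point defines $z(x,y)$ through \eqref{eq:locprop}, and the leading order recovers Theorem \ref{thm:macLLN}. For fluctuations the mechanism is standard for this class of problems: after deforming contours, the only surviving contribution to the \emph{connected} two-point function comes from the double pole at coinciding integration variables, and the residue, summed over the $p$ branches of the roots, telescopes to a kernel that is a logarithm of a cross-ratio in the $z$-variables. Using the complex-analytic properties of $\zeta(x,y)=e^xz(x,y)$ established in Section \ref{sec:cpx} — in particular that $\zeta$ is a homeomorphism from the liquid region onto $\HH$ — this kernel becomes $-\tfrac{1}{2\pi}\log\big|\tfrac{\zeta(u)-\zeta(v)}{\zeta(u)-\overline{\zeta(v)}}\big|$ after the $e^{-k\ft y}$-transform in $y$. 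The $q,t\to1$ limit (with $\log q/\log t=\alpha$ fixed) linearizes the Macdonald integrand, so the parameters $\alpha,\ft$ enter only through the overall scale: the change of variables $y\mapsto\ft y$ together with the $\alpha$-contraction of the height reduces everything to the Schur computation up to the factor $\alpha\ft$, which is exactly the $\sqrt\beta$-type renormalization of the height function. This yields the covariance, the transform variable $k\ft$, and the normalization $\alpha\ft\sqrt\pi$ in the statement, and explains the asserted $(\alpha,\ft)$-independence up to scaling.

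\textbf{Step 4 (Gaussianity and conclusion).} Finally I would show that all higher joint cumulants of the transforms are $o(1)$: in the contour integral for the $m$-th connected moment with $m\ge 3$, every term not annihilated by residue cancellations carries an extra power of $\e$. Therefore the joint moments of $\{\alpha\ft\sqrt\pi\int\overline{h}\,e^{-k\ft y}\,dy\}_{x\in I,\,k\in\Z_{>0}}$ converge to those of a centered Gaussian family with the covariance identified in Step 3; since a Gaussian family is determined by its moments, this gives the stated convergence in distribution, i.e.\ convergence of the finite-dimensional marginals to $\{\int\fH(\zeta(x,y))e^{-k\ft y}\,dy\}$. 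The restriction to $x\in I\setminus\{V_\ell\}$ is imposed because at a $V_\ell$ the slice sits at a corner of the back wall, where the singular-point analysis of Section \ref{sec:backwall} loses the uniform control needed for the residue estimates.

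\textbf{Main obstacle.} The delicate part is Step 3 at and near the \emph{singular points}: choosing the nested contours consistently across all marked slices when several of them approach a singular point, controlling the branch cuts of the $p$-th roots appearing in the $p$-periodic integrand, and proving that the residue sum over the $p$ branches genuinely telescopes to a single logarithm of $\zeta$ rather than a more complicated multivalued expression. A secondary difficulty is the bookkeeping in Step 4: verifying that no higher cumulant survives despite the richer pole structure created by the multiply-peaked boundary and the singular points.
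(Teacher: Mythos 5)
Your overall architecture matches the paper's (Sections~\ref{sec:obs}--\ref{sec:limgff}): Proposition~\ref{prop:aht} converts exponential moments of $h$ into $\wp_k(\pi^x;q,t)$, Theorem~\ref{thm:obs} gives joint moments as nested contour integrals via Negut's operators extended to multiply-peaked boundaries, Theorem~\ref{thm:gauss} controls cumulants, and Section~\ref{sec:cpx} supplies the diffeomorphism $\zeta$. Steps~1, 2 and~4 of your plan are essentially what the paper does, and you correctly identify both the cumulant route to Gaussianity and the reason for the $V_\ell$ exclusion (it is exactly the condition under which $\lfloor x/\e\rfloor$ is automatically $k\ft$-separated from singular points, so the stronger Theorem~\ref{thm:MACGFF} applies).

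The gap is in Step~3, where you describe the covariance as ``a residue at the double pole at coinciding integration variables, summed over the $p$ branches, telescoping to a logarithm.'' That is not the mechanism. Theorem~\ref{thm:gauss} produces the covariance as a genuine double contour integral
\[
\frac{k_ik_j}{(2\pi\i)^2}\oint\!\oint\frac{\cG_{\x_i}(z)^{k_i\ft}\,\cG_{\x_j}(w)^{k_j\ft}}{(z-w)^2}\,dz\,dw
\]
with the $z$-contour strictly enclosed by the $w$-contour, so there is no residue to take at $z=w$; the $(z-w)^{-2}$ factor arises from expanding the cross term $C(Z_i,Z_j)$ to order $\e^2$. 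The passage from this expression to the GFF kernel in Section~\ref{sec:GFF} is a \emph{deformation of both contours to} $\partial\cD_{\x}^{\C}$ (justified by Lemma~\ref{lem:deform}), on which $\cG_{\x}(\zeta(\x,\y))=e^{-\y}$, followed by \emph{two integrations by parts in} $\y_1,\y_2$; the boundary terms cancel because $\zeta(\x,\cdot)$ is real at the endpoints of each component of the liquid slice. This converts $(z-w)^{-2}$ into $\log$ and, after summing the four $\zeta/\bar\zeta$ combinations, yields $-\tfrac{1}{2\pi}\log\bigl|\tfrac{\zeta(\x_i,\y_1)-\zeta(\x_j,\y_2)}{\zeta(\x_i,\y_1)-\overline{\zeta}(\x_j,\y_2)}\bigr|$. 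There is likewise no ``sum over $p$ branches'' in the covariance step: the $p$-periodicity is entirely absorbed into $\cG_{\x}$ and hence into $\zeta$; the branch-sum device $\Ups_x(v)=\sum_j S_{\nu_x}(v\omega^{-j})$ appears only in the limit-shape (density) computation of Section~\ref{sec:LLN}, not in the fluctuation kernel. Replacing your residue heuristic with the contour deformation/integration-by-parts argument is what makes Step~3 rigorous, and it also removes the multivaluedness issue you flagged, since after deformation the integrand is evaluated on the real curve $\y\mapsto\zeta(\x,\y)$ where $\cG_{\x}^{k\ft}$ is single-valued and equal to $e^{-k\ft\y}$.

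A secondary omission: you need, as the paper does (proof of Theorem~\ref{thm:gauss}), to first prove the cumulant claim under a stronger $(k_i+\cdots+k_\nu)\ft$-separation, then use an $L^2$-stability lemma for cumulants (Lemma~\ref{lem:replace}) to downgrade this to the $k_i\ft$-separation actually available; without this reduction the nested-contour construction fails when several $x_i$ cluster near the same singular point.
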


\begin{remark}
We prove stronger statements (see Theorems \ref{thm:MACLLN} and \ref{thm:MACGFF}) which remove the restriction $x \in I \setminus \{V_\ell\}_{\ell=0}^n$ in exchange for a microscopic separation condition. In these improved theorems, we replace $\lfloor x/\e \rfloor$ with some sequence $x(\e)$ such that $\e x(\e) \to x$ for \emph{any} $x \in I$ with the caveat that certain $\e x(\e)$ need to be separated by some microscopic distance from certain \emph{singular points}, see Definition \ref{def:spt}. This separation condition can be removed for the $\alpha = 1$ case, and is also unnecessary whenever $\ft = k$ for any positive integer $k > 0$. We expect that the statement of Theorem \ref{thm:macGFF} should still hold in the absence of this condition. Due to technical complications, we did not pursue this refinement.
\end{remark}

\begin{notation}
Let $B: I \to \R$ be a back wall for some RPP. Denote $I_V = I \cap \Z$ and $I_E = I \cap (\Z + \frac{1}{2})$. For back walls $B^a$ denoted by superscripts, we denote the corresponding sets with superscripts: $I^a$ (domain of $B^a)$, $I^a_V = I^a \cap \Z$, $I^a_E = I^a \cap (\Z + \frac{1}{2})$.
\end{notation}

\section{Joint Expectations of Observables} \label{sec:obs}
The main goal of this section is to obtain formulas for expectations associated to the height function. Consider
\[ \wp_k(\lambda;q,t) = (1-t^{-k}) \sum_{i=1}^{\ell(\lambda)} q^{k\lambda_i} t^{k(-i+1)} + t^{-k\ell(\lambda)} \]
where $\lambda = (\lambda_1,\lambda_2,\ldots)$ is a partition, $\ell(\lambda)$ denotes the number of indices $i$ such that $\lambda_i \ne 0$, and $k \in \Z_{\ge 0}$. The following proposition gives a connection between $\wp_k$ and height functions.

\begin{proposition} \label{prop:aht}
Consider a plane partition $\pi \in \cP_B$. Fix $\alpha > 0$, and let $h$ be the height function. Then
\begin{align*}
\int_{-\infty}^\infty \! h(x,y) t^{ky} \, dy &= \frac{t^{kB(x)}}{\alpha k^2 (\log t)^2} \cdot \wp_k(\pi^x;t^\alpha,t).
\end{align*}
\end{proposition}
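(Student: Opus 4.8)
The plan is to compute both sides directly from the definitions, reducing everything to a single diagonal section $\pi^x$. First I would fix $x \in I_V$ and recall that, in $\alpha$-coordinates, the height function $h(x,y)$ is $\tfrac{1}{\alpha}$ times the total length of vertical segments below $(x,y)$ along the vertical line at horizontal coordinate $x$. The vertical lozenge stack above the column $x$ is encoded by the partition $\pi^x = (\pi^x_1, \pi^x_2, \ldots)$: the horizontal lozenges $\cpar$ along this section occur at $y$-levels determined by the profile of $\pi^x$ sitting above the back wall level $B(x)$. Concretely, $h(x,y)$ as a function of $y$ is piecewise linear with slope in $\{0,1/\alpha\}$ (in $\alpha$-coordinates the vertical direction is contracted by $\alpha$, hence the $1/\alpha$), it equals $0$ below $y = B(x)$, and its jumps in slope are located exactly at the images of the parts of $\pi^x$. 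So the first step is to write down $h(x,\cdot)$ explicitly as $\frac{1}{\alpha}\sum (\text{indicator-type terms})$ indexed by $i \ge 1$, with the $i$th term accounting for the $i$th particle/part of $\pi^x$, placed at height $B(x) + (\text{something linear in } \pi^x_i \text{ and } i)$ — the usual particle–hole correspondence between the diagonal slice of a skew plane partition and a partition, shifted by the back-wall height $B(x)$.

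Next I would integrate against $t^{ky}\,dy$. Since $0 < t < 1$ and $k > 0$, $t^{ky} = e^{k y \log t}$ decays as $y \to +\infty$ and the integral converges (the height function is bounded by a linear function and eventually constant near $-\infty$, matching $B(x)$; one should double-check the lower tail, but $h \ge 0$ and is eventually $0$ there). Using integration by parts, $\int h(x,y) t^{ky}\,dy = -\frac{1}{k\log t}\int \partial_y h(x,y)\, t^{ky}\,dy$ up to boundary terms that vanish by the decay, and $\partial_y h(x,y) = \frac{1}{\alpha}\sum_i (\text{indicator of the }i\text{th horizontal-lozenge interval})$. Each such interval has length $1$ in $\alpha$-coordinates (the height of $\cpar$ is $1$), so integrating the indicator of $[y_-^{(i)}, y_-^{(i)}+1]$ against $t^{ky}$ produces $\frac{1}{k\log t}(t^{k y_-^{(i)}} - t^{k(y_-^{(i)}+1)}) = \frac{1-t^k}{k\log t} t^{k y_-^{(i)}}$ — wait, more carefully one gets a geometric sum. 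Substituting the explicit coordinates $y_-^{(i)} = B(x) + \pi^x_i - i + \text{const}$ (the constant fixed by the coordinate normalization so that it matches the definition of $\wp_k$) and summing the geometric series over $i$ produces precisely the combination $(1 - t^{-k})\sum_i t^{kB(x)} q^{k\pi^x_i} t^{-ki} \cdot(\ldots)$, i.e. $t^{kB(x)}$ times $\wp_k(\pi^x; t^\alpha, t)$ up to the overall scalar $\frac{1}{\alpha k^2 (\log t)^2}$; the factor $k^2 (\log t)^2$ arises from the two reciprocals of $k\log t$ (one from the $e^{ky}$-antiderivative structure inside each interval integral, one from reassembling), and the $\frac{1}{\alpha}$ is the height normalization. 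The tail term $t^{-k\ell(\lambda)}$ in $\wp_k$ should drop out of the region where $\pi^x$ has finitely many nonzero parts (beyond which $h$ is flat), contributing the boundary constant.

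The main obstacle I anticipate is purely bookkeeping: getting the affine identification between the $y$-coordinates of the horizontal lozenges along section $x$ and the parts $\pi^x_i$ exactly right, including the role of the back wall offset $B(x)$ and the correct constant shift so that the sum matches the precise normalization in the definition of $\wp_k$ (with its leading $1-t^{-k}$, the $t^{k(-i+1)}$, and the additive $t^{-k\ell(\lambda)}$). In particular one must be careful that the summation index $i$ in $\wp_k$ corresponds to the lozenge enumeration along the diagonal, and that the geometric-series resummation over $i$ (which only makes sense because $q = t^\alpha$ with $0<t<1$ and the parts eventually vanish) reproduces both the main sum and the boundary term. Once the dictionary between the slice and the partition is pinned down, the computation is a one-line integration by parts plus a geometric sum, and everything else — convergence of the integral, the $\alpha$ and $\log t$ powers — falls out mechanically.
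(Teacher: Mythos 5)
Your overall strategy matches the paper's: express the height function along the section via the particle positions $Y_i = \alpha\pi^x_i - i + 1 + B(x)$ (the $y_+$-coordinate of the $i$th horizontal lozenge), integrate by parts to move $\partial_y$ onto $t^{ky}$, and sum the resulting geometric pieces to reproduce $\wp_k$. The boundary terms at $\pm\infty$ vanish as you note, one factor of $\tfrac{1}{k\log t}$ comes from the integration by parts and the other from antidifferentiating $t^{ky}$ over each interval, and the $\tfrac{1}{\alpha}$ is the height normalization.

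There is, however, a concrete error in the formula you write for $\partial_y h$. You set $\partial_y h(x,y) = \tfrac{1}{\alpha}\sum_i \1[\text{$i$th $\cpar$ interval}]$, but the height function is \emph{flat} precisely on the $\cpar$ intervals --- passing through a horizontal lozenge adds no vertical segments --- and rises at rate $\tfrac{1}{\alpha}$ in between. The correct identity is
\[
\partial_y h(x,y) = \frac{1}{\alpha}\Bigl(1 - \sum_{i\ge 1}\1[Y_i - 1,\,Y_i](y)\Bigr),
\]
which vanishes on $(-\infty, Y_{N+1})$ with $N=\ell(\pi^x)$. Carrying your version through the integration gives $-\tfrac{1-t^{-k}}{\alpha k^2(\log t)^2}\sum_i t^{kY_i}$: the sign of the main sum is flipped, and the term $t^{kY_{N+1}} = t^{kB(x)}t^{-kN}$ coming from the ``$1$'' is absent. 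That missing term is exactly what produces the additive $t^{-k\ell(\lambda)}$ in $\wp_k$. Your closing remark that the tail term ``should drop out of the region where $\pi^x$ has finitely many nonzero parts'' is the right intuition, but it is inconsistent with the $\partial_y h$ you wrote: the $t^{-k\ell(\lambda)}$ is not a boundary artifact but a bulk contribution from the complementary region where $h$ actually increases, and it only appears once the indicator sum is subtracted from $1$ rather than taken alone.
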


The main result of this section (stated in Theorem \ref{thm:obs}) is a formula for the joint expectation
\begin{align} \label{eq:joint_moments}
\E[ \wp_{k_1}(\pi^{x_1}) \cdots \wp_{k_m}(\pi^{x_m}) ],
\end{align} 
where $k_1,\ldots,k_m \in \Z_{\ge 0}$, $x_1,\ldots,x_m \in I$, and $(\pi^x)_{x\in I}$ are the diagonals of $\pi \sim \P^{B,r,\vec{s}}_{\alpha,\ft}$. This gives us an expression whose asymptotics are accessible, and the aforementioned proposition provides the link to interpret these asymptotics in terms of the height function.

To arrive at a formula for (\ref{eq:joint_moments}), we establish a more general expression for observables of \emph{formal} Macdonald processes in Section \ref{sec:macproc} (Theorem \ref{thm:multi} and Corollary \ref{cor:multi}). Here, we combine and generalize the approaches of \cite{BC}, \cite{BCGS} and \cite{GZ}. In Section \ref{ssec:rppobs}, we specialize these formal expressions to the case of $\P^{B,r,\vec{s}}_{\alpha,\ft}$ to prove Theorem \ref{thm:obs}; our formula for (\ref{eq:joint_moments}). We note that the formal expressions obtained in Section \ref{sec:macproc} are applicable to a much more general setting than ours.

Before proceeding, we prove Proposition \ref{prop:aht}.

\begin{proof}[Proof of Proposition \ref{prop:aht}]
As defined in Section \ref{sec:modelresults}, the height function at $(x,y)$ is the total length of vertical line segments beneath a point $(x,y)$. Let $Y_i$ denote the $y_+$ coordinate of the $i$th highest $\cpar$ lozenge along the $x$-diagonal section. Then $Y_i = \alpha \pi_i^x - i + 1 + B(x)$. We claim that the height function is given by the formula
\begin{align} \label{eq:aht}
h(x,y) = \frac{1}{\alpha}\left( y - B(x) + \int_{-1}^0 \! \left| \left\{i \ge 1:  Y_i + u  \ge y \right\}\right| \, du \right).
\end{align}
To see how to obtain (\ref{eq:aht}), note that the integral counts the total number of $\cpar$ lozenges lying above $(x,y)$, counting non-integer amounts of $\cpar$ if $(x,y)$ lies on the lozenge by the vertical distance from $(x,y)$ to the top of $\cpar$. For $y$ large, the height function is just $\frac{1}{\alpha}(y - B(x))$. As we decrease $y$, the integral term in (\ref{eq:aht}) enters since no vertical segments are added when passsing through a $\cpar$ lozenge. This proves the claim.

Let $N = \ell(\pi^x)$, and note that $\partial_y h = \frac{1}{\alpha}\left( 1 - \sum_{i=1}^\infty \1[Y_i - 1, Y_i] \right)$ which is $0$ on $(-\infty, Y_{N+1}) = (-\infty,-N+B(x))$. Then
\begin{align*}
\int_{-\infty}^\infty \! h(x,y) t^{ky} \, dy &= -\frac{1}{k \log t} \int_{-\infty}^\infty \! (\partial_y h)(x,y) t^{ky} \, dy \\
&= -\frac{1}{\alpha k \log t} \int_{Y_{N+1}}^\infty \left( 1 - \sum_{i=1}^N \1[ Y_i - 1, Y_i](y) \right) t^{ky} \, dy \\
&= \frac{1}{\alpha k^2 (\log t)^2} \left( t^{k Y_{N+1}} + (1 - t^{-k}) \sum_{i=1}^N t^{k Y_i} \right) \\
&= \frac{t^{kB(x)}}{\alpha k^2 (\log t)^2}\cdot \wp_k(\pi^x;t^\alpha,t).
\end{align*}
\end{proof}

\subsection{Formal Expectations} \label{sec:macproc} 
In this subsection, we obtain formal expressions for observables of formal Macdonald processes. In Sections \ref{sec:sym}, \ref{sec:gtop}, \ref{sec:pairres}, we provide some background on symmetric functions and notions to give rigorous meaning to the formal expressions we work with. In Section \ref{sec:formmp}, we define the formal Macdonald process and associated objects. In Section \ref{sec:negut}, we give a formal expression for single cut observables of formal Macdonald processes, originally obtained in \cite{GZ}. In Section \ref{sec:multi}, we extend these formulas to multicut observables of formal Macdonald processes.

\subsubsection{Symmetric Functions} \label{sec:sym}
The following background on symmetric functions and additional details can be found in \cite[Chapters I \& VI]{Mac}.

Let $\Y$ denote the set of partitions. Recall that we represent $\lambda \in \Y$ as the nondecreasing sequence $(\l_1,\l_2,\ldots)$ of its parts and denote by $\ell(\lambda)$ the number indices $i$ such that $\lambda_i \neq 0$. Given $\mu,\l \in \Y$, we write $\mu \prec \l$ if $\ell(\mu),\ell(\l) \le N$ and
\[ \l_1 \ge \mu_1 \ge \l_2 \ge \mu_2 \ge \cdots \ge \l_N \ge \mu_N. \]
Given a countably infinite set $X = (X_1,X_2,\ldots)$ of variables, let $\Lambda_X$ denote the algebra of symmetric functions on $X$ over $\C$. For sets $X^{(1)},\ldots,X^{(n)}$ of variables, let $\Lambda_{(X^{(1)},\ldots,X^{(n)})}$ denote the algebra of symmetric functions on the disjoint union of these sets.

Recall the power symmetric functions $p_0(X) = 1$ and
\[ p_k(X) = \sum_{i \ge 1} X_i^k, ~~~k \in \Z_{>0}. \]
These symmetric functions are generators of the algebra $\Lambda_X$. For each $\lambda \in \Y$, define
\[ p_\lambda(X) = \prod_{i = 1}^{\ell(\lambda)} p_{\lambda_i}(X). \]
Then $\{p_\lambda(X)\}_{\lambda \in \Y}$ forms a linear basis of $\Lambda_X$. Fixing $0 < q,t < 1$, we have the scalar product
\[ \langle p_\lambda, p_\mu \rangle = \delta_{\lambda\mu} \prod_{i=1}^{\ell(\lambda)} \frac{1 - q^{\lambda_i}}{1 - t^{\lambda_i}} \prod_{i=1}^\infty i^{m_i(\lambda)} m_i(\lambda)!\]
where $m_i(\lambda)$ is the multiplicity of $i$ in $\lambda$.

The \textit{normalized Macdonald symmetric functions} $\{P_\lambda(X;q,t)\}_{\lambda \in \Y}$ are the unique (homogeneous) symmetric functions satisfying
\[\langle P_\lambda(X;q,t), P_\mu(X;q,t) \rangle = 0\]
for $\lambda \neq \mu$ and with leading monomial $X_1^{\lambda_1} X_2^{\lambda_2} \cdots$ with respect to lexicographical ordering of the powers $(\lambda_1,\lambda_2,\ldots)$. This implies that $\{P_\lambda(X;q,t)\}_{\l \in \Y}$ forms a linear basis for $\L_X$. Let $Q_\lambda(X;q,t)$ represent the multiple of $P_\lambda(X;q,t)$ satisfying
\[ \langle P_\lambda(X;q,t), Q_\lambda(X;q,t) \rangle = 1. \]
For $\lambda,\mu \in \Y$, the \textit{skew Macdonald symmetric functions} $P_{\lambda/\mu}(X;q,t)$, $Q_{\lambda/\mu}(X;q,t)$ are uniquely defined by
\begin{align*}
P_\lambda(X,Y) &= \sum_{\mu \in \Y} P_{\lambda/\mu}(X) P_\mu(Y), \\
Q_\lambda(X,Y) &= \sum_{\mu \in \Y} Q_{\lambda/\mu}(X) Q_\mu(Y).
\end{align*}
For a single variable $x$, we have the following expressions for skew Macdonald symmetric functions. Let $f(u) = \frac{(tu;q)_\infty}{(qu;q)_\infty}$ with $(u;q)_\infty := \prod_{i \geq 0}(1 - uq^i)$,
\begin{align} \label{eq:PQmon}
P_{\lambda/\mu}(x) = \delta_{\mu \prec \lambda} \psi_{\lambda/\mu}(q,t) x^{|\lambda| - |\mu|}
~~~~~\mbox{and}~~~~~ Q_{\lambda/\mu}(x) = \delta_{\mu \prec \lambda}  \phi_{\lambda/\mu}(q,t) x^{|\lambda| - |\mu|}
\end{align}
where the coefficients are
\begin{align} \label{eq:psi}
\psi_{\lambda/\mu}(q,t) &= \prod_{1 \leq i \leq j \leq \ell(\lambda)} \frac{f(q^{\mu_i - \mu_j} t^{j-i}) f(q^{\lambda_i - \lambda_{j+1}} t^{j-i})}{f(q^{\lambda_i - \mu_j} t^{j-i}) f(q^{\mu_i - \lambda_{j+1}} t^{j-i})}, \\ \label{eq:phi}
\phi_{\lambda/\mu}(q,t) &= \prod_{1 \leq i \leq j \leq \ell(\mu)} \frac{f(q^{\lambda_i - \lambda_j} t^{j-i}) f(q^{\mu_i - \mu_{j+1}} t^{j-i})}{f(q^{\lambda_i - \mu_j} t^{j-i}) f(q^{\mu_i - \lambda_{j+1}} t^{j-i})}.
\end{align}
The skew Macdonald symmetric functions satisfy the branching rule:
\begin{align} \label{eq:branch}
P_{\lambda/\nu}(X,Y) = \sum_{\mu \in \Y} P_{\lambda/\mu}(X) P_{\mu/\nu}(Y), ~~Q_{\lambda/\nu}(X,Y) = \sum_{\mu \in \Y} Q_{\lambda/\mu}(X) Q_{\mu/\nu}(Y)
\end{align}
for any $\lambda,\nu \in \Y$.

We say that a unital algebra homomorphism $\rho: \Lambda_X \to \C$ is a \textit{specialization}. Given a specialization $\rho$ and $f \in \L_X$, we write $f(\rho)$ instead of $\rho(f)$ in view of the special case of function evaluation. The specializations we are interested in will have the following form. Take a sequence $\{a_i\}_{i=1}^\infty$ of nonnegative real numbers such that $a_1 \ge a_2 \ge \cdots$ and $\sum_{i=1}^\infty a_i < \infty$, define $\rho$ by
\[ p_n(\rho) = \sum_{i=1}^\infty a_i^n \]
for $n > 0$. This uniquely determines the specialization $\rho$ because the power symmetric functions generate the algebra of symmetric functions. For such specializations, we may write $\rho = (a_1,a_2,a_3,\ldots)$. If the only nonzero members of the sequence are $a_1,\ldots,a_N$, we may write $\rho = (a_1,\ldots,a_N)$.

A specialization $\rho$ is $(q,t)$-\textit{Macdonald-positive} if $P_\lambda(\rho;q,t) \geq 0$ for all partitions $\lambda$. The aforementioned specialization $\rho = (a_1,a_2,\ldots)$ with $a_i \geq 0$ for all $i \geq 1$ is Macdonald positive, as follows from the nonnegativity of (\ref{eq:PQmon}), (\ref{eq:psi}), (\ref{eq:phi}).

\subsubsection{Graded Topology} \label{sec:gtop}
Let $F$ be a field and $\cA$ be a ($\Z_{\geq 0}$-)graded algebra over $F$. Let $\cA_n$ denote the $n$th homogeneous component of $\cA$. Throughout this section, let us assume that all of our graded algebras have $\dim \cA_n < \infty$ for every $n \geq 0$.

\begin{definition}
Given $a \in \cA$, define $\ldeg(a)$ to be the minimum degree among the homogeneous components of $a$.  The \emph{graded topology} is the topology on $\cA$ where a sequence $a_n \in \cA$ converges to $a \in \cA$ if and only if
\[ \ldeg (a_n - a) \to \infty \]
as $n\to\infty$. Denote the completion of $\cA$ under this topology by $\widehat{\cA}$.
\end{definition}

The completion $\widehat{\cA}$ consists of formal sums $\sum_{n=1}^\infty a_n$ where $a_n \in \cA_n$. Given two graded algebras $\cA$ and $\cA'$ over $F$, we give the following grading to $\cA \otimes_F \cA'$. If $a \in \cA_m$ and $a' \in \cA_n'$, then $a \otimes a' \in (\cA \otimes_F \cA')_{m+n}$.

For a field $F \supset \C$ and a graded algebra $\cA$ over $\C$, denote by $\cA[F]$ the graded algebra $\cA \otimes_\C F$ over $F$; i.e. the extension of scalars from $\C$ to $F$. Given graded algebras $\cA^{(1)},\ldots,\cA^{(k)}$ over $\C$, we denote the completion of $(\cA^{(1)} \otimes \cdots \otimes \cA^{(n)})[F]$ under the graded topology by
\[ \cA^{(1)} \cotimes \cdots \cotimes \cA^{(k)}[F] \quad \mbox{or} \quad \widehat{\bigotimes}_{i=1}^k \cA^{(i)}[F]. \]

Let $\L_X[F]$ denote the $F$-algebra of symmetric functions in $X = \{x_1,x_2,\ldots\}$, a set of variables, with coefficients in $F$. Take the natural grading on $\Lambda_X[F]$ in which $(\Lambda_X[F])_n$ is spanned by monomials of total degree $n$. Given disjoint ordered sets of variables $Z_1,\ldots,Z_n$ with $Z_i = (z_{i,1},\ldots,z_{i,k_i})$, let $\cL(Z_1,\ldots,Z_n)$ denote the field of formal Laurent series in the variables
\[ \bigcup_{i=1}^n \left\{ \frac{z_{i,1}}{z_{i,2}}, \ldots, \frac{z_{i,k_i-1}}{z_{i,k_i}}, z_{i,k_i} \right\}. \]

The space $\widehat{\bigotimes}_{i=1}^k \Lambda_{X^i}[F]$ consists of formal sums
\[ \sum_{\lambda^1,\ldots,\lambda^N \in \Y} c_{\lambda^1,\ldots,\lambda^N} P_{\lambda^1}(X^1) \cdots P_{\lambda^N}(X^N) \]
where $c_{\lambda^1,\ldots,\lambda^N} \in F$.

For fields $\C \subset F_1 \subset F_2$, $1 \leq k \leq N$, there is the natural inclusion map
\begin{equation} \label{eq:inclu}
\widehat{\bigotimes}_{i=1}^k \Lambda_{X^i}[F_1] \hookrightarrow \widehat{\bigotimes}_{i=1}^k \Lambda_{X^i}[F_2].
\end{equation}
We also have consistency
\begin{equation} \label{eq:consis}
\L_{X^1}[F] \otimes_F \cdots \otimes_F \L_{X^N}[F] \cong \L_{X^1} \otimes \cdots \otimes \L_{X^N} [F].
\end{equation}

\begin{definition}
The \emph{projection map} $\pi_X^n: \widehat{\Lambda_X} \to \Lambda_{\{x_1,\ldots,x_n\}}$ is defined as the continuous map sending $x_{n+1},x_{n+2},\ldots$ to $0$ and $x_i$ to $x_i$ for $i = 1,\ldots,n$.
\end{definition}

For a field $F\supset \C$ and a graded algebra $\cA$ over $\C$, we can extend the domain of the projection
\[\pi_X^n: \cA \cotimes \Lambda_X[F] \to \cA \cotimes \Lambda_{\{x_1,\ldots,x_n\}}[F]\]
by identifying with $1_{\cA} \otimes \pi_X^n$ then extending by continuity under the graded topology.
  
\begin{definition}
Let $\cA$ and $\cA'$ be graded algebras over $\C$ and $\{a_{n,j}\}_j$ be a basis for $\cA_n$ for each $n \ge 0$. We say that an element $f \in \cA \cotimes \cA'[F]$ is $\cA$-\emph{projective} if
\[ f = \sum_{n,j} a_{n,j} \otimes \alpha_{n,j}', \quad \alpha_{n,j}' \in \cA_n' \]
such that $\lim_{n\to\infty} \min_j \ldeg(\alpha_{n,j}') = \infty$. This property is independent of the choice of basis.
\end{definition}

Elements which are $\cA$-projective are closed under addition and multiplication and form a subalgebra of $\cA \cotimes \cA'[F]$. If $\cA = \Lambda_X$, denote the algebra of $\Lambda_X$-projective elements by $\mathcal{P}_X(\Lambda_X \cotimes \cA'[F])$.

\subsubsection{Macdonald Pairing and Residue} \label{sec:pairres}
Recall the Macdonald scalar product determined by
\[ \langle P_\lambda, Q_\mu \rangle = \delta_{\lambda\mu}. \]
\begin{definition}
Let $\cA, \cA'$ be graded algebras over $\C$. Fix a field $F \supset \C$, and let the \textit{Macdonald pairing} be the bilinear map $\langle \cdot, \cdot \rangle_Y: (\cA \otimes \Lambda_X)[F] \times (\Lambda_X \otimes \cA')[F] \to \cA \otimes \cA'[F]$ defined by
\[ \langle a \otimes P_\lambda, Q_\mu \otimes b \rangle_X := \langle P_\lambda, Q_\mu \rangle a \otimes b = \delta_{\lambda \mu} a \otimes b. \]
\end{definition}
This pairing does not extend by continuity to the completions of the domain. However, the pairing does extend continuously to
\[ \mathcal{P}_X(\cA \cotimes \Lambda_X[F]) \times (\Lambda_X \cotimes \cA'[F]). \]

\begin{definition}
Given an ordered set $Z = (z_1,\ldots,z_k)$ of variables, denote by $\oint \, dZ: \cL(Z) \to \C$ the \textit{residue operator} which takes an element of $\cL(Z)$ and returns the coefficient of $(z_1 \cdots z_k)^{-1}$. For $\oint \, dZ$ applied to $f \in \cL(Z)$ we write $\oint f \, dZ$ or $\oint \, dZ \cdot f$.
\end{definition}

As with the projection map, the residue operator can act on larger domains. For example, we can extend
\begin{align}
\oint dZ: \widehat{\cA}[\cL(Z,W^1,\ldots,W^k)] \to \widehat{\cA}[\cL(W^1,\ldots,W^k)]
\end{align}
by the action $1_{\cA} \otimes \oint dZ$ then extension by continuity. In this case, $\oint dZ$ preserves the degree of homogeneous elements. In particular, if we replace $\widehat{\cA}$ with $\cA \cotimes \cA'$, we have that $\oint dZ$ preserves $\cA$-projectivity.

The residue operator commutes with continuous maps under the graded topology.

\begin{lemma} \label{lem:rescom}
Let $\cA$, $\cA'$ be graded algebras over $\C$, and let $\varphi: \widehat{\cA} \to \widehat{\cA}'$ be a continuous map which extends naturally to a continuous map $\widehat{\cA}[\cL(Z,W)] \to \widehat{\cA}'[\cL(Z,W)]$. Then
\[ \varphi \circ \left( \oint \, dZ \right) = \left( \oint \, dZ \right) \circ \varphi. \]
\end{lemma}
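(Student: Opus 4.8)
The statement to prove is Lemma~\ref{lem:rescom}: the residue operator $\oint\,dZ$ commutes with any continuous map $\varphi$ (under the graded topology) that extends naturally to the Laurent-series coefficients. Here is my plan.

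\medskip

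\noindent\textbf{Approach.} The proof is essentially a density/continuity argument, reducing the general identity to the obvious identity on ``polynomial'' (finitely-supported) elements. The key observations are: (1) on elements of $\cA\otimes\C[z^{\pm 1}, (w_j/w_{j+1})^{\pm 1}, w_{\text{last}}^{\pm 1}, \dots]$ with only finitely many monomials in $Z$, the operator $\oint dZ$ is simply ``extract the coefficient of $z_1^{-1}\cdots z_k^{-1}$,'' which is an $F$-linear (indeed continuous) projection, and $\varphi$ by hypothesis acts coefficientwise on the $Z$-monomials; so the two operations visibly commute there — both sides just apply $\varphi$ to the single relevant coefficient in $\widehat{\cA}$. (2) Both $\varphi\circ\oint dZ$ and $\oint dZ\circ\varphi$ are continuous maps $\widehat{\cA}[\cL(Z,W)]\to\widehat{\cA}'[\cL(W)]$: $\oint dZ$ preserves homogeneous degree (as noted in the excerpt just before the lemma) hence is continuous under the graded topology, and $\varphi$ is continuous by assumption and by its natural extension to Laurent coefficients. (3) The subspace on which the commutation is manifest is dense.

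\medskip

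\noindent\textbf{Steps, in order.} First I would fix notation: write a general element $f\in\widehat{\cA}[\cL(Z,W)]$ and recall that $\cL(Z,W)$ is the field of formal Laurent series in $z_1/z_2,\dots,z_{k-1}/z_k,z_k$ together with the analogous $W$-variables, so $f$ decomposes as a (graded-topology-convergent) sum of terms $a_{\vec m,\vec n}\, z^{\vec m} w^{\vec n}$ with $a_{\vec m,\vec n}\in\widehat\cA$ — but crucially, for fixed total degree $d$ in $\widehat{\cA}$, only finitely many $z$-monomials and finitely many $w$-monomials appear (Laurent series have locally finite support in each homogeneous slice). Second, I would verify the identity on a homogeneous component: fix $d$, let $f_d$ be the degree-$d$ part; then $\oint dZ$ picks out the coefficient of $z_1^{-1}\cdots z_k^{-1}$, a finite $F$-linear combination of the $a_{\vec m,\vec n}$, and since $\varphi$ extends to $\cL(Z,W)$ by acting only on the $\widehat\cA$-coefficients (leaving the formal variables fixed), $\varphi$ commutes with the extraction of any fixed Laurent coefficient: $\varphi(\oint f_d\,dZ)=\oint \varphi(f_d)\,dZ$. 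Third, I would sum over $d$: both sides are continuous, and on each homogeneous slice they agree, so by continuity (density of finitely-supported elements, or directly: a map agreeing on every homogeneous component of every element is the same map) the identity holds on all of $\widehat{\cA}[\cL(Z,W)]$. I would state explicitly that ``extends naturally'' means $\varphi$ acts as $1\otimes\varphi$-style coefficientwise on Laurent monomials, which is the only property used.

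\medskip

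\noindent\textbf{Main obstacle.} The content is almost entirely bookkeeping about the topology, and the one point that needs genuine care is confirming that $\oint dZ$ is continuous under the graded topology — i.e. that it cannot destroy convergence. This follows because $\oint dZ$ preserves homogeneous degree (each monomial $z^{\vec m}w^{\vec n}$ contributes degree $0$, so extracting a coefficient does not lower $\ldeg$), a fact asserted in the paragraph preceding the lemma; so if $\ldeg(f_n-f)\to\infty$ then $\ldeg(\oint(f_n-f)\,dZ)\to\infty$. The only mild subtlety is making sure that within a fixed homogeneous degree the relevant sums over Laurent monomials are finite, so that ``apply $\varphi$'' and ``extract one coefficient'' literally commute with no convergence issue at that level; this is built into the definition of $\cL(Z,W)$ as formal Laurent series (finite in each graded piece). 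Once these two points are pinned down, the proof is a two-line reduction.
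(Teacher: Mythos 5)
Your plan---verify on a dense subspace, then extend by continuity using that $\oint dZ$ preserves homogeneous degree---is the right one, but two of the supporting claims are false and would derail a careful writeup. The subspace of elements with Laurent-\emph{polynomial} coefficients in $Z$ is \emph{not} dense in $\widehat{\cA}[\cL(Z,W)]$: the graded topology measures only the $\cA$-degree and sees nothing inside the $\cL(Z,W)$-factor, so, for instance, $1\otimes\sum_{n\ge0}z_k^n\in\cA_0\otimes_\C\cL(Z,W)$ cannot be approximated by Laurent polynomials (the difference always has $\ldeg=0$). Relatedly, the claim that formal Laurent series are ``finite in each graded piece'' is simply false---that same example lies entirely in grade $0$ yet has infinitely many monomials.

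Neither claim is needed. The correct dense subspace is $\cA\otimes_\C\cL(Z,W)$ itself, the un-completed tensor product with arbitrary formal Laurent series in the second factor; density is immediate by truncating the expansion $\sum_n f_n$ of any element of $\widehat{\cA}[\cL(Z,W)]$ over homogeneous pieces. The finiteness you actually want to invoke is $\dim\cA_n<\infty$, a standing assumption in the paper: each $f_n\in\cA_n\otimes_\C\cL(Z,W)$ is a \emph{finite} sum of simple tensors $a\otimes l$ with $a$ running over a basis of $\cA_n$, and on a simple tensor the commutation is tautological since $\varphi$ touches only the $\widehat{\cA}$-leg and $\oint dZ$ only the $\cL$-leg:
\[
\varphi\left(\oint(a\otimes l)\,dZ\right) = \varphi(a)\otimes\left(\oint l\,dZ\right) = \oint \varphi(a\otimes l)\,dZ,
\]
the last step using that $\oint dZ$ acts degree-by-degree on $\widehat{\cA}'[\cL(Z,W)]$ and so commutes with the expansion $\varphi(a)=\sum_m\varphi(a)_m$ into homogeneous components in $\widehat{\cA}'$. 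With these corrections, your continuity-and-density argument does close the proof.
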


\begin{lemma}
Let $\cA$, $\cA'$ be graded algebras over $\C$, let $f \in \cA\cotimes \L_X[\cL(Z)]$ and $g \in \L_X \cotimes \cA'[\cL(W)]$. If $f$ is $\Lambda_X$-projective, then
\begin{align} \label{eq:rpcom1}
\left \langle \oint f \,dZ, g \right \rangle_X = \oint \langle f, g \rangle_X \, dZ, \\ \label{eq:rpcom2}
\left \langle f, \oint g \,dW \right \rangle_X = \oint \langle f, g \rangle_X \, dW
\end{align}
Since the residue operator preserves projectivity, the left hand sides of the equalities above are valid expressions.
\end{lemma}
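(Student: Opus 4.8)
The plan is to expand $f$ and $g$ in the dual bases $\{P_\lambda(X)\}$, $\{Q_\mu(X)\}$ of $\Lambda_X$ and reduce both identities to the defining bilinearity $\langle a\otimes P_\lambda,Q_\mu\otimes b\rangle_X=\delta_{\lambda\mu}\,a\otimes b$ of the Macdonald pairing, letting all the analytic content be carried by the continuity of the residue operator (Lemma \ref{lem:rescom}), the continuity of the pairing on $\Lambda_X$-projective elements, and the already-noted fact that $\oint\,dZ$ preserves $\Lambda_X$-projectivity. First I would confirm that every expression appearing is legitimate: $\oint f\,dZ$ is again $\Lambda_X$-projective, so $\langle\oint f\,dZ,g\rangle_X$ lies in the domain where the pairing is continuous; $\langle f,g\rangle_X$ is defined because $f$ is $\Lambda_X$-projective; and $\oint\,dZ$ carries $\cA\cotimes\cA'[\cL(Z,W)]$ into $\cA\cotimes\cA'[\cL(W)]$ while preserving homogeneous degree, so $\oint\langle f,g\rangle_X\,dZ$ makes sense.

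Next I would write $f=\sum_\lambda f_\lambda\otimes P_\lambda(X)$ with $f_\lambda\in\widehat{\cA}[\cL(Z)]$ and $g=\sum_\mu Q_\mu(X)\otimes g_\mu$ with $g_\mu\in\widehat{\cA'}[\cL(W)]$, the $\Lambda_X$-projectivity of $f$ being exactly the condition $\min_{|\lambda|=n}\ldeg(f_\lambda)\to\infty$ as $n\to\infty$. Since $\oint\,dZ$ acts only on the $\cL(Z)$-scalars (it is $1\otimes\oint\,dZ$ under the natural identifications) and is continuous, it passes termwise through the $P$-expansion, $\oint f\,dZ=\sum_\lambda\bigl(\oint f_\lambda\,dZ\bigr)\otimes P_\lambda(X)$, which also exhibits the projectivity of $\oint f\,dZ$ directly because $\ldeg\bigl(\oint f_\lambda\,dZ\bigr)\ge\ldeg(f_\lambda)$. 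Using the definition of the pairing together with its continuity on projective elements, both quantities collapse to sums over a single partition index:
\[ \langle\oint f\,dZ,g\rangle_X=\sum_\lambda\Bigl(\oint f_\lambda\,dZ\Bigr)\otimes g_\lambda,\qquad \langle f,g\rangle_X=\sum_\lambda f_\lambda\otimes g_\lambda, \]
both convergent since $\ldeg(f_\lambda\otimes g_\lambda)\ge\ldeg(f_\lambda)\to\infty$. Applying $\oint\,dZ$ to the second sum and commuting it past this graded-convergent series by Lemma \ref{lem:rescom} produces exactly the first sum, which is (\ref{eq:rpcom1}). For (\ref{eq:rpcom2}) I would run the mirror argument: expand $g=\sum_\mu Q_\mu(X)\otimes g_\mu$, note $\oint g\,dW=\sum_\mu Q_\mu(X)\otimes\bigl(\oint g_\mu\,dW\bigr)$, pair against the still-projective $f$, and compare with $\oint\langle f,g\rangle_X\,dW$; only the projectivity of $f$ is used, not of $g$.

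The step I expect to require the most care is not conceptual but the topological bookkeeping: one must stay within the projective subspace at every stage (the pairing does \emph{not} extend continuously to the full completion), check that extracting the coefficient of a fixed $P_\lambda$ is a continuous operation in the graded topology so that $\oint\,dZ$ genuinely commutes with the $P$-expansion, and verify that the single-index sums above converge in the graded topology so that Lemma \ref{lem:rescom} applies to pull $\oint\,dZ$ inside them. An equivalent and perhaps cleaner packaging bypasses the explicit expansions: both $f\mapsto\langle\oint f\,dZ,g\rangle_X$ and $f\mapsto\oint\langle f,g\rangle_X\,dZ$ are continuous maps on $\mathcal{P}_X(\cA\cotimes\Lambda_X[\cL(Z)])$ (continuity of $\oint\,dZ$ plus continuity of the pairing on projective elements, together with $\oint\,dZ$ preserving projectivity), and they agree on the dense subspace of finite $\C$-linear combinations of pure tensors $\phi(Z)\cdot\bigl(a\otimes P_\lambda(X)\bigr)$, where both reduce to $\bigl(\oint\phi\,dZ\bigr)\langle a\otimes P_\lambda(X),g\rangle_X$ by pulling the scalar $\oint\phi\,dZ$ through the bilinearity of the pairing; hence they agree throughout, and the same template gives (\ref{eq:rpcom2}).
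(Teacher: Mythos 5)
Your proposal is correct, and the ``cleaner packaging'' you sketch at the end is precisely the paper's proof: one observes that $\langle \cdot, g\rangle_X$ is continuous on $\cP_X(\cA\cotimes\Lambda_X[\cL(Z)])$ (and $\langle f,\cdot\rangle_X$ is continuous on $\Lambda_X\cotimes\cA'[\cL(W)]$ for projective $f$), and then applies Lemma \ref{lem:rescom} to commute $\oint\,dZ$ (resp.\ $\oint\,dW$) past these continuous maps. Your explicit $P_\lambda/Q_\mu$ expansion is a more hands-on verification of the same continuity and projectivity bookkeeping, but the underlying mechanism is identical.
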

\begin{proof}
For arbitrary $g \in \L_X \cotimes \cA'[\cL(W)]$, the map $\langle \cdot, g \rangle_X$ is continuous on $\cP_X(\cA \otimes \L_X[\cL(Z)])$. By Lemma \ref{lem:rescom}, (\ref{eq:rpcom1}) follows. For $f \in \cP_X(\cA \cotimes \L_X(\cL(Z)])$, the map $\langle f, \cdot \rangle$ is continuous on $\L_X \cotimes \cA'[\cL(W)]$. By Lemma \ref{lem:rescom}, (\ref{eq:rpcom2}) follows.
\end{proof}

\subsubsection{Formal Macdonald Processes} \label{sec:formmp}
Let $X,Y$ be countable sets of variables. Fix $0 < q,t<1$ throughout this section. Define the following element of $\Lambda_X \cotimes \Lambda_Y$
\[\Pi(X,Y) := \prod_{x\in X,y \in Y} \frac{(txy;q)_\infty}{(xy;q)_\infty} \]
From \cite[Chapter VI, Sections 2 \& 4]{Mac}, we have the following equalities
\[\Pi(X,Y) = \sum_{\lambda\in\Y} P_\lambda(X) Q_\lambda(Y) = \exp\left( \sum_{n=1}^\infty \frac{1 - t^n}{1 - q^n} \frac{1}{n} p_n(X) p_n(Y) \right).\]
Define the following element of $\Lambda_X \cotimes \Lambda_Y$ obtained by taking $q = 0$ above
\begin{align} \label{eq:Hnorm}
H(X,Y;t) = \prod_{x \in X, y \in Y} \frac{1 - txy}{1 - xy} = \exp\left( \sum_{n=1}^\infty \frac{1 - t^n}{n} p_n(X) p_n(Y) \right).
\end{align}
Given countable sets of variables $X^1,X^2$, the following splitting equality holds
\begin{equation} \label{eq:split}
\Pi((X^1,X^2),Y) = \Pi(X^1,Y) \Pi(X^2,Y)
\end{equation}
and likewise for $H(\cdot,\cdot;t)$. There is also an inversion equality
\begin{equation} \label{eq:invH}
H(X,Y;t)^{-1} = H(tX,Y;t^{-1})
\end{equation}
where by $tX$ we mean the variable set $\{tx\}_{x\in X}$.
 
\begin{definition}
Fix a positive integer $N$ and let $\vec{U} = (U^1,\ldots,U^N)$ and $\vec{V} = (V^1,\ldots,V^N)$ be ordered $N$-tuples of countable sets of variables. A \textit{formal Macdonald process} is a formal probability measure on $\Y^N$ valued in $\widehat{\bigotimes}_{i=1}^N (\Lambda_{U^i} \otimes \Lambda_{V^i})$ with the assignment
\begin{multline*}
\MP_{\vec{U},\vec{V}}^{\textbf{f}}(\vec{\lambda}) = \mathscr{Z}^{-1}P_{\lambda^1}(U^1) \left( \sum_{\mu \in \Y} Q_{\lambda^1/\mu}(V^1) P_{\lambda^2/\mu}(U^2) \right) \cdots \left( \sum_{\mu \in \Y} Q_{\lambda^{N-1}/\mu}(V^{N-1}) P_{\lambda^N/\mu}(U^N) \right) Q_{\lambda^N}(V^N)
\end{multline*}
where $\vec{\l} = (\l^1,\ldots,\l^N)$ and $\mathscr{Z} \in \widehat{\bigotimes}_{i=1}^N (\Lambda_{U^i} \otimes \Lambda_{V^i})$ is the normalization constant for which the sum over $\vec{\l} \in \Y^N$ gives unity.
\end{definition}

From \cite[Section 3]{BCGS},
\begin{equation} \label{eq:Znorm}
\mathscr{Z} = \prod_{1 \leq i \leq j \leq N} \Pi(U^i,V^j).
\end{equation}
  
In terms of the pairing, the formal Macdonald process can be expressed as
\begin{align} \label{eq:mppair}
\MP_{\vec{U},\vec{V}}^{\textbf{f}}(\vec{\l}) = \mathscr{Z}^{-1} P_{\l^1}(U^1) \left(\prod_{i = 1}^{N-1} \langle Q_{\l^i}(V^i, Y^i), P_{\l^{i+1}}(Y^i,U^{i+1}) \rangle_{Y^i} \right) Q_{\l^N}(V^N).
\end{align}
This is an immediate consequence of the branching rule (\ref{eq:branch}).
  
The $\Pi$'s introduced earlier also relate well with the pairing
\[\langle \Pi(X^1,Y), \Pi(Y,X^2) \rangle = \Pi(X^1,X^2). \]
Since the power symmetric functions from an algebraic basis for $\Lambda_X[\cL(Z)]$, this relation can be further extended as follows. Take graded algebras $\cA$ and $\cA'$ over $\cL(Z)$ with $Z = (z_1,\ldots,z_k)$, and sequences $\{a_n\}, \{a_n'\}$ in $\cA$ and $\cA'$ respectively such that $\ldeg(a_n), \ldeg(a_n') \to \infty$ as $n\to\infty$. Then
\begin{align} \label{eq:exppair}
\left\langle \exp\left( \sum_{n=1}^\infty \frac{a_n}{n} p_n(Y) \right), \exp\left( \sum_{n=1}^\infty \frac{a_n'}{n} p_n(Y) \right) \right\rangle = \exp\left( \sum_{n=1}^\infty \frac{1-q^n}{1-t^n} \frac{a_n a_n'}{n} \right).
\end{align}
See \cite[Proposition 2.3]{BCGS} for further details.

\begin{lemma}
Let $X^1,X^2,X^3,X^4,Y$ be countable sets of variables. Then
\begin{align} \label{eq:HPi}
\begin{multlined}
\langle H(X^1,Y;t^{-1}) \Pi(X^2,Y), H(X^3,Y;t^{-1}) \Pi(X^4,Y) \rangle_Y \\
= H(X^1,X^4; t^{-1}) H(X^2,X^3; t^{-1}) \Pi(X^2,X^4) \prod \frac{(1 - x_1x_3)(1 - \frac{q}{t} x_1x_3)}{(1 - \frac{1}{t}x_1x_3)(1 - qx_1x_3)}
\end{multlined}
\end{align}
where the product is over $x_i \in X^i$ for $i = 1,2,3,4$. The expression $(1 - qx_1x_3)^{-1}$ is interpreted as the formal power series $\sum_{n=0}^\infty (qx_1x_3)^n$ and similarly for $(1 - t^{-1}x_1x_3)^{-1}$.
\end{lemma}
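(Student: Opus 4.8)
The plan is to put both arguments of $\langle\cdot,\cdot\rangle_Y$ into exponential form in the power sums $p_n(Y)$ and then apply (\ref{eq:exppair}). Using (\ref{eq:Hnorm}) for $H$ and the exponential formula for $\Pi$, one has
\[
H(X^1,Y;t^{-1})\,\Pi(X^2,Y)=\exp\!\left(\sum_{n\ge1}\frac{1}{n}\,a_n\,p_n(Y)\right),\qquad a_n:=(1-t^{-n})\,p_n(X^1)+\frac{1-t^n}{1-q^n}\,p_n(X^2),
\]
and likewise the second argument equals $\exp\big(\sum_{n\ge1}\tfrac1n a_n'\,p_n(Y)\big)$ with $a_n'$ obtained from $a_n$ by replacing $X^1,X^2$ with $X^3,X^4$. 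Before using the pairing one checks that these elements are $\Lambda_Y$-projective: the coefficient of $p_n(Y)$ has $X$-degree $n$, so in the expanded exponential the component of $Y$-degree $m$ carries a coefficient of minimal degree $m$, whence $\ldeg\to\infty$. This is exactly the condition under which $\langle\cdot,\cdot\rangle_Y$ extends to the completion and under which (\ref{eq:exppair}) holds, per the framework of Sections \ref{sec:gtop} and \ref{sec:pairres}.

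Applying (\ref{eq:exppair}), the pairing equals $\exp\big(\sum_{n\ge1}\tfrac1n\,\tfrac{1-q^n}{1-t^n}\,a_na_n'\big)$. Expand $a_na_n'$ into the four products $p_n(X^i)p_n(X^j)$ with $(i,j)\in\{(1,3),(1,4),(2,3),(2,4)\}$, and split the exponential of the sum into the product of the corresponding four exponentials (valid since the algebra is commutative and each exponential series converges in the graded topology, its exponent having minimal degree $\to\infty$). After multiplying by $\tfrac{1-q^n}{1-t^n}$, the $(1,4)$ and $(2,3)$ terms carry coefficient $(1-t^{-n})$ and resum via (\ref{eq:Hnorm}) to $H(X^1,X^4;t^{-1})$ and $H(X^2,X^3;t^{-1})$, while the $(2,4)$ term carries coefficient $\tfrac{1-t^n}{1-q^n}$ and resums to $\Pi(X^2,X^4)$. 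These are the three named factors on the right-hand side.

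It remains to resum the $(1,3)$ factor, $\exp\big(\sum_{n\ge1}\tfrac1n c_n\,p_n(X^1)p_n(X^3)\big)$ with $c_n=\tfrac{1-q^n}{1-t^n}(1-t^{-n})^2$. Writing $p_n(X^1)p_n(X^3)=\sum_{x_1\in X^1,\,x_3\in X^3}(x_1x_3)^n$ makes this a product over pairs $(x_1,x_3)$ of $\exp\big(\sum_{n\ge1}\tfrac1n c_n (x_1x_3)^n\big)$. Simplifying $c_n=\tfrac{1-q^n}{1-t^n}(1-t^{-n})^2$ to a $\Z$-linear combination of four powers $\kappa^n$ and using $\sum_{n\ge1}\tfrac1n(\kappa u)^n=-\log(1-\kappa u)$ identifies each per-pair factor with the rational function of $x_1x_3$ in the statement, read as the indicated formal power series (so $(1-qx_1x_3)^{-1}=\sum_{m\ge0}(qx_1x_3)^m$, etc.). Multiplying the four pieces yields the claimed identity.

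The computation is mechanical; the only points needing care are (i) the $\Lambda_Y$-projectivity check that legitimizes (\ref{eq:exppair}) inside the completed algebras, and (ii) the bookkeeping in reducing $(1-t^{-n})^2\,\tfrac{1-q^n}{1-t^n}$ to four geometric series with the correct exponents and signs. I anticipate no real obstacle.
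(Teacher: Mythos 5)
Your strategy is exactly the one the paper uses: the paper's proof is literally the single line ``Use (\ref{eq:exppair}) with $a_n = (1-t^{-n})p_n(X^1)+\tfrac{1-t^n}{1-q^n}p_n(X^2)$,'' and you have expanded that hint correctly for the $(1,4)$, $(2,3)$, $(2,4)$ factors. The projectivity remark is fine.

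However, the step you defer as ``mechanical bookkeeping'' --- simplifying the $(1,3)$ coefficient $c_n=\tfrac{1-q^n}{1-t^n}(1-t^{-n})^2$ and resumming --- does \emph{not} reproduce the rational function stated in the lemma, and you should have carried it out. Doing so gives
\begin{align*}
c_n \;=\; \frac{1-q^n}{1-t^n}(1-t^{-n})^2 \;=\; \frac{(1-q^n)(1-t^n)}{t^{2n}} \;=\; t^{-2n} - t^{-n} - (qt^{-2})^n + (qt^{-1})^n,
\end{align*}
which resums to
\begin{align*}
\prod_{x_1\in X^1,\,x_3\in X^3} \frac{\bigl(1 - t^{-1}x_1x_3\bigr)\bigl(1 - qt^{-2}x_1x_3\bigr)}{\bigl(1 - t^{-2}x_1x_3\bigr)\bigl(1 - qt^{-1}x_1x_3\bigr)},
\end{align*}
i.e.\ the displayed factor with $x_1x_3$ replaced everywhere by $t^{-1}x_1x_3$. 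In contrast, the factor as printed in the lemma corresponds to $\tilde c_n=(1-q^n)(1-t^n)/t^{n}$, and $c_n = t^{-n}\tilde c_n$; a quick check of the linear-in-$x_1x_3$ term confirms the mismatch. This is not a defect in your approach --- the factor I wrote is the one that is actually used downstream: in Step 6 of the proof of Theorem~\ref{thm:multi} the lemma is applied with $X^1 = tq^{-1}Z_i$, $X^3 = qZ_{[i+1,N]}^{-1}$, so $x_1x_3 = t\,z_{i,a}/z_{j,b}$, and substituting this into the corrected factor recovers exactly $C(Z_i,Z_{[i+1,N]})$ from (\ref{eq:czw}), whereas the printed factor does not. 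So the lemma's statement has a typo, and you would have found it had you done the calculation rather than asserted it works. A proof proposal should not declare a computation ``poses no real obstacle'' without performing it when the result is supposed to match a specific formula.
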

\begin{proof}
Use (\ref{eq:exppair}) with
\begin{align*}
a_n &= (1 - t^{-n}) p_n(X^1) + \frac{1 - t^n}{1 - q^n} p_n(X^2) \\
a_n' &= (1 - t^{-n}) p_n(X^3) + \frac{1 - t^n}{1 - q^n} p_n(X^4).
\end{align*}
\end{proof}

\subsubsection{Negut's Operator} \label{sec:negut}
Define the continuous linear operator $D_{-k}^X: \widehat{\Lambda_X} \to \widehat{\Lambda_X}$ by
\[ D_{-k}^X P_\lambda(X;q,t) = \wp_k(\lambda;q,t) P_\lambda(X;q,t).\]
This operator was studied in \cite{N} and an integral form for this operator was obtained in \cite{GZ}. The action of this operator can be given in terms of the residue operator. We first introduce notation to abbreviate the expression. Let $Z = (z_1,\ldots,z_k)$ be an ordered set of variables. Define
\begin{align} \label{eq:DZ}
DZ = \frac{(-1)^{|Z|-1}}{(2\pi\i)^{|Z|}} \frac{\sum_{i=1}^k \frac{z_k}{z_i} \frac{t^{k-i}}{q^{k-i}}}{(1 - \frac{tz_2}{qz_1}) \cdots (1 - \frac{tz_k}{qz_{k-1}})} \prod_{i < j} \frac{(1 - \frac{z_i}{z_j})(1 - \frac{qz_i}{tz_j})}{(1 - \frac{z_i}{tz_j})(1 - \frac{qz_i}{z_j})} \prod_{i=1}^k \frac{dz_k}{z_k}.
\end{align}
For the instances of $(1 - v)^{-1}$ in the  expression, we mean the power series expansion into $\sum_{n \geq 0} v^n$. We adopt the shorthand notation $Z^{-1} = (z_1^{-1},\ldots,z_k^{-1})$.

\begin{proposition}
Let $X$ and $Y$ be countable sets of variables. Then
\begin{align} \label{eq:negut}
D_{-k}^X \Pi(X,Y) = \Pi(X,Y) \oint DZ \cdot H(qZ^{-1},X;t^{-1}) H(Y,q^{-1}Z;t)^{-1}.
\end{align}
\end{proposition}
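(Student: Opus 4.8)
The plan is to verify the identity coefficient-by-coefficient in the $Y$-variables, exploiting that $\Pi(X,Y)=\sum_\lambda P_\lambda(X)Q_\lambda(Y)$ is the reproducing kernel for the Macdonald pairing. Since $D_{-k}^X$ is continuous and diagonal on the $P$-basis, $D_{-k}^X\Pi(X,Y)=\sum_\lambda\wp_k(\lambda;q,t)\,P_\lambda(X)Q_\lambda(Y)$, so pairing the left-hand side against $P_\lambda(Y)$ in the $Y$-variables returns exactly $\wp_k(\lambda;q,t)\,P_\lambda(X)$; it therefore suffices to establish the same for the right-hand side, for every partition $\lambda$. After verifying the relevant projectivity conditions, Lemma \ref{lem:rescom} and (\ref{eq:rpcom1}) let me pull $\oint DZ$ through the $Y$-pairing, and the $Y$-free factor $H(qZ^{-1},X;t^{-1})$ out of it, reducing the problem to evaluating $\langle\Pi(X,Y)\,H(Y,q^{-1}Z;t)^{-1},P_\lambda(Y)\rangle_Y$ and then integrating the result against $DZ\cdot H(qZ^{-1},X;t^{-1})$.

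For the $Y$-pairing I would use the elementary fact that, for $g\in\Lambda_Y$, the quantity $\langle g(Y),\exp(\sum_{n\ge1}\tfrac{c_n}{n}p_n(Y))\rangle_Y$ equals $g$ evaluated at the specialization $\rho$ with $p_n(\rho)=\tfrac{1-q^n}{1-t^n}c_n$, which follows at once from expanding the exponential and the formula for $\langle p_\mu,p_\nu\rangle$. Applying this to $H(Y,q^{-1}Z;t)^{-1}=\exp(-\sum_n\tfrac{1-t^n}{n}q^{-n}p_n(Y)p_n(Z))$, combining with $\Pi(X,Y)=\sum_\nu P_\nu(X)Q_\nu(Y)$, the adjointness between multiplication by $Q_\nu$ and skewing by $\nu$, and the defining relation $P_\lambda(X,Z')=\sum_\nu P_{\lambda/\nu}(X)P_\nu(Z')$, yields $\langle\Pi(X,Y)\,H(Y,q^{-1}Z;t)^{-1},P_\lambda(Y)\rangle_Y=P_\lambda(X,\rho_Z)$, where $\rho_Z$ is the specialization with $p_n(\rho_Z)=(1-q^{-n})p_n(Z)$. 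The proposition thereby reduces to the purely residue-theoretic identity
\[ \oint DZ\cdot H(qZ^{-1},X;t^{-1})\,P_\lambda(X,\rho_Z)=\wp_k(\lambda;q,t)\,P_\lambda(X), \]
an equality of elements of $\widehat{\Lambda_X}$, with $|Z|=k$.

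The remaining step will be the crux. Expanding $P_\lambda(X,\rho_Z)=\sum_\nu P_{\lambda/\nu}(\rho_Z)P_\nu(X)$, each $P_{\lambda/\nu}(\rho_Z)$ is a polynomial in $Z$ of degree $|\lambda|-|\nu|$, while $H(qZ^{-1},X;t^{-1})$ is a power series in the $z_i^{-1}$ expanded as prescribed; the residue $\oint DZ$ extracts the coefficient of $(z_1\cdots z_k)^{-1}$. The denominator factors $(1-\tfrac{tz_{j+1}}{qz_j})^{-1}$ in $DZ$ (see (\ref{eq:DZ})) force a cascading expansion of the variables, and the cross terms $\prod_{i<j}\tfrac{(1-z_i/z_j)(1-qz_i/(tz_j))}{(1-z_i/(tz_j))(1-qz_i/z_j)}$ annihilate all but the nested pole configurations in which consecutive $z_j$ sit at coordinates $q^{\lambda_i}t^{1-i}$ coming from the $H$-factors, together with one ``boundary'' configuration running off the edge of $\lambda$. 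Summing the residues over these admissible configurations should telescope, by a Pieri-type cancellation, to $(1-t^{-k})\sum_i q^{k\lambda_i}t^{k(1-i)}+t^{-k\ell(\lambda)}=\wp_k(\lambda;q,t)$. I expect this residue bookkeeping---enumerating the surviving configurations once the full cross-term product is accounted for, and verifying the telescoping, in particular the origin of the boundary term $t^{-k\ell(\lambda)}$---to be the main difficulty. An alternative that bypasses part of this is to take the integral form of $D_{-k}^X$ acting on a single symmetric function established in \cite{GZ}, apply it to $f(X)=\Pi(X,Y)$, and reconcile the output with the stated form using the exponential-pairing identities (\ref{eq:exppair}) and (\ref{eq:HPi}); this replaces the residue enumeration by a manipulation of exponential generating functions.
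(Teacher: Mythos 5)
Your primary route reduces the proposition to the residue identity
\[
\oint DZ\cdot H(qZ^{-1},X;t^{-1})\,P_\lambda(X,\rho_Z)=\wp_k(\lambda;q,t)\,P_\lambda(X),
\]
with $p_n(\rho_Z)=(1-q^{-n})p_n(Z)$, and you explicitly leave this unproved, calling it ``the crux'' and ``the main difficulty.'' That is a genuine gap, and not a small one: this identity is precisely the hard content of the Negut/Gorin--Zhang integral formula for $D_{-k}$, so as it stands your primary plan amounts to re-proving GZ's theorem from scratch. The reduction itself is sound---the adjointness, the computation of $\rho_Z$, and the use of the branching rule to get $P_\lambda(X,\rho_Z)$ all check out---but it does not close the loop.

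Your ``alternative'' at the end is actually the correct main route, and it is essentially what the paper does; in fact it is simpler than you suggest. You do not need to start from the integral form of $D_{-k}^X$ on a general $f$ and then specialize to $f=\Pi(X,Y)$ and ``reconcile'': GZ Proposition 4.10 already states (\ref{eq:negut}) verbatim, with $Y$ replaced by a finite set of complex numbers. The only remaining work is the formal extension of that identity from ``$Y$ a finite set of numbers'' to ``$Y$ a countable set of variables.'' The paper does this by first replacing the complex numbers by a finite set of variables $\{y_1,\ldots,y_n\}$, then using that two elements $f,g\in\Lambda_X\cotimes\Lambda_Y$ with $\pi_n^Y f=\pi_n^Y g$ for all $n$ are equal. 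Your coefficient-of-$Q_\lambda(Y)$ argument is a legitimate way to organize that extension, but it is heavier machinery than needed once the GZ identity is in hand. I'd recommend promoting your last paragraph to the proof, citing GZ Prop.\ 4.10 directly, and replacing the unproved residue bookkeeping with the light formal-extension step.
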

\begin{proof}
From \cite[Proposition 4.10]{GZ}, we have (\ref{eq:negut}) where instead of a set of variables $Y$ we have some fixed set $\{u_1,\ldots,u_n\}$ of complex numbers, and $X$ is still a countable set of variables. Here we have $\oint: \widehat{\Lambda_X}[\cL(Z)] \to \widehat{\Lambda_X}$. The goal is to extend this to a formal equality on $\Lambda_X \cotimes \Lambda_Y$ for $Y$ an arbitrary countable set of variables.

We can replace (\ref{eq:negut}) with a finite set of variables $Y^{(n)} = \{y_1,\ldots,y_n\}$ instead of fixed complex numbers. In such a setting, we must consider the residue operator as a map $\oint \, dZ:\Lambda_X \cotimes \Lambda_{y_1,\ldots,y_n}[\cL(Z)] \to \Lambda_X \cotimes \Lambda_{y_1,\ldots,y_n}$.
  
Note that if $f,g \in \Lambda_X \cotimes \Lambda_Y$ such that $\pi_n^Y f = \pi_n^Y g$ for all $n$, then $f = g$. One then sees that (\ref{eq:negut}) holds formally for arbitrary countable sets of variables $X,Y$. In this setting, the residue operator takes $\Lambda_X \cotimes \Lambda_Y[\cL(Z)]$ to $\Lambda_X \cotimes \Lambda_Y$.
\end{proof}
  
\subsubsection{Formal Multicut Expectations} \label{sec:multi}
We obtain formulas for multicut expectations of formal Macdonald processes. The idea is to repeated apply the operators $D^X_{-k}$ to $\Pi$.

\begin{theorem} \label{thm:multi}
The following formal identity holds for any nonnegative integers $k_1,\ldots,k_N$
\begin{equation*}
\begin{multlined}
\E_{\MP_{\vec{U},\vec{V}}^{\textbf{\emph{f}}}}[ \wp_{k_1}(\l^1;q,t) \cdots \wp_{k_N}(\l^N;q,t) ] = \oint DZ_1 \cdots DZ_N \\
\times \prod_{1 \leq i \leq j \leq N} H(U^i,qZ_j^{-1};t^{-1}) H(q^{-1}Z_i, V^j ;t)^{-1} \prod_{1 \leq i < j \leq N} C(Z_i,Z_j)
\end{multlined}
\end{equation*}
where $|Z_i| = k_i$ and
\begin{align} \label{eq:czw}
C(Z,W) = \prod_{i,j} \frac{(1 - t^{-1} q z_i/w_j)(1 - z_i/w_j)}{(1 - qz_i/w_j)(1 - t^{-1} z_i/w_j)}
\end{align}
for sets of variables $Z = (z_1,\ldots,z_k)$ and $W = (w_1,\ldots,w_\ell)$.
\end{theorem}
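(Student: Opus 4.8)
The plan is to prove Theorem \ref{thm:multi} by repeatedly applying the Negut operators $D^{U^i}_{-k_i}$ to the partition function $\mathscr{Z}$, using the pairing formulation (\ref{eq:mppair}) of the formal Macdonald process together with the commutation of residue operators established in Lemma \ref{lem:rescom} and (\ref{eq:rpcom1})--(\ref{eq:rpcom2}). The starting point is the observation that $\wp_{k_i}(\lambda^i;q,t)$ is the eigenvalue of $D^{U^i}_{-k_i}$ acting on $P_{\lambda^i}(U^i)$, so that inserting $\prod_i \wp_{k_i}(\lambda^i;q,t)$ into the sum defining the expectation amounts to applying $\prod_i D^{U^i}_{-k_i}$ to $\mathscr{Z} = \prod_{i \le j}\Pi(U^i,V^j)$, and then dividing by $\mathscr{Z}$. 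One subtlety is that each $D^{U^i}_{-k_i}$ must act through the chained structure of sums over interior partitions $\mu$; using (\ref{eq:mppair}) one sees the expectation equals $\mathscr{Z}^{-1}$ times a nested pairing of the $P$'s and $Q$'s in which $D^{U^i}_{-k_i}$ commutes past the pairings $\langle \cdot,\cdot\rangle_{Y^j}$ for $j \ne i-1, i$ and acts on the $U^i$-variables where they appear.

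\emph{Key steps, in order.} (1) Write $\E[\prod_i \wp_{k_i}(\lambda^i)] = \mathscr{Z}^{-1}\, \big(\prod_i D^{U^i}_{-k_i}\big)\, \mathscr{Z}$, justifying the interchange of the (formal, convergent in graded topology) sum over $\vec\lambda$ with the continuous operators $D^{U^i}_{-k_i}$. (2) Apply the Negut formula (\ref{eq:negut}) once for each $i$: since $\mathscr{Z}$ is a product of $\Pi(U^a,V^b)$'s and $D^{U^i}_{-k_i}$ acts only on the $U^i$-factors, it produces $\mathscr{Z}$ times $\oint DZ_i\, H(qZ_i^{-1},U^i;t^{-1}) \prod_{j\ge i} H(V^j, q^{-1}Z_i;t)^{-1}$ — here one uses the splitting equality (\ref{eq:split}) and that $\Pi(U^i,V^j)$ appears precisely for $j \ge i$. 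Care is needed because $H(qZ_i^{-1},U^i;t^{-1})$ reintroduces $U^i$-dependence, so subsequent operators $D^{U^{i'}}_{-k_{i'}}$ with $i' \ne i$ do not see it, but the operator $D^{U^i}_{-k_i}$ applied \emph{before} others is consistent since distinct $D^{U^{i'}}_{-k_{i'}}$ commute (they act on disjoint variable sets). (3) Collect the cross-terms: after all $N$ applications, one has $\oint DZ_1\cdots DZ_N$ of a product of $H$'s involving $U^i, V^j, Z_i$, plus new interaction factors between $H(qZ_i^{-1},U^{i'};t^{-1})$ and the $Z_{i'}$-integrand that was created later. The identity (\ref{eq:HPi}) is the engine here: pairing $H(X^1,Y;t^{-1})\Pi(X^2,Y)$ against $H(X^3,Y;t^{-1})\Pi(X^4,Y)$ produces exactly the factor $\prod (1-x_1x_3)(1-\tfrac qt x_1x_3)/(1-\tfrac1t x_1x_3)(1-qx_1x_3)$, which with $x_1 \leftrightarrow qz_i^{-1}$-type substitutions becomes $C(Z_i,Z_j)$ as in (\ref{eq:czw}). (4) Match the bookkeeping: verify that the $H(U^i,qZ_j^{-1};t^{-1})$ appear exactly for $i \le j$, the $H(q^{-1}Z_i,V^j;t)^{-1}$ exactly for $i \le j$, and the $C(Z_i,Z_j)$ exactly for $i<j$, using (\ref{eq:invH}) to normalize the direction of the $t$-parameter in the $H$'s where needed.

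\emph{Main obstacle.} The delicate point is step (3)--(4): tracking how the $U^i$-dependence reintroduced by $H(qZ_i^{-1},U^i;t^{-1})$ interacts with later operator applications, and getting the index ranges ($i\le j$ versus $i<j$) exactly right. Concretely, one should fix an order of application — say $i = 1, 2, \ldots, N$ — and after applying $D^{U^i}_{-k_i}$ push the newly created $H(qZ_i^{-1},U^i;t^{-1})$-factor through all the remaining pairings $\langle\cdot,\cdot\rangle_{Y^j}$; when it finally meets the $Z_j$-integrand ($j>i$) coming from a later application, invoking (\ref{eq:HPi}) generates precisely one $C(Z_i,Z_j)$ and no spurious $\Pi(U^i,\cdot)$ survives into the final answer because of cancellations against $\mathscr{Z}^{-1}$. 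Verifying that these cancellations are exact, and that the residue operators genuinely commute with the pairings on the relevant $\Lambda_X$-projective subspaces (which is where Lemma \ref{lem:rescom} and (\ref{eq:rpcom1})--(\ref{eq:rpcom2}) are essential, since naive continuity fails on the full completion), is the crux; the rest is careful but routine manipulation of the $\Pi$ and $H$ generating functions via (\ref{eq:split}), (\ref{eq:invH}), and (\ref{eq:exppair}). I would organize the induction on $N$, with the inductive step isolating one operator application and one use of (\ref{eq:HPi}).
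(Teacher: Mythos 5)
The crux of your argument, Step (1), is false: $\E[\prod_i \wp_{k_i}(\lambda^i)] \ne \mathscr{Z}^{-1}\big(\prod_i D^{U^i}_{-k_i}\big)\mathscr{Z}$ when $N > 1$. The eigenvalue equation $D_{-k}^X P_\lambda(X) = \wp_k(\lambda)P_\lambda(X)$ requires the operator to act on the \emph{full} variable set of the Macdonald polynomial, and in the pairing form (\ref{eq:mppair}) the partition $\lambda^i$ (for $i\ge 2$) appears as $P_{\lambda^i}(Y^{i-1},U^i)$, a Macdonald polynomial in the joint alphabet $(Y^{i-1},U^i)$, not in $U^i$ alone. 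The operator $D^{U^i}_{-k_i}$ therefore does not act as multiplication by $\wp_{k_i}(\lambda^i)$, and the correct operator (which the paper applies) is $D_{-k_i}^{Y^{i-1},U^i}$. You can see the failure concretely for $N=2$: computing $\mathscr{Z}^{-1}D^{U^1}_{-k_1}D^{U^2}_{-k_2}\mathscr{Z}$ by the rule you state in Step (2) gives $\oint DZ_1 DZ_2\cdot H(qZ_1^{-1},U^1;t^{-1})H(V^1,q^{-1}Z_1;t)^{-1}H(V^2,q^{-1}Z_1;t)^{-1}\cdot H(qZ_2^{-1},U^2;t^{-1})H(V^2,q^{-1}Z_2;t)^{-1}$, which is missing both the factor $H(U^1,qZ_2^{-1};t^{-1})$ and the interaction term $C(Z_1,Z_2)$ that appear in the claimed formula. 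The two operators act on disjoint variable sets, so they literally commute and produce no cross-terms; the product just factorizes, and no amount of bookkeeping in Steps (3)--(4) can conjure up the missing $C(Z_i,Z_j)$ and $H(U^i,qZ_j^{-1};t^{-1})$ factors.

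Your Steps (3)--(4) invoke (\ref{eq:HPi}) to produce the cross-terms, but (\ref{eq:HPi}) is a \emph{pairing} identity: to apply it one needs two expressions in a shared auxiliary alphabet $Y$ sitting inside $\langle\cdot,\cdot\rangle_Y$. Once you have collapsed everything to $\prod_i D^{U^i}_{-k_i}\mathscr{Z}$, there are no $Y$-variables and no pairings left, so (\ref{eq:HPi}) has nothing to act on. The paper's proof avoids this by never collapsing: it brings the sum over $\vec\lambda$ inside the nested chain of pairings (Step 3 of the paper), rewrites each factor $\mathbf{E}_i = \sum_{\lambda^i}\wp_{k_i}(\lambda^i)P_{\lambda^i}(Y^{i-1},U^i)Q_{\lambda^i}(V^i,Y^i)$ as $D_{-k_i}^{Y^{i-1},U^i}\Pi\big((Y^{i-1},U^i),(V^i,Y^i)\big)$ (note the operator acts on $(Y^{i-1},U^i)$), commutes the residue operators out of the pairings using $\Lambda_{Y^i}$-projectivity (Steps 4--5), and only then evaluates the pairings $\langle\cdot,\cdot\rangle_{Y^i}$ inductively from $i=N-1$ down; each evaluation uses (\ref{eq:HPi}) and produces exactly one batch of $C(Z_i,Z_{j})$ factors and redistributes the $H$'s correctly. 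To repair your argument you would need to replace $D^{U^i}_{-k_i}$ by $D_{-k_i}^{Y^{i-1},U^i}$ and keep the pairing chain intact through the inductive evaluation, which is precisely the paper's Steps 2--6.
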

  
This theorem implies a more general result in which the $\wp_{k_i}(\l^i)$ may be taken to higher powers than $1$ in the expectation.
\begin{corollary} \label{cor:multi}
Let $1 \leq x_1 \leq \cdots \leq x_m \leq N$ and $k_1, \cdots, k_m > 0$ be integers. Then
\begin{align}
\begin{multlined} \label{eq:multi}
\E_{\MP_{\vec{U},\vec{V}}^{\textbf{\emph{f}}}}[\wp_{k_1}(\l^{x_1};q,t) \cdots \wp_{k_m}(\l^{x_m};q,t)] = \oint DZ_1 \cdots DZ_m \\
\times \prod_{a=1}^m \prod_{\substack{1 \leq i \leq x_a \\ x_a \leq j \leq N}} H(U^i,q Z_a^{-1}; t^{-1}) H(q^{-1}Z_a,V^j; t)^{-1} \prod_{a < b} C(Z_a,Z_b).
\end{multlined}
\end{align}
where $|Z_a| = k_a$.
\end{corollary}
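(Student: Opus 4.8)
The plan is to deduce Corollary~\ref{cor:multi} from Theorem~\ref{thm:multi} by embedding the left-hand side into an auxiliary formal Macdonald process that carries exactly one factor $\wp_{k_a}$ per level. The key enabling fact is that Theorem~\ref{thm:multi} already permits $k_i=0$: since $\wp_0\equiv 1$ and $D_0^X=\mathrm{id}$ (equivalently, if $|Z_i|=0$ then the tuple $Z_i$ is empty and $DZ_i$, $C(Z_i,\cdot)$, $H(\cdot,qZ_i^{-1};t^{-1})$, $H(q^{-1}Z_i,\cdot;t)^{-1}$ all collapse to the identity), applying Theorem~\ref{thm:multi} with $k_i=0$ for every $i\notin\{x_1,\dots,x_m\}$ and relabelling the surviving variable sets $Z_j\rightsquigarrow Z_a$ already yields \eqref{eq:multi} whenever the $x_a$ are \emph{pairwise distinct}; the only manipulation used is the trivial reindexing $\prod_{1\le i\le j\le N}H(U^i,qZ_j^{-1};t^{-1})=\prod_j\big(\prod_{i\le j}H(U^i,qZ_j^{-1};t^{-1})\big)$, its counterpart for the $H(q^{-1}Z_i,V^j;t)^{-1}$ factors, and $\prod_{i<j}C(Z_i,Z_j)\rightsquigarrow\prod_{a<b}C(Z_a,Z_b)$.

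To treat repeated indices I would pass to a larger process that separates coincident cuts. Let $c_x\ge 1$ be the number of $a$ with $x_a=x$ (taking $c_x=1$ if $x$ appears at most once), and form the formal Macdonald process whose levels are, in order, $c_x$ consecutive \emph{copies} of each original level $x=1,\dots,N$; give the first copy of $x$ the $P$-specialization $U^x$, give the last copy the $Q$-specialization $V^x$, and give the empty specialization to the $P$-variable of every non-first copy and to the $Q$-variable of every non-last copy. Two facts make this legitimate. First, the link between consecutive copies of the same level is $\sum_\mu Q_{\lambda/\mu}(\emptyset)P_{\lambda'/\mu}(\emptyset)=\delta_{\lambda\lambda'}$, so all copies of $x$ are forced equal, while the link from the last copy of $x$ to the first copy of $x+1$ reproduces the original link; hence the marginal of the auxiliary process on the first copies is exactly $\MP^{\mathbf f}_{\vec U,\vec V}$, and therefore $\E_{\mathrm{aux}}\big[\prod_a\wp_{k_a}(\text{copy of }x_a\text{ for cut }a)\big]=\E_{\MP^{\mathbf f}_{\vec U,\vec V}}\big[\prod_a\wp_{k_a}(\lambda^{x_a})\big]$, since all copies of $x_a$ equal $\lambda^{x_a}$. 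Second, inserting empty specializations only kills $\Pi$-factors, and the surviving pairings are precisely $\Pi(U^i,V^j)$ for $i\le j$, so the auxiliary normalization again equals $\mathscr Z$; thus the auxiliary process is a genuine normalized formal Macdonald process.

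Now I would apply Theorem~\ref{thm:multi} (in the $k_i=0$-tolerant form above) to the auxiliary process, putting $\wp_{k_a}$ on the copy of $x_a$ reserved for cut $a$ — cuts sharing a base level being assigned to copies in increasing order of $a$ — and $\wp_0$ on any original level with no factor. Every $U$-specialization on a non-first copy and every $V$-specialization on a non-last copy is empty, so each $H$-factor attached to such a specialization is $1$, while the $DZ_a$ and $C(Z_a,Z_b)$ are untouched. Because the copies of each level sit contiguously and in increasing position order, the condition ``source $\le$ target'' that selects an $H(U^i,qZ_a^{-1};t^{-1})$ factor (carried by the first copy of level $i$) becomes $i\le x_a$; the analogous condition for an $H(q^{-1}Z_a,V^j;t)^{-1}$ factor (carried by the last copy of level $j$) becomes $j\ge x_a$; and the position order ``$p<p'$'' restricts to ``$a<b$'' on the cut-copies. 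The surviving expression is exactly \eqref{eq:multi}.

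I expect the only step requiring genuine care to be this final translation of the index ranges, i.e.\ checking that the contiguous, in-order placement of copies makes the total order on auxiliary positions restrict precisely to the conditions $1\le i\le x_a$, $x_a\le j\le N$, and $a<b$ in \eqref{eq:multi}. The remaining inputs are purely formal: $\wp_0\equiv 1$ and $D_0^X=\mathrm{id}$; the identities $H(\cdot,\emptyset;\cdot)=1$ and $\Pi(\cdot,\emptyset)=\Pi(\emptyset,\cdot)=1$; and the $\delta$-collapse of links through empty specializations, all of which follow from the definitions; and since specializing a variable set to $\emptyset$ is a continuous algebra homomorphism, no convergence issue arises in the graded topology nor any question about the validity of the formal identities.
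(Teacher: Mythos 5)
Your proposal is correct and follows essentially the same route as the paper: Corollary~\ref{cor:multi} is reduced to Theorem~\ref{thm:multi} by introducing an enlarged formal Macdonald process (one auxiliary level per $\wp$-factor, with empty specializations on the unused $U$- and $V$-slots) and then contracting; the empty specializations force the duplicated cut partitions to coincide and trivialize the extra $H$- and $\Pi$-factors, giving exactly \eqref{eq:multi}. The paper presents only a small worked example and defers the general bookkeeping to \cite{BCGS}, whereas you carry out the general construction and the index-range verification explicitly, but the underlying mechanism is identical.
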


\begin{proof}[Proof of Theorem \ref{thm:multi}]
Choose nonnegative integers $k_1,\ldots,k_N$ and let $Z_i = \{z_{ij}\}_{j=1}^{k_i}$ for $i = 1,\ldots,N$ be disjoint sets of variables.
  
  \noindent
  \textbf{1.} Consider the element
  \[ \sum_{\vec{\l}} \wp_{k_1}(\l^1) \cdots \wp_{k_N}(\l^N) \MP_{\vec{U},\vec{V}}^{\textbf{f}}(\vec{\l}) \in \widehat{\bigotimes}_{i=1}^N (\Lambda_{U^i} \otimes \Lambda_{V^i}). \]
  \\
  
  \noindent
  \textbf{2.} Multiply through by the normalizing constant. Reexpress the sums within $\MP$ in terms of Macdonald pairings as in (\ref{eq:mppair})
  \[ \sum_{\vec{\l}} \wp_{k_1}(\l^1) \cdots \wp_{k_N}(\l^N) P_{\l^1}(U^1) \left( \prod_{i=1}^{N-1} \left \langle Q_{\l^i}(V^i,Y^i), P_{\l^{i+1}}(Y^i,U^{i+1}) \right \rangle_{Y^i} \right) Q_{\l^N}(V^N). \]
  Here we note the spaces which the pairings map:
  \[ \langle \cdot, \cdot \rangle_{Y^i}: \left( \L_{V^i} \cotimes \L_{Y^i} \right) \times \left( \L_{Y^i} \cotimes \L_{U^{i+1}} \right) \to \L_{V^i} \cotimes \L_{U^{i+1}}. \]
  By natural inclusions (\ref{eq:inclu}) and consistency (\ref{eq:consis}), the domain of this pairing may be extended.
  \\
  
  \noindent
  \textbf{3.} Bring the summation inside the pairings and the pairings inside the pairings
  \[ \langle \mathbf{E}_1, \langle \mathbf{E}_2, \langle \cdots \langle \mathbf{E}_{N-1}, \mathbf{E}_N \rangle_{Y^{N-1}} \cdots \rangle_{Y^2} \rangle_{Y^1} \]
  where
  \[ \mathbf{E}_i = \sum_{\l^i} \wp_{k_i} P_{\l^1}(Y^{i-1},U^i) Q_{\l^i}(V^i,Y^i) \]
  and $Y^0, Y^N$ are empty sets of variables. It was important to use the fact that the first argument of the $Y^i$ Macdonald pairing is $\Lambda_{Y^i}$-projective which provides the continuity necessary for bringing the summations inside.
  \\
  
  \noindent
  \textbf{4.} We can reexpress the summations in terms of Negut's operator in the residue form (\ref{eq:negut})
  \begingroup \makeatletter \def \f@size{9}\check@mathfonts
  \begin{align*}
  \mathbf{E}_i &= D_{-k_i}^{Y^{i-1},U^i} \Pi((Y^{i-1},U^i),(V^i,Y^i)) \\
  &= \Pi((Y^{i-1},U^i),(V^i,Y^i)) \oint H((Y^{i-1},U^i),qZ_i^{-1};t^{-1}) H(q^{-1}Z_i,(V^i,Y^i);t)^{-1} \, DZ_i
  \end{align*}
  \endgroup
  \\
  
  \noindent
  \textbf{5.} The domain of the residue operator can be appropriately extended and consistency follows from (\ref{eq:inclu}) and (\ref{eq:consis}). Note that the integrand in $\mathbf{E}_i$ remains $\Lambda_{Y^i}$-projective. Therefore, by (\ref{eq:rpcom1}) and (\ref{eq:rpcom2}), we may commute the residue operators with the pairings. After pulling out $Y^i$ independent factors outside the residue operators, we obtain
  \[ A \oint DZ_1 \cdots \oint DZ_N \langle \mathbf{F}_1, \langle \mathbf{F}_2, \cdots \langle \mathbf{F}_{N-1}, \mathbf{F}_N \rangle_{Y^{N-1}} \cdots \rangle_{Y^2} \rangle_{Y^1} \]
  where
  \begin{align} \label{eq:fi}
  \mathbf{F}_i &=  H(Y^{i-1},qZ_i^{-1};t^{-1})\Pi(Y^{i-1},V^i)H(q^{-1}Z_i,Y^i;t)^{-1} \Pi((Y^{i-1},U^i),Y^i) \\ \label{eq:C}
  A &=  \prod_{i=1}^N H(U_i,qZ_i^{-1};t^{-1}) H(q^{-1}Z_i,V_i;t)^{-1} \Pi(U^i,V^i).
  \end{align}
  Here, (\ref{eq:split}) and (\ref{eq:invH}) were used to split $H$ and $\Pi$.
  \\
  
  \noindent
  \textbf{6.} Apply the pairings for $Y^i$ in decreasing order of $i$. At the $(N-i)$th step, we have
  \begin{align} \label{eq:6pf}
  A_i \oint DZ_1 \cdots \oint DZ_N \langle \mathbf{F}_1, \langle \mathbf{F}_2, \cdots \langle \mathbf{F}_i, \mathscr{F}_i \rangle_{Y^{N-i}} \cdots \langle_{Y^2} \rangle_{Y^1} 
  \end{align}
  where $A_i$ collects the $Y^1,\ldots,Y^i$ independent terms. We show by induction that
  \begin{align} \label{eq:sF}
  \mathscr{F}_i = H(Y^i, qZ_{[i+1,N]}^{-1};t^{-1}) \Pi(Y^i,V^{[i+1,N]})
  \end{align}
  where we used shorthand notation $Z_{[i,j]} = (Z_i,Z_{i+1},\ldots,Z_j)$ and similarly for $V$. If we suppose (\ref{eq:sF}) is true, then within the $Y^i$ bracket in (\ref{eq:6pf}), $\mathscr{F}_i$ interacts with the third and fourth terms given in (\ref{eq:fi}). By (\ref{eq:invH}) and (\ref{eq:HPi}), this interaction produces
  \begin{equation} \label{eq:outpair}
  \begin{multlined}
  C(Z_i,Z_{[i+1,N]}) H(q^{-1} Z_i,V^{[i+1,N]};t)^{-1} \\
  \times H((U^i,Y^{i-1}),qZ_{[i+1,N]}^{-1};t^{-1}) \Pi((U^i,Y^{i-1}),V^{[i+1,N]})
  \end{multlined}
  \end{equation}
  where $C(Z,W)$ is defined by (\ref{eq:czw}). As a formal expression, we expand any terms of the form $(1 - v)^{-1}$ as the geometric series. The $\{Y^j\}$ independent term of (\ref{eq:outpair}) is
  \begin{align} \label{eq:Yjind}
  C(Z_i,Z_{[i+1,N]}) H(q^{-1} Z_i,V^{[i+1,N]};t)^{-1} H(U^i, qZ_{[i+1,N]}^{-1};t^{-1}) \Pi(U^i,V^{[i+1,N]}).
  \end{align}
  After picking up the first two terms in (\ref{eq:fi}), the remaining term to interact with the $Y^{i-1}$ pairing is
  \[ H(Y^{i-1},qZ_{[i,N]}^{-1};t^{-1}) \Pi(Y^{i-1}, V^{[i,N]}) \]
  which completes the induction as the starting term and ending terms are consistent, the initial term for $i = N-1$ is exactly $\mathbf{F}_N$, and the final term is unity because $Y^0$ is empty. After collecting the $Y^j$-independent terms (\ref{eq:Yjind}) from each $i = N-1,N-2,\ldots,1$ and applying (\ref{eq:split}), we complete the proof of Theorem \ref{thm:multi}.
\end{proof}

We now illustrate the main idea of the proof of Corollary \ref{cor:multi} via a particular example. For further details, we note that the proof is essentially identical to a corresponding extension in \cite{BCGS} (Theorem 3.10 to Corollary 3.11).

\begin{proof}[Proof Idea of Corollary \ref{cor:multi}]
We consider the example of $N = 1$, $M = 2$, $m = 2$. Let $k_1,k_2 > 0$ be integers. Consider auxiliary variables $\vec{S} = (S^1,S^2),\vec{T} = (T^1,T^2)$, and the formal expectation
\begin{align} \label{eq:prephi}
\E_{\MP_{\vec{S},\vec{T}}^{\mathbf{f}}}[\wp_{k_1}(\l^1), \wp_{k_2}(\l^2)] = \sZ_0^{-1} \sum_{\l^1,\l^2 \in \Y} \wp_{k_1}(\l^1) \wp_{k_2}(\l^2) P_{\l^1}(S^1) \left( \sum_{\mu \in \Y} Q_{\l^1/\mu}(T^1) P_{\l^2/\mu}(S^2)\right) Q_{\l^1}(T^2)
\end{align}
where $\sZ_0$ is the normalizing factor $\MP_{\vec{S},\vec{T}}^{\mathbf{f}}$. Consider the map $\phi:\L_{T^1} \otimes \L_{S^2}$ which sends $f(T^1)g(S^2) \mapsto f(0)g(0)$ to the constant term for any $f,g\in \L_X$. By applying (the continuous extension of) $\phi$ to (\ref{eq:prephi}) and rewriting $S^1 = U$ and $T^2 = V$, we get
\begin{align} \label{eq:postphi}
\sZ^{-1} \sum_{\l \in \Y} \wp_{k_1}(\l) \wp_{k_2}(\l) P_{\l}(U) Q_{\l}(V) = \E_{\MP_{U,V}^{\mathbf{f}}}[\wp_{k_1}(\l^1) \wp_{k_2}(\l^2)].
\end{align}
where $\sZ$ is the normalizing factor for $\MP_{U,V}^{\mathbf{f}}$. On the other hand, by Theorem \ref{thm:multi}, we have a formal residue expression for (\ref{eq:prephi}). By applying $\phi$ to this expression, we obtain (\ref{eq:multi}) for this choice of $N,M,m$.

In the general case, we consider some formal Macdonald process in a greater number of variables, apply Theorem \ref{thm:multi}, then apply variable contractions $\phi$ to obtain the Corollary.
\end{proof}
  
\subsection{RPP Observables} \label{ssec:rppobs}
In this section, we derive a formula for (\ref{eq:joint_moments}), stated below in Theorem \ref{thm:obs}. It is convenient to do this in two steps: first apply Corollary \ref{cor:multi} to a formal version of the RPP measures, then specialize the formal RPPs to $\P^{B,r,\vec{s}}_{\alpha,\ft}$.

\subsubsection{Formal Random Plane Partition}
Fix a measure $\P^{B,r,\vec{s}}_{\alpha,\ft}$. For each $e \in I_E$, let
\begin{align} \label{eq:Fml}
F_{\mu,\lambda}(X;b,q,t) &= \left\{ \begin{array}{ll}
P_{\lambda/\mu}(X;q,t) & \mbox{if $b = 1$,} \\
Q_{\mu/\lambda}(X;q,t) & \mbox{if $b = 0$.}
\end{array} \right.
\end{align}

\begin{definition}
If the domain $I$ of the back wall $B:I \to \R$ has finite length, define the \textit{formal RPP with back wall} $B$ to be the formal probability measure $\P^{B,\mathbf{f}}$ supported on $\cP_B$ and valued in $\widehat{\bigotimes}_{e \in I_E} \Lambda_{X_e}$ so that
\begin{align} \label{eq:informal}
\P^{B,\mathbf{f}}(\pi) = \frac{1}{\sZ} \prod_{e \in I_E} F_{\pi^{e-\frac{1}{2}},\pi^{e+\frac{1}{2}}}(X_e;B'(e),q,t).
\end{align}
\end{definition}

The partition function can be computed:
\begin{align} \label{eq:frpppartition}
\sZ = \prod_{\substack{e_1,e_2 \in I_E, e_1 < e_2 \\ B'(e_1) > B'(e_2)}} \Pi(X_{e_1}, X_{e_2}).
\end{align}
We comment on how to obtain (\ref{eq:frpppartition}) after proving Proposition \ref{prop:frpp}.

\begin{proposition} \label{prop:frpp}
Suppose the domain $I$ of $B:I \to \R$ has finite length. Let $x_1 \leq \cdots \leq x_m$ be points in $I_E$ and $k_1,\ldots,k_m > 0$ be integers. Then
\begin{align*}
\begin{multlined}
\E_{\P^{B,\mathbf{f}}} \left[ \prod_{i=1}^m \wp_{k_i}(\pi^{x_i})\right] = \oint \cdots \oint \prod_{a < b} C(Z_a,Z_b) \\
\times \prod_{a=1}^m \prod_{\substack{e \in I_E, e < x_a \\ B'(e) = 1}} H(X^e, q Z_a^{-1};t^{-1}) \prod_{\substack{e \in I_E, e > x_a \\ B'(e) = 0}} H( q^{-1} Z_a,X^e;t)^{-1} \, DZ_a
\end{multlined}
\end{align*}
where $|Z_a| = k_a$.
\end{proposition}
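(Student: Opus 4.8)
The plan is to recognize $\P^{B,\mathbf{f}}$ as a formal Macdonald process and then read the statement off Corollary~\ref{cor:multi}. By (\ref{eq:Fml})--(\ref{eq:informal}), the formal RPP assigns to $\pi\in\cP_B$ a product over the edges $e\in I_E$, contributing $P_{\pi^{e+1/2}/\pi^{e-1/2}}(X_e)$ at an up-edge ($B'(e)=1$) and $Q_{\pi^{e-1/2}/\pi^{e+1/2}}(X_e)$ at a down-edge ($B'(e)=0$); thus $\P^{B,\mathbf{f}}$ is exactly the joint law of the sequence of diagonal sections $(\pi^v)$, with consecutive sections interlacing as dictated by $B'$, carrying $P$-specializations on up-edges and $Q$-specializations on down-edges. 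This is nearly the law of a formal Macdonald process $\MP^{\mathbf{f}}_{\vec U,\vec V}$, except that the definition of $\MP^{\mathbf{f}}$ prescribes the strictly alternating profile $P,Q,P,Q,\dots$ whereas the profile of $B$ is arbitrary. So the first step is to pad the sequence with trivial steps carrying the empty specialization: between two consecutive up-edges insert a down-step, between two consecutive down-edges insert an up-step, at the two ends adjoin an initial up-step and a final down-step if needed, and — so that each diagonal $\pi^{x_a}$ to be tested by an observable appears among the partitions $\lambda^{j}$ on which Corollary~\ref{cor:multi} reports rather than among the intermediate partitions summed out — insert, where necessary, a pair of trivial steps so that $\pi^{x_a}$ is both entered by an up-step and left by a down-step. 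Each inserted step leaves the measure (and the partition function) unchanged because $P_{\lambda/\mu}(\emptyset)=Q_{\lambda/\mu}(\emptyset)=\delta_{\lambda\mu}$ (put the variable to $0$ in (\ref{eq:PQmon}), or use the branching rule), and since $I$ has finite length only finitely many steps are inserted, so the result is a genuine formal Macdonald process $\MP^{\mathbf{f}}_{\vec U,\vec V}$ whose nonempty $U^i$ are the sets $X_e$ at up-edges, listed in order, and whose nonempty $V^j$ are the $X_e$ at down-edges, listed in order.

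Granting this identification, the proposition is a transcription of Corollary~\ref{cor:multi}. Each diagonal $\pi^{x_a}$ now equals some $\lambda^{j_a}$ with $j_a$ determined by where the edge $x_a$ sits in the padded sequence, and $j_1\le\cdots\le j_m$. In $\mathscr{Z}=\prod_{1\le i\le j\le N}\Pi(U^i,V^j)$ of (\ref{eq:Znorm}), a factor is nontrivial only when both specializations are nonempty, i.e.\ when $U^i=X_{e_1}$ at an up-edge $e_1$ and $V^j=X_{e_2}$ at a down-edge $e_2$; and because the padding preserves the left-to-right order of the real edges, the condition $i\le j$ amounts to $e_1<e_2$. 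Hence $\mathscr{Z}$ collapses to $\prod_{e_1<e_2,\ B'(e_1)>B'(e_2)}\Pi(X_{e_1},X_{e_2})$, which is (\ref{eq:frpppartition}) — this supplies the derivation promised just after that formula. The same reasoning applies to the product $\prod_{1\le i\le j_a}\prod_{j_a\le j\le N} H(U^i,qZ_a^{-1};t^{-1})\,H(q^{-1}Z_a,V^j;t)^{-1}$ in Corollary~\ref{cor:multi}: each factor containing an empty specialization is $1$ by (\ref{eq:Hnorm}), and the survivors are precisely $H(X_e,qZ_a^{-1};t^{-1})$ over up-edges $e<x_a$ and $H(q^{-1}Z_a,X_e;t)^{-1}$ over down-edges $e>x_a$. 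The measures $DZ_a$ and the cross-terms $C(Z_a,Z_b)$ are identical in the two statements, so Corollary~\ref{cor:multi} yields the asserted formula.

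I expect the main work — which is essentially bookkeeping rather than analysis — to be in the first step: setting up the padding so that the profile is alternating, the observables land on the $\lambda^{j}$'s, and, crucially, the index ranges $1\le i\le j_a$ and $j_a\le j\le N$ of Corollary~\ref{cor:multi} translate to the \emph{strict} edge conditions $e<x_a$ and $e>x_a$; one also has to fix conventions at the two ends of the wall. An alternative that avoids explicit padding is to imitate the contraction argument in the proof of Corollary~\ref{cor:multi}: realize $\P^{B,\mathbf{f}}$ inside a larger genuine Macdonald process with auxiliary specializations, apply Corollary~\ref{cor:multi} there, and then send the auxiliary variables to their constant terms via the maps $\phi$. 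Either route is purely formal, taking place in the completed graded algebras where the identities of Corollary~\ref{cor:multi} hold as stated.
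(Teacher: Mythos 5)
Your proposal is correct and matches the paper's route. Both proofs amount to the same two steps: (i) realize $\P^{B,\mathbf{f}}$ inside a formal Macdonald process $\MP^{\mathbf f}_{\vec U,\vec V}$ by introducing extra variables that will evaluate to the empty specialization, then (ii) apply Corollary~\ref{cor:multi} and observe that each $H$-factor carrying an empty specialization equals $1$.

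The one notable difference is in how the extra variables are introduced. You propose \emph{minimal} padding — insert a trivial step only between two same-direction edges, plus (where needed) a pair so that each observed diagonal $\pi^{x_a}$ lands on a $\lambda^j$ rather than on an intermediate $\mu$. That works, but it requires a parity-dependent case analysis (which $\pi^v$ land on $\lambda$'s vs.\ $\mu$'s depends on how many insertions precede position $v$), and you correctly flag this as the bookkeeping burden. The paper instead does \emph{uniform} doubling: it sets $N=|I_E|-1$ and gives every edge $e$ both a $U^e$ and a $V^e$, with the relabeling $\widetilde U^i = U^{e'+i-1}$, $\widetilde V^i = V^{e'+i}$, $\widetilde\lambda^i = \lambda^{v'+i}$, so that each level $i$ is canonically in bijection with a diagonal $\pi^{v'+i}$ and one of the two specializations $U^e,V^e$ at each edge is killed by the map $\rho_0^B$ after the fact. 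This eliminates the case analysis: every non-boundary diagonal is automatically a $\lambda$, so no "insert a pair of trivial steps so that $\pi^{x_a}$ is entered by $P$ and left by $Q$" is needed, and the translation from $1\le i\le x_a$, $x_a\le j\le N$ to the strict edge conditions $e<x_a$, $e>x_a$ is immediate from the relabeling. Your second suggested route ("realize $\P^{B,\mathbf{f}}$ inside a larger Macdonald process and contract") is precisely this. Either way the argument goes through — the two are formally equivalent — but the uniform-doubling bookkeeping is cleaner and is what the paper does.

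Two small remarks: the proposition statement's "$x_1,\dots,x_m\in I_E$" should be $I_V$ (consistently with Theorem~\ref{thm:obs} and with the conditions $e<x_a$, $e>x_a$ in the formula); and the derivation of the partition function~(\ref{eq:frpppartition}) you sketch (only cross-terms with both specializations nonempty survive, and $i\le j$ translates to $e_1<e_2$) is the same observation the paper records in its remark after the proof.
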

\begin{proof}
The proof is specializing Corollary \ref{cor:multi} to the formal RPPs. We find a good way of relabeling the formal Macdonald process indices to make this specialization transparent.

Let $N = |I_E| - 1$, $e' = \min I_E$ and $v' = \min I_V$ (then $v' = e' - \frac{1}{2}$). We may reexpress (\ref{eq:informal}) as
\begin{align} \label{eq:fRPPproof}
\P_{B,\mathbf{f}}(\pi) = \frac{1}{\sZ} F_{\pi^{v'},\pi^{v'+1}}(X_{e'};B'(e'),q,t) F_{\pi^{v'+1},\pi^{v'+2}}(X_{e'+1};B'(e'+1);q,t) \cdots F_{\pi^{v'+N},\pi^{v'+N+1}}(X_{e'+N};B'(e'+N);q,t).
\end{align}
By (\ref{eq:PQmon}) and (\ref{eq:Fml}), we have that $\pi^{v'+1} = (0)$ whenever $B'(e') = 0$. Likewise $\pi^{v'+N} = (0)$ whenever $B'(e'+N) = 1$. We may therefore assume that $B'(e') = 1$ and $B'(e' + N) = 0$.

Let $\widetilde{\vec{U}} = (\widetilde{U}^1,\ldots,\widetilde{U}^N)$ and $\widetilde{\vec{V}} = (\widetilde{V}^1,\ldots,\widetilde{V}^N)$ where $N = |I_E| - 1$. Consider the formal Macdonald process $\MP_{\widetilde{\vec{U}},\widetilde{\vec{V}}}^{\mathbf{f}}(\widetilde{\lambda}^1,\ldots,\widetilde{\lambda}^N)$. It will be convenient to consider relabelings $\vec{U} = (U^{e})_{e\in I_E},\vec{V} = (V^{e})_{e\in I_E}$, $(\lambda^{e})_{e\in I_E}$ so that
\begin{align}
& \widetilde{U}^1 = U^{e'},~~~ \widetilde{U}^2 = U^{e' + 1}, ~~~\ldots,~~~ \widetilde{U}^N = U^{e' + N - 1} \\
& \widetilde{V}^1 = V^{e'+1},~~~ \widetilde{V}^2 = V^{e' + 2},~~~\ldots,~~~ \widetilde{V}^N = V^{e' + N} \\
& \widetilde{\l}^1 = \l^{v'+1},~~~ \widetilde{\l}^2 = \l^{v'+1},~~~\ldots,~~~ \widetilde{\l}^N = \l^{v'+N}.
\end{align}
Thus
\begin{align} \label{eq:MPproof}
\begin{multlined}
\MP_{\widetilde{U},\widetilde{V}}^{\mathbf{f}}(\widetilde{\l^1},\ldots,\widetilde{\l^N}) = \frac{1}{\widetilde{\sZ}} P_{\l^{v'+1}}(U^{e'}) \cdot \sum_{\mu \in \Y} Q_{\l^{v'+1}/\mu}(V^{e'+1}) P_{\l^{v'+2}/\mu}(U^{e'+1}) \\
\cdots \sum_{\mu \in \Y} Q_{\l^{v'+N-1}/\mu}(V^{e'+N-1}) P_{\l^{v'+N}/\mu}(U^{e'+N-1}) \cdot Q_{\l^{v'+N}}(V^{e'+N})
\end{multlined}
\end{align}
where $\widetilde{\sZ}$ is the normalization factor.

For $e \in I_E$, define $X^{e}$ to be $U^{e}$ if $B'(e) = 1$ and $V^{e}$ if $B'(e) = 0$, and $\vec{X} = (X^e)_{e\in I_E}$. Similarly, define $Y^{e}$ to be $V^{e}$ if $B'(e) = 1$ and $U^{e}$ if $B'(e) = 0$, and $\vec{Y} = (Y^e)_{e\in I_E}$.

Let $\rho_0^X$ denote the $0$-specialization on $\Lambda_X$, or equivalently constant term map for $\Lambda_X$. Define
\[ \rho_0^B := \bigotimes_{e \in I_E} \rho_0^{Y^e}: \bigotimes_{e \in I_E} \Lambda_{Y^e} \to \C. \]
By taking tensor products with the identity on $\Lambda_{X^e}$ for $e \in I_E$, and extending by continuity, we have a map
\[ \rho_0^B: \widehat{\bigotimes}_{e \in I_E} (\Lambda_{X^e} \widehat{\otimes} \Lambda_{Y^e}) \to \widehat{\bigotimes}_{e \in I_E} \Lambda_{X^e}. \]
We may further extend this map by extending the scalars from $\C$ to $\cL(Z^1,\ldots,Z^n)$.

Applying $\rho_0^B$ to (\ref{eq:MPproof}) gives (\ref{eq:fRPPproof}) for $\l^v = \pi^v$, $v \in I_{v}$. This continues to hold true for expectations, so we have
\[ \rho_0^B\left( \E_{\MP_{\vec{U},\vec{V}}^{\mathbf{f}}} [ \wp_{k_1}(\pi^{x_1}) \cdots \wp_{k_m}(\pi^{x_m})] \right) = \E_{\P^{B,\mathbf{f}}}[ \wp_{k_1}(\l^{x_1}) \cdots \wp_{k_m}(\l^{x_m})].\]
By Corollary \ref{cor:multi} the left hand side is exactly
\begin{align*}
\begin{multlined}
\rho_0^B \left( \oint \cdots \oint \prod_{a < b} C(Z_a,Z_b) \prod_{a=1}^m \left( \prod_{e < x_a, e\in I_E} H(U^e,q Z_a^{-1}; t^{-1}) \prod_{e > x_a, e \in I_E} H(q^{-1}Z_a,V^e; t)^{-1} \,DZ_a  \right) \right) \\
= \oint \cdots \oint \prod_{a < b} C(Z_a,Z_b) \prod_{a=1}^m \prod_{\substack{e \in I_E, e < x_a \\ B'(e) = 1}} H(X^e,  qZ_a^{-1};t^{-1}) \prod_{\substack{e \in I_E, e > x_a \\ B'(e) = 0}} H( q^{-1}Z_a,X^e;t)^{-1} \, DZ_a.
\end{multlined}
\end{align*}
where we have used the fact that the residue operator commutes with continuous maps. This proves the proposition.
\end{proof}

\begin{remark}
The formula (\ref{eq:frpppartition}) is then a consequence of applying the specializations in the proof of Proposition \ref{prop:frpp} to the partition function for the formal Macdonald process (\ref{eq:Znorm}).
\end{remark}

\subsubsection{Specialization to RPP}

Consider the distribution in (\ref{eq:macrv}) where we have a sequence of weights $(r_v)_{v\in I_V}$. Fix an arbitrary $\xi > 0$, define
\begin{align} \label{eq:ae}
a_e := \xi \prod_{\substack{v \in I_V \\ v < e}} r_v.
\end{align}
By (\ref{eq:psi}) and (\ref{eq:phi}), the distribution defined by
\begin{align} \label{eq:rppae}
\P(\pi) = \frac{1}{\sZ((a_e)_{I_E})} \prod_{e \in I_E} F_{\lambda^{e-\frac{1}{2}},\lambda^{e+\frac{1}{2}}}(a_e^{1 - 2B'(e)};B'(e),q,t)
\end{align}
coincides with (\ref{eq:macrv}) if and only if
\[ \sZ((a_e)_{I_E}) = \prod_{\substack{e_1,e_2 \in I_E, e_1 < e_2 \\ B'(e_1) > B'(e_2)}} \Pi(a_{e_1}^{-1}, a_{e_2}) < \infty. \]

This implies the following lemma.
\begin{lemma} \label{lem:ae<1}
If the weights in (\ref{eq:macrv}) are summable, then for any $e_1,e_2 \in I_E$ such that $e_1 < e_2$ and $B'(e_1) > B'(e_2)$, we have
\[ a_{e_1}^{-1} a_{e_2} < 1 \]
where $(a_e)_{e \in I_E}$ is defined in (\ref{eq:ae}). If $I$ is finite, the inequality is also sufficient to determine the weights are summable.
\end{lemma}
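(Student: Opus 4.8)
The plan is to read off the claim directly from the structure of the partition function formula \eqref{eq:frpppartition}, its specialized form, and the product formula for $\Pi$. First I would recall that $\Pi(X,Y) = \prod_{x \in X, y \in Y} \frac{(txy;q)_\infty}{(xy;q)_\infty}$, so for single-variable specializations $X = (a_{e_1}^{-1})$ and $Y = (a_{e_2})$ one has $\Pi(a_{e_1}^{-1}, a_{e_2}) = \frac{(t a_{e_1}^{-1} a_{e_2};q)_\infty}{(a_{e_1}^{-1} a_{e_2};q)_\infty} = \prod_{i \ge 0} \frac{1 - t q^i a_{e_1}^{-1} a_{e_2}}{1 - q^i a_{e_1}^{-1} a_{e_2}}$. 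Since $0 < q, t < 1$, this infinite product converges to a positive finite real number \emph{if and only if} $a_{e_1}^{-1} a_{e_2} < 1$ (for $u = a_{e_1}^{-1}a_{e_2} \ge 1$ some factor $1 - q^i u$ vanishes or the tail fails to converge, making the expression ill-defined or non-positive as a would-be probability normalization; in any case it is not a finite positive number). This is the single-pair analysis that does all the work.

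Next I would invoke the setup immediately preceding the lemma: the distribution \eqref{eq:rppae} built from the specializations $a_e = \xi \prod_{v < e} r_v$ is shown to coincide with \eqref{eq:macrv}, and this holds precisely when the normalization $\sZ((a_e)_{I_E}) = \prod_{\substack{e_1 < e_2,\, B'(e_1) > B'(e_2)}} \Pi(a_{e_1}^{-1}, a_{e_2})$ is a finite positive real number. Summability of the weights in \eqref{eq:macrv} is exactly the statement that this normalization is finite. For the forward direction: if the weights are summable, then $\sZ((a_e)_{I_E})$ is finite, hence every factor $\Pi(a_{e_1}^{-1}, a_{e_2})$ in the (possibly infinite) product over pairs with $e_1 < e_2$, $B'(e_1) > B'(e_2)$ must itself be finite (a product of positive terms is finite only if each term is), and therefore by the single-pair analysis $a_{e_1}^{-1}a_{e_2} < 1$ for each such pair. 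For the converse in the finite case: if $I$ is finite then there are only finitely many such pairs, so a finite product of finite positive numbers is finite, giving summability.

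The only mild subtlety — and the place I would be slightly careful — is the passage between "weights summable" and "$\sZ((a_e)_{I_E})$ finite," together with justifying that a convergent infinite product of factors $> 0$ forces each factor (and each finite sub-block, needed when there are infinitely many pairs) to be finite and that $\Pi(a_{e_1}^{-1},a_{e_2})$ is monotone/continuous enough that finiteness is equivalent to the argument being $< 1$. This is genuinely routine once the $\Pi$ product formula is written out, since all factors $1 - q^i u$ and $1 - tq^i u$ are positive exactly when $u < 1$ and the ratio $\prod_i \frac{1-tq^iu}{1-q^iu}$ is then a convergent positive product; I do not expect any real obstacle here. I would close by noting the formula \eqref{eq:ae} also makes $a_{e_1}^{-1}a_{e_2} = \prod_{e_1 \le v < e_2} r_v^{-1}\cdots$ — wait, more precisely $a_{e_1}^{-1} a_{e_2} = \prod_{v \in I_V,\, e_1 < v < e_2} r_v$ when $e_1 < e_2$, which gives a clean combinatorial reading of the inequality in terms of the original weights, though this is not needed for the proof itself.
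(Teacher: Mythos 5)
Your proposal is correct and follows the same approach as the paper, which records the factorization $\sZ((a_e)_{I_E}) = \prod_{\substack{e_1<e_2\\ B'(e_1)>B'(e_2)}}\Pi(a_{e_1}^{-1},a_{e_2})$ of the partition function and then simply asserts that the lemma follows, without spelling out the single-pair analysis you supply.

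One imprecision worth flagging: the infinite product $\prod_{i\ge 0}\tfrac{1-tq^iu}{1-q^iu}$ can in fact be a finite positive number for some $u>1$ (for instance when $q<t$ and $1/t<u<1/q$: the $i=0$ factor is a ratio of two negatives, hence positive, and the tail always converges for any fixed $u$ since $q^iu\to 0$), so your parenthetical ``in any case it is not a finite positive number'' is not literally correct. The right reading is that $\Pi(a_{e_1}^{-1},a_{e_2})$ here stands for the nonnegative sum $\sum_{\lambda}P_\lambda(a_{e_1}^{-1})Q_\lambda(a_{e_2})$, which for single variables is a power series $\sum_{k\ge 0}c_k u^k$ in $u=a_{e_1}^{-1}a_{e_2}$ with $c_k>0$ and a pole at $u=1$, hence with radius of convergence exactly $1$ and divergent whenever $u\ge 1$. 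Moreover, this sum is literally a sub-sum of $\sZ$: restrict to plane partitions with $\pi^v=(k)$ for $e_1<v<e_2$ and $\pi^v=\emptyset$ elsewhere, whose unnormalized weight is $a_{e_1}^{-k}Q_{(k)}(a_{e_2})=c_k u^k$. Its divergence therefore forces $\sZ=\infty$ directly, which also sidesteps the need to interpret the equality $\sZ=\prod\Pi$ in $[0,\infty]$ over a possibly infinite set of pairs; your converse for finite $I$ is fine as stated.
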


We note that for $I$ of finite length, each diagonal partition $\pi^v$ of $\pi \in \cP_B$ has bounded length $\ell(\pi^v)$ depending only $B$ and $v$. Thus for finite $I_E$, the finiteness of $\sZ((a_e)_{I_E})$ implies the existence of the multicut expectations of (\ref{eq:rppae}).

For the measure $\P^{B,r,\vec{s}}_{\alpha,\ft}$, a suitable choice for $a_e$ is given by
\begin{align} \label{eq:perspec}
a_e = r^v s_0 \cdots s_{\lfloor e \rfloor} = r^v s_0 \cdots s_{\lfloor e \rfloor}.
\end{align}

The main formula for the observables $\wp$ can be obtained by specializing the formal RPPs, taking $X_e \mapsto a_e^{1 - 2B'(e)}$. For $x \in I_V$, define the function
\begin{align} \label{eq:GB}
\begin{split}
G^B_{<x}(z;\e,\ft) &= \prod_{\substack{e<x,e \in I_E \\ B'(e) = 1}} \frac{1 - (t a_e z)^{-1}}{1 - (a_e z)^{-1}} = \prod_{i=0}^{p-1} \prod_{\substack{e < x, e \in I_E \\ e \in p\Z + i + \frac{1}{2}}} \left(\frac{1 - (tr^e(s_0 \cdots s_i) z)^{-1}}{1 - (r^e(s_0\cdots s_i) z)^{-1}} \right)^{B'(e)} \\
G^B_{>x}(z;\e,\ft) &= \prod_{\substack{e>x,e \in I_E \\ B'(e) = 0}} \frac{1 - a_e z}{1 - t a_e z} = \prod_{i=0}^{p-1} \prod_{\substack{e > x, e \in I_E \\ e \in p\Z + i + \frac{1}{2}}} \left( \frac{1 - r^e (s_0\cdots s_i) z}{1 - t r^e (s_0\cdots s_i) z} \right)^{1-B'(e)}
\end{split}
\end{align}

Given some function $g(z)$ in one-variable and $Z = (z_1,\ldots,z_k)$ an ordered collection of variables, we write
\[ g(Z) := \prod_{i=1}^k g(z_i). \]

\begin{theorem} \label{thm:obs}
Consider the measure $\P^{B,r,\vec{s}}_{\alpha,\ft}$ and let $(\pi^x)_{x\in I}$ denote the (random) diagonal partitions. Let $x_1 \le \cdots \le x_m$ be in $I_V$ and $k_1,\ldots,k_m > 0$  be integers. Suppose there exist positively oriented contours $\{\cC_{i,j}\}_{\substack{1 \leq j \leq k_i \\ 1 \leq i \leq m}}$ such that
\begin{itemize}
    \item the contour $\cC_{i,j}$ is contained in the domain bounded by $t \cC_{i',j'}$ whenever $(i,j) < (i',j')$ in lexicographical ordering;
    \item each domain bounded by $\cC_{i,j}$ contains $0$ and the poles of $G_{<x}^B(z;\e,\ft)$ but not the poles of $G_{>x}^B(z;\e,\ft)$.
\end{itemize}
Then
\begin{align*}
\E[\wp_{k_1}(\pi^{x_1};q,t) \cdots \wp_{k_m}(\pi^{x_m};q,t)] = \oint \cdots \oint \prod_{a < b} C(Z_a,Z_b) \prod_{a=1}^m G^B_{<x_a}(Z_a;\e,\ft) G^B_{>x_a}(Z_a;\e,\ft) \,DZ_a
\end{align*}
where $|Z_i| = k_i$, $Z_i = (z_{i,1},\ldots,z_{i,k_i})$, the contour of $z_{i,j}$ is given by $\cC_{i,j}$.
\end{theorem}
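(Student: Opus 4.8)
The plan is to obtain the formula by specializing the formal residue identity of Proposition~\ref{prop:frpp} to the measure $\P^{B,r,\vec{s}}_{\alpha,\ft}$ and then rewriting the resulting formal residue as the iterated contour integral. If the back wall domain $I$ is infinite I would first reduce to the finite case: truncate $B$ to back walls $B^{(L)}$ with finite domains increasing to $I$, establish the identity for each $B^{(L)}$, and pass to $L\to\infty$. On the left side the moments converge, the domination being supplied by the summability in Lemma~\ref{lem:ae<1}; on the right side the finite products defining $G^{B^{(L)}}_{<x_a}$, $G^{B^{(L)}}_{>x_a}$ converge to the infinite products $G^{B}_{<x_a}$, $G^{B}_{>x_a}$ of (\ref{eq:GB}) uniformly on any fixed contour system of the kind in the statement, again by Lemma~\ref{lem:ae<1}, so dominated convergence on the contour integral closes the reduction. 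Assume henceforth that $I$ is finite.

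For finite $I$, recall from the discussion around (\ref{eq:rppae}) that, with $a_e$ as in (\ref{eq:perspec}), the measure $\P^{B,r,\vec{s}}_{\alpha,\ft}$ is precisely the image of the formal RPP $\P^{B,\mathbf{f}}$ under the one-variable specializations $X^e\mapsto a_e^{1-2B'(e)}$, and summability (Lemma~\ref{lem:ae<1}) makes the partition function finite so that all multicut expectations exist. This specialization is a continuous unital $\C$-algebra homomorphism on the relevant completed tensor products, hence by Lemma~\ref{lem:rescom} it commutes with the residue operators; applying it to the identity of Proposition~\ref{prop:frpp} therefore turns the left side into $\E_{\P^{B,r,\vec{s}}_{\alpha,\ft}}[\prod_i\wp_{k_i}(\pi^{x_i})]$ (after identifying the cut positions appropriately; the $\wp_{k}$ themselves are unchanged by the specialization) and on the right replaces each $H(X^e,qZ_a^{-1};t^{-1})$ with $e<x_a$, $B'(e)=1$, and each $H(q^{-1}Z_a,X^e;t)^{-1}$ with $e>x_a$, $B'(e)=0$, by the corresponding one-variable factors. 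A change of variables $z_{a,j}\mapsto q\,z_{a,j}$ — under which $DZ$ of (\ref{eq:DZ}) and each $C(Z_a,Z_b)$ of (\ref{eq:czw}) are invariant, being built only from the ratios $z_i/z_j$ and the forms $dz_i/z_i$ — then collects those factors into $G^{B}_{<x_a}(Z_a;\e,\ft)$ and $G^{B}_{>x_a}(Z_a;\e,\ft)$. At this stage the desired identity holds with $\oint\cdots\oint$ still denoting the formal residue.

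The final step is to recognize that formal residue as a genuine iterated contour integral over the contours $\cC_{i,j}$. By construction the formal residue is the coefficient of $(z_{1,1}\cdots z_{m,k_m})^{-1}$ in the formal Laurent series obtained by expanding every $(1-v)^{-1}$ in $DZ$, in the $C(Z_a,Z_b)$, and in $G^{B}_{<x_a}$, $G^{B}_{>x_a}$ according to the conventions of Section~\ref{sec:obs}; equivalently it equals $(2\pi\i)^{-\sum_i k_i}$ times the iterated integral over any contour system lying in the polyannulus on which all of these expansions converge absolutely. The two bulleted hypotheses on $\{\cC_{i,j}\}$ place the contours precisely in this polyannulus: demanding that the disk bounded by $\cC_{i,j}$ contain the poles of $G^{B}_{<x_a}$ but not those of $G^{B}_{>x_a}$ forces the expansions of those two rational factors to be the ones converging on an annulus separating the two families of poles, while the nesting $\cC_{i,j}\subset t\,\cC_{i',j'}$ for $(i,j)<(i',j')$, together with $0<q,t<1$, is exactly what makes the expansions occurring in $DZ$ and in each $C(Z_a,Z_b)$ converge on the product of the contours. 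That these hypotheses are not vacuous follows from Lemma~\ref{lem:ae<1}, which separates the two pole families, together with the ordering $x_1\le\cdots\le x_m$. With the contours in the region of convergence, Fubini's theorem and the residue theorem identify the iterated integral with the coefficient of $(z_{1,1}\cdots z_{m,k_m})^{-1}$, i.e.\ with the formal residue, which finishes the proof of Theorem~\ref{thm:obs}.

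The main obstacle I anticipate is the bookkeeping in the last paragraph: one must verify that the several families of geometric expansions — those internal to $DZ$, those in each $C(Z_a,Z_b)$, and those in $G^{B}_{<x_a}$, $G^{B}_{>x_a}$ across all cuts $a=1,\dots,m$ — are \emph{simultaneously} absolutely convergent on a single admissible nested contour system, and that closing the contours in the right order reproduces exactly the coefficient of $(z_{1,1}\cdots z_{m,k_m})^{-1}$ with no spurious residue contributions. The remaining ingredients — the domination in the $L\to\infty$ reduction, the scale-invariance of $DZ$ and of $C$, and the continuity of the specialization — are routine.
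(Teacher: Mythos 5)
Your proposal is correct and follows essentially the same route as the paper: specialize the formal-RPP residue identity of Proposition~\ref{prop:frpp} via $X^e\mapsto a_e^{1-2B'(e)}$, rewrite the formal residue as a genuine iterated contour integral by placing the contours in the region where all geometric expansions (in $DZ$, in each $C(Z_a,Z_b)$, and in the $H$-factors) converge, perform the scale-invariance change of variables $z_{a,j}\mapsto q z_{a,j}$, and pass from finite to infinite $I$ by truncation. One small caveat: the specialization $X^e\mapsto a_e^{\pm 1}$ is not continuous in the graded topology (which would require $\ldeg(f_n-f)\to\infty$ to force the specialized values to converge, but this fails for e.g. $f_n-f=X_e^n$ with $a_e>1$), so invoking Lemma~\ref{lem:rescom} is not quite the right justification for commuting the specialization past the residue; the commutation instead follows because the residue is coefficient extraction and, for finite $I$ with the summability guaranteed by Lemma~\ref{lem:ae<1}, the specialization can be applied coefficient-by-coefficient with absolutely convergent sums. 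This is exactly what the paper tacitly uses when it says the finite-$I$ case ``follows from Proposition~\ref{prop:frpp}.''
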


\begin{remark} \label{rem:ae<1}
By Lemma \ref{lem:ae<1}, given any poles $\fp_1,\fp_2$ of $G_{<x}^B(z;\e,\ft), G_{>x}^B(z;\e,\ft)$ respectively, we have $\fp_1 < \fp_2$. The existence of the contours $\cC_{i,j}$ is then dependent on whether there is enough distance between these two sets of poles. 
\end{remark}

\begin{proof}[Proof of Theorem \ref{thm:obs}]
If the domain $I$ has finite length, then the theorem follows from Proposition \ref{prop:frpp}. To see how to obtain the contour conditions, we recall the formal definition of (\ref{eq:Hnorm}) and (\ref{eq:DZ}). The formal expansion of (\ref{eq:Hnorm}) that we desire amounts to taking contours which contain the poles of $G_{<x}^B(z;\e,\ft)$ but not the poles of $G_{>x}^B(z;\e,\ft)$. The expressions $(1 - \frac{z_{i,j}}{tz_{i',j'}})^{-1}$ which appear in $DZ$ and $C(Z,W)$ are expanded as $\sum_{n\ge 0} \frac{z_{i,j}^n}{(tz_{i',j'})^n}$ which requires the condition that $\cC_{i,j}$ is contained in the domain bounded by $t \cC_{i',j'}$ whenever $(i,j) < (i',j')$ in lexicographical order. Note the change of variables rewriting $q^{-1}Z_a$ as $Z_a$.

If $I$ has infinite length, define $\P_N \defeq \P^{B_N,r,\vec{s}}_{\alpha,\ft}$ where the back wall $B_N:I^N \to \R$ is defined to be the restriction of $B$ to $I^N := I \cap [-N,N]$, for $N \in \mathbb{N}$. The summability of the weights of $\P \defeq \P^{B,r,\vec{s}}_{\alpha,\ft}$ implies the summability of the weights of $\P_N$.

Let $(a_e)_{e\in I_E}$ be the sequence of specializations for $\P$ as in (\ref{eq:perspec}). Then $(a_e)_{e \in I_{N,e}}$ where $I^N_E = I_E \cap [N,N]$ is the sequence of specializations for $\P_N$. Let $\sZ$ and $\sZ_N$ denote the partition functions for $\P$ and $\P_N$ respectively. Choose $x_1 \leq \cdots \leq x_m \in I_E$ and integers $k_1,\ldots,k_m > 0$. Consider $N$ large enough so that $x_1,\ldots,x_m \in [-N,N]$. We have
\begin{align*}
\begin{multlined}
\sZ_N \E_{\P_N}\left[ \prod_{i=1}^m \wp_{k_i}(\pi^{x_i}) \right]
= \sum_{(\pi^v) \in \Y^{I^N_V}} \left[ \prod_{i=1}^m \wp_{k_i}(\pi^{x_i}) \right] \prod_{e \in I^N_E} F_{\pi^{e-\frac{1}{2}},\pi^{e+\frac{1}{2}}}(a_e^{1 - 2B'(e)};B'(e),q,t) \\
\to \sum_{(\pi^v) \in \Y^{I_V}} \left[ \prod_{i=1}^m \wp_{k_i}(\pi^{x_i}) \right] \prod_{e \in I_E} F_{\pi^{e-\frac{1}{2}},\pi^{e+\frac{1}{2}}}(a_e^{1 - 2B'(e)};B'(e),q,t) = \sZ \E_\P \left[ \prod_{i=1}^m \wp_{k_i}(\pi^{x_i}) \right] 
\end{multlined}
\end{align*}
as $N\to\infty$ since the sequence is monotonically increasing. Since $\sZ_N \to \sZ$, we have as $N\to\infty$
\[ \E_{\P_N}\left[ \prod_{i=1}^m \wp_{k_i}(\pi^{x_i}) \right] \to \E_\P \left[ \prod_{i=1}^m \wp_{k_i}(\pi^{x_i}) \right]. \]
On the other hand, for any $x \in I_V$, we have $G_{<x}^{B_N}(z;\e,\ft) \to G_{<x}^B(z;\e,\ft)$ as $N\to\infty$ uniformly away from the poles of $G_{<x}^B(z;\e,\ft)$, and likewise for $G_{>x}^B(z;\e,\ft)$. By applying the theorem for the known case of $B_N$ and taking $N\to\infty$, the general theorem follows.
\end{proof}

\section{Limit Conditions and Back Walls} \label{sec:backwall}
In this section, we identify the class of functions which can be realized as limits of back walls. As mentioned in Section \ref{sec:results}, our limit theorems restrict to a dense subset of this class. We motivate this restriction through the concept of singular points and some examples from the literature. Our study of singular points is also used in Section \ref{sec:asymp} for asymptotics.

We recall the Limit Conditions.

\begin{limcon*}
Fix a $p$-periodic, bi-infinite sequence $\vec{s} = (\cdots,s_{-1},s_0,s_1,\cdots) \in \R_{>0}^\infty$ such that $s_0 \cdots s_{p-1} = 1$. Let $\P^{B,r,\vec{s}}_{\alpha,\ft}$ be a family parametrized by a small parameter $\e > 0$ where $B:I^\e \to \R$ and $r \defeq e^{-\e}$ vary with $\e$ so that
\begin{enumerate}
    \item there exist integers
    \[ \inf I^\e = v_0(\e) < \cdots < v_n(\e) = \sup I^\e \]
    such that for each $1 \le \ell \le n$, $B'(x)$ is $p$-periodic on $(v_{\ell-1}, v_\ell) \cap (\Z + \frac{1}{2})$;
    \item there exists an interval $I \subset \R$ and a piecewise linear $\cB:I \to \R$ with non-differentiable points
    \[ \inf I = V_0 < \cdots < V_n = \sup I \]
    such that
    \[ \e v_\ell(\e) \to V_\ell \quad (0 \le \ell \le n), \quad \e B^\e(x/\e) \to \cB(x) \]
    as $\e \to 0$, where the latter convergence is uniform over any compact subset of $I$.
\end{enumerate}
\end{limcon*}

In the setting of global limits under this limit regime, some of the information encoded by $\vec{s}$ is washed away. The dependence on $\vec{s}$ is only through the values and corresponding multiplicities of the sequence $\{s_0 \cdots s_i\}_{i=0}^{p-1}$. This motivates the following definition.

\begin{definition}
We associate to $\vec{s}$ the multiset $\cS = \{s_0 \cdots s_i\}_{i=0}^{p-1}$. Given $\sigma \in \cS$, let
\[ S_\sigma = \{i \in [[0,p-1]]: \sigma = s_0 \cdots s_i \}. \]
\end{definition}

In particular, we remember the multiplicity of each member $\sigma \in \cS$ in $(s_0\cdots s_i)_{i=0}^{p-1}$. The multiplicity of $\sigma$ is given by $|S_\sigma|$. In replacing $\vec{s}$ with $\cS$, we forget about the particular order of the sequence $(s_0 \cdots s_i)_{i=0}^{p-1}$.

For fixed $\cS$, it is not the case that \emph{any} piecewise linear $\cB$ may be realized as the limiting back wall of some $\P^{B,r,\vec{s}}_{\alpha,\ft}$ satisfying the Limit Conditions. This is due to the fact that $\P^{B,r,\vec{s}}_{\alpha,\ft}$ is not a probability measure for arbitrary $B$; the conditions required for the summability of the weights, summarized by Lemma \ref{lem:ae<1}, severely restricts the class of $B$ which give rise to probability measures. We now characterize the set of $\cB$ which can be achieved by the Limit Conditions.

Given a real-valued function $f$ on an interval $I$, let $f(x^\pm) = \lim_{u \to x^\pm} f(u)$.

\begin{definition} \label{def:B(S)}
Let $\tau_1 \ge \cdots \ge \tau_p$ be a labeling of the elements of $\cS$ and set $\tau_0 = \infty, \tau_{p+1} = 0$. Let $\mathfrak{B}(\cS)$ denote the set of continuous piecewise linear functions $\cB:I \to \R$ on some interval domain such that
\begin{enumerate}
    \item \label{def:B(S)_1} the non-differentiable points of $\cB$ are given by
    \[ \inf I = V_0 < \cdots < V_n = \sup I; \]
    \item \label{def:B(S)_2} for each $x \in I \setminus \{V_\ell\}_{\ell=0}^n$, $\cB'(x) \in \{ \frac{i}{p} \}_{i=0}^p$;
    \item \label{def:B(S)_3} if $V \le W$ are non-differentiable points of $\cB$ with $\cB'(V^-) = \frac{i}{p}$, $\cB'(W^+) = \frac{j}{p}$, then
    \begin{align} \label{lc}
    \tau_i^{-1} \tau_{j+1} e^{-(W - V)} \le 1.
    \end{align}
\end{enumerate}
\end{definition}

\begin{remark}
In the definition above, we take the convention that $\cB'(V_0^-) = 0$ and $\cB'(V_n^+) = 1$.
\end{remark}

\begin{theorem} \label{thm:nec_suff}
A function $\cB: I \to \R$ is a limiting back wall of some $\P^{B,r,\vec{s}}_{\alpha,\ft}$ satisfying the Limit Conditions if and only if $\cB \in \fB(\cS)$.
\end{theorem}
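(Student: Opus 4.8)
\textbf{Proof plan for Theorem \ref{thm:nec_suff}.}

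The plan is to establish the two implications separately, with the main work being the combinatorial translation of the summability constraint (Lemma \ref{lem:ae<1}) into condition (\ref{lc}) of Definition \ref{def:B(S)}. First I would address necessity. Suppose $\cB$ is a limiting back wall of some family $\P^{B,r,\vec s}_{\alpha,\ft}$ satisfying the Limit Conditions. Condition (\ref{def:B(S)_1}) is immediate from Limit Condition (\ref{limreg:2}). For condition (\ref{def:B(S)_2}), observe that on each interval $(v_{\ell-1}(\e),v_\ell(\e))$ the derivative $B'$ is $p$-periodic and takes values in $\{0,1\}$, so the average slope on any such block is a fraction $i/p$ with $i \in \{0,\ldots,p\}$; passing to the $\e\to 0$ limit (using that $\e B^\e(x/\e)\to \cB(x)$ uniformly on compacts, hence the slopes of the limiting piecewise-linear function are exactly these block averages) gives $\cB'(x)\in\{i/p\}_{i=0}^p$. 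The heart of necessity is condition (\ref{def:B(S)_3}). Here I would take two non-differentiable points $V\le W$ of $\cB$ with $\cB'(V^-)=i/p$ and $\cB'(W^+)=j/p$, lift them back to nearby lattice points $v,w$ in the prelimit, and apply Lemma \ref{lem:ae<1} to carefully chosen edges $e_1<e_2$ in $I_E$: one picks $e_1$ just to the left of $v$ sitting on an ``up-step'' ($B'(e_1)=1$) and $e_2$ just to the right of $w$ sitting on a ``down-step'' ($B'(e_2)=0$), so that $B'(e_1)>B'(e_2)$ and Lemma \ref{lem:ae<1} forces $a_{e_1}^{-1}a_{e_2}<1$. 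Using (\ref{eq:perspec}), $a_e = r^e s_0\cdots s_{\lfloor e\rfloor}$, so this ratio is $r^{e_2-e_1}\,(s_0\cdots s_{\lfloor e_2\rfloor})(s_0\cdots s_{\lfloor e_1\rfloor})^{-1}$; one must choose the residues of $e_1,e_2$ modulo $p$ to extract exactly the extremal products $\tau_i$ and $\tau_{j+1}$ (this is where the definitions of $S_\sigma$ and the ordering $\tau_1\ge\cdots\ge\tau_p$ enter: the worst case over admissible choices of edges produces the largest $a_{e_2}/a_{e_1}$). Taking $\e\to 0$ with $\e e_1\to V$, $\e e_2\to W$, $r=e^{-\e}$, the inequality $a_{e_1}^{-1}a_{e_2}<1$ becomes $\tau_i^{-1}\tau_{j+1}e^{-(W-V)}\le 1$ in the limit.

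For sufficiency, suppose $\cB\in\fB(\cS)$; I would construct an explicit approximating sequence of back walls $B^\e$. On each linear piece of $\cB$ with slope $i/p$, I would take $B^\e$ to be, up to $O(\e)$-scale adjustments near the breakpoints $V_\ell$, a $p$-periodic staircase with exactly $i$ up-steps and $p-i$ down-steps per period, arranged so that the partial-product sequence $(s_0\cdots s_{\lfloor e\rfloor})$ restricted to the up-steps realizes the ordered values $\tau_\bullet$ in the ``monotone'' pattern that makes the relevant ratios largest — i.e. choosing the arrangement of the periodic pattern so that the summability check of Lemma \ref{lem:ae<1} reduces precisely to the finitely many inequalities (\ref{lc}). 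One then verifies: (a) $\e v_\ell(\e)\to V_\ell$ and $\e B^\e(x/\e)\to\cB(x)$ uniformly on compacts, so the Limit Conditions (\ref{limreg:1})--(\ref{limreg:2}) hold; and (b) the weights of $\P^{B^\e,r,\vec s}_{\alpha,\ft}$ are summable for all small $\e$, using the sufficiency clause of Lemma \ref{lem:ae<1} (valid since each $I^\e$ can be taken finite, or by an exhaustion argument as in the proof of Theorem \ref{thm:obs}): every pair $e_1<e_2$ with $B'(e_1)>B'(e_2)$ satisfies $a_{e_1}^{-1}a_{e_2}<1$ because the strongest such constraint over all pairs straddling a given pair of breakpoints is exactly the strict version of (\ref{lc}) for $\e$ small, which follows from (\ref{lc}) together with $e^{-(W-V)}<e^{-(W-V)+O(\e)}$-type slack coming from the $O(\e)$ freedom in placing breakpoints. (For the degenerate boundary slopes one uses the conventions $\cB'(V_0^-)=0$, $\cB'(V_n^+)=1$, $\tau_0=\infty$, $\tau_{p+1}=0$, which make the corresponding inequalities either vacuous or automatically strict.)

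The main obstacle I anticipate is the bookkeeping in matching the extremal partial products $\{s_0\cdots s_i\}$ to the ordered labels $\tau_1\ge\cdots\ge\tau_p$ while simultaneously respecting the $p$-periodicity of $B'$ on each block: one must check that there is enough freedom in choosing which residues mod $p$ carry the up-steps versus down-steps on each linear piece so that, on the one hand (necessity), \emph{every} admissible prelimit forces (\ref{lc}), and on the other (sufficiency), \emph{some} admissible prelimit satisfies all the summability inequalities. In particular, a block with limiting slope $i/p$ must use up-steps on some $i$-element subset of residue classes, and the relevant product $s_0\cdots s_{\lfloor e\rfloor}$ depends on \emph{where} in the period the edge $e$ sits, not just on its step type; I would argue that the worst/best cases are always realized by a ``sorted'' arrangement of the periodic pattern, reducing everything to the clean statement in terms of $\tau_i$ and $\tau_{j+1}$. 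The analytic limiting arguments ($\e\to0$ passage, uniform convergence on compacts, continuity of the various discrete-to-continuous identifications) are routine by comparison.
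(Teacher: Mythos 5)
Your overall plan matches the paper's: both directions hinge on the interplay between the summability criterion (Lemma~\ref{lem:ae<1}) and the $p$-periodic block structure, and sufficiency is handled by an explicit staircase construction with a ``sorted'' choice of up-steps. However, there is a genuine gap in your necessity argument.

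You write that you would ``choose the residues of $e_1,e_2$ modulo $p$ to extract exactly the extremal products $\tau_i$ and $\tau_{j+1}$.'' But in the necessity direction you do not get to choose which residues carry up-steps: the admissible prelimit $B$ has already fixed, on each block $(v_{\ell-1},v_\ell)$ with limiting slope $i/p$, an $i$-element set $A_\ell \subset [[0,p-1]]$ of residues on which $B'=1$. If $A_\ell$ were arbitrary, the best up-step product you could pick near $V$ might be $\max_{a\in A_\ell}(s_0\cdots s_a)$, which could be as large as $\tau_1$ rather than $\tau_i$; since $\tau_1^{-1} \le \tau_i^{-1}$, the resulting inequality $\sigma^{-1}\tau_{j+1}e^{-(W-V)}\le 1$ with $\sigma=\max_{a\in A_\ell}(s_0\cdots s_a)$ would then be \emph{weaker} than the desired \eqref{lc} and you could not conclude. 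What you are missing is that the set $A_\ell$ is \emph{forced} to be the $i$ residues with the smallest partial products: this is the content of the paper's Lemma~\ref{lem:sigma_order}, and it is proved by applying Lemma~\ref{lem:ae<1} to adjacent edges \emph{within the same block}, where $p$-periodicity guarantees an up-step edge and a down-step edge at distance $<p$; if the products were not sorted, the corresponding ratio $a_{e_1}^{-1}a_{e_2}$ would exceed $1$ as $\e\to 0$. You identified this bookkeeping as ``the main obstacle'' but did not resolve it, and your framing (``worst case over admissible choices of edges'') obscures the key point that the prelimit is rigid, not free. Once Lemma~\ref{lem:sigma_order} is in hand, the rest of your plan — picking $e_1$ on the block to the left of $V$ realizing $\tau_i$, $e_2$ on the block to the right of $W$ realizing $\tau_{j+1}$, applying Lemma~\ref{lem:ae<1}, and taking $\e\to0$ — goes through, and your sufficiency sketch (sorted staircase plus a case analysis for intra-block and inter-block pairs) is in line with the paper.
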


Before proving this theorem, we provide an important link between the slopes of $\cB'$ and that of the prelimit $B'(x)$ on $p\Z + \frac{1}{2}$.

\begin{lemma} \label{lem:sigma_order}
Suppose $\P^{B,r,\vec{s}}_{\alpha,\ft}$ satisfies the Limit Conditions. Fix $\ell \in [[1,n]]$ and let $\cB'(V_{\ell-1},V_\ell) = \frac{i}{p}$. Then for sufficiently small $\e > 0$, there exists a set $A \subset[[0,p-1]]$ (potentially varying in $\e$) of size $i$ such that
\[ s_0 \cdots s_a \le s_0 \cdots s_b, \quad \mbox{for all~} a \in A,~ b \in [[0,p-1]] \setminus A \]
and
\[ B' = \1\left[A + p\Z + \frac{1}{2} \right] \]
on $I_E^\e \cap (v_{\ell-1},v_\ell)$.
\end{lemma}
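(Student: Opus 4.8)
The plan is to decode what the Limit Conditions say about the prelimit back wall $B$ on the block $(v_{\ell-1}, v_\ell)$, and then to translate the summability constraint from Lemma \ref{lem:ae<1} into the asserted order relation on the partial products $s_0\cdots s_i$. First I would recall that by Limit Condition \eqref{limreg:1}, $B'$ is $p$-periodic on $(v_{\ell-1},v_\ell)\cap(\Z+\tfrac12)$, so $B'$ restricted to this block is determined by its values on one period, i.e.\ by the subset $A \subset [[0,p-1]]$ of residues $i$ with $B'(e) = 1$ for $e \in p\Z + i + \tfrac12$ lying in the block. Thus automatically $B' = \1[A + p\Z + \tfrac12]$ on $I_E^\e \cap (v_{\ell-1},v_\ell)$; the only content is that $|A| = i$ and that $A$ consists of the residues realizing the $i$ smallest values among $\{s_0\cdots s_0, \ldots, s_0\cdots s_{p-1}\}$.

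For the claim $|A| = i$: over one period of length $p$ the back wall $B$ rises by $|A|$ (one unit per edge with slope $1$), so the average slope of $B$ on the block is $|A|/p$. On the other hand Limit Condition \eqref{limreg:2} forces $\e B^\e(x/\e) \to \cB(x)$ uniformly on compacts, and $\cB$ has constant slope $\tfrac{i}{p}$ on $(V_{\ell-1},V_\ell)$ with $\e v_{\ell-1}(\e)\to V_{\ell-1}$, $\e v_\ell(\e) \to V_\ell$; comparing the increment $\e\big(B^\e(v_\ell(\e)^-) - B^\e(v_{\ell-1}(\e)^+)\big)$ computed both ways gives $\tfrac{|A|}{p}(V_\ell - V_{\ell-1}) = \tfrac{i}{p}(V_\ell - V_{\ell-1})$ in the limit, hence $|A| = i$ for all sufficiently small $\e$ (the slope being an integer multiple of $1/p$ and the block of positive limiting length).

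For the order relation: the $a_e$ along the block are, by \eqref{eq:perspec}, of the form $r^{e}\, s_0\cdots s_{\lfloor e\rfloor}$, and within one period two consecutive edges $e_a \in p\Z + a + \tfrac12$ (with $a\in A$, so $B'(e_a)=1$) and $e_b \in p\Z + b + \tfrac12$ (with $b\notin A$, so $B'(e_b)=0$) have $B'(e_a) > B'(e_b)$. If $e_a < e_b$, Lemma \ref{lem:ae<1} gives $a_{e_a}^{-1} a_{e_b} < 1$, which after cancelling the $r$-powers (bounded away from forcing a sign change once $\e$ is small, since $r = e^{-\e} \to 1$) yields $s_0\cdots s_b \le s_0 \cdots s_a$ in the limit --- wait, I must be careful with direction. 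Let me instead argue purely combinatorially: suppose for contradiction there exist $a\in A$, $b\notin A$ with $s_0\cdots s_a > s_0\cdots s_b$. Then, using $p$-periodicity of $B'$ on the block and the fact that the block has length $> p$ for small $\e$ (its rescaled length tends to $V_\ell - V_{\ell-1} > 0$), one can find edges $e_a, e_b$ in the block with $e_a$ in residue class $a$, $e_b$ in residue class $b$, $e_a < e_b$, $e_b - e_a$ bounded; then $a_{e_a}^{-1}a_{e_b} = r^{e_b - e_a}(s_0\cdots s_b)/(s_0\cdots s_a)$ and as $\e\to 0$ this tends to $(s_0\cdots s_b)/(s_0\cdots s_a) < 1$ --- so that case is fine; the bad case is $(s_0\cdots s_b)/(s_0\cdots s_a) > 1$, i.e.\ $s_0\cdots s_a < s_0\cdots s_b$ with $a\in A$, $b\notin A$, which we must \emph{exclude}, and which is exactly what a violation of the asserted inequality looks like. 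In that bad case, for $\e$ small, $r^{e_b - e_a}(s_0\cdots s_b)/(s_0\cdots s_a) > 1$, contradicting Lemma \ref{lem:ae<1}; the ties ($s_0\cdots s_a = s_0\cdots s_b$) are permitted by the non-strict inequality in the statement. This contradiction establishes the order relation.

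The main obstacle I anticipate is the bookkeeping in the last step: ensuring that whenever $a\in A$ and $b\notin A$ one can always locate a pair of edges $e_a < e_b$ (or handle $e_b < e_a$ symmetrically by choosing the next period) within the block at bounded separation so that the limit of $a_{e_a}^{-1}a_{e_b}$ is precisely $(s_0\cdots s_b)/(s_0\cdots s_a)$ with no stray $r$-power surviving, and confirming that $r = e^{-\e}\to 1$ makes the $r^{e_b-e_a}$ factor harmless. The other mild subtlety is justifying that the block genuinely contains at least one full period once $\e$ is small, which follows from $\e(v_\ell(\e) - v_{\ell-1}(\e)) \to V_\ell - V_{\ell-1} > 0$; this is where I use that $V_{\ell-1} < V_\ell$ are distinct non-differentiable points of $\cB$. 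Everything else is a direct unwinding of definitions \eqref{eq:perspec}, \eqref{eq:ae} and Lemma \ref{lem:ae<1}.
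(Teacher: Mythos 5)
Your overall strategy — reduce $B'$ on the block to a subset $A\subset[[0,p-1]]$ via $p$-periodicity, obtain $|A|=i$ from the limiting slope, and derive the order relation from the formula $a_{e_1}^{-1}a_{e_2}=r^{e_2-e_1}(s_0\cdots s_b)/(s_0\cdots s_a)$ combined with the summability constraint of Lemma \ref{lem:ae<1} — is the same as the paper's, and your justification of $|A|=i$ via rescaled increments is more careful than the paper's terse assertion of it.

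The gap is in the final logical step. You assume for contradiction $s_0\cdots s_a>s_0\cdots s_b$ for some $a\in A$, $b\notin A$, compute $a_{e_a}^{-1}a_{e_b}<1$, and rightly observe there is no contradiction. You then consider the opposite case $s_0\cdots s_a<s_0\cdots s_b$, derive a contradiction, and describe it as ``exactly what a violation of the asserted inequality looks like.'' It is not: $s_0\cdots s_a<s_0\cdots s_b$ \emph{satisfies} the asserted $s_0\cdots s_a\le s_0\cdots s_b$. What your computation actually proves is the reverse inequality, $s_0\cdots s_a\ge s_0\cdots s_b$ for $a\in A$, $b\notin A$, and the closing sentence ``This contradiction establishes the order relation'' does not follow from what precedes it. The underlying cause is that the lemma as printed appears to carry a sign error and should read $\ge$: for $a\in A$ one has $B'(e_a)=1$, for $b\notin A$ one has $B'(e_b)=0$, so Lemma \ref{lem:ae<1} applied to a pair $e_a<e_b$ forces $(s_0\cdots s_b)/(s_0\cdots s_a)\le 1$ as $\e\to 0$. (Concretely: $p=2$, $s_0=2$, $s_1=\frac{1}{2}$, $\cB'\equiv\frac{1}{2}$; the only $A$ compatible with summability is $\{0\}$, for which $s_0=2>1=s_0s_1$.) The paper's own proof contains the same slip — it assumes $\sigma=s_0\cdots s_a>\tau=s_0\cdots s_b$ and then asserts $\sigma^{-1}\tau e^{-\e p}>1$, which in fact requires $\tau>\sigma$ — and the downstream use of the lemma in Theorem \ref{thm:nec_suff} identifies $\{s_0\cdots s_a\}_{a\in A_1}$ with the $i$ \emph{largest} values $\{\tau_1,\dots,\tau_i\}$, again matching $\ge$. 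Your computation is correct; the right move was to flag the mismatch with the printed inequality rather than to present the argument as if it confirmed it.
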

\begin{proof}
By the Limit Conditions, we know that for sufficiently small $\e > 0$ there exists a (potentially varying in $\e$) subset $A \subset [[0,p-1]]$ of size $i$ such that
\[ B' = \1\left[A + p\Z + \frac{1}{2} \right]. \]
on $I_E^\e \cap (v_{\ell-1},v_\ell)$.

Assume for contradiction that for arbitrarily small $\e > 0$, there exist (potentially varying in $\e$) pairs $\sigma = s_0 \cdots s_a$ and $\tau = s_0 \cdots s_b$ for some $a \in A$ and $b \in [[0,p-1]] \setminus A$ such that $\sigma > \tau$. For $\e$ small enough so that $(v_{\ell-1},v_\ell)$ has more than $p$ points, we may choose $e_1 \in (v_{\ell-1},v_\ell) \cap (a + p\Z + \frac{1}{2})$, $e_2 \in (v_{\ell-1},v_\ell) \cap (b + p\Z + \frac{1}{2})$ such that $0 < e_2 - e_1 < p$. By (\ref{eq:perspec}), we have
\[ a_{e_1}^{-1} a_{e_2} = \sigma^{-1} \tau r^{e_2 - e_1} > \sigma^{-1} \tau e^{-\e p} \]
where $(a_e)_{e\in I^\e_E}$ is the specialization sequence for $\P^{B,r,\vec{s}}_{\alpha,\ft}$ as defined in (\ref{eq:ae}). For $\e$ sufficiently small, the latter is $> 1$ which violates Lemma \ref{lem:ae<1}.
\end{proof}

\begin{proof}[Proof of Theorem \ref{thm:nec_suff}]
Suppose $\cB: I \to \R$ is a limiting back wall of $\P^{B,r,\vec{s}}_{\alpha,\ft}$ satisfying the Limit Conditions. It is clear that $\cB$ satisfies properties (\ref{def:B(S)_1}) and (\ref{def:B(S)_2}) of Definition \ref{def:B(S)} as a direct consequence of the $p$-periodicity and convergence in the Limit Conditions. It remains to check property (\ref{def:B(S)_3}) of Definition \ref{def:B(S)}. Let $1 \le \ell \le m < n$ and suppose $\cB'(V_\ell^-) = \frac{i}{p}$ and $\cB'(V_m^+)$; note that checking the cases $\ell=0$ and $m  = n$ is trivial. By Lemma \ref{lem:sigma_order}, for small enough $\e > 0$ there exist subsets $A_1,A_2 \in [[0,p-1]]$ of sizes $i,j$ respectively such that
\[ \left. B' \right|_{I_E^\e \cap (v_{\ell-1},v_\ell)} = \1[A_1], \quad \left. B' \right|_{I_E^\e \cap (v_m, v_{m+1})} = \1[A_2]. \]
Moreover,
\[ s_0 \cdots s_a \le s_0 \cdots s_b, \quad \mbox{for } a \in A_1,~ b \in [[0,p-1]] \setminus A_1 \]
so that the multiset $\{s_0 \cdots s_a\}_{a\in A_1}$ coincides with $\{\tau_1,\ldots,\tau_i\}$. Likewise, $\{s_0 \cdots s_a\}_{a \in A_2}$ coincides with $\{\tau_1,\ldots,\tau_j\}$. In particular, there exist
\[ e_1 \in (A_1 + p\Z + \frac{1}{2}) \cap (v_{\ell-1},v_\ell), \quad e_2 \in ([[0,p-1]] \setminus A_2 + p\Z + \frac{1}{2}) \cap (v_m,v_{m+1}) \]
where $s_0 \cdots s_a = \tau_i$, $s_0 \cdots s_b = \tau_{j+1}$, such that
\[ B'(e_1) = 1, \quad B'(e_2) = 0 \]
for small enough $\e > 0$. By $p$-periodicity, we may add that
\[ 0 \le v_\ell - e_1 < p,\quad \quad 0 \le e_2 - v_m < p. \]
By \eqref{eq:perspec} and Lemma \ref{lem:ae<1}, we have
\[ a_{e_1}^{-1} a_{e_2} = \tau_i^{-1} \tau_{j+1} r^{e_2 - e_1} < 1 \]
where $(a_e)_{e\in I^\e_E}$ is the specialization sequence for $\P^{B,r,\vec{s}}_{\alpha,\ft}$ as defined in (\ref{eq:ae}). Taking $\e \to 0$, we obtain
\[ \tau_i^{-1} \tau_{j+1} e^{-(V_m - V_\ell)} \le 1. \]

Conversely, suppose $\cB \in \fB(\cS)$. For small $\e > 0$, set
\[ v_\ell = \lfloor V_\ell/\e \rfloor + \ell, \quad 0 \le \ell \le n; \]
in the case $V_0 = -\infty$ ($V_n = +\infty$) let $v_0 = -\lfloor 1/\e^2 \rfloor$, $v_n = \lfloor 1/\e^2 \rfloor$. Let $B$ be a back wall of a skew diagram such that $B'(x)$ is $p$-periodic in $x \in (v_{\ell-1},v_\ell)$. For each $1 \le \ell \le n$, if we have $\cB'(V_{\ell-1},V_\ell) = \frac{i}{p}$, choose some subset $A_\ell \subset [[0,p-1]]$ of size $i$ so that
\[ s_0 \cdots s_a \le s_0 \cdots s_b, \quad a \in A_\ell, b\in [[0,p-1]] \setminus A_\ell. \]
By fixing $\e B(\lfloor x/\e \rfloor) = \cB(x)$ at some point $x \in I$, we have the convergence
\[ \e B(\lfloor x/\e \rfloor) \to \cB(x). \]
It remains to check that $\P^{B,r,\vec{s}}_{\alpha,\ft}$ defines a probability measure, at least for $\e$ sufficiently small. By Lemma \ref{lem:ae<1}, it suffices to check that $a_{e_1}^{-1} a_{e_2} < 1$ over all edges $e_1 < e_2$ where $B(e_1) = 1$ and $B(e_2) = 0$. We divide this into two cases.

\emph{Case 1: $e_1,e_2 \in (v_{\ell-1},v_\ell)$.} By construction of $B$, we have $B' = \1[A_\ell]$ on $(v_{\ell-1},v_\ell)$. Thus $e_1 \in a + p\Z +\frac{1}{2}$, $e_2 \in b + p\Z + \frac{1}{2}$ for some $a \in A_\ell, b\in [[0,p-1]] \setminus A_\ell$. Thus
\[ a_{e_1}^{-1} a_{e_2} = (s_0 \cdots s_a)^{-1} (s_0 \cdots s_b) r^{e_2 - e_1} < 1. \]
Note that this only relies on $p$-periodicity and did not require property (\ref{def:B(S)_3}) in Definition \ref{def:B(S)}.

\emph{Case 2: $e_1 \in (v_{\ell-1},v_\ell)$, $e_2 \in (v_m,v_{m+1})$ where $0 < \ell \le m < n$.} Again, by construction of $B$ we have $B = \1[A_\ell]$ on $(v_{\ell-1},v_\ell)$ and $B = \1[A_{m+1}]$ on $(v_m,v_{m+1})$. If $\cB'(V_\ell^-) = \frac{i}{p}$ and $\cB'(V_m^+) = \frac{j}{p}$, then we may argue as before to establish that $A_\ell$ coincides with $\{\tau_1,\ldots,\tau_i\}$ and $A_{m+1}$ coincides with $\{\tau_1,\ldots,\tau_j\}$. Then
\[ a_{e_1}^{-1} a_{e_2} = \sigma^{-1} \tau r^{e_2 - e_1} \]
where $\sigma \ge \tau_i$ and $\tau \le \tau_{j+1}$. Since $e_1 < v_\ell \le v_m < e_2$, we have
\[ a_{e_1}^{-1} a_{e_2} \le \tau_i^{-1} \tau_{j+1} r^{v_m - v_\ell + 1} = \tau_i^{-1} \tau_{j+1} e^{-\e(m-\ell+1)} e^{-\e ( \lfloor V_m/\e \rfloor - \lfloor V_\ell/\e \rfloor)} < \tau_i^{-1} \tau_{j+1} e^{-(V_m - V_\ell)} \le 1 \]
where the latter inequality follows from property (\ref{def:B(S)_3}) for $\fB(\cS)$.
\end{proof}

\subsection{Well-Behaved Back Walls}

In this subsection, we introduce a subset $\fB^\Delta(\cS)$ of $\fB(\cS)$ which corresponds to well-behaved back walls. This good behavior is characterized by the presence of only \emph{finitely many} singular points which we describe further below. We avoid a fully rigorous treatment to maintain the focus of the article on limit shape and fluctuation results. However, we provide key ideas and examples which may be further elaborated for rigorous statements.

Before providing the definition of the subset $\fB^\Delta(\cS)$, we begin by introducing and motivating the notion of a singular point. Fix $\cB \in \fB(\cS)$, and define
\begin{align} \label{eq:rho<>}
\begin{split}
\rho_<(\x) &= \max \{ \tau_i^{-1} e^\xi: \xi \le \x, \cB'(\xi^-) = \frac{i}{p}\}, \\
\rho_>(\x) &= \min \{ \tau_{j+1}^{-1} e^\xi: \xi \ge \x, \cB'(\xi^+) = \frac{j}{p} \}.
\end{split}
\end{align}

\begin{lemma}
If $\cB \in \fB(\cS)$ and $\x \in I$, then
\[ \rho_<(\x) \le \rho_>(\x). \]
\end{lemma}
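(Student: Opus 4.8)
The plan is to unfold both optima and reduce the claim to the single inequality \eqref{lc}. Unwinding \eqref{eq:rho<>}, the inequality $\rho_<(\x)\le\rho_>(\x)$ is equivalent to the pointwise statement that $\tau_i^{-1}e^\eta\le\tau_{j+1}^{-1}e^{\eta'}$ whenever $\eta\le\x$ with $\cB'(\eta^-)=i/p$ and $\eta'\ge\x$ with $\cB'(\eta'^+)=j/p$, since $\rho_<(\x)$ is a maximum over the first family of quantities and $\rho_>(\x)$ a minimum over the second. Cancelling $e^{\eta'}$, the inequality to be proved reads $\tau_i^{-1}\tau_{j+1}e^{-(\eta'-\eta)}\le1$, which is exactly the shape of condition \eqref{lc}. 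So the whole proof reduces to establishing this one inequality for every admissible pair $(\eta,\eta')$.

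First I would cut down the range of $\eta$ and $\eta'$. On each open linear piece $(V_{\ell-1},V_\ell)$ the slope of $\cB$, hence the index $i$, is constant, so $\eta\mapsto\tau_i^{-1}e^\eta$ is increasing there; thus the supremum over $\eta\le\x$ contributed by a given piece is attained at its right endpoint $V_\ell$ when $V_\ell\le\x$, and at $\x$ itself when $\x$ lies in that piece. Symmetrically, the infimum over $\eta'\ge\x$ contributed by a piece is attained at a left endpoint $V_{m-1}$ with $V_{m-1}\ge\x$, or else at $\x$. Hence it suffices to verify the inequality when each of $\eta,\eta'$ is either one of the breakpoints $V_0<\cdots<V_n$ (carrying the appropriate one-sided derivative) or equals $\x$. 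The two degenerate sub-cases, $\eta=V_0$ and $\eta'=V_n$, are immediate: by the conventions $\cB'(V_0^-)=0$, $\cB'(V_n^+)=1$ together with $\tau_0=\infty$, $\tau_{p+1}=0$, the corresponding quantities are $0$ and $+\infty$ respectively.

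There remain four cases. If $\eta=V$ and $\eta'=W$ are both breakpoints, then $V\le\x\le W$ and \eqref{lc} with these $V,W$ is precisely the claim. If $\eta=V$ is a breakpoint and $\eta'=\x$ lies strictly inside a piece $(V_{m-1},V_m)$, then $V\le V_{m-1}$ and $\cB'(\x^+)=\cB'(V_{m-1}^+)$, so \eqref{lc} applied to the pair $V\le V_{m-1}$, followed by $e^{V_{m-1}}\le e^{\x}$, gives the bound; the case $\eta=\x$, $\eta'=W$ a breakpoint is symmetric, now using the right endpoint $V_\ell$ of the piece containing $\x$ and the fact that $V_\ell\le W$. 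Finally, if $\eta=\eta'=\x$, then either $\x$ lies inside a piece, where $i=j$ and $\tau_i^{-1}\le\tau_{i+1}^{-1}$ is the ordering $\tau_0\ge\tau_1\ge\cdots\ge\tau_{p+1}$, or $\x$ is a breakpoint, where the claim is \eqref{lc} with $V=W=\x$. The only genuine care needed — and the main obstacle — is the bookkeeping in the mixed cases: one must select the correct pair of breakpoints bracketing $\x$ and verify that the one-sided derivatives fed into \eqref{lc} coincide with those at $\x$, which holds because the slope, hence the index, is constant on each open linear piece of $\cB$.
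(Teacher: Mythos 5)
Your proof is correct and takes essentially the same approach as the paper: unfold $\rho_<,\rho_>$ to the pointwise claim $\tau_i^{-1}e^{\xi_1}\le\tau_{j+1}^{-1}e^{\xi_2}$ for $\xi_1\le\x\le\xi_2$, handle the same-piece case by monotonicity of $\tau_i$, and otherwise locate the bracketing breakpoints $V\le W$ and invoke \eqref{lc}. The preliminary reduction to $\eta,\eta'$ being breakpoints or $\x$ is harmless but unnecessary — in the mixed cases you still end up reaching for the adjacent breakpoint exactly as the paper does directly.
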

\begin{proof}
Suppose $\xi_1 \le \xi_2$ and $\cB'(\xi_1^-) = \frac{i}{p}, \cB'(\xi_2^+) = \frac{j}{p}$. It is enough to show that
\begin{align} \label{eq:xi1xi2}
\tau_i^{-1} e^{\xi_1} \le \tau_{j+1}^{-1} e^{\xi_2}.
\end{align}
If $\xi_1,\xi_2 \in (V_{\ell-1},V_\ell)$ for some $\ell \in [[1,n]]$, then $i = j$ so that (\ref{eq:xi1xi2}) holds. Otherwise, there exists a non-differentiable points points $V,W$ such that
\[ \xi_1 \le V \le W \le \xi_2. \]
Assume that $V$ is the minimal such point and $W$ is the maximal such point. Then
\[ \cB'(V^-) = \cB'(\xi_1^-) = \frac{i}{p}, \quad \cB'(W^+) = \cB'(\xi_2^+) = \frac{j}{p}. \]
Since
\[ \tau_i^{-1} \tau_{j+1} e^{-(W-V)} \le 1 \]
it follows that
\[ \tau_i^{-1} e^{\xi_1} \le \tau_i^{-1} e^V \le \tau_{j+1}^{-1} e^W \le \tau_{j+1} e^{\xi_2}. \]
\end{proof}

We define the singular points to be those points which achieve equality.

\begin{definition} \label{def:spt}
Given $\cB \in \fB(\cS)$, we say that $\x \in I$ is a \emph{singular point of $\cB$} if $\rho_<(\x) = \rho_>(\x)$.
\end{definition}

\begin{definition} \label{def:BD(S)}
Denote by $\fB^\Delta(\cS)$ the subset of $\fB(\cS)$ consisting of $\cB$ with finitely many singular points.
\end{definition}

The concept of singular points is significant due to the following connection with the limit shape. Recall the content of Theorem \ref{thm:macLLN}: if $\P^{B,r,\vec{s}}_{\alpha,\ft}$ satisfies the Limit Condition with $\cB \in \fB^\Delta(\cS)$, then the random rescaled height function $\e h(x/\e,y/\e)$ converges to a deterministic limit $\cH(x,y)$. Then the local proportions $p_{\lloz},p_{\rloz},p_{\hloz}$ converge at each point $(x,y)$. Recall the liquid region is the set of $(x,y)$ where all the proportions are nonzero. We may view the closure of the liquid region as the set of points $(x,y)$ where the local picture is random.

In Section \ref{sec:cpx}, we characterize the liquid region as the set of $(x,y)$ for which some equation $\cG_x^{\cB}(\zeta) = e^{-y}$ determined by $\cB$ has a pair of nonreal complex roots, see (\ref{eq:compeq}). The map $\cB \mapsto \cG_x^{\cB}$ is continuous with respect to the topology on $\fB(\cS)$ introduced above and convergence in compactum on $\HH$ in the image. Thus one can \emph{formally} extend the definition of the liquid region associated to some $\cB \in \fB(\cS) \setminus \fB^\Delta(\cS)$ to be the set of $(x,y)$ such that $\cG_x^{\cB}(\zeta) = e^{-y}$ has a pair of nonreal roots.

Under this alternative definition of the liquid region, one can determine that the singular points of $\cB$ are exactly the points $x$ such that $(x,y)$ is in the liquid region for arbitrarily large $y$. In other words, the singular points of $\cB$ correspond to the horizontal coordinates along which the liquid region is vertically unbounded.

For certain examples of $\cS$ and $\cB \in \fB(\cS) \setminus \fB^\Delta(\cS)$, one can prove the limit shape phenomenon. In these cases, the alternative definition of the liquid region coincides with the original definition of the liquid region in terms of the local proportions of lozenges. Although it is not present in the literature, we believe that one may use the method of correlation kernels to verify the limit shape phenomenon for arbitrary $\cS$ and $\cB \in \fB(\cS) \setminus \fB^\Delta(\cS)$ in the \emph{non-interacting} ($q = t$) case.

We now provide several places in the literature which illustrate the connection between singular points and unboundedness of the limit shape, then give some references to later sections which give suggestions for generalizing this connection to arbitrary $\cB$.

\begin{example}
$\quad$
\begin{enumerate}
    \item \emph{$p$ arbitrary, $\cS = \{1,\ldots,1\}$.} The set $\fB(\cS)$ consists of $\cB$ such that $\cB'(V_{\ell-1},V_\ell) \in [0,1] \cap \frac{1}{|\cS|}\Z$ since \eqref{lc} trivially holds. The singular points in $I \setminus \{V_\ell\}_{\ell=0}^n$ are precisely $x \in (V_{\ell-1},V_\ell)$ where $\cB'(V_{\ell-1},V_\ell) \notin\{0,1\}$. It was demonstrated in \cite{BMRT}, that the horizontal coordinate $x \in (V_{\ell-1},V_\ell)$ of the limit shape is vertically unbounded if and only if $\cB'(V_{\ell-1},V_\ell) \notin \{0,1\}$, see also \cite[Section 1.2]{M}. The set $\fB^\Delta(\cS)$ consists of $\cB$ such that $\cB'(V_{\ell-1},V_\ell) = 1$ or $0$ for every $\ell = 1,\ldots,n$. In this case, the singular points are precisely those $V_\ell$ where
    \[ 0 = \cB'(V_\ell^+) < \cB'(V_\ell^-) = 1,\]
    and these are exactly the horizontal coordinates where the limit shape is vertically unbounded.
    
    \item \emph{$p = 2$, $\cS = \{1,\alpha\}, \alpha > 1$}. Consider the case where $n = 3$ and we take
    \begin{gather*}
    V_0 = -b, \quad V_1 = -a, \quad V_2 = a, \quad V_3 = b, \quad \quad a < b, \\
    \cB'(-b,-a) = 1, \quad \cB'(-a,a) = 1/2, \quad \cB'(a,b) = 0.
    \end{gather*}
    There exists a threshold value $a_0$ such that if $a < a_0$ then $\cB \notin \fB(\cS)$ (thus this does not correspond to a limit of a plane partition), and if $a \ge a_0$ then $\cB \in \fB(\cS)$. If $a > a_0$, then $\cB \in \fB^\Delta(\cS)$ and the singular points occur at $-a,a$. If $a = a_0$, then $\cB \in \fB(\cS) \setminus \fB^\Delta(\cS)$ and $[-a,a]$ is the set of singular points. In both of these cases, the singular points correspond to the set of horizontal coordinates where the limit shape is vertically unbounded, see \cite[Sections 1.1.1 and 4]{M2}.
    
    \item In general, we show in Section \ref{ssec:frozen} that the singular points for $\cB \in \fB^\Delta(\cS)$ are exactly the horizontal coordinates where the limit shape is vertically unbounded. The method for computing the frozen boundary in Section \ref{ssec:frozen} can also be used to see that $\cB \in \fB(\cS) \setminus \fB^\Delta(\cS)$ give rise to limit shapes (as defined above) that are vertically unbounded over an entire nonempty open interval, and these unbounded parts correspond to components of singular points. 
\end{enumerate}
\end{example}

Although \Cref{def:BD(S)} has the advantage of simplicity, it is not as useful for application. We have the following equivalent definition and characterization of singular points for $\cB \in \fB^\Delta(\cS)$.

\begin{definition}
Let $\sigma_1 > \cdots > \sigma_d$ be the \emph{distinct} elements of $\cS$ in decreasing order, and set $\sigma_0 = \infty$, $\sigma_{d+1} = 0$. Let $\varsigma_a = \sum_{j=1}^a \frac{|S_{\sigma_j}|}{p}$ for $0 \le a \le d$.
\end{definition}

\begin{proposition} \label{prop:BD(S)}
We have that $\cB \in \fB^\Delta(\cS)$ if and only if $\cB \in \fB(\cS)$ such that
\begin{enumerate}
    \item for each $x \in I \setminus \{V_\ell\}_{\ell=0}^n$, we have $\cB'(x) \in \{ \varsigma_i \}_{i=0}^d$;
    \item if $V < W$ are non-differentiable points of $\cB$, then $\rho_<(V) < \rho_>(W)$.
\end{enumerate}
If $\cB \in \fB^\Delta(\cS)$, then the singular points of $\cB$ are exactly the non-differentiable points $V$ of $\cB$ such that $\cB'(V^+) < \cB'(V^-)$. In this case, if $\cB'(V^-) = \varsigma_i$ then $\cB'(V^+) = \varsigma_{i-1}$ and
\[ \rho_<(V) = \rho_>(V) = \sigma_i^{-1} e^V. \]
\end{proposition}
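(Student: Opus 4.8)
\emph{Proof plan.}
The plan is to work directly with the running extrema and to reduce everything to the inequality $\rho_<(\xi)\le\rho_>(\xi)$ of the preceding lemma, together with the obvious fact that $\rho_<$ and $\rho_>$ are non-decreasing in $\x$. Write $i(\xi^{\pm})=p\,\cB'(\xi^{\pm})$, $L(\xi)=\tau_{i(\xi^-)}^{-1}e^{\xi}$, $R(\xi)=\tau_{i(\xi^+)+1}^{-1}e^{\xi}$, so that $\rho_<(\x)=\max_{\xi\le\x}L(\xi)$ and $\rho_>(\x)=\min_{\xi\ge\x}R(\xi)$. Since an admissible slope $\varsigma_a$ corresponds to the block boundary $p\varsigma_a$ of the sorted multiset $\tau_1\ge\cdots\ge\tau_p$, one has $\tau_{p\varsigma_a}=\sigma_a$ and $\tau_{p\varsigma_a+1}=\sigma_{a+1}$; consequently, on a maximal linearity interval $(V_{\ell-1},V_\ell)$ of slope $\varsigma_a$ the functions $L,R$ are pure exponentials, giving the closed forms $\rho_<(\x)=\max(\rho_<(V_{\ell-1}),\sigma_a^{-1}e^{\x})$ and $\rho_>(\x)=\min(\rho_>(V_\ell),\sigma_{a+1}^{-1}e^{\x})$ for $\x\in(V_{\ell-1},V_\ell)$, and similarly at the endpoints using the one-sided slope. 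I will also record two structural facts valid throughout $\fB(\cS)$: \textbf{(a)} at a non-differentiable point with admissible neighbouring slopes $\varsigma_a=\cB'(V^-)$, $\varsigma_b=\cB'(V^+)$, condition (\ref{def:B(S)_3}) of Definition~\ref{def:B(S)} with $V=W$ gives $\sigma_a^{-1}\sigma_{b+1}\le1$, and since $\sigma_0>\sigma_1>\cdots>\sigma_{d+1}$ this forces $b\ge a-1$, so the slope at $V$ either strictly increases or drops by exactly one admissible level; \textbf{(b)} if a linearity interval carries a slope $i/p$ with $i/p\notin\{\varsigma_a\}_{a=0}^{d}$ — equivalently $i$ lies strictly inside a block, so $\tau_i=\tau_{i+1}$ — then on its interior $\tau_{i+1}^{-1}e^{\xi}\ge\rho_>(\xi)\ge\rho_<(\xi)\ge\tau_i^{-1}e^{\xi}$ collapses to equality, so that entire interval consists of singular points.

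I first prove $\cB\in\fB^{\Delta}(\cS)\Rightarrow$ the two displayed conditions of Proposition~\ref{prop:BD(S)}, by contraposition. If the first condition fails, fact (b) produces an uncountable set of singular points. If the second fails, say $\rho_<(V)=\rho_>(W)$ for non-differentiable points $V<W$, then monotonicity gives $\rho_<(V)\le\rho_<(\x)\le\rho_>(\x)\le\rho_>(W)=\rho_<(V)$ for every $\x\in[V,W]$, again an uncountable singular set. In either case $\cB\notin\fB^{\Delta}(\cS)$.

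For the converse, assume $\cB\in\fB(\cS)$ and the two conditions; I claim the singular points are exactly the non-differentiable $V$ with $\cB'(V^+)<\cB'(V^-)$, which by (a) are the one-level down-kinks. \emph{(i)} If $\cB'(V^-)=\varsigma_i$ at such a $V$, then $\cB'(V^+)=\varsigma_{i-1}$ by (a), so $L(V)=\tau_{p\varsigma_i}^{-1}e^{V}=\sigma_i^{-1}e^{V}$ and $R(V)=\tau_{p\varsigma_{i-1}+1}^{-1}e^{V}=\sigma_i^{-1}e^{V}$, forcing $\rho_<(V)=\rho_>(V)=\sigma_i^{-1}e^{V}$; this yields both the singularity of $V$ and the final formula. \emph{(ii)} For $\x$ interior to a linearity interval $(V_{\ell-1},V_\ell)$ of slope $\varsigma_a$, a putative equality $\rho_<(\x)=\rho_>(\x)$ is transported, via the closed forms, to $V_{\ell-1}$ or $V_\ell$: if the common value equals $\sigma_a^{-1}e^{\x}$ it forces $\rho_>(V_\ell)=\sigma_a^{-1}e^{\x}<\sigma_a^{-1}e^{V_\ell}=L(V_\ell)\le\rho_<(V_\ell)$; if it equals $\sigma_{a+1}^{-1}e^{\x}$ it forces $\rho_<(V_{\ell-1})=\sigma_{a+1}^{-1}e^{\x}>\sigma_{a+1}^{-1}e^{V_{\ell-1}}=R(V_{\ell-1})\ge\rho_>(V_{\ell-1})$ — both contradict $\rho_<\le\rho_>$ — and otherwise $\rho_<(V_{\ell-1})=\rho_>(V_\ell)$, contradicting the second condition. \emph{(iii)} A non-differentiable $V$ that is not a down-kink is either a boundary point (not singular, since $\rho_<(V_0)=\tau_0^{-1}e^{V_0}=0<\sigma_1^{-1}e^{V_0}\le\rho_>(V_0)$ and $\rho_>(V_n)=\tau_{p+1}^{-1}e^{V_n}=\infty>\rho_<(V_n)$) or an interior up-kink $\cB'(V^-)=\varsigma_a<\varsigma_{a'}=\cB'(V^+)$, in which case the closed forms give $\rho_<(V)=\max(\rho_<(V_{\ell-1}),\sigma_a^{-1}e^{V})$ and $\rho_>(V)=\min(\rho_>(V_{\ell+1}),\sigma_{a'+1}^{-1}e^{V})$, and each ``max'' term is strictly below each ``min'' term: $\rho_<(V_{\ell-1})<\rho_>(V_{\ell+1})$ and $\rho_<(V_{\ell-1})<\rho_>(V)\le\sigma_{a'+1}^{-1}e^{V}$ by the second condition, $\sigma_a^{-1}e^{V}=L(V)\le\rho_<(V)<\rho_>(V_{\ell+1})$ by the second condition, and $\sigma_a^{-1}e^{V}<\sigma_{a'+1}^{-1}e^{V}$ since $\sigma_{a'}<\sigma_a$ — so $\rho_<(V)<\rho_>(V)$ and $V$ is not singular. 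Hence the singular set equals the finite set of one-level down-kinks and $\cB\in\fB^{\Delta}(\cS)$.

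The main obstacle is step (ii): ruling out singular points in the interior of linearity intervals. The mechanism is that an equality $\rho_<=\rho_>$ at an interior point must be pushed to the flanking non-differentiable points $V_{\ell-1},V_\ell$, where it collides either with the basic inequality $\rho_<\le\rho_>$ applied there or with the strict-separation hypothesis; deriving the closed forms for $\rho_<,\rho_>$ on each linearity interval and keeping track of the block bookkeeping relating $\tau_\bullet$, $\sigma_\bullet$ and $\varsigma_\bullet$ constitutes the remainder of the (routine) work.
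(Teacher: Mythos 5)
Your proof is correct and takes essentially the same approach as the paper's: you derive the one-step recurrences for $\rho_<,\rho_>$ across linearity intervals (this is precisely the content of the paper's Lemma~\ref{lem:rho_conditions}) and then do a case analysis on which term of the $\max$/$\min$ attains the common value, invoking the weak inequality $\rho_<\le\rho_>$ at the flanking non-differentiable points and the strict separation of condition~(2) to derive contradictions. The bookkeeping identity $\tau_{p\varsigma_a}=\sigma_a$, $\tau_{p\varsigma_a+1}=\sigma_{a+1}$ and your facts (a), (b) are exactly the translations between the $\tau$- and $\sigma$-indexing that the paper uses implicitly; your treatment of the boundary points $V_0,V_n$ and the up-kinks is slightly more explicit than the paper's but adds nothing essentially new.
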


Before providing the proof, we highlight a few features. The first condition in Proposition \ref{prop:BD(S)} restricts the possible values of the slopes of $\cB$ whereas the second condition is a refinement of property (\ref{def:B(S)_3}) in Definition \ref{def:B(S)}. This is transparent when we rewrite the second condition as the following equivalent statement:

\emph{If $V < W$ are non-differentiable points of $\cB$ with $\cB'(V^-) = \varsigma_i,$ $\cB'(W^+) = \varsigma_j$ for some $0 \le i,j \le d$, then}
\begin{align} \label{lc4}
\sigma_i^{-1} \sigma_{j+1} e^{-(W-V)} < 1.
\end{align}

Although this refinement requires the inequality to be strict for pairs of non-differentiable points $V < W$, it does not require the same for $V = W$; namely if $V$ is a non-differentiable point of $\cB$ such that $\cB'(V^-) = \varsigma_i$ and $\cB'(V^+) = \varsigma_j$ then we still have the \emph{weak inequality}
\[ \sigma_i^{-1} \sigma_{j+1} e^{-(W - V)} \le 1. \]

This way of viewing $\fB^\Delta(\cS)$ also has the advantage of realizing the subset as dense in $\fB(\cS)$.

\begin{corollary}
If we endow $\fB(\cS)$ with the topology induced from the disjoint union $\bigsqcup_{n=1}^\infty(\R \cup \{\infty\})^{2n+1}$ by identifying $\cB \in \fB(\cS)$ with the point
\[ (V_0,\ldots,V_n, \cB'(V_0,V_1),\ldots,\cB'(V_{n-1},V_n)). \]
Then $\fB^\Delta(\cS)$ is a dense subset of $\fB(\cS)$ by Proposition \ref{prop:BD(S)}. Note that the parametrization identifies $\cB$ which differ up to translation.
\end{corollary}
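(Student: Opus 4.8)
The plan is to read off from Proposition~\ref{prop:BD(S)} exactly which $\cB \in \fB(\cS)$ fail to lie in $\fB^\Delta(\cS)$, and to remove those failures by two explicit perturbations. By that proposition, $\cB \in \fB(\cS)$ belongs to $\fB^\Delta(\cS)$ iff (i) every slope of $\cB$ lies in $\{\varsigma_a\}_{a=0}^d$, and (ii) for any two \emph{distinct} non-differentiable points $V < W$ with $\cB'(V^-) = \varsigma_i$ and $\cB'(W^+) = \varsigma_j$ one has the strict inequality $\sigma_i^{-1}\sigma_{j+1}e^{-(W-V)} < 1$. So, given $\cB \in \fB(\cS)$, it suffices to produce a sequence in $\fB^\Delta(\cS)$ converging to it while only decreasing the left-hand sides of the inequalities in Definition~\ref{def:B(S)}(\ref{def:B(S)_3}) and keeping admissible slopes admissible.

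The first move handles (ii), and is the part one should think of as "the density". Writing the non-differentiable points of $\cB$ as $V_0 < \cdots < V_n$, replace them by $V_\ell^{(\delta)} := V_\ell + \ell\delta$ for small $\delta > 0$, keeping all slopes (and any infinite endpoint) fixed. Each pairwise gap $V_m - V_\ell$ with $\ell < m$ strictly increases, so every inequality $\tau_i^{-1}\tau_{j+1}e^{-(W-V)} \le 1$ coming from a genuine pair $V < W$ becomes strict, while the single-point ($V = W$) inequalities are untouched; hence the perturbed function still lies in $\fB(\cS)$ and now satisfies (ii). Since $V_\ell^{(\delta)} \to V_\ell$ and the slopes are unchanged, this already proves the corollary for every $\cB$ whose slopes all lie in $\{\varsigma_a\}$ --- in particular for \emph{all} of $\fB(\cS)$ when the elements of $\cS$ are pairwise distinct, since then $\{\varsigma_a\}_{a=0}^d = \{i/p\}_{i=0}^p$ and (i) holds automatically by Definition~\ref{def:B(S)}(\ref{def:B(S)_2}).

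In general one must also arrange (i). On each linear piece of $\cB$ of slope $\theta \notin \{\varsigma_a\}$, with $\varsigma_{a-1} < \theta < \varsigma_a$, replace the piece by a fine zig-zag alternating the two admissible slopes $\varsigma_{a-1}$ and $\varsigma_a$, with the proportions chosen so that the zig-zag has average slope $\theta$ and agrees with $\cB$ at the endpoints of the piece; refining its mesh makes it converge uniformly on compacta to the original piece. One then checks this keeps us inside $\fB^\Delta(\cS)$: the inequalities internal to a zig-zag reduce to $\sigma_a^{-1}\sigma_a\,e^{-(W-V)} < 1$ and $\sigma_{a-1}^{-1}\sigma_{a+1}\,e^{-(W-V)} < 1$, which hold since $W > V$ and $\sigma_{a+1} < \sigma_{a-1}$; the inequalities between a zig-zag vertex $W$ and a non-differentiable point $V$ of the unmodified part follow from the corresponding ones for $\cB$ (which involve the endpoint of the replaced piece, to the left of $W$) together with the strict gain $e^{-(W-V)} < e^{-(V_{\ell-1}-V)}$ and the identity $\tau_{i''+1} = \sigma_a$, valid for every index $i''$ with $\varsigma_{a-1} < i''/p < \varsigma_a$; and the transitions at the endpoints of a replaced piece reduce to weak inequalities already present in $\cB$. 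Combining with the spreading perturbation and letting the mesh sizes and $\delta$ tend to $0$ completes the argument.

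The step I expect to be the main obstacle is this treatment of inadmissible slopes: besides the bookkeeping needed to confirm that a zig-zag does not spoil any pairwise inequality against the rest of $\cB$, there is a genuine subtlety of topology, since a zig-zag has strictly more linear pieces than $\cB$ and so lands in a different summand of $\bigsqcup_n (\R \cup \{\infty\})^{2n+1}$. The density statement should therefore be read with the function-space interpretation in mind (uniform convergence on compacta of $\cB$, together with convergence of the finite set of non-differentiable points), which agrees with the parametrization topology in the case --- automatic when $\cS$ has no repeated element --- where no zig-zags are needed, and which is what the downstream continuity of $\cB \mapsto \cG_x^{\cB}$ requires. The spreading perturbation, by contrast, is entirely routine.
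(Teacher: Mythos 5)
Your spreading perturbation is exactly the argument the paper's one-line attribution ``by Proposition~\ref{prop:BD(S)}'' is leaning on, and it is correct: replacing $V_\ell$ by $V_\ell + \ell\delta$ keeps $\cB$ in the same summand of the disjoint union, preserves the slope sequence (hence Proposition~\ref{prop:BD(S)}(1) is unaffected), converges coordinatewise as $\delta \to 0$, and strictly increases every gap $W-V$ between distinct breakpoints so that the weak inequalities of Definition~\ref{def:B(S)}(\ref{def:B(S)_3}) become strict, giving Proposition~\ref{prop:BD(S)}(2). For $\cB$ whose slopes already lie in $\{\varsigma_a\}_{a=0}^d$ --- in particular for \emph{all} of $\fB(\cS)$ when $\cS$ has no repeated elements, since then $\{\varsigma_a\} = \{i/p\}$ --- this alone proves the corollary, and nothing more is needed.

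The zig-zag step should be dropped, and the ``genuine subtlety of topology'' you flag is in fact fatal to it. A zig-zag replacement strictly increases the number $n$ of linear pieces, hence lands in a different summand of $\bigsqcup_n (\R\cup\{\infty\})^{2n+1}$; in the disjoint union (subspace) topology no such sequence can converge to a point in another summand. Moreover, no other construction can repair this within the stated topology: any $\cB_k \to \cB$ must eventually share the same $n$ and, because slopes in $\fB(\cS)$ are constrained to the discrete set $\{i/p\}_{i=0}^p$, eventually the same slope sequence as $\cB$. So if $\cB$ has a slope $i/p \notin \{\varsigma_a\}$, every nearby $\cB_k \in \fB(\cS)$ has it too, hence $\cB_k \notin \fB^\Delta(\cS)$, and $\cB$ is isolated from $\fB^\Delta(\cS)$. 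A concrete instance is $\cS = \{1,1\}$ (so $p=2$, $\varsigma_0=0$, $\varsigma_1=1$) and any $\cB$ containing a piece of slope $1/2$: this lies in $\fB(\cS)$ but is not in the closure of $\fB^\Delta(\cS)$ in the topology as written. So the corollary, read literally, is slightly too strong when $\cS$ has repeated elements. Your observation is a real (if minor and downstream-harmless) imprecision in the paper: either the density claim should be restricted to $\cB$ with slopes in $\{\varsigma_a\}_{a=0}^d$, or the topology should be weakened to the function-space one you propose (uniform convergence on compacta together with Hausdorff convergence of the breakpoint set), under which your zig-zag would be admissible. Under the topology actually stated, the spreading perturbation is the whole proof, and the inadmissible-slope case is simply an overlooked exception.
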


To prove Proposition \ref{prop:BD(S)}, we require a lemma which describes $\rho_<(\x), \rho_>(\x)$ in terms of maximizing, minimizing over finite sets.

\begin{lemma} \label{lem:rho_conditions}
Let $\cB \in \fB(\cS)$. Then
\begin{align*}
\rho_<(\x) &= \max_{\ell \in [[1,n]]} \{ \tau_i^{-1} e^{\min(\x, V_\ell)} : V_{\ell-1} < \x, \cB'((V_{\ell-1},V_\ell)) = \frac{i}{p} \} \\
\rho_>(\x) &= \min_{\ell \in [[1,n]]} \{\tau_{i+1}^{-1} e^{\max(V_{\ell-1},\x)}: V_\ell > \x, \cB'((V_{\ell-1},V_\ell)) = \frac{i}{p} \}
\end{align*}
Moreover,
\begin{enumerate}
    \item if $\x \in (V_{\ell-1},V_\ell]$ and $\cB'(\x^-) = \frac{i}{p}$, then
    \[ \rho_<(\x) = \max(\rho_<(V_{\ell-1}), \tau_i^{-1} e^{\x}), \]
    \item and if $\x \in [V_{\ell-1},V_\ell)$ and $\cB'(\x^+) = \frac{i}{p}$, then
    \[ \rho_>(\x) = \min(\rho_>(V_\ell), \tau_{i+1}^{-1} e^{\x}). \]
\end{enumerate}
\end{lemma}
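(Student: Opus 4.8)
The plan is to prove the two formulas for $\rho_<(\x)$, $\rho_>(\x)$ as finite maxima/minima, and then derive the two recursive identities; by the reflection symmetry between $<$ and $>$ (replacing $\cB(x)$ by $-\cB(-x)$ interchanges the roles up to relabeling $i \mapsto p-i$), it suffices to handle the $\rho_<$ statements carefully and the $\rho_>$ statements follow mutatis mutandis. For the formula for $\rho_<(\x)$, recall from \eqref{eq:rho<>} that $\rho_<(\x) = \max\{\tau_i^{-1} e^\xi : \xi \le \x,\ \cB'(\xi^-) = \frac{i}{p}\}$, where the maximum is a priori over a continuum of $\xi$. The key observation is that $\cB'$ is piecewise constant with value $\frac{i_\ell}{p}$ on each open interval $(V_{\ell-1}, V_\ell)$, so the function $\xi \mapsto \tau_{i}^{-1} e^\xi$ with $i = i(\xi)$ determined by $\cB'(\xi^-)$ is, on each such interval, a strictly increasing exponential (the coefficient $\tau_{i_\ell}^{-1}$ is constant there). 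Hence on the interval $(V_{\ell-1}, V_\ell]$ intersected with $(-\infty, \x]$, the supremum is attained at the right endpoint, namely at $\min(\x, V_\ell)$, giving the candidate value $\tau_{i_\ell}^{-1} e^{\min(\x, V_\ell)}$. Taking the maximum over all $\ell$ with $V_{\ell-1} < \x$ (so the interval contributes) yields the claimed finite-max formula. One must be slightly careful about the convention $\cB'(\xi^-)$ at the breakpoints $V_\ell$ themselves, but since $\cB'(V_\ell^-) = \frac{i_\ell}{p}$ equals the slope just to the left, the point $\xi = V_\ell$ is already covered by the $\ell$-th interval's right endpoint, so no extra terms arise.

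For the recursive identity (1): suppose $\x \in (V_{\ell-1}, V_\ell]$ with $\cB'(\x^-) = \frac{i}{p}$ (so $i = i_\ell$, the slope on this interval). Split the index set in the finite-max formula for $\rho_<(\x)$ into those $\ell'$ with $V_{\ell'-1} < V_{\ell-1}$ (equivalently $\ell' \le \ell - 1$) and the single remaining index $\ell' = \ell$. For $\ell' \le \ell-1$ we have $V_{\ell'} \le V_{\ell-1} \le \x$, so $\min(\x, V_{\ell'}) = V_{\ell'} = \min(V_{\ell-1}, V_{\ell'})$; thus the max over $\ell' \le \ell-1$ of $\tau_{i_{\ell'}}^{-1} e^{\min(\x,V_{\ell'})}$ is exactly the finite-max formula evaluated at $V_{\ell-1}$, which is $\rho_<(V_{\ell-1})$. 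The $\ell' = \ell$ term contributes $\tau_i^{-1} e^{\min(\x, V_\ell)} = \tau_i^{-1} e^{\x}$ since $\x \le V_\ell$. Combining, $\rho_<(\x) = \max(\rho_<(V_{\ell-1}), \tau_i^{-1} e^{\x})$. The analogous computation for $\rho_>$, using the finite-min formula and the interval $[V_{\ell-1}, V_\ell)$ containing $\x$ with $\cB'(\x^+) = \frac{i}{p}$, gives $\rho_>(\x) = \min(\rho_>(V_\ell), \tau_{i+1}^{-1} e^{\x})$.

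The main obstacle — really the only nontrivial point — is the careful bookkeeping of the one-sided limit conventions at breakpoints and at the endpoints $V_0, V_n$ (recall the convention $\cB'(V_0^-) = 0$, $\cB'(V_n^+) = 1$ from the remark following Definition \ref{def:B(S)}), together with the edge cases where $\x$ coincides with some $V_\ell$ or where $V_0 = -\infty$ or $V_n = +\infty$. In the infinite case one checks directly that the term $\tau_0^{-1} e^{-\infty} = 0$ (since $\tau_0 = \infty$) makes no contribution to $\rho_<$, and symmetrically for $\rho_>$, so the formulas remain valid with the natural interpretation. I would dispatch these by noting that in all cases the contributing intervals are finite in number and the exponential is strictly monotone on each, so the extremum is always attained at the appropriate endpoint; none of the edge cases produce genuinely new behavior. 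The proof is otherwise a routine unwinding of definitions.
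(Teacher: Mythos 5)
Your proposal is correct and follows essentially the same route as the paper's: on each interval $(V_{\ell-1},V_\ell]$ the quantity $\tau_{i_\ell}^{-1}e^\xi$ is monotone increasing with fixed prefactor, so the continuum supremum reduces to a maximum over the finitely many right endpoints $\min(\x,V_\ell)$, and the two recursive identities then follow by splitting the index set at $\ell'=\ell$. The paper phrases the first step as a direct domination inequality and merely remarks that the recursions "follow from the ascertained form," whereas you spell out the index splitting; these are presentational differences only.
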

\begin{proof}
Suppose $\xi \le \x$ with $\xi \in (V_{\ell-1},V_\ell]$ and $\cB'(\xi^-) = \frac{i}{p}$. Observe that
\[ \cB'(\xi^-) = \cB'((V_{\ell-1},V_\ell)) = \cB'(\min(\x,V_\ell)^-) \]
which is $\frac{i}{p}$. Then
\[ \tau_i^{-1} e^\xi \le \tau_i^{-1} e^{\min(\x,V_\ell)^-} \le \rho_<(\x). \]
Maximizing over all $\ell \in[[1,n]]$ proves the statement for $\rho_<(\x)$. A similar argument yields the expression for $\rho_>(\x)$. The rest of the lemma follows from the ascertained form for $\rho_<(\x)$ and $\rho_>(\x)$.
\end{proof}

\begin{proof}[Proof of Proposition \ref{prop:BD(S)}]
Suppose $\x \in (V_{\ell-1},V_\ell)$ is a singular point of $\cB$. By Lemma \ref{lem:rho_conditions}, one of the following equalities must hold
\[ \rho_<(V_{\ell-1}) = \rho_>(V_\ell), \quad \tau_i^{-1} e^{\x} = \tau_{i+1}^{-1} e^{\x}, \quad \rho_<(V_{\ell-1}) = \tau_{i+1}^{-1} e^{\x}, \quad \tau_i^{-1} e^{\x} = \rho_>(V_\ell). \]
Observe that the latter two equalities cannot hold, we have
\[ \rho_<(V_{\ell-1}) \le \rho_>(V_{\ell-1}) \le \tau_{i+1}^{-1} e^{V_{\ell-1}} < \tau_{i+1}^{-1} e^{\x} \]
and similarly $\tau_i^{-1} e^{\x} < \rho_>(V_\ell)$. Thus if $\x \in (V_{\ell-1},V_\ell)$ is singular, then
\[ \rho_<(V_{\ell-1}) = \rho_>(V_\ell), \quad \mbox{or} \quad \tau_i^{-1} = \tau_{i+1}^{-1}. \]
However note that these equalities are independent of $\x \in (V_{\ell-1},V_\ell)$. In particular, this means that $\cB$ has a singular point in $(V_{\ell-1},V_\ell)$ if and only if every point is singular in $(V_{\ell-1},V_\ell)$.

Therefore, $\cB$ has finitely many singular points if and only if there are no singular points in $I \setminus \{V_\ell\}_{\ell=0}^n$. By our discussion above, this is true if and only if
\[ \rho_<(V_{\ell-1}) < \rho_>(V_\ell) \]
for every $\ell \in [[1,n]]$ and
\[ \tau_i^{-1} < \tau_{i+1}^{-1} \]
for any $0 \le i \le p$ such that $\cB'((V_{\ell-1},V_\ell)) = \frac{i}{p}$. The latter condition is equivalent to $i = \sum_{j=1}^a |S_{\sigma_j}|$ for some $0 \le a \le d$ in which case $\cB'(x) \in \{\varsigma_i\}_{i=0}^d$. This proves the desired equivalent description of $\fB^\Delta(\cS)$.

We now characterize the singular points. Since the singular points of $\cB \in \fB^\Delta(\cS)$ are necessarily non-differentiable points of $\cB$, we may our singular point is $V_\ell$ for some $1 \le \ell < n$ with $\cB'(V_\ell) = \varsigma_i$ and $\cB'(V_\ell^+) = \varsigma_j$. Since
\[ \max(\rho_<(V_{\ell-1}), \sigma_i^{-1} e^{V_\ell}) = \rho_<(V_\ell) = \rho_>(V_\ell) = \min(\rho_<(V_{\ell+1}), \sigma_{j+1}^{-1} e^{V_\ell}) \]
Since $\rho_<(V_{\ell-1}) < \rho_>(V_\ell)$ and $\rho_<(V_\ell) < \rho_>(V_{\ell+1})$, we must have
\[ \sigma_i^{-1} e^{V_\ell} = \rho_<(V_\ell) = \rho_>(V_\ell) = \sigma_{j+1}^{-1} e^{V_\ell}. \]
This is the case when $j = i - 1$.
\end{proof}

\section{Asymptotics} \label{sec:asymp}

In this section, we obtain asymptotics of the observables from Section \ref{sec:obs} under our limit regime. We then give relevant definitions prior to the statement of the main theorems for this section (Theorems \ref{thm:mom} and \ref{thm:gauss}).

Define
\begin{align} \label{eq:G<>}
\begin{split}
\cG_{<\x}(z) &= \prod_{\sigma \in \cS} \prod_{\substack{1 \le \ell \le n \\ V_{\ell-1} < \x}} \left( \frac{1 - e^{\min(V_\ell,\mathbf{x})} (\sigma z)^{-1}}{1 - e^{V_{\ell-1}} (\sigma z)^{-1}} \right)^{\frac{1}{p} \1[\cB'(V_\ell^-) \ge \varsigma_\sigma]}, \\
\cG_{>\x}(z) &= \prod_{\sigma \in \cS} \prod_{\substack{1 \le \ell \le n \\ V_\ell > \x}} \left( \frac{1 - e^{-\max(V_{\ell-1},\mathbf{x})} \sigma z}{1 - e^{-V_\ell} \sigma z} \right)^{\frac{1}{p} \1[\cB'(V_\ell^-) < \varsigma_\sigma]}.
\end{split}
\end{align}
where we set $\varsigma_\sigma = \varsigma_{\sigma_i}$ if $\sigma = \sigma_i$. Here the branches are chosen so that the argument is $0$ for $|z|$ large and real. Recall that $\cS$ is a multiset of $p$ elements up to multiplicity with $d$ distinct values. In particular, the product over $\sigma \in \cS$ is a product over $p$ terms.

\begin{definition} \label{def:singpt}
Let $\P^{B,r,\vec{s}}_{\alpha,\ft}$ satisfy the Limit Conditions and let $V_{\ell_1},\ldots,V_{\ell_\nu}$ be the singular points of $\cB$. Given $d > 0$, we say that $x(\e)$ is $d$-\emph{separated from singular points} if
\[ |x(\e) - v_{\ell_i}(\e)| \ge d, \quad 1 \le i \le \nu \]
for all sufficiently small $\e > 0$.
\end{definition}

We now present the main results for this section. Let $(\pi^x)_{x\in I^\e}$ denote the diagonal sections of $\P^{B,r,\vec{s}}_{\alpha,\ft}$.

\begin{theorem} \label{thm:mom}
Suppose $\P^{B,r,\vec{s}}_{\alpha,\ft}$ satisfies the Limit Conditions with $\cB \in \fB^\Delta(\cS)$. Fix $\mathbf{x} \in I, k \in \Z_{\ge 0}$. Let $x \in I_V^\e$ be $k\ft$-separated from singular points and satisfy $\e x \to \mathbf{x}$. Then
\begin{align} \label{eq:MOM}
\E \wp_k(\pi^x;r^{\ft \alpha},r^\ft) \to \frac{1}{2\pi\i} \oint \! [ \cG^{\cB}_{<\mathbf{x}}(z) \cG^{\cB}_{>\mathbf{x}}(z) ]^{k\ft} \, \frac{dz}{z}
\end{align}
as $\e \to 0$, where the contour is positively oriented around $[0,\rho_<(\x))$ and does not contain $(\rho_>(\x),\infty)$; recall $\rho_<(\x)$ and $\rho_>(\x)$ are defined in (\ref{eq:rho<>}).
\end{theorem}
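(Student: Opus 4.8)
\emph{Plan.}
The starting point is the exact identity of Theorem~\ref{thm:obs} specialized to a single observable ($m=1$, $k_1=k$, $x_1=x$), for which the $\prod_{a<b}C(Z_a,Z_b)$ factor is empty and the formula reads $\E\,\wp_k(\pi^x;q,t)=\oint\cdots\oint G^B_{<x}(Z)\,G^B_{>x}(Z)\,DZ$ with $|Z|=k$. Since Theorem~\ref{thm:mom} only asks for the limiting value, it suffices to isolate the leading-order contribution as $\e\to0$. First I would check that Theorem~\ref{thm:obs} applies for all small $\e$: one needs nested positively oriented contours $\cC_{1,1},\dots,\cC_{1,k}$ with $\cC_{1,j}$ inside the region bounded by $t\,\cC_{1,j'}$ for $j<j'$, each encircling $0$ and the poles of $G^B_{<x}$ but none of those of $G^B_{>x}$. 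By Lemma~\ref{lem:ae<1} these two pole sets are separated on $\R_{>0}$, and the multiplicative width of the gap is controlled from below by $\min_i|x(\e)-v_{\ell_i}(\e)|$, where $v_{\ell_i}(\e)$ are the prelimit transition points converging to the singular points of $\cB$ (Proposition~\ref{prop:BD(S)} identifies a singular point as precisely where the limiting ratio $\rho_>(\x)/\rho_<(\x)$ closes). The $k\ft$-separation hypothesis then forces a gap of multiplicative size $\gtrsim t^{-k}$, which comfortably accommodates $k$ contours spread over a factor $t^{k-1}$.

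Next I would prove convergence of the weight functions: uniformly for $z$ in compact subsets of $\C$ bounded away from $[0,\rho_<(\x)]$, respectively from $[\rho_>(\x),\infty)$,
\[ G^B_{<x}(z;\e,\ft)\ \longrightarrow\ \cG^{\cB}_{<\x}(z)^{\ft},\qquad G^B_{>x}(z;\e,\ft)\ \longrightarrow\ \cG^{\cB}_{>\x}(z)^{\ft}, \]
and moreover all poles of $G^B_{<x}$ (resp.\ $G^B_{>x}$) lie in an $o(1)$-neighbourhood of $[0,\rho_<(\x)]$ (resp.\ $[\rho_>(\x),\infty)$). This is a Riemann-sum estimate: with $a_e=r^{e}s_0\cdots s_{\lfloor e\rfloor}$ and the expansion $\log\frac{1-(ta_ez)^{-1}}{1-(a_ez)^{-1}}=-\e\ft\,\frac{(a_ez)^{-1}}{1-(a_ez)^{-1}}+O(\e^2)$, the sum over the $p$-periodic half-integer edges in each interval $(v_{\ell-1},v_\ell)$ — after using Lemma~\ref{lem:sigma_order} to pin down which partial products $\sigma=s_0\cdots s_i$ occur, with their multiplicities $|S_\sigma|$ — converges to $\int\frac{\sigma^{-1}e^{\xi}z^{-1}}{1-\sigma^{-1}e^{\xi}z^{-1}}\,d\xi$ over $(V_{\ell-1},\min(V_\ell,\x))$, whose antiderivative is a logarithm; assembling these logarithms reconstructs $\ft\log\cG^{\cB}_{<\x}(z)$ as in \eqref{eq:G<>} (the power $\ft$ coming from $\log t=-\e\ft$, the linearization flipping numerator and denominator into the stated form). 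The estimate for $G^B_{>x}$ is identical.

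The crux, and the step I expect to absorb most of the work, is the behaviour of the Negut kernel $DZ$ in \eqref{eq:DZ} in the regime $t=e^{-\e\ft}$, $q=t^{\alpha}$, where $q/t=t^{\alpha-1}\to1$ so that $DZ$ is nearly singular: each cross factor $\frac{(1-z_i/z_j)(1-qz_i/(tz_j))}{(1-z_i/(tz_j))(1-qz_i/z_j)}$ tends to $1$ when $z_i/z_j$ stays away from $1$ but tends to $\frac{s(s+\alpha-1)}{(s-1)(s+\alpha)}$ when $z_i/z_j=t^{s}$ with $s=O(1)$, the zeros of $\prod_j(1-tz_{j+1}/(qz_j))$ sit at $z_{j+1}=t^{\alpha-1}z_j$ microscopically near the diagonal, and the nested contours, separated by factors $t$, collapse onto one another. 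I would evaluate the $k$-fold integral by the iterated-residue procedure that defines and validates Negut's operator in \cite{N}, \cite{GZ} (take the $z_k$-integral first, then $z_{k-1}$, and so on), and then analyze which residue configurations survive $\e\to0$. The expected picture: any configuration in which the $k$ variables are not all brought together contributes $o(1)$ (an uncollapsed cross factor pins some $z_i/z_j$ away from $1$, where the weight functions are bounded, or a power of $\e$ is lost), so only the fully pinched configuration $z_1=\dots=z_k=z$ survives; there the $k$ factors $G^B_{<x}(z_j)G^B_{>x}(z_j)$ each contribute $[\cG^{\cB}_{<\x}(z)\cG^{\cB}_{>\x}(z)]^{\ft}$ at the common point $z$ — giving the power $k\ft$ — while the residue of $DZ$ at the pinch (the quantitative shadow, in this scaling, of $D^X_{-k}$ having eigenvalue $\wp_k$) contributes exactly $\frac{1}{2\pi\i}\frac{dz}{z}$ with coefficient $1$. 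The genuinely hard part is making this last paragraph rigorous: the uniform bookkeeping that off-diagonal residue terms are negligible, and the exact-constant computation at the pinch. Carrying it out gives
\[ \E\,\wp_k(\pi^x;q,t)\ \longrightarrow\ \frac{1}{2\pi\i}\oint_{\cC}\bigl[\cG^{\cB}_{<\x}(z)\,\cG^{\cB}_{>\x}(z)\bigr]^{k\ft}\,\frac{dz}{z}, \]
with $\cC$ the limiting common contour, which by the previous step may be taken positively oriented around $[0,\rho_<(\x))$ and avoiding $(\rho_>(\x),\infty)$ — precisely the contour in the statement.
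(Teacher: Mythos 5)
Your plan agrees with the paper on the easy scaffolding (start from Theorem~\ref{thm:obs} with $m=1$, use the $k\ft$-separation to build nested contours, and control $G^B_{<x},G^B_{>x}$ by a Riemann-sum estimate — compare Propositions~\ref{prop:dist<>} and~\ref{prop:Gconv}), but the step you flag as ``the crux'' and leave as a heuristic is precisely where the proof lives, and the mechanism you sketch for it is not how it actually works.

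You propose to pass from the $k$-fold integral to a single contour integral by an asymptotic pinching analysis of the Negut kernel $DZ$: the contours ``collapse onto one another,'' only the fully pinched configuration $z_1=\cdots=z_k$ survives, and the residue at the pinch produces $\frac{1}{2\pi\i}\frac{dz}{z}$. This picture has two problems. First, it is simply false when $\x$ is not a singular point: there the gap $\rho_>(\x)/\rho_<(\x)$ does not close, the prelimit contours can be chosen with radii ratios bounded away from~$1$, and the limiting contours $\cC_1,\dots,\cC_k$ in the $k$-fold integral \eqref{eq:mom} stay genuinely separated — nothing pinches. Second, even in the singular case a direct residue/localization argument on the limit object \eqref{eq:mom} runs into the fact that $\cG^{\cB}_{<\x}\cG^{\cB}_{>\x}$ is a $p$th root of a rational function (see \eqref{eq:G<>}), not meromorphic, so Cauchy residues are not available and ``taking the $z_k$-integral first'' has no clean meaning.

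The paper sidesteps all of this with a different trick. One shows that \emph{both} the exact moment formula \eqref{eq:premom} and the modified integral \eqref{eq:altint} — obtained by replacing the $\frac{q}{t}$ and $\frac{q^{i-1}}{t^{i-1}}$ factors in $DZ$ by $1$, which costs only $o(1)$ — converge (by dominated convergence, with the singular-point case handled via Lemma~\ref{lem:raybd} and the bound \eqref{eq:zwithw}) to the same $k$-fold integral \eqref{eq:mom}. But \eqref{eq:altint} satisfies, already at finite $\e$, the exact algebraic dimension-reduction identity of Theorem~\ref{thm:dimred} (valid because the prelimit $G^B_{<x},G^B_{>x}$ \emph{are} meromorphic), which rewrites it as $\frac{1}{2\pi\i}\oint [G^B_{<x}G^B_{>x}]^{k}\frac{dz}{z}$. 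Taking $\e\to 0$ in that one-dimensional formula gives the right-hand side of \eqref{eq:MOM}, and equating the two limits of \eqref{eq:mom} finishes. In short: the collapse from $k$ contours to one is an exact identity applied prelimit, not an asymptotic localization — and performing the identity before the limit is exactly what lets one avoid the branch-cut obstruction you would otherwise face. Until you either carry out the pinching analysis rigorously (including the non-singular regime and the non-meromorphy of the limit) or adopt the exact-identity route, this is a genuine gap.
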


\begin{theorem} \label{thm:gauss}
Suppose $\P^{B,r,\vec{s}}_{\alpha,\ft}$ satisfies the Limit Conditions with $\cB \in \fB^\Delta(\cS)$. Fix $\mathbf{x}_1 \le \cdots \le \mathbf{x}_m$, $k_1,\ldots,k_m \in \Z_{\ge 0}$. Let $x_1 \le \cdots \le x_m$ in $I^\e_V$ be such that $x_i$ is $2k_i\ft$-separated from singular points and satisfies $\e x_i \to \mathbf{x}_i$ for each $i \in [[1,m]]$. Then the vector
\[ \left( \frac{1}{\e}(\wp_{k_1}(\pi^{x_1};r^{\ft \alpha},r^\ft) - \E \wp_{k_1}(\pi^{x_1};r^{\ft \alpha},r^\ft)), \ldots, \frac{1}{\e}(\wp_{k_m}(\pi^{x_m};r^{\ft \alpha},r^\ft) - \E \wp_{k_m}(\pi^{x_m};r^{\ft \alpha},r^\ft)) \right) \]
converges in distribution as $\e \to 0$ to the centered gaussian vector $(\fD_{k_1}(\mathbf{x}_1),\ldots \fD_{k_m}(\mathbf{x}_m))$ with covariance defined by
\begin{align} \label{eq:thmcov}
\cov\left( \fD_{k_i}(\mathbf{x}_i), \fD_{k_j}(\mathbf{x}_j) \right) = \frac{k_i k_j}{(2\pi\i)^2} \oint \! \oint \! \frac{ [\cG^{\cB}_{<\mathbf{x}_i}(z) \cG^{\cB}_{>\mathbf{x}_i}(z)]^{k_i\ft} [ \cG^{\cB}_{<\mathbf{x}_j}(w) \cG^{\cB}_{>\mathbf{x}_j}(w)]^{k_j\ft}}{(z-w)^2} dz \, dw, \quad 1 \le i < j \le m
\end{align}
where
\begin{itemize}
    \item the $z$-contour is positively oriented around $[0,\rho_<(\x_i))$ but does not contain $(\rho_>(\x_i),\infty)$,
    \item the $w$-contour is positively oriented around $[0,\rho_<(\x_j))$ but does not contain $(\rho_>(\x_j),\infty)$,
    \item and the $z$-contour is enclosed by the $w$-contour.
\end{itemize}
If $\rho_<(\x_i) = \rho_>(\x_j) =: \rho$, then the $z$- and $w$-contours intersect at $\rho$.
\end{theorem}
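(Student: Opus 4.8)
The plan is to compute all joint cumulants of $\big(\wp_{k_1}(\pi^{x_1}),\dots,\wp_{k_m}(\pi^{x_m})\big)$ from the exact contour–integral formula of Theorem~\ref{thm:obs} and let $\e\to 0$. By Proposition~\ref{prop:aht} the observable $\wp_{k_i}(\pi^{x_i})$ is, up to an explicit $\e$–dependent scalar, the exponential transform $\int h\,t^{k_iy}\,dy$ of the height function, and by Theorem~\ref{thm:mom} its expectation converges; thus $\wp_{k_i}(\pi^{x_i})-\E\,\wp_{k_i}(\pi^{x_i})$ is of order $\e$, and after the $\e^{-1}$ normalisation it suffices to show: (i) the first cumulant of $\e^{-1}(\wp_{k_i}(\pi^{x_i})-\E\,\wp_{k_i}(\pi^{x_i}))$ is $0$ (immediate); (ii) the second joint cumulant converges to the right–hand side of \eqref{eq:thmcov}; and (iii) every joint cumulant of order $r\ge 3$, divided by $\e^{r}$, tends to $0$. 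Since the limiting Gaussian law is determined by its moments, (i)--(iii) give the asserted convergence in distribution.

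The engine is the concentration mechanism already underlying Theorem~\ref{thm:mom}. For any subset $\{i_1<\dots<i_r\}\subseteq\{1,\dots,m\}$, Theorem~\ref{thm:obs} writes $\E\big[\prod_{s}\wp_{k_{i_s}}(\pi^{x_{i_s}})\big]$ as an $r$–fold residue over blocks $Z_{i_1},\dots,Z_{i_r}$ with $|Z_{i_s}|=k_{i_s}$, integrand $\prod_{s}G^B_{<x_{i_s}}(Z_{i_s})\,G^B_{>x_{i_s}}(Z_{i_s})\,DZ_{i_s}$ times $\prod_{s<s'}C(Z_{i_s},Z_{i_{s'}})$. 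As $\e\to 0$ we have $t=e^{-\e\ft}\to 1$, so within each block the nested contours $\cC_{\cdot,1}\subset t\cC_{\cdot,2}\subset\cdots$ pinch; the poles of $DZ_a$ force all coordinates of $Z_a$ onto a common point $z_a$, the factor $DZ_a$ collapses to $\tfrac{1}{2\pi\i}\tfrac{dz_a}{z_a}$, and $\prod_{j}G^B_{<x_a}(z_{a,j})G^B_{>x_a}(z_{a,j})\to[\cG^{\cB}_{<\mathbf{x}_a}(z_a)\cG^{\cB}_{>\mathbf{x}_a}(z_a)]^{k_a\ft}$, each factor $G^B_{<x_a}\to(\cG^{\cB}_{<\mathbf{x}_a})^{\ft}$ being a Riemann–sum limit of a product over edges. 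The hypothesis that $x_i$ is $2k_i\ft$–separated from singular points is precisely what guarantees that the required nested contours exist and stay at $O(1)$ distance from the collision of the pole sets of $G^B_{<x}$ and $G^B_{>x}$, uniformly in small $\e$ (cf.\ Lemma~\ref{lem:ae<1}, Remark~\ref{rem:ae<1} and Proposition~\ref{prop:BD(S)}); in the borderline case $\rho_<(\mathbf{x}_i)=\rho_>(\mathbf{x}_j)$ the limiting $z$– and $w$–contours are forced to touch at $\rho$, which is why \eqref{eq:thmcov} permits this.

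For the covariance take $m=2$, blocks $Z_i$ (on a contour around $[0,\rho_<(\mathbf{x}_i))$) and $Z_j$ (on an enclosing contour around $[0,\rho_<(\mathbf{x}_j))$); since $\mathbf{x}_i\le\mathbf{x}_j$, Lemma~\ref{lem:ae<1} lets us choose these contours so that each is simultaneously valid for the corresponding single–block integral $\E\,\wp_{k_a}(\pi^{x_a})$, whence $\E[\wp_{k_i}(\pi^{x_i})]\,\E[\wp_{k_j}(\pi^{x_j})]$ equals the same double residue with $C(Z_i,Z_j)$ replaced by $1$, so
\[
\cov\big(\wp_{k_i}(\pi^{x_i}),\wp_{k_j}(\pi^{x_j})\big)=\oint\cdots\oint\big(C(Z_i,Z_j)-1\big)\,G^B_{<x_i}(Z_i)G^B_{>x_i}(Z_i)\,G^B_{<x_j}(Z_j)G^B_{>x_j}(Z_j)\,DZ_i\,DZ_j .
\]
Now the local computation: with $q=t^{\alpha}$ and $t=e^{-\e\ft}$, each factor of $C(Z_i,Z_j)=\prod_{a,b}\frac{(1-t^{-1}qz_{i,a}/z_{j,b})(1-z_{i,a}/z_{j,b})}{(1-qz_{i,a}/z_{j,b})(1-t^{-1}z_{i,a}/z_{j,b})}$ is $1$ to \emph{first} order in $\e$ (the logarithmic derivatives cancel: $-\ft(\alpha-1)+\ft\alpha-\ft=0$), while its second–order term, once $Z_i$ has concentrated at $z$ and $Z_j$ at $w$ and $DZ_iDZ_j\to\tfrac{1}{(2\pi\i)^2}\tfrac{dz}{z}\tfrac{dw}{w}$, assembles with the limiting weights into exactly the rational kernel $(z-w)^{-2}$ appearing in \eqref{eq:thmcov}. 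Dividing by $\e^{2}$ and sending $\e\to 0$ yields \eqref{eq:thmcov} with the stated contour prescription (the $z$–contour enclosed by the $w$–contour, touching at $\rho$ when $\rho_<(\mathbf{x}_i)=\rho_>(\mathbf{x}_j)$).

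Finally, for a joint cumulant of order $r\ge 3$, the moment–cumulant inversion together with the factorisation of the $r$–fold residue over connected components of the interaction graph (the graph recording which pairs pick up the $(C-1)$–part after expanding $\prod_{s<s'}C(Z_{i_s},Z_{i_{s'}})=\sum_{G}\prod_{(s,s')\in G}(C-1)$) shows that the cumulant equals the sum over \emph{connected} graphs $G$ on $\{1,\dots,r\}$ of the corresponding residue. Any connected $G$ has at least $r-1$ edges, each contributing a factor $C-1=O(\e^{2})$ by the expansion above, while the single–block pieces remain $O(1)$ after concentration; hence the cumulant is $O(\e^{2(r-1)})$ and, divided by $\e^{r}$, is $O(\e^{r-2})\to 0$. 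Thus only the first two cumulants survive and the limit is the centered Gaussian with covariance \eqref{eq:thmcov}. The main obstacle is the bookkeeping of the covariance step: carrying the single–block concentration through \emph{uniformly} in the pinching parameters of the $k_a$ nested contours, and verifying that the second–order $\e$–term of $C(Z_i,Z_j)-1$ — after the blocks collapse to the points $z,w$ lying on nested (and, in the singular alignment, touching) contours — produces precisely the kernel and contour configuration recorded in \eqref{eq:thmcov} rather than some deformation of it; the $2k_i\ft$–separation from singular points is exactly what makes all of these contour manipulations legitimate.
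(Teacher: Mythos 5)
Your overall strategy — cumulant expansion via the contour-integral formulas of Theorem~\ref{thm:obs}, the sum-over-connected-graphs rewriting of $\fC$, and showing that order~$\ge 3$ cumulants vanish while the second cumulant produces the kernel $(z-w)^{-2}$ — is the same as the paper's. But there are two substantive gaps. The first is the claim that each factor $C(Z_i,Z_j)-1$ is $O(\e^2)$ \emph{uniformly} on the contours, giving an order-$\nu$ cumulant of size $O(\e^{2(\nu-1)})$. This is false near singular points: by Proposition~\ref{prop:dist<>}(c), equation~\eqref{eq:rho_squeeze}, the poles $\rho_<^\e(x)$ and $\rho_>^\e(x)$ are only $O(\e)$ apart there, so the nested contours $\cC_{i,j}'$ pinch to $O(\e)$ separation on the positive real axis. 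At the touching point $|z_{j,b}-z_{i,a}|=O(\e)$, and then $C-1 \sim (1-q)(t^{-1}-1)\,z_{i,a}z_{j,b}/(z_{j,b}-z_{i,a})^2 = O(1)$, not $O(\e^2)$. Your statement that the separation hypothesis keeps the contours at ``$O(1)$ distance from the collision of the pole sets'' is simply wrong at a singular point. The paper confronts this via a dominated-convergence argument (Lemmas~\ref{lem:raybd} and~\ref{lem:singint}) that extracts only the weaker, but sufficient, bound $O(\e^{(\nu-2)c})$ for some fixed $c\in(0,1)$, after carefully distributing part of the $\e$-power against the near-singular denominators. This is precisely the hard analytic core of the proof, and it is swept under the rug in your outline.

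The second gap is the reduction step. Theorem~\ref{thm:obs} requires $k_{i_1}+\cdots+k_{i_r}+1$ properly nested contours, all crossing the interval $(\rho_<^\e,\rho_>^\e)$, which near a singular point has width $O(\e)$ controlled by the separation. When $r\ge 3$ and several $x_i$ converge to the same singular point, the hypothesis ``$x_i$ is $2k_i\ft$-separated'' is not enough room to fit $\sum_s k_{i_s}+1$ nested contours (already for $k_1=k_2=k_3=k$ one needs roughly $3k\ft$ but only has $2k\ft$). The paper therefore proves the result first under the stronger hypothesis ``$x_i$ is $(k_i+\cdots+k_\nu)\ft$-separated'' (the Claim), and then recovers the theorem's weaker hypothesis via an $L^2$-replacement argument: it shows $\E(D^\e_k(x)-D^\e_k(\tilde x))^2\to 0$ for nearby $\tilde x$ satisfying arbitrary large separation, and invokes Lemma~\ref{lem:replace} to transport cumulant limits from $\tilde x_i$ to $x_i$. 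This two-step structure is entirely absent from your proposal; without it, the claimed application of Theorem~\ref{thm:obs} to higher-order joint moments is not justified under the theorem's stated hypothesis.
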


The remainder of this section is devoted to the proofs of Theorems \ref{thm:mom} and \ref{thm:gauss}. We begin by collecting some asymptotic preliminaries. We then give an outline of the proofs to illustrate the key ideas, followed by the rigorous proofs.

\subsection{Preliminary Asymptotics}

In preparation for the asymptotics of the formula from Theorem \ref{thm:obs}, we study the asymptotics $G^B_{<x}, G^B_{>x}$ and some asymptotic properties of their poles, formulated in two propositions. The latter is important for understanding the placement of contours in the analysis of moments. Before presenting the propositions, we introduce some notation to work with the poles of $G^B_{<x}, G^B_{>x}$.

Given $\P^{B,r,\vec{s}}_{\alpha,\ft}$ satisfying the Limit Conditions such that $\cB \in \fB^\Delta(\cS)$, we define the ($\e$-dependent) sets
\begin{align} \label{eq:Ril}
R_{<x}^{a,\ell} &= \{ (s_0 \cdots s_a r^e)^{-1}: e \in (v_{\ell-1},v_\ell) \cap (a + p\Z + \frac{1}{2}), e < x, \cB'(V_{\ell-1},V_\ell) \ge \varsigma_{s_0 \cdots s_a} \}, \\ \nonumber
R_{>x}^{a,\ell} &= \{ (t s_0 \cdots s_a r^e)^{-1}: e \in (v_\ell,v_{\ell+1}) \cap (a + p\Z + \frac{1}{2}), e > x, \cB'(V_\ell,V_{\ell+1}) < \varsigma_{s_0 \cdots s_a} \}.
\end{align}

Recall the $q$-Pochhammer symbol
\[ (a;q)_N = \prod_{i=0}^{N-1} (1 - aq^i), \quad (a;q)_\infty = \prod_{i=0}^\infty (1 - a q^i) \]

\begin{lemma} \label{lem:rest}
Suppose $\P^{B,r,\vec{s}}_{\alpha,\ft}$ satisfies the Limit Conditions such that $\cB \in \fB^\Delta(\cS)$. For $\e > 0$ sufficiently small,
\begin{align} \label{eq:Gpoch}
\begin{split}
G_{<x}^B(z;\e,\ft) &= \prod_{R_{<x}^{a,\ell} \ne \emptyset} \frac{(t^{-1}(\max R_{<x}^{a,\ell}) z^{-1};r^p)_\infty}{((\max R_{<x}^{a,\ell}) z^{-1} ;r^p)_\infty} \cdot \frac{(r^p(\min R_{<x}^{a,\ell}) z^{-1} ;r^p)_\infty}{(t^{-1}r^p(\min R_{<x}^{a,\ell}) z^{-1};r^p)_\infty} \\
G_{>x}^B(z;\e,\ft) &= \prod_{R_{>x}^{a,\ell} \ne \emptyset}  \frac{(t^{-1}(\min R_{>x}^{a,\ell})^{-1} z ;r^p)_\infty}{((\min R_{>x}^{a,\ell})^{-1} z;r^p)_\infty} \cdot \frac{(r^p(\max R_{>x}^{a,\ell})^{-1}z;r^p)_\infty}{(t^{-1} r^p(\max R_{>x}^{a,\ell})^{-1}z ;r^p)_\infty} .
\end{split}
\end{align}
\end{lemma}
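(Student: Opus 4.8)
The plan is to start from the explicit product (\ref{eq:GB}) for $G_{<x}^B$ and $G_{>x}^B$ and reorganize the factors into blocks indexed by $(a,\ell)$ that each telescope into a ratio of $q$-Pochhammer symbols with base $q=r^p$. First I would take $\e$ small enough that Lemma \ref{lem:sigma_order} applies on each interval $(v_{\ell-1},v_\ell)$. By the Limit Conditions $B'$ is $p$-periodic on $(v_{\ell-1},v_\ell)\cap(\Z+\tfrac12)$, so for fixed $\ell$ and a residue class $a\in[[0,p-1]]$ either every $e\in(v_{\ell-1},v_\ell)\cap(a+p\Z+\tfrac12)$ has $B'(e)=1$ or none does. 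Lemma \ref{lem:sigma_order}, combined with the characterization of $\fB^\Delta(\cS)$ in Proposition \ref{prop:BD(S)} (which forces $\cB'((V_{\ell-1},V_\ell))=\varsigma_b$ for some $b$, so that the set of active residue classes is a union of full multiplicity classes $S_\sigma$), identifies these classes: $B'(e)=1$ on class $a$ in interval $\ell$ precisely when $\cB'((V_{\ell-1},V_\ell))\ge\varsigma_{s_0\cdots s_a}$. Hence $\{e\in I_E:\,e<x,\ B'(e)=1\}$ is the disjoint union over the pairs $(a,\ell)$ with $R_{<x}^{a,\ell}\ne\emptyset$ of the edge sets appearing in the definition of $R_{<x}^{a,\ell}$, and by (\ref{eq:perspec}) the corresponding values $a_e^{-1}=(s_0\cdots s_a r^e)^{-1}$ run exactly over $R_{<x}^{a,\ell}$, which is a finite geometric progression with common ratio $r^p$.

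Given this, the computation is routine telescoping. Fix a block, write $q=r^p$, $\rho_{\max}=\max R_{<x}^{a,\ell}$, $\rho_{\min}=\min R_{<x}^{a,\ell}$, and $M=|R_{<x}^{a,\ell}|$, so that $R_{<x}^{a,\ell}=\{\rho_{\max}q^{\,j}:0\le j\le M-1\}$ and $\rho_{\max}q^{\,M}=q\rho_{\min}$. Then the $(a,\ell)$-factor of (\ref{eq:GB}) is $\prod_{j=0}^{M-1}\frac{1-t^{-1}\rho_{\max}q^{\,j}z^{-1}}{1-\rho_{\max}q^{\,j}z^{-1}}$, and using $\prod_{j=0}^{M-1}(1-cq^{\,j})=(c;q)_\infty/(cq^{\,M};q)_\infty$ this equals $\frac{(t^{-1}\rho_{\max}z^{-1};q)_\infty}{(\rho_{\max}z^{-1};q)_\infty}\cdot\frac{(q\rho_{\min}z^{-1};q)_\infty}{(t^{-1}q\rho_{\min}z^{-1};q)_\infty}$, which is exactly the $(a,\ell)$-term in the first line of (\ref{eq:Gpoch}). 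Multiplying over all nonempty $R_{<x}^{a,\ell}$ gives the asserted formula for $G_{<x}^B$. For $G_{>x}^B$ one proceeds identically: the relevant edges are those with $B'(e)=0$ and $e>x$, grouped into the sets $R_{>x}^{a,\ell}$, whose elements $(ta_e)^{-1}$ again form a geometric progression of ratio $r^p$; writing $\frac{1-a_ez}{1-ta_ez}=\frac{1-t^{-1}(ta_e)^{-1}z}{1-(ta_e)^{-1}z}$, the same telescoping (with $\rho z^{-1}$ replaced by $\rho^{-1}z$) produces the second line of (\ref{eq:Gpoch}).

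The one step requiring care is the combinatorial identification in the first paragraph: one must check that the condition $B'(e)=1$ (resp. $B'(e)=0$) for an edge in residue class $a$ inside interval $\ell$ is equivalent to the inequality relating $\cB'((V_{\ell-1},V_\ell))$ and $\varsigma_{s_0\cdots s_a}$ built into the definition of $R_{<x}^{a,\ell}$ (resp. $R_{>x}^{a,\ell}$), which is where Lemma \ref{lem:sigma_order} and the structure of $\fB^\Delta(\cS)$ are essential and where the ordering conventions on $\cS$ must be tracked carefully. Everything after that is the elementary $q$-Pochhammer identity above.
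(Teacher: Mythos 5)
Your proof is correct and follows essentially the same approach as the paper: use Lemma \ref{lem:sigma_order} (plus the $\fB^\Delta(\cS)$ structure from Proposition \ref{prop:BD(S)}) to identify the active residue classes with the condition built into the definition of $R^{a,\ell}_{<x}$, then observe that each $R^{a,\ell}_{<x}$ is a geometric progression of ratio $r^p$ and telescope into a ratio of $q$-Pochhammer symbols. One small slip: the rewriting $\frac{1-a_ez}{1-ta_ez}=\frac{1-t^{-1}(ta_e)^{-1}z}{1-(ta_e)^{-1}z}$ should read $\frac{1-t^{-1}(ta_e)z}{1-(ta_e)z}$, i.e.\ $\frac{1-t^{-1}\rho^{-1}z}{1-\rho^{-1}z}$ with $\rho=(ta_e)^{-1}\in R^{a,\ell}_{>x}$, which is what your parenthetical ``$\rho z^{-1}$ replaced by $\rho^{-1}z$'' correctly indicates.
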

\begin{proof}
If $\cB'((V_{\ell-1},V_\ell)) = \varsigma_i$, then Lemma \ref{lem:sigma_order} implies
\[ B' = \1\left[ \bigcup_{1 \le j \le i} (S_{\sigma_j} + p\Z + \frac{1}{2}) \right] = \1[ \cB'((V_{\ell-1},V_\ell)) \ge \varsigma_i] \]
on $I_E^\e \cap (v_{\ell-1},v_\ell)$. By the definitions of $G_{<x}^B$ and $R_{<x}^{a,\ell}$, we have
\[ G_{<x}^B(z;\e,\ft) = \prod_{R_{<x}^{a,\ell} \ne \emptyset} \prod_{\rho \in R_{<x}^{a,\ell}} \frac{1 - t^{-1} \rho z^{-1}}{1 - \rho z^{-1}}. \]
Since the elements of $R_{<x}^{a,\ell}$ are
\[ \max R_{<x}^{a,\ell}, \quad r^p \max R_{<x}^{a,\ell}, \quad r^{2p} \max R_{<x}^{a,\ell}, \quad \ldots, \quad \min R_{<x}^{a,\ell}, \]
in decreasing order, we can rewrite the product over $\rho$ as the ratios of Pochhammer symbols in (\ref{eq:Gpoch}). The argument is similar for $G_{>x}^B$.
\end{proof}

The first proposition in this section describes the behavior of the largest pole of $G_{<x}^B$ and the smallest pole of $G_{>x}^B$.

\begin{proposition} \label{prop:dist<>}
Suppose $\P^{B,r,\vec{s}}_{\alpha,\ft}$ satisfies the Limit Conditions such that $\cB \in \fB^\Delta(\cS)$. For $x\in I^\e$, let $\rho_<^\e(x)$ and $\rho_>^\e(x)$ denote the maximal pole of $G_{<x}^B(z;\e,\ft)$ and minimal pole of $G_{>x}^B(z;\e,\ft)$ respectively. Suppose $\x \in I$ and $x \in I^\e$ satisfy $\e x \to \x$ as $\e \to 0$.
\begin{enumerate}[(a)]
    \item \label{dist<>:1} If $\x \in I \setminus \{V_\ell\}_{\ell=0}^n$, then
    \[ \lim_{\e \to 0} \rho_<^\e(x) = \rho_<(\x), \quad \quad \lim_{\e \to 0} \rho_>^\e(x) = \rho_>(\x). \]
    
    \item \label{dist<>:2} If $\x = V_\ell$ is not a singular point and
    \begin{enumerate}[(i)]
        \item \label{dist<>:2a} $x > v_\ell$ for all $\e > 0$, then
        \[ \lim_{\e \to 0} \rho_<^\e(x) = \rho_<(V_\ell^+), \quad \quad \lim_{\e \to 0} \rho_>^\e(x) = \rho_>(V_\ell^+) = \rho_>(V_\ell). \]
        
        \item \label{dist<>:2b} $x < v_\ell$ for all $\e > 0$, then
        \[ \lim_{\e \to 0} \rho_<^\e(x) = \rho_<(V_\ell^-)  = \rho_<(V_\ell), \quad \quad \lim_{\e \to 0} \rho_>^\e(x) = \rho_>(V_\ell^-). \]
        
        \item \label{dist<>:2c} $x = v_\ell$ for all $\e > 0$, then
        \[ \lim_{\e \to 0} \rho_<^\e(x) = \rho_<(V_\ell), \quad \quad \lim_{\e \to 0} \rho_>^\e(x) = \rho_>(V_\ell). \] 
    \end{enumerate}
    
    \item \label{dist<>:3} If $\x = V_\ell$ is a singular point, then
    \[ \lim_{\e \to 0} \rho_<^\e(x) = \rho_<(\x) = \rho_>(\x) = \lim_{\e \to 0} \rho_>^\e(x). \]
    Furthermore, if $\cB'(V_\ell) = \varsigma_\sigma$, then
    \begin{align} \label{eq:rho<_union}
    \lim_{\e \to 0}  \max \left( \bigcup_{a \in S_\sigma} R_{<x}^{a,\ell} \right) = \rho_<(V_\ell) > &  \limsup_{\e \to 0}  \max \{\mbox{poles of $G_{<x}^B(z;\e,\ft)$}\} \setminus \left( \bigcup_{a \in S_\sigma} R_{<x}^{a,\ell} \right), \\ \label{eq:rho>_union}
    \lim_{\e \to 0} \min \left( \bigcup_{a \in S_\sigma} R_{>x}^{a,\ell} \right) = \rho_>(V_\ell) < & \liminf_{\e \to 0} \min \{\mbox{poles of $G_{>x}^B(z;\e,\ft)$}\} \setminus \left( \bigcup_{a \in S_\sigma} R_{>x}^{a,\ell} \right), \\ \label{eq:rho_squeeze}
    t^{-1} r^{-|x-v_\ell|-1} & \le \frac{\rho_>^\e(x)}{\rho_<^\e(x)} \le t^{-1} r^{-|x-v_\ell|-2p+1},
    \end{align}
    for $\e > 0$ sufficiently small.
\end{enumerate}
\end{proposition}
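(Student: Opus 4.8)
The plan is to reduce the statement to an analysis of the pole sets of $G^B_{<x}$ and $G^B_{>x}$ and of their limits as $\e\to 0$. Recall from Lemma~\ref{lem:rest} that the poles of $G^B_{<x}(z;\e,\ft)$ are the points of $\bigcup_{a,\ell} R^{a,\ell}_{<x}$ and those of $G^B_{>x}(z;\e,\ft)$ the points of $\bigcup_{a,\ell} R^{a,\ell}_{>x}$, so that $\rho^\e_<(x)=\max\bigcup_{a,\ell}R^{a,\ell}_{<x}$ and $\rho^\e_>(x)=\min\bigcup_{a,\ell}R^{a,\ell}_{>x}$. Lemma~\ref{lem:sigma_order} pins down which residues $a$ actually occur in a given region: if $\cB'((V_{\ell-1},V_\ell))=\varsigma_k$, the half-integer edges of $(v_{\ell-1},v_\ell)$ with $B'=1$ are exactly those in $(S_{\sigma_1}\cup\cdots\cup S_{\sigma_k})+p\Z+\frac12$, which is precisely the condition $\cB'((V_{\ell-1},V_\ell))\ge\varsigma_{s_0\cdots s_a}$ appearing in \eqref{eq:Ril}. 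A single factor then contributes $\max R^{a,\ell}_{<x}=(s_0\cdots s_a\,r^{e})^{-1}$, with $e$ the largest edge $<x$ of the relevant progression in $(v_{\ell-1},v_\ell)$; since $r=e^{-\e}$ and $\e e\to\min(\mathbf{x},V_\ell)$, we get $\max R^{a,\ell}_{<x}\to(s_0\cdots s_a)^{-1}e^{\min(\mathbf{x},V_\ell)}$ whenever that region still contributes in the limit, and symmetrically $\min R^{a,\ell}_{>x}\to(s_0\cdots s_a)^{-1}e^{\max(\mathbf{x},V_\ell)}$, the prefactor $t^{-1}=r^{-\ft}$ tending to $1$.

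For parts~(\ref{dist<>:1}) and~(\ref{dist<>:2}) I would take the maximum (resp.\ minimum) of these over all $(a,\ell)$ and match the result with \eqref{eq:rho<>} via Lemma~\ref{lem:rho_conditions}. The algebraic point is that, among the $a$ admissible in a region of slope $i/p=\varsigma_k$, the largest value of $(s_0\cdots s_a)^{-1}$ equals $\tau_i^{-1}=\sigma_k^{-1}$, because the admissible products $s_0\cdots s_a$ are exactly the elements of $\{\sigma_1,\ldots,\sigma_k\}$ and $\sigma_k$ is the smallest of them. Combining over the regions with $V_{\ell-1}<\mathbf{x}$ reproduces $\rho_<(\mathbf{x})$ when $\mathbf{x}\notin\{V_\ell\}_{\ell=0}^n$, and reproduces $\rho_<(V_\ell^-)$, $\rho_<(V_\ell^+)$, or $\rho_<(V_\ell)$ when $\mathbf{x}=V_\ell$ according to whether $x<v_\ell$, $x>v_\ell$, or $x=v_\ell$ — since these cases govern whether $(v_{\ell-1},v_\ell)$, $(v_\ell,v_{\ell+1})$, or both contribute edges $e$ with $\e e\to V_\ell$, and the three outcomes are exactly the branches of the recursions in Lemma~\ref{lem:rho_conditions}. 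The analysis of $\rho^\e_>$ is the mirror image, with $\max(\mathbf{x},V_\ell)$ and $\rho_>$ replacing $\min(\mathbf{x},V_\ell)$ and $\rho_<$.

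For part~(\ref{dist<>:3}), let $\mathbf{x}=V_\ell$ be singular; by Proposition~\ref{prop:BD(S)}, $\cB'(V_\ell^-)=\varsigma_i>\varsigma_{i-1}=\cB'(V_\ell^+)$ and $\rho_<(V_\ell)=\rho_>(V_\ell)=\sigma_i^{-1}e^{V_\ell}$. For $a\in S_{\sigma_i}$ the region $(v_{\ell-1},v_\ell)$ (slope $\varsigma_i\ge\varsigma_{\sigma_i}$) feeds $R^{a,\ell}_{<x}$ and the region $(v_\ell,v_{\ell+1})$ (slope $\varsigma_{i-1}<\varsigma_{\sigma_i}$) feeds $R^{a,\ell}_{>x}$; the first paragraph gives $\max\bigcup_{a\in S_{\sigma_i}}R^{a,\ell}_{<x}\to\sigma_i^{-1}e^{V_\ell}$ and $\min\bigcup_{a\in S_{\sigma_i}}R^{a,\ell}_{>x}\to\sigma_i^{-1}e^{V_\ell}$, which are the left sides of \eqref{eq:rho<_union} and \eqref{eq:rho>_union}. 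To conclude $\rho^\e_<(x),\rho^\e_>(x)\to\sigma_i^{-1}e^{V_\ell}$ I would show these ``principal'' poles asymptotically dominate: any other pole of $G^B_{<x}$ has limit at most $\sigma_k^{-1}e^{V_\ell}$ with $k<i$ (region $(v_{\ell-1},v_\ell)$ with a larger product), or at most $\sigma_{i-1}^{-1}e^{V_\ell}$ (region $(v_\ell,v_{\ell+1})$, whose admissible products lie in $\{\sigma_1,\ldots,\sigma_{i-1}\}$), or at most $\rho_<(V_{\ell-1})$ (earlier regions) — all strictly below $\sigma_i^{-1}e^{V_\ell}$, the last because the refined inequality \eqref{lc4} of Proposition~\ref{prop:BD(S)} gives $\rho_<(V_{\ell-1})<\rho_>(V_\ell)=\sigma_i^{-1}e^{V_\ell}$; the mirror estimate handles $G^B_{>x}$, yielding the strict inequalities in \eqref{eq:rho<_union}--\eqref{eq:rho>_union}. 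Finally, for \eqref{eq:rho_squeeze}, write $\rho^\e_<(x)=(\sigma_i r^{e_-})^{-1}$ and $\rho^\e_>(x)=(t\,\sigma_i r^{e_+})^{-1}$, where $e_-$ is the largest edge $<x$ and $e_+$ the smallest edge $>x$ in $(S_{\sigma_i}+p\Z+\frac12)$ lying in the region on the appropriate side of $v_\ell$; then $\rho^\e_>(x)/\rho^\e_<(x)=t^{-1}r^{-(e_+-e_-)}$, and since $x\in\Z$ while $e_\pm$ are half-integers in a union of arithmetic progressions of common difference $p$ straddling $x$ and confined to within $p$ of $v_\ell$ on the far side, a direct count gives $|x-v_\ell|+1\le e_+-e_-\le|x-v_\ell|+2p-1$; \eqref{eq:rho_squeeze} then follows since $0<r<1$.

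I expect the main difficulty to be the combinatorial bookkeeping — matching the prelimit pole sets $\bigcup_{a,\ell}R^{a,\ell}_{<x}$ with the $\tau_i$ and $\sigma_i$ data of \eqref{eq:rho<>} uniformly in the position of $x$ relative to $v_\ell$, and, in the singular case, showing simultaneously that the two ``principal'' pole families converge to the common value $\sigma_i^{-1}e^{V_\ell}$ and strictly dominate all other poles; the latter is exactly where the strict form of property~(3) of $\fB^\Delta(\cS)$ from Proposition~\ref{prop:BD(S)} is essential. Establishing the precise constants $1$ and $2p-1$ in \eqref{eq:rho_squeeze} is a finite but somewhat delicate count.
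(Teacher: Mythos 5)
Your proposal follows the same route as the paper's proof: identify $\rho^\e_<(x)$ and $\rho^\e_>(x)$ as the extrema of $\bigcup_{a,\ell}R^{a,\ell}_{<x}$ and $\bigcup_{a,\ell}R^{a,\ell}_{>x}$, pin down nonempty sets via Lemma~\ref{lem:sigma_order}, pass to the limit and match against \eqref{eq:rho<>} using Lemma~\ref{lem:rho_conditions}, and in the singular case split the pole set into the principal family indexed by $S_\sigma$ (whose extremum converges to $\sigma_i^{-1}e^{V_\ell}$) versus the rest (strictly dominated, by the strict inequality of Proposition~\ref{prop:BD(S)}), finally bounding $e_+-e_-$ by straddling counting. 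This matches the paper's decomposition and constant-tracking in all essentials.
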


The next proposition gives asymptotics of $G_{<x}^B, G_{>x}^B$ with special precision given to points near maximal and minimal poles respectively.

Let $\theta \in (0,\pi)$, $\delta > 0$. Define
\[ D^{\e,\theta,\delta} = \{w \in \C: \mathrm{dist}(w,[1,\infty)) \le \delta\} \cap \{ w \in \C: |\arg(w - (1-\e))| \le \theta \}, \]
that is the neighborhood formed by the $\delta$-neighborhood of $[1,\infty)$ clipping away the parts separated by the rays of arguments $\theta$ and $-\theta$ started from $1-\e$.

\begin{proposition} \label{prop:Gconv}
Suppose $\P^{B,r,\vec{s}}_{\alpha,\ft}$ satisfies the Limit Conditions such that $\cB \in \fB^\Delta(\cS)$. For $x\in I^\e$, let $\rho_<^\e(x)$ and $\rho_>^\e(x)$ denote the maximal pole of $G_{<x}^B(z;\e,\ft)$ and minimal pole of $G_{>x}^B(z;\e,\ft)$ respectively. Suppose $x \in I^\e_V$ such that $\e x \to \x \in I$. Further assume that if $\x = V_\ell$ with $V_\ell$ not a singular point, then either $x > v_\ell$, $x = v_\ell$, or $x < v_\ell$ independent of $\e > 0$. Then
\begin{align*}
G_{<x}^B(t^{-1} z^{-1} \cdot \rho_<^\e(x);\e,\ft) &=  \cG_{<\x}(z^{-1} \cdot \rho_<(\x))^\ft \exp\left( O\left( \e \frac{|z|\vee |z|^2}{|1 - z|} \right) \right) \\
G_{>x}^B(tz \cdot\rho_>^\e(x);\e,\ft) &= \cG_{>\x}(z \cdot\rho_>(\x))^\ft \exp\left( O\left( \e \frac{|z|\vee |z|^2}{|1 - z|} \right) \right) 
\end{align*}
uniformly for $z \in D^{\e,\theta,\delta}$ and $\e > 0$ sufficiently small.
\end{proposition}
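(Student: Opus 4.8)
The plan is to establish the two asymptotic expansions by passing through the Pochhammer-symbol representation of $G^B_{<x}$, $G^B_{>x}$ from Lemma~\ref{lem:rest}, tracking how each factor behaves after the substitution $z \mapsto t^{-1}z^{-1}\rho^\e_<(x)$ (resp.\ $z \mapsto tz\,\rho^\e_>(x)$). I would treat the two formulas by the same argument, so I describe only the $G^B_{<x}$ case. By Lemma~\ref{lem:rest}, $G^B_{<x}(z;\e,\ft)$ is a finite product over the nonempty sets $R^{a,\ell}_{<x}$, and each factor is a ratio of four infinite $q$-Pochhammer symbols with base $r^p = e^{-\e p}$. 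The substitution rewrites the argument of each Pochhammer symbol in terms of ratios $\rho / \rho^\e_<(x)$ of endpoints of the $R^{a,\ell}_{<x}$'s. First I would use Proposition~\ref{prop:dist<>} (and the hypothesis that $\x = V_\ell$ is resolved into $x \lessgtr v_\ell$ or $x = v_\ell$) to compute the limits of $\max R^{a,\ell}_{<x}$ and $\min R^{a,\ell}_{<x}$ after dividing by $\rho^\e_<(x)$; these converge to explicit ratios $e^{\min(V_\ell,\x)}\sigma^{-1}/\rho_<(\x)$, $e^{V_{\ell-1}}\sigma^{-1}/\rho_<(\x)$, exactly matching the factors of $\cG_{<\x}(z^{-1}\rho_<(\x))$ written in the form \eqref{eq:G<>}.

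\textbf{Converting Pochhammer symbols to exponentials.} The second step is the analytic heart: show that $(a^\e;r^p)_\infty$ with $r^p = e^{-\e p}$ behaves, as $\e \to 0$ with $a^\e \to a$, like $\exp\!\big(\tfrac{1}{\e p}\,\mathrm{Li}_2$-type integral$\big)$ — concretely, one has
\[
\log (a^\e ; e^{-\e p})_\infty = \frac{1}{\e p} \int_0^1 \frac{\log(1 - a^\e u)}{u}\,du \;+\; O\!\left( \e \, \frac{|a^\e|}{|1 - a^\e|} \right)
\]
via Euler--Maclaurin applied to $\sum_{i\ge 0} \log(1 - a^\e e^{-\e p i})$, the error term being controlled by the derivative of $\log(1-a^\e e^{-x})$, which blows up like $|1-a^\e|^{-1}$ near $a^\e$ close to $1$. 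Taking the ratio of the four Pochhammer symbols in each $R^{a,\ell}_{<x}$ factor, the leading $\tfrac{1}{\e p}$ terms combine: the dilogarithm integrals telescope because $\log\frac{1-t^{-1}Au}{1-Au} = \int$-difference and the $t^{-1} = r^{-\ft p} = e^{\ft \e p}$ shift contributes a factor $\ft$ when one expands $t^{-1} = 1 + \ft\e p + O(\e^2)$ and differentiates the integral in the parameter. This is precisely the mechanism by which the exponent $\ft$ appears on $\cG_{<\x}$ in the claimed formula: $\big[\text{ratio}\big] = \big(\tfrac{1-t^{-1}Az^{-1}}{1-Az^{-1}} \cdot \tfrac{1-r^p B z^{-1}}{1 - t^{-1}r^p Bz^{-1}}\big) \to \big(\tfrac{1-Az^{-1}}{1-Bz^{-1}}\big)^{\ft}$ after the $\e\to 0$ limit absorbs the extra geometric-like factors.

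\textbf{Uniformity and the error term.} The third step is to make the error uniform over $z \in D^{\e,\theta,\delta}$. The set $D^{\e,\theta,\delta}$ is designed so that after the substitution, the arguments $a^\e$ of the Pochhammer symbols stay in a region where $|1 - a^\e|$ is comparable to $|1 - z|$ (up to the finitely many bounded ratios of endpoints), while $a^\e$ is bounded away from the other singularities — this uses Proposition~\ref{prop:dist<>}, in particular \eqref{eq:rho<_union} and the separation-from-singular-points hypothesis, to guarantee a uniform gap between $\rho^\e_<(x)$ and the next-largest pole, hence that all the "non-leading" Pochhammer arguments are uniformly bounded away from $1$. Summing the per-factor errors $O(\e |a^\e|/|1-a^\e|)$ over the finitely many factors and replacing $|1-a^\e|$ by $|1-z|$ gives the stated $O\big(\e(|z|\vee|z|^2)/|1-z|\big)$; the $|z|\vee|z|^2$ arises because for large $|z|$ the argument $a^\e \sim A/z$ is small and $\log(1-a^\e)\approx -a^\e$, so the Euler--Maclaurin error scales like $\e|a^\e| \sim \e|z|^{-1}$ from each symbol but the overall normalization $t^{-1}z^{-1}\rho^\e_<$ introduces the quadratic growth when combined with the behavior near $|z|\to\infty$ of the product of all factors.

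\textbf{Main obstacle.} I expect the principal difficulty to be the uniform control of the Euler--Maclaurin remainder near the branch point, i.e.\ proving the error is genuinely $O(\e/|1-z|)$ per factor and not something worse as $z$ approaches $1$ along $D^{\e,\theta,\delta}$; this requires carefully exploiting the angular cutoff by $\pm\theta$ (which keeps $z$, hence $a^\e$, away from the ray where $1 - a^\e u$ could vanish for $u \in (0,1)$) together with the $\delta$-neighborhood constraint, and then checking that the discrete sum $\sum_i \log(1 - a^\e e^{-\e p i})$ versus its integral comparison has remainder dominated by $\e \sup_i |\partial_x \log(1 - a^\e e^{-x})|$ which is $\lesssim \e/|1 - a^\e|$ only once one knows $a^\e e^{-\e p i}$ never gets closer to $1$ than $a^\e$ itself — a monotonicity statement that holds because $a^\e < 1$ (Lemma~\ref{lem:ae<1}) forces the terms $a^\e e^{-\e p i}$ to march monotonically toward $0$. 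The bookkeeping that the leading dilogarithm terms across all four Pochhammer symbols in a factor collapse to exactly $\ft \log\frac{1 - Az^{-1}}{1 - Bz^{-1}}$ (rather than some other multiple) is routine but must be done with care, keeping track of the $t^{-1} = e^{\ft \e p}$ expansion to first order.
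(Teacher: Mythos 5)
Your plan takes a genuinely different route through the Pochhammer asymptotics, and it is worth comparing the two. The paper does not expand each infinite Pochhammer symbol separately via Euler--Maclaurin/dilogarithm. Instead, in the proof of Proposition~\ref{prop:Gconv} it first combines the four infinite symbols in each $R^{a,\ell}$-factor into a ratio of two \emph{finite} Pochhammer symbols $(A;r^p)_N/(tA;r^p)_N$ (using that the $\max$ and $\min$ endpoints differ by $r^{p(N-1)}$), and then Lemma~\ref{lem:pochbd} writes $\log\frac{(z;r)_N}{(r^\fa z;r)_N}$ as a single telescoping integral $-\frac{1-r^\fa}{1-r}\int_{r^N}^1 \frac{z}{1-u(v)z}\,dv$. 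This representation makes the exponent $\fa$ appear as the exact prefactor $\frac{1-r^\fa}{1-r}\to\fa$, with no leading $\frac{1}{\e}$ divergence to cancel. By contrast, your route produces four dilogarithm terms of size $\frac{1}{\e p}$ whose cancellation must be proved, and the $\ft$ exponent is recovered only after Taylor-expanding $t^{-1}=1+\ft\e p+O(\e^2)$ inside those dilogarithms. Both routes can work, but the paper's buys cleaner bookkeeping: all the $\fa$-dependence is concentrated in one factor, and the error is split into two explicit pieces $E_1^\e$ (discrete-vs-continuous) and $E_2^\e$ (Taylor error in $\frac{1-r^\fa}{1-r}-\fa$).

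Three places in your plan need more care than you indicate. First, the $O(\e^2)$ remainder in $t^{-1}=1+\ft\e p+O(\e^2)$ produces, after multiplying by the $\frac{1}{\e p}$ weight of the dilogarithm, an $O(\e)\cdot\log\frac{1-z}{1-r^Nz}$ correction with a genuine logarithmic singularity at $z=1$; this is precisely the paper's $E_2^\e$ term and it is not automatically absorbed into the $O(\e/|1-z|)$ bound without the argument that $\log|1-z|\lesssim 1/|1-z|$ balanced against the $|z|$ growth at infinity. Second, Euler--Maclaurin applied to $\sum_i\log(1-a^\e e^{-\e p i})$ does not jump directly from the $\frac{1}{\e p}$-integral to an $O(\e|a|/|1-a|)$ error; there is an intermediate $\frac{1}{2}\log(1-a^\e)$ boundary term of order $1$ which must be shown to combine, in the ratio of the four symbols, into something of the claimed order --- this is not hard, but you do not mention it. Third, your attribution of the $|z|^2$ piece of the error to ``the overall normalization ... quadratic growth near $|z|\to\infty$'' is vague; in the paper it arises precisely from bounding $\frac{z^2}{(1-u(v)z)(1-vz)}$ in $E_1^\e$ and converting $\frac{1}{|1-vz|^2}$ into a difference of Stieltjes-type terms, which gives the clean $\frac{|z|^2}{|1-z|}$. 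Your identification of the main obstacle (uniform control near $z=1$ via the angular cutoff $\theta$ and the monotonicity $a^\e e^{-\e p i}\downarrow 0$) is correct and corresponds to the paper's use of the buffer provided by $D^{\e,\theta,\delta}$ to lower-bound $|1-u(v)z|$.
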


The remainder of this subsection is devoted to the proofs of Propositions \ref{prop:dist<>} and \ref{prop:Gconv}.

\begin{proof}[Proof of Proposition \ref{prop:dist<>}]
Suppose throughout the proof, $\e$ is small enough so that the conclusion of Lemma \ref{lem:rest} is true and so that $v_\ell - v_{\ell-1} > p$ for each $\ell \in [[1,n]]$ (that is we have at least one period in $(v_{\ell-1},v_\ell)$). Then
\begin{align} \label{eq:rho<_max}
\rho_<^\e(x) = \max \left( \bigcup_{a \in [[0,p-1]], m \in [[1,n]]} R_{<x}^{a,m} \right)
\end{align}
where the above set being maximized over is the set of poles of $G_{<x}^B(z;\e,\ft)$. Observe that if $R_{<x}^{a,m}$ is nonempty, then
\[ \max R_{<x}^{a,m} = (s_0 \cdots s_a)^{-1} e^{\min(\x,V_m)} + o(1) \]
as $\e \to 0$. The set $R_{<x}^{a,m}$ is nonempty if $v_{m-1} < x$ and $\cB'((V_{m-1},V_m)) \ge \varsigma_{s_0 \cdots s_a}$; here we used the fact that $v_m - v_{m-1} > p$, otherwise it is possible that $\Z + a + \frac{1}{2}$ and $(v_{m-1},v_m)$ do not intersect.

We first prove the convergence statements in (\ref{dist<>:1}), (\ref{dist<>:2}), (\ref{dist<>:3}) for $\rho_<^\e(x)$. The argument for $\rho_>^\e(x)$ is similar.

\underline{(\ref{dist<>:1}).} If $\x \in I \setminus \{V_\ell\}_{\ell=0}^n$, then the set $R_{<x}^{a,m}$ is nonempty if and only if $V_{m-1} < \x$ and $\cB'((V_{m-1},V_m)) \ge \varsigma_{s_0 \cdots s_a}$. Then for $\e x \to \x$, we have
\begin{align} \label{eq:rho<_conv}
\begin{split}
\rho_<^\e(x) & \to \max \{ (s_0 \cdots s_a)^{-1} e^{\min(\x,V_m)}: V_{m-1} < \x, \cB'((V_{m-1},V_m)) \ge \varsigma_{s_0 \cdots s_a} \} \\
& = \max \{ (s_0 \cdots s_a)^{-1} e^{\min(\x,V_m)}: V_{m-1} < \x, \cB'((V_{m-1},V_m)) = \varsigma_{s_0 \cdots s_a} \} = \rho_<(\x)
\end{split}
\end{align}
as $\e \to 0$, where the first equality follows from the fact that $\varsigma_{s_0 \cdots s_a}$ is an increasing function of $(s_0 \cdots s_a)^{-1}$ and the second equality follows from Lemma \ref{lem:rho_conditions}.

\underline{(\ref{dist<>:2b}), (\ref{dist<>:2c}) and (\ref{dist<>:3}).} Suppose $\x = V_\ell$ for some $0 < \ell \le n$ and $x \le v_\ell$ for all $\e > 0$. In this case, we again have that the set $R_{<x}^{a,m}$ is nonempty if and only if $V_{m-1} < \x = V_\ell$ and $\cB'((V_{m-1},V_m)) \ge \varsigma_{s_0 \cdots s_a}$. Then (\ref{eq:rho<_conv}) holds for this case as well.

\underline{(\ref{dist<>:2a}) and (\ref{dist<>:3}).} Suppose $\x = V_\ell$ for some $0 \le \ell < n$ and $x > v_\ell$ for all $\e > 0$. Then $R_{<x}^{a,m}$ is nonempty if and only if $V_{m-1} \le V_\ell = \x$ and $\cB'((V_{m-1},V_m)) \ge \varsigma_{s_0 \cdots s_a}$. Then
\begin{align*}
\rho_<^\e(x) & \to \max \{ (s_0 \cdots s_a)^{-1} e^{\min(\x,V_m)}: V_{m-1} \le \x = V_\ell, \cB'((V_{m-1},V_m)) \ge \varsigma_{s_0 \cdots s_a} \} \\
& = \max \{ (s_0 \cdots s_a)^{-1} e^{\min(\x,V_m)}: V_{m-1} \le \x = V_\ell, \cB'((V_{m-1},V_m)) = \varsigma_{s_0 \cdots s_a} \} = \rho_<(V_\ell^+)
\end{align*}
as $\e \to 0$, by the same reasoning as before.

Note that combining the latter two cases gives us the complete case of the convergence of $\rho_<^\e(x)$ for (\ref{dist<>:3}). It remains to show (\ref{eq:rho<_union}), (\ref{eq:rho>_union}), (\ref{eq:rho_squeeze}). For the remainder of the proof, assume $\x = V_\ell$ is a singular point.

\underline{(\ref{dist<>:3}): (\ref{eq:rho<_union}).} By the argument preceding the case analysis above, $R_{<x}^{a,m}$ is empty if $m > \ell+1$. We rewrite the union
\begin{align} \label{eq:three_union}
\begin{multlined}
\left( \bigcup_{a \in [[0,p-1]], m \in [[1,n]]} R_{<x}^{a,m} \right) =  \left( \bigcup_{a \in [[0,p-1]], m < \ell} R_{<x}^{a,m} \right) \cup \left( \bigcup_{a \in [[0,p-1]]} R_{<x}^{a,\ell+1} \right) \\
\cup \left( \bigcup_{a \in [[0,p-1]]\setminus S_\sigma} R_{<x}^{a,\ell} \right) \cup \left( \bigcup_{a \in S_\sigma} R_{<x}^{a,\ell} \right)
\end{multlined}
\end{align}
as four smaller unions. We want to show that the $\limsup$ of the maximum of the first three unions converges to $\rho < \rho_<(V_\ell)$, then by (\ref{eq:rho<_max}) the maximum of the fourth union must be equal to $\rho_<^\e(x)$. Since $\rho_<^\e(x) \to \rho_<(V_\ell)$ as $\e \to 0$, this establishes (\ref{eq:three_union}). Note that the maximum of the first union is exactly $\rho_<^\e(v_{\ell-1})$ and therefore converges to $\rho_<(V_{\ell-1})$ as $\e \to 0$. By Proposition \ref{prop:BD(S)}, 
\[ \rho_<(V_{\ell-1}) < \rho_>(V_\ell) = \rho_<(V_\ell) \]
because $V_\ell$ is a singular point. We have thus shown
\[ \lim_{\e \to 0} \max \left( \bigcup_{a \in [[0,p-1]], m < \ell} R_{<x}^{a,m} \right) < \rho_<(V_\ell). \]

For the second union in (\ref{eq:three_union}), observe that by Proposition \ref{prop:BD(S)}, if $\sigma = \sigma_i$ then
\[ \cB'((V_\ell,V_{\ell+1})) = \cB'(V_\ell^+) = \varsigma_{i-1}. \]
Thus, arguing as in the case analysis above,
\begin{align*}
\limsup_{\e \to 0} \max \left( \bigcup_{a \in [[0,p-1]]} R_{<x}^{a,\ell+1} \right) &\le \max\{(s_0 \cdots s_a)^{-1} e^{V_\ell}: \varsigma_{i-1} = \cB'((V_\ell,V_{\ell+1})) \ge \varsigma_{s_0 \cdots s_a} \} \\
&\le \sigma_{i-1}^{-1} e^{V_\ell} < \sigma_i^{-1} e^{V_\ell} = \rho_<(V_\ell)
\end{align*}
where we recall $\varsigma_{i-1} = \varsigma_{\sigma_{i-1}}$ and the final equality follows from Proposition \ref{prop:BD(S)}. It is necessary to take the $\limsup$ since the set above may be empty depending on whether $x < v_\ell$ or $x > v_\ell$. Also, note that the second inequality is equality if the preceding set is nonempty (this is when $i > 0$). Otherwise it is strict since $\max \emptyset = -\infty < 0$; recall that $\sigma_0 = \infty$ so we take $\sigma_0^{-1} = 0$. The third union is similar, we have $\cB'((V_{\ell-1},V_\ell)) = \varsigma_\sigma = \varsigma_i$ by assumption. Then
\begin{align*}
\lim_{\e\to 0} \max \left( \bigcup_{a \in [[0,p-1]] \setminus S_\sigma} R_{<x}^{a,\ell} \right) &= \max\{(s_0 \cdots s_a)^{-1} e^{V_\ell}: \varsigma_i = \cB'((V_\ell,V_{\ell+1})) \ge \varsigma_{s_0 \cdots s_a}, s_0 \cdots s_a \ne \sigma_i \} \\
&\le \sigma_{i-1}^{-1} e^{V_\ell} < \sigma_i^{-1} e^{V_\ell} = \rho_<(V_\ell).
\end{align*}
This proves (\ref{eq:rho<_union}).

\underline{(c): (\ref{eq:rho>_union}).} Uses an analogous argument as for (\ref{eq:rho<_union}).

\underline{(c): (\ref{eq:rho_squeeze}).} Using (\ref{eq:rho<_union}) and (\ref{eq:rho>_union}), notice that
\[ \rho_<^\e(x) = \max \left( \bigcup_{a \in S_\sigma} R_{<x}^{a,\ell} \right) = \sigma^{-1} r^{-e_1}, \quad \rho_>^\e(x) = \min \left( \bigcup_{a \in S_\sigma} R_{>x}^{a,\ell} \right) = t^{-1} \sigma^{-1} r^{-e_2} \]
where
\[ e_1 = \max\{ e \in (v_{\ell-1},v_\ell) \cap (S_\sigma + p\Z + \frac{1}{2}): e < x\}, \quad e_2 = \min\{e \in (v_\ell,v_{\ell+1}) \cap (S_\sigma + p\Z + \frac{1}{2}): e > x\}. \]
We have the inequalities
\begin{align*}
\min(v_\ell,x) - p + \frac{1}{2} & \le e_1 \le \min(v_\ell,x) - \frac{1}{2} \\
\max(v_\ell,x) + \frac{1}{2} & \le e_2 \le \max(v_\ell,x) + p - \frac{1}{2}.
\end{align*}
Thus we conclude (\ref{eq:rho_squeeze}).
\end{proof}

Before going into the proof of Proposition \ref{prop:Gconv}, we first require a lemma on the asymptotics of Pochhammer symbols.

\begin{lemma} \label{lem:pochbd}
Let $\fa > 0$ and suppose $N(\e) \in \Z_{>0}$ such that $\limsup_{\e \to 0} \e N(\e) > 0$ as $\e \to 0$. Then for any fixed $\theta \in (0,\pi), \delta > 0$, we have
\[ \frac{(z;e^{-\e})_{N(\e)}}{(e^{-\e \fa}z;e^{-\e})_{N(\e)}} = \left(\frac{1 - z}{1-r^N z} \right)^\fa \exp\left( O\left(\e \frac{|z| \vee |z|^2}{|1 - z|} \right) \right)\]
uniformly for $z$ in $\C \setminus D^{\e,\theta,\delta}$ and $\e$ arbitrarily small.
\end{lemma}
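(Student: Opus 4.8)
The plan is to take logarithms and compare the finite sum $\log \frac{(z;r)_{N}}{(r^{\fa}z;r)_{N}} = \sum_{i=0}^{N-1}\bigl[\log(1-r^{i}z) - \log(1-r^{i+\fa}z)\bigr]$ (with $r=e^{-\e}$) against $\fa\bigl[\log(1-z)-\log(1-r^{N}z)\bigr] = \fa\sum_{i=0}^{N-1}\bigl[\log(1-r^{i}z)-\log(1-r^{i+1}z)\bigr]$. Writing $g(u) = \log(1-e^{u}z)$, the first sum is $\sum_{i}\bigl[g(-\e i) - g(-\e i - \e\fa)\bigr]$ and the target is $\fa\sum_{i}\bigl[g(-\e i) - g(-\e i - \e)\bigr]$. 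Each bracket $g(-\e i) - g(-\e i-\e b)$ equals $\e b\, g'(\theta_{i,b})$ for some intermediate point by the mean value theorem, or more precisely I would Taylor expand: $g(-\e i)-g(-\e i - \e b) = \e b\, g'(-\e i) + O(\e^{2}b^{2}\sup|g''|)$ on the relevant segment. So both the true log-ratio and the target log-ratio agree to leading order with $\e\fa\sum_{i} g'(-\e i)$, and the lemma reduces to controlling the error terms $\sum_i \e^2 \sup_{[-\e i-\e\fa, -\e i]}|g''|$ and $\fa\sum_i \e^2 \sup|g''|$, plus the discrepancy between the two second-order terms.

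The key computation is the bound on $g'$ and $g''$. We have $g'(u) = \dfrac{-e^{u}z}{1-e^{u}z}$ and $g''(u) = \dfrac{-e^{u}z}{(1-e^{u}z)^{2}}$. The crucial geometric input, supplied by the hypothesis $z\notin D^{\e,\theta,\delta}$, is a lower bound $|1-e^{u}z| \gtrsim \max(|1-z|,\,\text{const})$ uniformly for $u\in[-\e\fa - \e N,\,0]$: indeed as $u$ decreases from $0$, $e^{u}z$ moves toward the origin along a ray, and staying outside the clipped $\delta$-neighborhood of $[1,\infty)$ forces $|1-e^u z|$ to stay comparable to $|1-z|$ (this is exactly why the region $D^{\e,\theta,\delta}$ is carved out with the cone of half-angle $\theta$ and the $\delta$-tube — the cone handles $z$ near $1$ and the tube handles $z$ far out on $[1,\infty)$). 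Granting this, $|g'(u)| = O\!\left(\dfrac{|z|}{|1-z|}\right)$ and $|g''(u)| = O\!\left(\dfrac{|z|\vee|z|^{2}}{|1-z|^{2}}\right)$ — actually one gets a cleaner bound $O\!\left(\dfrac{|z|\vee|z|^2}{|1-z|}\right)$ after using that $|1-e^u z|$ is also bounded below by an absolute constant when $|z|$ is large, which is what the $\delta$-tube gives. Then $\sum_{i=0}^{N-1}\e^{2}\,|g''| = \e\cdot\bigl(\e N\bigr)\cdot O\!\left(\dfrac{|z|\vee|z|^{2}}{|1-z|}\right)$, and since $\e N(\e)$ is bounded (it may not converge, but one can pass to the $\limsup$; the statement only needs an $O$, and if $\e N(\e)$ were unbounded one runs into the $r^{N}z\to 0$ regime where the stated target also behaves, so a uniform constant still works) this is $O\!\left(\e\,\dfrac{|z|\vee|z|^{2}}{|1-z|}\right)$, as required.

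The remaining bookkeeping is to check the two first-order main terms genuinely cancel: $\sum_{i=0}^{N-1}\bigl[g(-\e i) - g(-\e i - \e\fa)\bigr]$ telescopes against nothing, but writing it as an integral $\int$-comparison, $\sum_i \e\fa\, g'(-\e i) \approx \fa\int_{-\e N}^{0} g'(u)\,du = \fa[g(0) - g(-\e N)] = \fa\log\frac{1-z}{1-r^N z}$, with the Riemann-sum error again controlled by $\e^{2}\sum|g''|$ of the same size. So combining: $\log\frac{(z;r)_N}{(r^\fa z;r)_N} = \fa\log\frac{1-z}{1-r^N z} + O\!\left(\e\,\frac{|z|\vee|z|^2}{|1-z|}\right)$, which exponentiates to the claim. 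The main obstacle is purely the uniformity in $z$: one must verify the lower bound on $|1-e^u z|$ over the whole range of $u$ from the single condition $z\notin D^{\e,\theta,\delta}$, and track how the $\e$-dependence of the region ($1-\e$ as cone apex) interacts with $u$ ranging down to $-\e N$; this is elementary plane geometry but needs to be done carefully to get the stated error form rather than something weaker.
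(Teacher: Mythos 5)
Your overall strategy is the same as the paper's: take logs, compare the Pochhammer sum to the continuum quantity $\fa\log\frac{1-z}{1-r^N z}$, and bound the discrepancy through a second‑order (Taylor‑remainder) estimate. But there is a concrete gap in the error control, and it is exactly the point the paper has to work hardest to get past.

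The step that fails is the ``cleaner bound'' $|g''(u)| = O\!\left(\frac{|z|\vee|z|^2}{|1-z|}\right)$, followed by the sup‑times‑length estimate $\sum_i \e^2|g''|\le \e\cdot(\e N)\cdot\sup|g''|$. The cleaner bound is false when $z$ is close to $1$: the $\delta$‑tube only gives an absolute lower bound on $|1-e^u z|$ when $|z|$ is large, while for $z$ with $|1-z|\sim\e$ (perfectly allowed outside $D^{\e,\theta,\delta}$) the best uniform pointwise bound is $|g''(u)|\lesssim \frac{|z|\vee|z|^2}{|1-z|^2}$. Plugging the correct pointwise bound into sup‑times‑length gives
\[
\sum_{i=0}^{N-1}\e^2|g''| \;\lesssim\; \e\cdot(\e N)\cdot\frac{|z|\vee|z|^2}{|1-z|^2}
\;=\;\e\cdot\frac{|z|\vee|z|^2}{|1-z|}\cdot\frac{\e N}{|1-z|},
\]
which overshoots the target by the factor $\frac{\e N}{|1-z|}$; with $\e N\sim 1$ and $|1-z|\sim\e$ this is of order $1/\e$, so your bound is $O(1)$ where the lemma needs $O(\e\cdot\frac{1}{\e})=O(1)$ — the arithmetic only appears to close because the erroneous factor of $|1-z|$ you dropped from the denominator exactly compensates for the lost factor of $\e N/|1-z|$, i.e.\ two mistakes cancelling.

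The paper avoids this by never applying a pointwise bound to $|1-vz|^{-2}$: after changing variables to a single integral it evaluates
\[
\int_{r^N}^1\frac{dv}{|1-vz|^2}
= \frac{1}{\Im z}\,\arg\!\left(\frac{1-r^N z}{1-z}\right)
\]
exactly (partial fractions, logarithm, take imaginary part) and then observes that the factor $\arg\!\left(\frac{1-r^N z}{1-z}\right)$ vanishes at the same rate as $\sin\arg(1-z)$, so the whole expression is $\lesssim \frac{1}{|1-z|}$ rather than $\lesssim \frac{1-r^N}{|1-z|^2}$. This cancellation is the missing ingredient in your argument: the Riemann‑sum error is controlled by an integral that is much smaller than length‑times‑sup because the singularity of $|1-vz|^{-2}$ near $v=1$ is integrable at scale $|1-z|$. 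To fix the proposal you would need to either carry out this integral evaluation or at least prove the inequality $\int_{r^N}^1 |1-vz|^{-2}\,dv\lesssim |1-z|^{-1}$ directly, rather than bounding the integrand by its maximum.

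One smaller imprecision: you also assert $|1-e^u z|\gtrsim\max(|1-z|,\text{const})$ uniformly, which cannot be right (taking $u=0$ gives $|1-z|\ge\text{const}$, contradicting $|1-z|\sim\e$). The correct geometric fact is only $|1-e^u z|\gtrsim |1-z|$ for $u\le 0$, together with an absolute lower bound when $z$ is far from $[1,\infty)$.
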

\begin{proof}
Throughout the proof, the constant symbol $C$ is independent of $\e > 0$ (though it may depend on $\fa, \theta$), and may change from line to line.

Set $r = e^{-\e}$, $N = N(\e)$, $D^\e = D^{\e,\theta,\delta}$. Define
\begin{align*}
E^\e_1(z) &= \log \frac{(z;r)_N}{(r^\fa z;r)_N} - \frac{1 - r^\fa}{1 - r} \log\left( \frac{1 - z}{1 - r^Nz} \right), \\
E^\e_2(z) &= \frac{1 - r^\fa}{1 - r} \log\left( \frac{1 - z}{1 - r^Nz} \right) - \fa \log\left( \frac{1 - z}{1 - r^N z} \right) \\
E^\e(z) &= E^\e_1(z) + E^\e_2(z) = \log \frac{(z;r)_N}{(r^\fa z;r)_N} - \fa \log\left( \frac{1 - z}{1 - r^N z} \right).
\end{align*}

Using the fact that
\[ 1 - r^\fa = 1 - (1 - (1 - r))^\fa = \fa (1 - r) - \binom{\fa}{2} (1 - r)^2 + O(\e^3), \]
we have
\[ E_2^\e(z) = \left( \frac{1 - r^\fa}{1 - r} - \fa \right) \log \left( \frac{1 - z}{1 - r^N z} \right) = \left( - \binom{\fa}{2}\e + O(\e^2) \right) \log \left( \frac{1 - z}{1 - r^N z} \right). \]
The $\log$ term is bounded for large $|z|$, behaves as $C|z|$ for small $|z|$, and has logarithmic singularities at $z = 1$ and $z = r^{-N}$. Since $\C \setminus D^\e$ is separated by a constant distance from the $r^N $ singularity by the assumption $\limsup_{\e \to 0} \e N(\e) > 0$, we may disregard the singularity at $z = r^{-N}$ and have
\[ |E_2^\e(z)| \le C \e \frac{|z|}{|1 - z|} \]
for $z \in \C \setminus D^\e$. Note that we accounted for the $\log$ singularity at $1$ with the ``wasteful'' $|1 - z|$ term in the denominator and the boundedness for $|z|$ large by balancing the right hand side.

Observe that
\begin{align*}
\log \frac{(z;r)_N}{(r^\fa z;r)_N} = - \sum_{j=0}^{N-1} \int_{r^{\fa + j}}^{r^j} \frac{z}{1 - uz} \, du = - \frac{1 - r^\fa}{1 - r} \int_{r^N}^1 \frac{z}{1 - u(v) z} \, dv
\end{align*}
by the change of variables
\begin{align}
\begin{split} \label{eq:CoV}
%(r^{j+\fa},r^j] \ni u \mapsto v(u) & = \frac{1 - r}{1 - r^\fa} (u - r^j) + r^j \in (r^{j+1},r^j], \\
(r^{j+1},r^j] \ni v \mapsto u(v) & = \frac{1 - r^\fa}{1 - r} (v - r^j) + r^j \in (r^{j+\fa},r^j].
\end{split}
\end{align}
We may similarly write
\[ \frac{1 - r^\fa}{1 - r} \log \left( \frac{1 - z}{1 - r^Nz} \right) = -\frac{1 - r^\fa}{1 - r} \int_{r^N}^1 \frac{z}{1 - vz} \, dv. \]
Thus
\begin{align*}
E_1^\e(z) = \frac{1 - r^\fa}{1 - r} \int_{r^N}^1 \left( \frac{z}{1 - u(v) z} - \frac{z}{1 - vz} \right) \, dv = \frac{1 - r^\fa}{1- r} \int_{r^N}^1 \frac{(u(v) - v) z^2}{(1 - u(v) z)(1 - vz)} \, dv.
\end{align*}

For $v \in [r^{j+1},r^j]$,
\[ u(v) - v = \frac{r - r^\fa}{1 - r}(v - r^j), \]
so that
\[ |u(v) - v| < C \e \]
where the constant is also uniform over $v \in [r^N,1]$. Also for $v \in [r^{j+1},r^j]$,
\[ \left| \frac{1 - vz}{1 - u(v)z} \right| = \left| 1 + \frac{z(u(v) - v)}{1 - u(v)z} \right| \le C\]
for $z \in \C \setminus D^\e$. Indeed, $u(v)$ ranges from $r^{N-1 + \fa}$ to $1$ so that the buffer provided by $D^\e$ bounds $|1 - u(v)z|$ from below by some constant times $\e$. Thus
\[ |E_1^\e(z)| \le \frac{1 - r^\fa}{1 - r}\left|\int_{r^N}^1 \! \frac{(u(v)-v) z^2}{(1 - u(v) z)(1 - vz)} \, dv \right| \leq \int_{r^N}^1 \! \frac{C \, \e\, |z|^2}{|1 - vz|^2} \, dv \]
By writing
\[ \frac{1}{|1 - vz|^2} = \frac{1}{z - \bar{z}} \left( \frac{z}{1 - vz} - \frac{\bar{z}}{1 - v\bar{z}} \right), \quad v \in [r^N,1], \]
we obtain
\[ |E_1^\e(z)| \le \frac{C \, \e |z|^2}{z - \bar{z}}  \left[ \log \left( \frac{1 - \bar{z}}{1 - r^N \bar{z}} \right) - \log \left( \frac{1 - z}{1 - r^N z} \right) \right] = C \, \e \cdot \frac{|z|^2}{\Im z} \arg\left( \frac{1 - \bar{z}}{1 - r^N \bar{z}} \right) \]
Observe that
\[ 0 \le \frac{|z|}{\Im z} \arg\left( \frac{1 - \bar{z}}{1 - r^N \bar{z}} \right) = - \frac{|z|}{|1 - z|} \frac{1}{\sin \arg(1 - z)} \arg\left( \frac{1 - \bar{z}}{1 - r^N z} \right) \le C \frac{|z|}{|1 - z|}\]
for $z \in \C \setminus D^\e$. The latter bound follows from the fact that the $\sin \arg(1 - z)$ denominator term is balanced by $\arg \left( \frac{1 - \bar{z}}{1 - r^N z} \right)$ away from $[1,\infty)$; note that as $z$ approaches $\R$ we only need the fact that $\arg\left( \frac{1 - \bar{z}}{1 - r^Nz} \right)$ approaches $0$, however near $z = 1$ we really need $\arg(1 - \bar{z})$ to balance $\sin \arg(1 - z)$. Thus
\[ |E_1^\e(z)| \le C \e \frac{|z|^2}{|1 - z|} \]
uniformly for $z \in \C \setminus D^\e$. Combining our bounds, we obtain
\[ |E^\e(z)| \le C_1 \e \frac{|z| \vee |z|^2}{|1 - z|} \]
uniformly for $z \in \C \setminus D^\e$.
Our lemma now follows from
\[ \frac{(z;r)_N}{(r^\fa z;r)_N} = \exp(E^\e(z)). \]
\end{proof}

\begin{proof}[Proof of Proposition \ref{prop:Gconv}]
By Lemma \ref{lem:rest}, for sufficiently small $\e > 0$, we have
\[ G_{>x}^B(t z\cdot\rho_>^\e(x);\e,\ft) = \prod_{R_{>x}^{a,\ell} \ne \emptyset}  \frac{((\min R_{>x}^{a,\ell})^{-1} z \cdot \rho_>^\e(x) ;r^p)_\infty}{(t(\min R_{>x}^{a,\ell})^{-1} z \cdot \rho_>^\e(x);r^p)_\infty} \cdot \frac{(t r^p(\max R_{>x}^{a,\ell})^{-1}z \cdot \rho_>^\e(x);r^p)_\infty}{(r^p(\max R_{>x}^{a,\ell})^{-1}z \cdot \rho_>^\e(x) ;r^p)_\infty} \]
Fix $R_{>x}^{a,\ell} \ne \emptyset$. As established in the various cases in the proof of Proposition \ref{prop:dist<>}, the nonemptiness of $R_{>x}^{a,\ell}$ for $a,\ell$ fixed is independent of $\e > 0$ when $\e$ is sufficiently small, under our assumptions on $x$. We have
\[ \frac{((\min R_{>x}^{a,\ell})^{-1} z \cdot \rho_>^\e(x) ;r^p)_\infty}{(t(\min R_{>x}^{a,\ell})^{-1} z \cdot \rho_>^\e(x);r^p)_\infty} \cdot \frac{(t r^p(\max R_{>x}^{a,\ell})^{-1}z \cdot \rho_>^\e(x);r^p)_\infty}{(r^p(\max R_{>x}^{a,\ell})^{-1}z \cdot \rho_>^\e(x) ;r^p)_\infty} = \frac{((\min R_{>x}^{a,\ell})^{-1} z \cdot \rho_>^\e(x) ;r^p)_N}{(t(\min R_{>x}^{a,\ell})^{-1} z \cdot \rho_>^\e(x);r^p)_N} \]
where $N = N(\e)$ satisfies
\[ r^{p(N-1)} = \frac{\min R_{>x}^{a,\ell}}{\max R_{>x}^{a,\ell}}. \]
By definition of $\rho_>^\e(x)$,
\[ \frac{\rho_>^\e(x)}{\min R_{>x}^{a,\ell}} = r^\nu \]
for some $\nu = \nu(\e) \ge 0$. Note that $\e \nu \to \hat{\nu}$ as $\e \to 0$ for some $\hat{\nu}$; this follows from the fact that $\min R_{>x}^{a,\ell}$ converges to $(s_0 \cdots s_a)^{-1} e^{\max(\x,V_\ell)}$ and that $\rho_>^\e(x)$ converges by Proposition \ref{prop:dist<>}.

There are two main cases to consider: $\hat{\nu} > 0$ and $\hat{\nu} = 0$. First suppose $\hat{\nu} > 0$. Note that if $a_1 = a_1(\e), a_2 = a_2(\e)$ converge to some $\hat{a} > 0$ as $\e \to 0$, then
\[ \frac{1 - r^a z}{1 - r^b z} = \exp\left( \log\left( 1 + \frac{z(r^b - r^a)}{1 - r^b z} \right) \right) = e^{O(\e \cdot (|z| \wedge 1))}\]
uniformly over $z\in D^{0,\theta,\delta}$. Thus Lemma \ref{lem:pochbd} implies
\[ \frac{((\min R_{>x}^{a,\ell})^{-1} z \cdot \rho_>^\e(x) ;r^p)_N}{(t(\min R_{>x}^{a,\ell})^{-1} z \cdot \rho_>^\e(x);r^p)_N} = \left( \frac{1 - (s_0 \cdots s_a)^{-1} e^{\max(\x,V_\ell)} z \cdot \rho_>(\x)}{1 - (s_0 \cdots s_a)^{-1} e^{V_{\ell+1}} z \cdot \rho_>(\x)} \right)^{\frac{\ft}{p}} e^{O(\e(|z|\wedge 1))} \]
uniformly over $z \in D^{0,\theta,\delta}$.

If $\hat{\nu} = 0$, then write
\[ \frac{((\min R_{>x}^{a,\ell})^{-1} z/\rho_>^\e(x) ;r^p)_N}{(t(\min R_{>x}^{a,\ell})^{-1} z/\rho_>^\e(x);r^p)_N} = \frac{(r^\nu z ;r^p)_N}{(t r^\nu z;r^p)_N} \]
If, in addition, $\lim_{\e \to 0} \e N > 0$, then by Lemma \ref{lem:pochbd}, we have
\[ \frac{(r^\nu z ;r^p)_N}{(t r^\nu z;r^p)_N} = \left( \frac{1 - r^\nu z}{1 - (s_0 \cdots s_a)^{-1} e^{V_\ell+1} z \cdot \rho_>(\x)} \right)^{\frac{\ft}{p}} \exp\left( O\left( \e \frac{|z|\vee |z|^2}{|1 -z|} \right) \right) \]
uniformly over $z \in D^{\e,\theta,\delta}$. We may further replace the $1 - r^\nu z$ with $1 - z$ without changing the right hand side since
\begin{align} \label{eq:nu_small}
\frac{1 - r^\nu z}{1 - z} = \exp\left( \log \left( 1 + \frac{z(1 - r^\nu)}{1 - z} \right) \right) = \exp\left( O\left( \e \frac{|z|}{|1 -z|} \right) \right)
\end{align}
uniformly over $z \in D^{\e,\theta,\delta}$.

Otherwise, if $\lim_{\e \to 0} \e N = 0$, take $M = M(\e) \in \Z_{>0}$ large so that $\lim_{\e \to 0} \e M > 0$. Then by Lemma \ref{lem:pochbd},
\begin{align*}
\frac{(r^\nu z ;r^p)_{N+M}}{(t r^\nu z;r^p)_{N+M}}  &= \frac{(r^\nu z ;r^p)_{N+M}}{(t r^{\nu+N} z;r^p)_{N+M}} \cdot \frac{(t r^{\nu + N} z;r^p)_M}{(r^\nu z ;r^p)_M} \\
& = \left( \frac{1 - r^\nu z}{1 - r^{\nu + N}z} \right)^{\frac{\ft}{p}} \exp\left( O\left( \e \frac{|z| \vee |z|^2}{|1 - z|} \right) \right) = \exp\left( O \left( \e \frac{|z| \vee |z|^2}{|1 - z|} \right) \right)
\end{align*}
uniformly over $z \in D^{\e,\theta,\delta}$ where the last equality follows from applying (\ref{eq:nu_small}) to $\frac{1 - r^\nu z}{1 - z}$ and $\frac{1 - r^{N+\nu}z}{1 - z}$, then taking their quotient.

Multiplying over all $R_{>x}^{a,\ell} \ne \emptyset$ and comparing with $\cG_{>\x}$ as defined in (\ref{eq:G<>}), we obtain the desired asymptotics for $G_{>x}^B$. The argument for $G_{>x}^B$ is similar.
\end{proof}

\subsection{Outline of proofs}
We first outline the proof of Theorem \ref{thm:mom} to indicate the main obstacles. Part of the proof of Theorem \ref{thm:gauss} will mirror the outlined proof of Theorem \ref{thm:mom}.

By Theorem \ref{thm:obs} and (\ref{eq:DZ}), we can express $\E\wp_k(\pi^x;r)$ as
\begin{align} \label{eq:premom}
\frac{1}{(2\pi\i)^k} \oint_{\cC_1'} \cdots \oint_{\cC_k'} \frac{\sum_{i=1}^k \frac{1}{z_i} \frac{q^{i-1}}{t^{i-1}}}{(z_2 - \frac{q}{t}z_1) \cdots (z_k - \frac{q}{t}z_{k-1})} \prod_{i < j} \frac{(z_j - z_i)(z_j - \frac{q}{t}z_i)}{(z_j - \frac{1}{t}z_i)(z_j - qz_i)} \prod_{i=1}^k G_{<x}^{B^\e}(z_i;\e,\ft) G_{>x}^{B^\e}(z_i;\e,\ft) \, dz_i
\end{align}
where $\cC_i'$ is the $z_i$-contour and satisfies the conditions in Theorem \ref{thm:obs}. If the $\cC_i'$ converge to $\cC_i$ and are separated from one another so that the contours do not pass through any singularities of the integrand, then the integrand converges on the contour and the integral (\ref{eq:premom}) converges to
\begin{align} \label{eq:mom}
\frac{1}{(2\pi\i)^k} \oint_{\cC_1} \cdots \oint_{\cC_k} \frac{\sum_{i=1}^k \frac{1}{z_i}}{(z_2 - z_1) \cdots (z_k - z_{k-1})} \prod_{i=1}^k [\cG_{<\x}^{\cB}(z) \cG_{>\x}^{\cB}(z)]^\ft \, dz_i.
\end{align}
The dimension of this contour is in general higher than what we desire. However, we can obtain the desired form by applying the following Theorem from \cite[Corollary A.2]{GZ}.

\begin{theorem}[\cite{GZ}] \label{thm:dimred}
Let $s$ be a positive integer. Let $f,g_1,\ldots,g_s$ be meromorphic functions with possible poles at $\{\fp_1,\ldots,\fp_m\}$. Then for $k \ge 2$,
\begin{align*}
\begin{multlined}
\frac{1}{(2\pi\i)^k} \oint \cdots \oint \frac{1}{(v_2 - v_1)\cdots(v_k - v_{k-1})} \prod_{j=1}^s \left( \sum_{i=1}^k g_j(v_i) \right) \prod_{i=1}^k f(v_i)\, dv_i \\
= \frac{k^{s-1}}{2\pi\i} \oint f(v)^k \prod_{j=1}^s g_j(v)\,dv.
\end{multlined}
\end{align*}
where the contours contain $\{\fp_1,\ldots,\fp_m\}$ and on the left side we require that the $v_i$-contour is contained in the $v_j$-contour whenever $i < j$. 
\end{theorem}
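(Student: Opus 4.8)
The plan is to prove the identity by integrating out the variables one at a time from the outside in, using a single elementary residue evaluation together with a slightly generalized inductive statement. The engine is the following fact: if $h$ is a rational function with no pole at infinity, all of whose poles lie inside a positively oriented contour $\cC$, and $a$ is any point inside $\cC$, then
\[ \frac{1}{2\pi\i}\oint_{\cC}\frac{h(v)}{v-a}\,dv \;=\; h(\infty):=\lim_{v\to\infty}h(v), \]
since the left-hand side is the sum of \emph{all} residues of $\tfrac{h(v)}{v-a}$ in $\C$, i.e.\ the coefficient of $v^{-1}$ in its Laurent expansion at infinity; in particular it is independent of $a$.

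The key step is to prove, by induction on $k\ge 1$, the following flexible form of the claim: for arbitrary constants $b_1,\dots,b_s\in\C$,
\[ \frac{1}{(2\pi\i)^k}\oint_{\cC_1}\!\cdots\!\oint_{\cC_k}\frac{\prod_{i=1}^k f(v_i)\,\prod_{j=1}^s\!\big(b_j+\sum_{i=1}^k g_j(v_i)\big)}{(v_2-v_1)\cdots(v_k-v_{k-1})}\,\prod_i dv_i \;=\; f(\infty)^{\,k-1}\,\frac{1}{2\pi\i}\oint f(v)\prod_{j=1}^s\!\big(b_j+(k-1)g_j(\infty)+g_j(v)\big)\,dv. \]
For $k=1$ both sides equal $\tfrac{1}{2\pi\i}\oint f(v)\prod_j\!\big(b_j+g_j(v)\big)\,dv$. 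For the inductive step, carry out the innermost $v_k$-integral first: the $v_k$-dependent part of the integrand is $\tfrac{1}{v_k-v_{k-1}}\,f(v_k)\prod_j\!\big((b_j+\sum_{i<k}g_j(v_i))+g_j(v_k)\big)$, and because $\cC_k$ encloses $v_{k-1}$ as well as all the poles $\fp_1,\dots,\fp_m$, the residue fact above (with $a=v_{k-1}$) replaces the $v_k$-integral by the constant $f(\infty)\prod_j\!\big((b_j+\sum_{i<k}g_j(v_i))+g_j(\infty)\big)$. What is left is a $(k-1)$-fold integral of exactly the same shape, with $k$ lowered to $k-1$ and each $b_j$ shifted to $b_j+g_j(\infty)$, multiplied by the prefactor $f(\infty)$; invoking the inductive hypothesis closes the induction.

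It remains to specialize $b_1=\cdots=b_s=0$, so the left-hand side of the theorem becomes $f(\infty)^{k-1}\cdot\tfrac{1}{2\pi\i}\oint f(v)\prod_j\!\big((k-1)g_j(\infty)+g_j(v)\big)\,dv$, and to match this against $\tfrac{k^{s-1}}{2\pi\i}\oint f(v)^k\prod_j g_j(v)\,dv$. Both are values of the linear functional ``coefficient of $v^{-1}$ in the Laurent expansion at infinity'' applied to a product of rational functions regular at infinity, and this functional obeys the Leibniz rule $\tfrac{1}{2\pi\i}\oint h_1\cdots h_r\,dv=\sum_i\big(\tfrac{1}{2\pi\i}\oint h_i\,dv\big)\prod_{i'\ne i}h_{i'}(\infty)$. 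Expanding both sides this way, and using that $g_j(v)+(k-1)g_j(\infty)$ has value $k\,g_j(\infty)$ at infinity while having the same residue as $g_j$, one verifies term-by-term that the two expressions agree: the explicit $k^{s-1}$ on the right is exactly compensated by the extra factors of $k$ produced by the shifted $g_j$'s on the left. I expect this last matching --- seeing why the bookkeeping produces precisely $k^{s-1}$ --- to be the only delicate point; the remainder is routine residue calculus plus keeping track of which poles each nested contour encloses. (For the application of the theorem in this article one may work with the prelimit functions $G^{B^\e}_{<x},G^{B^\e}_{>x}$, which are rational, and only afterwards pass to the limit $\e\to0$.)
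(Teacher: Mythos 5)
The paper does not give its own proof of this statement; it is imported as \cite[Corollary A.2]{GZ} and used as a black box, so there is no in-text argument to compare against. Judged on its own terms, your proof is correct and self-contained. The residue fact, the strengthened inductive statement with the extra constants $b_1,\ldots,b_s$, the inductive step (integrate over the contour $\cC_k$ holding $v_1,\ldots,v_{k-1}$ fixed; the sum of residues of $h(v_k)/(v_k-v_{k-1})$ over $\cC_k$ is $h(\infty)$; absorb the resulting $g_j(\infty)$ into $b_j$), and the final Leibniz-rule bookkeeping all check out. The ``delicate matching'' you flag does work term by term: writing $R[\cdot]=\tfrac{1}{2\pi\i}\oint(\cdot)\,dv$, both your reduced left-hand side and $k^{s-1}R\bigl[f^k\prod_j g_j\bigr]$ equal
\[
k^{s}\,f(\infty)^{k-1}\,R[f]\,\prod_{j}g_j(\infty)\;+\;k^{s-1}\,f(\infty)^{k}\,\sum_{j}R[g_j]\prod_{j'\neq j}g_{j'}(\infty).
\]

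Two minor points worth tightening. First, the only property your residue fact and Leibniz rule actually need is that $f$ and each $g_j$ have finite limits at $v=\infty$; this is already forced by the hypothesis that their only possible poles are $\fp_1,\ldots,\fp_m\in\C$, so you can invoke it directly. Your parenthetical about ``the prelimit functions \ldots which are rational'' muddies this: rationality alone does not rule out a pole at infinity (e.g.\ $v\mapsto v$), and rationality is neither necessary nor the precise condition you use. Second, a terminology nit: you call the $v_k$-integral the ``innermost,'' but since the $v_i$-contour sits inside the $v_j$-contour for $i<j$, the contour $\cC_k$ is the \emph{outermost} one; the computation is unaffected, as $\cC_k$ still encloses $v_{k-1}$ and all of $\fp_1,\ldots,\fp_m$.
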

We note that this was how the asymptotics of moments were carried out in \cite{GZ}.

In the case that $\mathbf{x} = V_\ell$ is a singular point, there is some additional difficulty due to the order of $\mathrm{dist}(\rho_<^\e(x),\rho_>^\e(x))$ being $O(\e)$ where $\rho_<^\e(x),\rho_>^\e(x)$ are defined as in Proposition \ref{prop:dist<>}. By the conditions on the contours in Theorem \ref{thm:obs}, our $k$th moment formula only makes sense if there is some separation between $v_\ell$ and $x$. This is where the $k\ft$-separation condition is needed. Even with this separation condition, the contours $\cC_i'$ in (\ref{eq:premom}) are $O(\e)$ from one another on the positive real axis; this is the main technical complication and is a byproduct of $\mathrm{dist}(\rho_<^\e(x),\rho_>^\e(x))$ being $O(\e)$. For points $(z_1,\ldots,z_n)$ where $|z_i - z_j| \sim O(\e)$, $i \ne j$, we take care to show that the integrand does not diverge. We still have convergence to (\ref{eq:mom}) but with the limiting contours sharing a common point; this means that the integration goes over some singularities of the integrand. However, we are still able to obtain (\ref{eq:MOM}) by finding a sequence of integrals which converge to (\ref{eq:mom}) for which we can apply the dimension reduction formula to complete the proof of Theorem \ref{thm:mom}.

The proof of Theorem \ref{thm:gauss} requires the asymptotics of higher cumulants, see the Appendix for a review of the definition and some facts about cumulants. Modulo a reduction step, the proof of Theorem \ref{thm:gauss} follows a similar line of argument as Theorem \ref{thm:mom}. Therefore some of the more repetitive points will be done in less detail.

\subsection{Proof of Theorem \ref{thm:mom}}

As outlined above, we want to show that (\ref{eq:premom}) converges to (\ref{eq:mom}). We suppress the dependence on $B$, $\alpha$ and $\ft$ in the indexing. Throughout the proof, constants $C$ are uniform in $\e$ and may vary from line to line. We let $\rho_<^\e(x), \rho_>^\e(x)$ be as in Proposition \ref{prop:dist<>}. For the proof, we assume that if $x = V_\ell$, then either $x > v_\ell$, $x = v_\ell$ or $x < v_\ell$ for all sufficiently small $\e > 0$. The purpose of this assumption is to ensure $\rho^\e_{<x}$ and $\rho^\e_{>x}$ both converge as in Proposition \ref{prop:dist<>}. Note that this condition is not restrictive, if we show (\ref{eq:premom}) converges to (\ref{eq:mom}) under each separate regime $x > v_\ell$, $x = v_\ell$, and $x < v_\ell$, then certain we have that (\ref{eq:premom}) converges to (\ref{eq:mom}) under a general limit $\e x \to \x$ without restriction on the inequality between $x$ and $v_\ell$, since the limit is independent of this ordering.

The first step is find contours such that the conditions in Theorem \ref{thm:obs} are satisfied. As we will see, the $k\ft$-separation condition gives us existence of such contours. In order for the conditions in Theorem \ref{thm:obs} to be met, we require the contours $\cC_1',\ldots,\cC_k'$ in (\ref{eq:premom}) to satisfy the following: $\cC_i'$ contains $[0,\rho_<^\e(x)]$ but does not intersect $[\rho_>^\e(x),\infty)$, and $\cC_j'$ encircles $t \cC_i'$ for any $i < j$. In particular, we require that $\cC_i'$ intersects $(\rho_<^\e(x), \rho_>^\e(x))$ at some point $a_i$, and these points must satisfy
\begin{align} \label{eq:kpt}
\rho_<^\e(x) < a_1 < ta_2 < \cdots < t^{k-1} a_k < t^{k-1} \rho_>^\e(x).
\end{align}

Let $0 < \theta_k,\ldots, \theta_1 < \pi$, and set $\cC_1'',\ldots,\cC_k''$ to be contours in $\C$ such that $\cC_i''$ is the contour consisting of line segments and circular arc
\[ \{1 + u e^{\pm \i \theta_j}: u \in [0,1]\}, \quad \{ z \in \C: |z| = |1 + e^{\i \theta_j}|, \arg z > \arg(1 + e^{\i \theta_j})\} \]
where the $\arg$ branches are in $(-\pi,\pi]$, positively oriented around $0$. Then $\cC_1'',\ldots,\cC_k''$ intersect pairwise at $1$ and $\cC_j''$ encircles $\cC_i'' \setminus \{1\}$ whenever $i < j$.

The $k\ft$-separation condition guarantees the existence of points $a_1:= a_1(\e),\ldots,a_k:= a_k(\e)$ satisfying (\ref{eq:kpt}). Indeed, by Proposition \ref{prop:dist<>}, if
\[ t^{-M} r^{-1} := \frac{\rho_>^\e(x)}{\rho_<^\e(x)} \]
then $M \ge k+1$. Indeed, if $\mathbf{x}$ is not a singular point then $\lim_{\e \to 0} \rho_<^\e(x) < \lim_{\e \to 0} \rho_>^\e(x)$ so that there is enough space to guarantee this inequality for $\e$ sufficiently small, and if $\mathbf{x}$ is a singular point then the $k\ft$-separation condition implies the right hand side is $\ge t^{-1} r^{-k\ft-1} =  t^{-(k+1)} r^{-1}$. In particular, we may set
\[ a_i = (t^{-M} r^{-1})^{\frac{i}{k+1}} \rho_<^\e(x) = (t^{-M} r^{-1})^{-\frac{k+1-i}{k+1}} \rho_>^\e(x). \]
Since $a_i/a_{i-1} > t^{-1}$, by setting $\cC_i' = a_i \cC_i''$, we obtain contours satisfying the conditions in Theorem \ref{thm:obs}.

If $\x$ is not a singular point, then $\lim_{\e \to 0} \rho_<^\e(x) < \lim_{\e \to 0} \rho_>^\e(x)$ so that $\lim_{\e \to 0} t^{-M} > 1$. Then the integrand in (\ref{eq:premom}) is bounded, having no singularities. Thus by Proposition \ref{prop:Gconv}, (\ref{eq:premom}) converges to (\ref{eq:mom}) with $\cC_i = (\lim_{\e \to 0} a_i) \cC_i''$.

In the case that $\x$ is a singular point, then $t^{-M} \to 1$, so there are singularities in (\ref{eq:premom}) to take care of. Let $F_\e(z_1,\ldots,z_k)$ denote the integrand in (\ref{eq:premom}). We may change variables in (\ref{eq:premom}) to replace the $\cC_i'$ contours with $\cC_i''$ via $z_i = a_i w_i$. By dominated convergence, to prove that (\ref{eq:premom}) converges to (\ref{eq:mom}), we seek a function $g$ so that
\[ \left| F_\e\left( a_1 w_1,\ldots, a_k w_k \right) \right| \le g(w_1,\ldots,w_k)\]
for $(w_1,\ldots,w_k) \in \cC_1''\times \cdots \times\cC_k''$ with $g$ integrable on $\cC_1''\times\cdots\times\cC_k''$ with respect to $d|w_1| \cdots d|w_k|$.

We may write
\begin{align*}
G_{<x}(a_i w_i;\e,\ft) &= G_{<x}( t^{-1} \cdot t (t^{-M} r^{-1})^{\frac{i}{k+1}} w_i \cdot \rho_<^\e(x)) \\
G_{>x}(a_i w_i;\e \ft) &= G_{>x}( t \cdot t^{-1} (t^{-M} r^{-1})^{\frac{k+1-i}{k+1}} w_i \cdot \rho_>^\e(x))
\end{align*}
If we denote by $\cC_i''^{-1} = \{w: w^{-1} \in \cC_i''\}$, then there exists $\theta \in (0,\pi)$ such that $\cC_i'', \cC_i''^{-1} \subset D^{0,\theta,\delta}$ for $1 \le i \le k$ and some fixed $\delta > 0$. This is clear for $\cC_i''$. For $\cC_i''^{-1}$, this follows from the map $w \mapsto w^{-1}$ being conformal and an involution taking $(-\infty,0)$ onto intself and $(0,1)$ to $(1,\infty)$. This implies that given $A > 0$, then $e^{-\e \fa} \cC_i''^{-1}, e^{-\e \fa} \cC_i'' \subset D^{c\e, \theta,\delta}$ for $1 \le i \le k$, $\fa \ge A$, $\delta > 0$ fixed, and $c$ depending on $A$. In particular, since
\[ t (t^{-M} r^{-1})^{\frac{i}{k+1}} > r^{-\frac{1}{k+1}}, \]
we have
\begin{align*}
\left( t \frac{a_i}{\rho_<^\e(x)} w_i \right)^{-1} &= (t(t^{-M} r^{-1})^{\frac{i}{k+1}} w_i )^{-1} \in D^{c\e,\theta,\delta} \\
t^{-1} \frac{a_i}{\rho_>^\e(x)} w_i &= t^{-1} (t^{-M} r^{-1})^{-\frac{k+1-i}{k+1}} w_i \in D^{c\e,\theta,\delta}
\end{align*}
for $1 \le i \le k$ and some fixed $\delta, c > 0$, $\theta \in (0,\pi)$. Thus by Proposition \ref{prop:Gconv}, we have
\begin{align} \label{eq:contour_conv}
\begin{split}
G_{<x}(a_i w_i; \e, \ft) &= \cG_{<\x}(t(t^{-M}r^{-1})^{\frac{i}{k+1}} w_i \cdot \rho_<(\x))^\ft \exp\left( O\left( \frac{\e}{|1 - \frac{t a_i w_i}{\rho_<^\e(x)}|} \right) \right), \\
G_{>x}(a_i w_i; \e, \ft) &= \cG_{>\x}(t^{-1} (t^{-M} r^{-1})^{\frac{k+1-i}{k+1}} w_i \cdot \rho_>(\x))^\ft \exp\left( O\left( \frac{\e}{|1 - \frac{ta_i w_i}{\rho_>^\e(x)}|} \right) \right)
\end{split}
\end{align}
for $w_i \in \cC_i'', 1 \le i \le k$; note that we dropped the $|z| \vee |z|^2$ term since $\cC_i''$ is bounded and separated from $0$. Observe that away from $[0,1]$,
\[ \cG_{<\x}(z)^p = R(z) (1 - z) \]
where $R(z)$ is a rational functions with poles $< 1$, we have a similar statement for $\cG_{<\x}(z)$ but we will only need the fact that it is bounded (we could alternatively flip the roles of $\cG_{<\x}$ and $\cG_{>\x}$ here). Using this and the fact that we may replace the exponential term in (\ref{eq:contour_conv}) with a crude constant order term, we have
\begin{align} \label{eq:Gbd}
|G_{<x}(a_i w_i;\e,\ft) \cdot G_{>x}(a_i w_i; \e,\ft)| \le C |1 - t^\delta w_i|^c
\end{align}
for $w_i \in \cC_i''$ and some fixed constant $c > 0$, $\delta > 0$. Next, we need the following lemma.

\begin{lemma} \label{lem:raybd}
Fix $\nu > 0$. For $\e > 0$ arbitrarily small, there exists a constant $C$ independent of $\e$ such that
\begin{align*}
\frac{|1 - e^{-\nu \e}z|}{|w - e^{-\e}z|} \le C
\end{align*}
on $(z,w) \in \cC_i'' \times \cC_j''$ whenever $1 \le i < j \le k$.
\end{lemma}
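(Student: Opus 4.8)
The plan is to localize the whole estimate near the self–intersection point $1$, since (writing $r=e^{-\e}$) that is the only point where $\cC_i''$ and $\cC_j''$ come close to one another. The numerator is controlled crudely by
\[ |1-e^{-\nu\e}z|\le (1-e^{-\nu\e})+e^{-\nu\e}|1-z|\le \max(\nu,1)\,\bigl(\e+|1-z|\bigr), \]
so it suffices to establish the matching lower bound: there are $c>0$ and $\e_0>0$, depending only on $\nu,\theta_i,\theta_j$, with
\[ |w-e^{-\e}z|\ \ge\ c\,\bigl(\e+|1-z|\bigr)\qquad\text{for all }z\in\cC_i'',\ w\in\cC_j'',\ 0<\e<\e_0. \]
This inequality \emph{requires} $i<j$ (for $i=j$ it fails, e.g.\ $w=z$ away from $1$ gives $|w-e^{-\e}z|\asymp\e$ while $|1-z|\asymp1$), and together with the numerator bound it yields the lemma with $C=\max(\nu,1)/c$.

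To prove the lower bound I would split into three regimes, using that $\cC_i''\cap\cC_j''=\{1\}$ and $\cC_j''$ encircles $\cC_i''\setminus\{1\}$, so the compact set $\{z\in\cC_i'':|z-1|\ge\eta_0\}$ lies at a fixed distance $d_0=d_0(\eta_0)>0$ from $\cC_j''$. \emph{(i) $|1-z|\ge\eta_0$:} then $e^{-\e}z$ lies within $C\e$ of that compact set, so $|w-e^{-\e}z|\ge d_0-C\e\ge d_0/2$ for small $\e$, while the numerator is bounded; done. \emph{(ii) $|1-z|<\eta_0$ and $|w-1|\ge4\eta_0$:} then $|w-e^{-\e}z|\ge|w-1|-|1-z|-|z-e^{-\e}z|\ge 4\eta_0-\eta_0-C\e\ge2\eta_0$, again with bounded numerator. \emph{(iii) $|1-z|<\eta_0$ and $|w-1|<4\eta_0$:} taking $\eta_0<1$ forces $z,w$ onto the straight segments of the contours, so $z=1+u\,e^{\tau\i\theta_i}$ and $w=1+s\,e^{\sigma\i\theta_j}$ with $u=|1-z|$, $s=|1-w|$, $\tau,\sigma\in\{\pm1\}$. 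This is the heart of the matter.

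In regime (iii), put $\delta=1-e^{-\e}$ (so $\e/2\le\delta\le\e$ for small $\e$) and $Y=\delta-e^{-\e}u\,e^{\tau\i\theta_i}$, so that $w-e^{-\e}z=Y+s\,e^{\sigma\i\theta_j}$. Two elementary facts then finish the argument. First, $Y$ lies on the ray issuing from the positive real number $\delta$ in the direction $e^{\i(\tau\theta_i+\pi)}$, hence $|\arg Y|<\pi-\theta_i$; therefore the angle between $Y$ and $e^{\sigma\i\theta_j}$ is at most $(\pi-\theta_i)+\theta_j=\pi-(\theta_i-\theta_j)<\pi$ (using $\theta_i>\theta_j$), from which $|Y+s\,e^{\sigma\i\theta_j}|\ge|Y|\sin(\theta_i-\theta_j)$ for all $s\ge0$ — if the angle is acute the left side is $\ge|Y|$, if obtuse one minimizes the quadratic in $s$. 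Second, completing the square,
\[ |Y|^2=e^{-2\e}\bigl(u-\delta e^{\e}\cos\theta_i\bigr)^2+\delta^2\sin^2\theta_i, \]
and since $|\delta e^{\e}\cos\theta_i|\le2\e$ for small $\e$, treating $u\ge4\e$ and $u<4\e$ separately gives $|Y|\ge c'(\theta_i)\,(\e+u)$ with $c'(\theta_i)\sim\sin\theta_i$. Combining the two facts, $|w-e^{-\e}z|\ge c'(\theta_i)\sin(\theta_i-\theta_j)\,(\e+|1-z|)$, which is the desired lower bound. The only real obstacle is the bookkeeping in regime (iii): one must carry the four sign choices $(\tau,\sigma)$ and verify that every constant is uniform in $\e$; the two displayed facts are precisely what make the bound independent of the sign choices, the essential structural input being $\theta_i>\theta_j$ (equivalently $i<j$), which keeps the relevant angle bounded away from $\pi$.
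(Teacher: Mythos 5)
Your proof is correct and rests on the same geometric observation as the paper's: the summands $w-1$ and $1-e^{-\e}z$ of $w-e^{-\e}z$ meet at an angle bounded away from $\pi$ by $\theta_i-\theta_j>0$, so no cancellation can occur. Your bookkeeping is slightly different and more explicit: you compare numerator and denominator separately against the intermediary quantity $\e+|1-z|$ rather than factoring the ratio through $|1-e^{-\e}z|$, you carry out the reduction to the local regime near $(1,1)$ in detail, and you supply the completing-the-square estimate that the paper uses only implicitly when asserting that $|1-e^{-\nu\e}z|/|1-e^{-\e}z|$ is bounded.
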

\begin{proof}
It suffices to check the inequality for $(z,w)$ near $(1,1)$ where $\cC_i'' \times \cC_j''$ looks like
\[ \{1 + u e^{\pm \i \theta_i}: u \in [0,\delta]\} \cup \{1 + u e^{\pm \i \theta_j}: u \in [0,\delta]\} \]
for some small $\delta > 0$. Since $\frac{1 - e^{-\nu \e} z}{1 - e^{-\e}z}$ is bounded over this region, independent of $\e$, it suffices to bound
\[ \left| \frac{1 - e^{-\e} z}{w - e^{-\e}z} \right| = \left|1 + \frac{1 - w}{(w - 1) + (1 - e^{-\e}z)} \right| = 1 + \left| \frac{w - 1}{(w - 1) + (1 - e^{-\e} z)} \right| = 1 + \left| \frac{1}{1 + \frac{1 - e^{-\e}z}{w - 1}} \right|. \]
Without loss of generality, we may suppose $w = 1 + u e^{\i \theta_j}$ for $u \in [0,\delta]$. Also note that if $z\in \HH$ then $|w - e^{-\e}z| < |w - e^{-\e}\bar{z}|$, so we may suppose that $z \in \HH$. Then $1 - e^{-\e} z = u' e^{\i \theta}$ for some $0 < \theta < \theta_i < \theta_j$ and $u' \in [0,\delta]$. Thus
\[ \left| \frac{1}{1 + \frac{1 - e^{-\e}z}{w - 1}} \right| = \left| \frac{1}{1 + ve^{\i (\theta - \theta_j)}} \right| \]
for some $v \in [0,\infty)$. We can maximize the right hand side over $v\in [0,\infty)$ for each fixed $\theta < \theta_j$; the maximizing value is given by $\frac{1}{\sin(\theta - \theta_j)}$. Thus
\[ \left| \frac{1 - e^{-\e}z}{w - e^{-\e}z} \right| \le 1 + \frac{1}{\sin(\theta_i - \theta_j)} \]
which proves the lemma.
\end{proof}

Using (\ref{eq:Gbd}) and the fact that $\left| \frac{(z_j - z_i)(z_j - \frac{q}{t} z_i)}{(z_j - \frac{1}{t} z_i)(z_j - q z_i)} \right|$ is bounded on $\cC_1' \times \cdots \times \cC_k'$ uniformly in $\e$, we can dominate $F_\e(z_1,\ldots,z_k)$ as follows

\begin{align} \label{eq:preFineq}
|F_\e(z_1,\ldots, z_k)| & \le C \cdot \left| \frac{1}{(z_2 - \frac{q}{t} z_1) \cdots (z_k - \frac{q}{t} z_{k-1})} \right| \prod_{i=1}^k |1 - t^\delta a_i^{-1} z_i|^c \\ \label{eq:Fineq}
& \le C \left| \frac{1}{(z_2 - \frac{q}{t} z_1) \cdots (z_k - \frac{q}{t} z_{k-1})} \right|^{1 - c}
\end{align}
on $\cC_1' \times \cdots \times \cC_k'$, where the second inequality follows from Lemma \ref{lem:raybd}. With $z_i = a_i w_i$, (\ref{eq:Fineq}) becomes
\begin{align} \label{eq:zwithw}
\begin{multlined}
|F_\e(z_1,\ldots,z_k)| \le C \left| \frac{1}{(a_2 w_2 - \frac{q}{t} a_1 w_1) \cdots (a_k w_k - \frac{q}{t} a_{k-1} w_{k-1})} \right|^{1 - c} \\
\le C \left| \frac{1}{(w_2 - w_1) \cdots (w_k - w_{k-1})} \right|^{1 - c} =: g(w_1,\ldots,w_k)
\end{multlined}
\end{align}
for $(w_1,\ldots,w_k) \in \cC_1'' \times \cdots \times \cC_k''$. The dominating function $g$ is integrable, since all its singularities are integrable; e.g. integrate $w_1$, then $w_2$, etc. We may thus apply dominated convergence and see that (\ref{eq:premom}) converges to (\ref{eq:mom}) up to a change of variables where we need to take $\cC_i = \rho_<(V_\ell) \cC_i''$.

Now that we have shown (\ref{eq:premom}) converges to (\ref{eq:mom}), we want to show that (\ref{eq:mom}) is the right hand side of (\ref{eq:MOM}). By a similar argument, we have the integral
\begin{align} \label{eq:altint}
\frac{1}{(2\pi\i)^k} \oint_{\cC_1'} \cdots \oint_{\cC_k'} \frac{\sum_{i=1}^k \frac{1}{z_i}}{(z_2 - z_1) \cdots (z_k - z_{k-1})} \prod_{i=1}^k G_{<x}(z_i;\e,\ft) \cdot G_{>x}(z_i;\e,\ft) \, dz_i,
\end{align}
with $\cC_i'$ and $G_x$ depending on $\e$, converges to (\ref{eq:mom}) as well. On the other hand, Theorem \ref{thm:dimred} says that (\ref{eq:altint}) may be reexpressed as
\[ \frac{1}{2\pi\i} \oint_{\cC'} \! [G_{<x}(z;\e,\ft) \cdot G_{>x}(z;\e,\ft)]^k \frac{dz}{z} \]
where $\cC'$ is some contour containing the poles of $G^B_{<x}(z;\e,\ft)$ and $0$ and containing no pole of $G^B_{>x}(z;\e,\ft)$; for example it suffices to pick $\cC_i'$ for some $i \in [[1,k]]$. This converges to the right hand side of (\ref{eq:MOM}). Therefore, (\ref{eq:mom}) coincides with the right hand side of (\ref{eq:MOM}), completing the proof of Theorem \ref{thm:mom}. \qed

\subsection{Proof of Theorem \ref{thm:gauss}}
We begin by defining
\begin{align} \label{eq:defD}
D^\e_k(x) = \frac{1}{\e} (\wp_k(\pi^x;q,t) - \E \wp_k(\pi^x;q,t)).
\end{align}
As alluded to in proof outline, the idea of the proof is to compute the cumulants of $(D^\e_{k_1}(x_1),\ldots,D^\e_{k_m}(x_m))$ and check that they are asymptotically Gaussian as $\e \to 0$; that is the order $\ge 3$ cumulants vanish and the order $2$ cumulants (i.e. the covariances) have the structure asserted by (\ref{eq:thmcov}). The asymptotics of the cumulants have many elements similar to the asymptotics from the proof of Theorem \ref{thm:mom}. However, the higher order cumulants require greater separation in order for the formulas from Theorem \ref{thm:obs} to apply which poses a problem yet again for the singular points. To ameliorate this, we prove Theorem \ref{thm:gauss} by first reducing to a seemingly weaker claim.

\begin{claim}
Fix integers $\nu \ge 2$, $k_1,\ldots,k_\nu \in \Z_{> 0}$ and let $x_1 \le \cdots \le x_\nu$ be points in $I^\e$ (depending on $\e$) such that
\begin{align} \label{eq:clmcond}
\begin{multlined}
\e x_i \to \mathbf{x}_i \in I~~\mbox{for $i \in [[1,\nu]]$, and} \\
x_i ~~\mbox{is $(k_i + \cdots + k_\nu)\ft$-separated from singular points, for $i \in [[1,\nu]]$.}
\end{multlined}
\end{align}
Then
\begin{align}
\kappa(D^\e_{k_1}(x_1), \ldots, D^\e_{k_\nu}(x_\nu)) \to \left\{ \begin{array}{cl}
\kappa(\fD_{k_1}(x_1),\fD_{k_2}(x_2)) & \mbox{if $\nu = 2$,} \\
0 & \mbox{if $\nu > 2$}.
\end{array} \right.
\end{align}
\end{claim}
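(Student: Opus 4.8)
The plan is to compute the joint cumulant $\kappa(D^\e_{k_1}(x_1),\ldots,D^\e_{k_\nu}(x_\nu))$ via the contour-integral formula of Theorem~\ref{thm:obs}, following the same scheme as the proof of Theorem~\ref{thm:mom}, and then show that after dimension reduction the leading contribution is $O(\e^{2-\nu})$ in general, with the coefficient of $\e^{-2}$ surviving only when $\nu = 2$ and matching \eqref{eq:thmcov}. First I would recall the standard combinatorial identity expressing the joint cumulant of the $\wp_{k_i}(\pi^{x_i})$ in terms of connected parts of the joint moments (see the Appendix referenced for cumulants); because $D^\e_k(x) = \e^{-1}(\wp_k(\pi^x) - \E\wp_k(\pi^x))$, each cumulant carries a prefactor $\e^{-\nu}$. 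The key structural input is that Theorem~\ref{thm:obs} expresses $\E[\wp_{k_1}(\pi^{x_1})\cdots\wp_{k_m}(\pi^{x_m})]$ as an iterated contour integral of $\prod_{a<b} C(Z_a,Z_b)\prod_a G^B_{<x_a}(Z_a)G^B_{>x_a}(Z_a)\,DZ_a$; when one passes to the cumulant, the ``disconnected'' terms cancel and what remains is a sum of integrals where the cross-factors $C(Z_a,Z_b)$ are replaced by their ``connected'' pieces $C(Z_a,Z_b)-1$, each of which is $O(\e)$ near the coincident contours. This is exactly the mechanism (familiar from \cite{BC}, \cite{GZ}, \cite{BGG}) by which an $(\ell)$-fold connected cross-interaction contributes a factor $\e^{\ell-1}$; with $\nu$ cuts one needs at least $\nu-1$ such cross-links to connect everything, giving total order $\e^{\nu-1}$ and hence $\e^{-\nu}\cdot\e^{\nu-1} = \e^{-1} \to 0$ when $\nu \ge 3$, while for $\nu = 2$ one gets $\e^{-2}\cdot\e = \e^{-1}$, which must be refined to a finite limit.

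The $\nu = 2$ case requires the precise constant: here I would use Theorem~\ref{thm:dimred} (dimension reduction) to collapse the $k_1 + k_2$-dimensional integral down to a double contour integral in two variables $z,w$, then expand $C(Z,W) - 1$ to first order. A single factor $\frac{(1-t^{-1}qz_i/w_j)(1-z_i/w_j)}{(1-qz_i/w_j)(1-t^{-1}z_i/w_j)} - 1$ contributes, to leading order in $\e = -\log r$ and with $q = r^{\ft\alpha}$, $t = r^{\ft}$, a term of size $\e$ times a rational kernel that upon summing over the $k_1 k_2$ pairs $(i,j)$ and performing the reduction yields $\frac{\e^2 k_1 k_2}{(z-w)^2}$ after the standard residue computation; this is the origin of the $\frac{k_i k_j}{(z-w)^2}$ kernel and the $\ft^{\pm}$ powers in \eqref{eq:thmcov}. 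The prefactor $\e^{-2}$ from the two copies of $D^\e$ then cancels the $\e^2$, leaving the stated covariance; one checks the contour prescription (the $z$-contour enclosed by the $w$-contour, touching at $\rho$ in the singular case) is exactly what the conditions of Theorem~\ref{thm:obs} force, with the $2k_i\ft$-separation providing the room needed for the two nested contour families.

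The main obstacle, exactly as in the proof of Theorem~\ref{thm:mom}, is the singular-point case where $\mathrm{dist}(\rho^\e_{<x_i}(x_i), \rho^\e_{>x_i}(x_i)) = O(\e)$, so the contours for the different $z$-variables within a single cut, and the contours across cuts, are all crammed into an $O(\e)$-wide annulus on the positive real axis; the integrand has poles there, and one cannot naively pass to the limit. The strategy to handle this is the one already deployed for Theorem~\ref{thm:mom}: parametrize the contours as $\cC_i' = a_i\cC_i''$ with the $a_i$ chosen geometrically interpolating between $\rho^\e_{<}$ and $\rho^\e_{>}$ (the $(k_i+\cdots+k_\nu)\ft$-separation guaranteeing that $\rho^\e_>/\rho^\e_<$ is large enough a power of $t^{-1}r^{-1}$ to fit all the required contours), use Proposition~\ref{prop:Gconv} together with Lemma~\ref{lem:raybd} to produce an $\e$-independent integrable dominating function $g(w_1,\ldots,w_{k_1+\cdots+k_\nu})$ of the form $\prod |w_{j+1}-w_j|^{-(1-c)}$, and invoke dominated convergence. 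Once uniform domination is in hand, the limit is computed on the limiting (possibly touching) contours and Theorem~\ref{thm:dimred} applies to give the stated answer; the bookkeeping that the first-order term in the $C-1$ expansion is the only one that survives (all higher cross-links and all ``within-cut'' extra factors being of strictly higher order in $\e$ after reduction) is the routine but somewhat lengthy part I would not grind through here. Finally, the reduction of the full Theorem~\ref{thm:gauss} to this Claim is immediate: vanishing of all cumulants of order $\ge 3$ plus convergence of the covariances is precisely the statement that $(D^\e_{k_1}(x_1),\ldots,D^\e_{k_m}(x_m))$ converges in distribution to the centered Gaussian vector with covariance \eqref{eq:thmcov}, by the method of moments/cumulants.
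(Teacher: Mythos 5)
Your overall route matches the paper's: express the cumulant as a contour integral, re-sum $\fC$ over connected graphs in factors $C(Z_a,Z_b)-1$, dominate the integrand uniformly in $\e$ near the coinciding contours, pass to the limit by dominated convergence, and collapse dimension with Theorem~\ref{thm:dimred}. The reduction of Theorem~\ref{thm:gauss} to the Claim is also as in the paper. However, your power counting contains a genuine error that would break the proof of vanishing for $\nu>2$ as you have written it.

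You state that each connected factor $C(Z_a,Z_b)-1$ is $O(\e)$, so $\nu-1$ cross-links give $\e^{\nu-1}$, and then you write ``$\e^{-\nu}\cdot\e^{\nu-1}=\e^{-1}\to 0$ when $\nu\ge 3$.'' Both halves of this are wrong. First, $\e^{-1}\to\infty$, not $0$. Second, the numerator $(1-q)(t^{-1}-1)$ of each elementary factor in $C-1$ is $O(\e^2)$ (since $q=r^{\ft\alpha}$, $t=r^{\ft}$, $r=e^{-\e}$), not $O(\e)$; the paper's bound \eqref{eq:Cbd} pulls out $\e^2$ per edge, so a spanning tree on $\nu$ vertices contributes $\e^{2(\nu-1)}$, which combined with the $\e^{-\nu}$ normalization gives $\e^{\nu-2}$. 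That is $O(1)$ for $\nu=2$ and $\to 0$ for $\nu>2$. (Your second paragraph, which correctly attributes an $\e^2$ to the edge in the $\nu=2$ case, contradicts your first paragraph's $O(\e)$ claim.) There is a further subtlety you gloss over: near singular points the denominators $(z_{j,b}-qz_{i,a})(z_{j,b}-t^{-1}z_{i,a})$ themselves shrink to order $\e^2$, so one cannot simply extract the full $\e^{2(\nu-1)}$ and declare the remainder bounded; the paper spends $\e^{(\nu-2)(1-c)}$ to tame all but one singular denominator per spanning tree, retaining only $\e^{(\nu-2)c}$, and then uses the Hölder-type power $1-c$ together with the raybound Lemma~\ref{lem:raybd} and the tree-integrability Lemma~\ref{lem:singint} to get an $\e$-free integrable majorant. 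So the ``$\to 0$'' for $\nu>2$ ultimately comes from $\e^{(\nu-2)c}\to 0$ for some $c\in(0,1)$, not from the clean $\e^{\nu-2}$, and certainly not from $\e^{-1}$.

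One more detail: the separation hypothesis you cite as ``$2k_i\ft$'' for the $\nu=2$ dimension-reduction step is what appears in Theorem~\ref{thm:gauss}; but the Claim, which is what's being proved, needs the stronger telescoping separation $(k_i+\cdots+k_\nu)\ft$ so that all the nested contours $\cC_{i,j}'$ across all $\nu$ cuts can be fitted inside the $O(\e)$-wide annulus simultaneously. You should make sure your contour construction actually uses that form of the hypothesis, as the paper does in the inequalities \eqref{eq:aij1}--\eqref{eq:aij2}.
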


\begin{lemma}
The Claim above implies Theorem \ref{thm:gauss}
\end{lemma}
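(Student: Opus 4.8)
The plan is to deduce Theorem~\ref{thm:gauss} from the Claim in three moves: (i) replace each spatial coordinate $x_i$ by a nearby lattice point $x_i^\e$ that is \emph{far} from every singular point (so that the Claim's cumulative separation hypothesis is automatically met for all joint cumulants of the perturbed family); (ii) identify the limiting law of the perturbed vector $\bigl(D^\e_{k_1}(x_1^\e),\dots,D^\e_{k_m}(x_m^\e)\bigr)$ via the classical method of moments/cumulants; and (iii) show the perturbation is negligible in $L^2$, so that Slutsky's theorem transfers the conclusion back to the original vector $\bigl(D^\e_{k_1}(x_1),\dots,D^\e_{k_m}(x_m)\bigr)$. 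The point of the reduction is exactly that the single-coordinate hypothesis ``$x_i$ is $2k_i\ft$-separated'' of Theorem~\ref{thm:gauss} is precisely what the Claim needs to pin down the \emph{variance} of $D^\e_{k_i}(x_i)$, while all other input can be fed into the Claim through super-separated surrogates.

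For step (i): let $V_{\ell_1},\dots,V_{\ell_\nu}$ be the singular points of $\cB$, and for each $i\in[[1,m]]$ choose $x_i^\e\in I^\e_V$ with $\e x_i^\e\to\mathbf{x}_i$, with $x_1^\e\le\cdots\le x_m^\e$ for small $\e$, and with $\mathrm{dist}\bigl(x_i^\e,\{v_{\ell_1}(\e),\dots,v_{\ell_\nu}(\e)\}\bigr)\to\infty$ as $\e\to0$. When $\mathbf{x}_i$ is not a singular point one may simply take $x_i^\e=x_i$; when $\mathbf{x}_i=V_\ell$ one takes $x_i^\e=v_\ell(\e)+g(\e)$ for any $g(\e)\to\infty$ with $\e g(\e)\to0$. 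By construction $x_i^\e$ is $d$-separated from singular points for \emph{every} fixed $d>0$ once $\e$ is small. Hence for any fixed finite list $i_1,\dots,i_r\in[[1,m]]$ (repetitions allowed), the points $x_{i_1}^\e,\dots,x_{i_r}^\e$, once reordered to be nondecreasing, satisfy the hypotheses of the Claim for all small $\e$, and therefore $\kappa\bigl(D^\e_{k_{i_1}}(x_{i_1}^\e),\dots,D^\e_{k_{i_r}}(x_{i_r}^\e)\bigr)$ converges: to $0$ for $r\ge3$, to $\kappa\bigl(\fD_{k_{i_1}}(\mathbf{x}_{i_1}),\fD_{k_{i_2}}(\mathbf{x}_{i_2})\bigr)$ (the double contour integral of \eqref{eq:thmcov}, resp.\ its diagonal analogue) for $r=2$, and trivially to $0$ for $r=1$ since the $D^\e$ are centered. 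A centered Gaussian law is determined by its covariance matrix and in particular by its moments, so convergence of all joint cumulants of $\bigl(D^\e_{k_1}(x_1^\e),\dots,D^\e_{k_m}(x_m^\e)\bigr)$ to those of $\vec{\fD}$ gives, by the method of moments, that this perturbed vector converges in distribution to $\vec{\fD}$; the limiting covariance matrix is a limit of genuine covariance matrices, hence positive semidefinite, so $\vec{\fD}$ indeed exists.

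For steps (iii): fix $i$ and set $V_i:=\kappa\bigl(\fD_{k_i}(\mathbf{x}_i),\fD_{k_i}(\mathbf{x}_i)\bigr)$. The Claim with $r=2$ applies to each of the pairs $(x_i,x_i)$, $(x_i^\e,x_i^\e)$, $(x_i,x_i^\e)$: in the first pair the smaller coordinate must be $(k_i+k_i)\ft=2k_i\ft$-separated, which is exactly the hypothesis of Theorem~\ref{thm:gauss} on $x_i$; in the other two, $x_i^\e$ supplies arbitrarily large separation and the remaining coordinate needs only $k_i\ft\le2k_i\ft$-separation. All three quantities have both spatial limits equal to $\mathbf{x}_i$, hence all three converge to $V_i$, so
$\bigl\|D^\e_{k_i}(x_i)-D^\e_{k_i}(x_i^\e)\bigr\|_2^2
=\mathrm{Var}\,D^\e_{k_i}(x_i)+\mathrm{Var}\,D^\e_{k_i}(x_i^\e)-2\,\cov\bigl(D^\e_{k_i}(x_i),D^\e_{k_i}(x_i^\e)\bigr)\to V_i+V_i-2V_i=0.$
Thus $D^\e_{k_i}(x_i)=D^\e_{k_i}(x_i^\e)+o_{\P}(1)$ for every $i$, and Slutsky's theorem yields that $\bigl(D^\e_{k_1}(x_1),\dots,D^\e_{k_m}(x_m)\bigr)$ has the same weak limit $\vec{\fD}$ as the perturbed vector. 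This is Theorem~\ref{thm:gauss}.

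The only genuinely non-routine ingredient is the Claim itself (already isolated), whose difficulty lies in the contour-integral asymptotics of Theorem~\ref{thm:obs} when the relevant poles of $G^B_{<x}$ and $G^B_{>x}$ collide at order $O(\e)$ near a singular point; the present lemma merely repackages that analysis plus the cumulant bookkeeping above, and I expect its proof to be short.
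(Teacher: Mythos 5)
Your proposal is correct and follows the same two-move strategy as the paper: (a) replace each $x_i$ by a surrogate $\tilde x_i$ far from singular points so that the Claim yields convergence of all joint cumulants, and (b) use the Claim with $\nu=2$ on the pairs $(x_i,x_i)$, $(\tilde x_i,\tilde x_i)$, $(x_i,\tilde x_i)$ --- which is exactly where the $2k_i\ft$-separation hypothesis of Theorem~\ref{thm:gauss} is spent --- to conclude $\E(D^\e_{k_i}(x_i)-D^\e_{k_i}(\tilde x_i))^2\to0$. The only cosmetic difference is in the transfer step: the paper transfers at the level of cumulants via its Lemma~\ref{lem:replace} (choosing $\chi$-separated surrogates for each fixed cumulant order $\eta$ and then letting $\eta\to\infty$), whereas you first establish distributional convergence of the surrogate vector by the method of moments and then transfer via Slutsky; both are valid and rest on the same underlying estimates.
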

\begin{proof}
We first show that for any integer $k > 0$ and any $\vec{x} \in I$, we have 
\begin{align} \label{eq:L2van}
\begin{multlined}
\E(D^\e_k(x) - D^\e_k(\tilde{x}))^2 \to 0 \\
\mbox{for any $x,\tilde{x} \in I^\e$ that are $(2k)\ft$-separated from singular points with $\e x, \e \tilde{x} \to \vec{x}$.}
\end{multlined}
\end{align}
To see (\ref{eq:L2van}), rewrite
\begin{align} \label{eq:L2cum}
\begin{multlined}
\E(D^\e_k(x) - D^\e_k(\tilde{x}))^2 = \E (D^\e_k(x))^2 - 2\E D^\e_k(x) D^\e_k(\tilde{x}) + \E (D^\e_k(\tilde{x}))^2 \\
= \kappa(D^\e_k(x),D^\e_k(x)) - 2 \kappa(D^\e_k(x),D^\e_k(\tilde{x})) + \kappa(D^\e_k(\tilde{x}),D^\e_k(\tilde{x})).
\end{multlined}
\end{align}
By the Claim, each cumulant appearing in (\ref{eq:L2cum}) converges as $\e \to 0$ to the same limit:
\[ \kappa(D^\e_k(x),D^\e_k(x)),~ \kappa(D^\e_k(x),D^\e_k(\tilde{x})),~ \kappa(D^\e_k(\tilde{x}),D^\e_k(\tilde{x})) \to  \kappa(\fD_{k_1}(\x),\fD_{k_2}(\x)). \]
This implies (\ref{eq:L2van}).

Let $x_1,\ldots,x_m \in I^\e$ and integers $k_1,\ldots,k_m \in \Z_{> 0}$ be as in the statement of Theorem \ref{thm:gauss}. For any $\chi > 0$, we can find $\tilde{x}_1,\ldots,\tilde{x}_m \in I^\e$ such that
\begin{align} \label{eq:xtilde}
\begin{multlined}
\e \tilde{x}_i \to \mathbf{x}_i \in I~~\mbox{for $i \in [[1,\nu]]$, and} \\
\tilde{x}_i ~~\mbox{is $\chi \ft$-separated from singular points, for $i \in [[1,\nu]]$.}
\end{multlined}
\end{align}
Fix an arbitrarily large integer $\eta > 0$. By the Claim, we can choose $\chi$ large enough so that
\begin{align} \label{eq:xtildecum}
\kappa(D^\e_{k_{i_1}}(\tilde{x}_{i_1}),\ldots,D^\e_{k_{i_\nu}}(\tilde{x}_{i_\nu})) \to \left\{ \begin{array}{cl}
\kappa(\fD_{k_1}(x_{i_1}),\fD_{k_2}(x_{i_2})) & \mbox{if $\nu = 2$,} \\
0 & \mbox{if $\nu > 2$}.
\end{array} \right.
\end{align}
for any $i_1,\ldots,i_\nu \in [[1,m]]$ and $\nu \in [[1,2\eta]]$. By (\ref{eq:L2van}), we have
\[ \E( D^\e_{k_i}(x_i) - D^\e_{k_i}(\tilde{x}_i))^2 \to 0 \]
for each $i \in [[1,m]]$. Thus (\ref{eq:xtildecum}) and Lemma \ref{lem:replace} from the Appendix imply that
\begin{align} \label{eq:xcum}
\kappa(D^\e_{k_{i_1}}(x_{i_1}),\ldots,D^\e_{k_{i_\nu}}(x_{i_\nu})) \to \left\{ \begin{array}{cl}
\kappa(\fD_{k_1}(x_{i_1}),\fD_{k_2}(x_{i_2})) & \mbox{if $\nu = 2$,} \\
0 & \mbox{if $\nu > 2$}.
\end{array} \right.
\end{align}
for any $i_1,\ldots,i_\nu \in [[1,m]]$ and $\nu \in [[1,\eta]]$. Since $\eta > 0$ was arbitrary, (\ref{eq:xcum}) holds for any integer $\nu \ge 2$. By Lemma \ref{lem:gausscum}, this implies that $(D^\e_{k_1}(x_1),\ldots,D^\e_{k_m}(x_m))$ converges in distribution to the Gaussian vector $(\fD_{k_1}(x_1),\ldots,\fD_{k_m}(x_m))$.
\end{proof}

We are left to prove the Claim. Adding a constant vector to a random vector adds only a constant order term to the logarithm of the characteristic function. By Definitions \ref{def:cum1} and \ref{def:cum2}, we have for $\nu > 1$
\begin{align}\label{eq:cum}
\begin{multlined}
\kappa( D_{k_1}^\e(x_1),\ldots, D_{k_\nu}^\e(x_\nu)) = \e^{-\nu} \kappa\left( \wp_{k_1}(\pi^{x_1};q,t),\ldots, \wp_{k_\nu}(\pi^{x_\nu};q,t)\right)\\
= \sum_{\substack{d \in \Z_{> 0} \\ \{ U_1,\ldots,U_d\} \in \Theta_\nu}} \e^{-\nu} (-1)^{d-1} (d-1)! \prod_{\ell=1}^d \E \left[ \prod_{i \in U_\ell} \wp_{k_i}(\pi^{x_i};q,t) \right]
\end{multlined}
\end{align}
where we use the notation from the Appendix with $\Theta_\nu$ the collection of all set partitions of $[[1,\nu]]$.

If $x_1 \le \cdots \le x_\nu$, then given that the conditions of Theorem \ref{thm:obs} are met, (\ref{eq:cum}) can be expressed as
\begin{align}\label{eq:intcum} 
\e^{-\nu}\oint \cdots \oint \fC(Z_1,\ldots,Z_\nu) \prod_{i=1}^\nu G_{x_i}(Z_i) \,DZ_i
\end{align}
where $Z_i = (z_{i,1},\ldots,z_{i,k_i})$,
\begin{align} \label{eq:fC}
\fC(Z_1,\ldots,Z_\nu) = \sum_{\substack{d \in \Z_{> 0} \\ \{U_1,\ldots,U_d\} \in \Theta_\nu}} (-1)^{d-1} (d-1)! \prod_{\ell=1}^d \prod_{\substack{(i,j) \in U_\ell \\ i < j}} \fC(Z_i,Z_j).
\end{align}
Let $\cC_{i,j}'$ denote the $z_{i,j}$-contour in (\ref{eq:intcum}).

As before, the separation condition (\ref{eq:clmcond}) ensures that the conditions of Theorem \ref{thm:obs} are met. We verify that this is the case. We require the existence of contours $\cC_{i,j}'$, $j \in [[1,k_i]]$ and $i \in [[1,\nu]]$, satisfying the following: $\cC_{i,j}'$ contains $[0, \rho_<^\e(x_i)]$ but no elements of $[\rho_>^\e,\infty)$, and $t \cC_{i',j'}'$ encircles $\cC_{i,j}'$ whenever $(i',j') > (i,j)$ in lexicographical order. In particular, we require $\cC_{i,j}'$ intersects $(\rho_<^\e(x_i),\rho_>^\e(x_i))$ at some point $a_{i,j}$, and these points must satisfy 
\begin{align} \label{eq:multikpt}
a_{i,j} < t a_{i',j'}, \quad \quad (i,j) < (i',j')
\end{align}
in lexicographical order. We can construct such contours as follows.

Let $0 < \theta_{i,j} < \pi$ such that $\theta_{i',j'} < \theta_{i,j}$ whenever $(i,j) < (i',j')$ in lexicographical order and set $\cC_{i,j}$ to be the contour in $\C$ consisting of line segments and a circular arc
\[ \{1 + u e^{\pm \i \theta_{i,j}}: u \in [0,1]\}, \quad \{z \in \C: |z| = | 1 + e^{\i \theta_{i,j}} |, \arg z > \arg(1 + e^{\i \theta_{i,j}}) \} \]
where the $\arg$ brsnches are in $(-\pi,\pi]$, positively oriented around $0$. Then $\cC_{i,j}''$, $1 \le j \le k_i, 1 \le i \le \nu$, intersect pairwise at $1$ and $\cC_{i',j'}''$ encircles $\cC_{i,j}'' \setminus \{1\}$ whenver $(i,j) < (i',j')$ in lexicographical order.

From $\rho_<^\e(x_i) < \rho_>^\e(x_i)$, we have
\[ \lim_{\e \to 0} \rho_<^\e(x_i) \le \lim_{\e \to 0} \rho_>^\e(x_i). \]
By monotonicity of $\rho_<^\e$ and $\rho_>^\e$, we also have that the left hand side and right hand side grow as $i$ increases; recall that $x_1 \le \cdots \le x_\nu$. By definition of singular points and Proposition \ref{prop:dist<>}, we have equality if and only if $\x_i$ is a singular point. Let $\Psi \subset [[1,\nu]]$ be defined so that $i \in \Psi$ if and only if $\x_i$ is singular. Then, we can find $a_{i,j} := a_{i,j}(\e)$, for each $(i,j), (i',j')$ where $j \in [[1,k_i]], j' \in [[1,k_{i'}]]$ and $i,i' \in [[1,\nu]] \setminus \Psi$, such that
\begin{align} \label{eq:aij1}
\begin{split}
& \lim_{\e \to 0} \rho_<^\e(x_i) < \lim_{\e \to 0} a_{i,j} < \lim_{\e \to 0} \rho_>^\e(x_i), \\
& \lim_{\e \to 0} a_{i,j} < \lim_{\e \to 0} a_{i',j'}, \quad \quad (i,j) < (i',j')
\end{split}
\end{align}
where we order lexicographically. For $i \in \Psi$ we define $a_{i,j}$ using the separation condition. Suppose we have,
\[ \x_{\eta-1} < \x_\eta = \cdots = \x_{\eta+\xi-1} < \x_{\eta+\xi} \]
with $\x_i = V_\ell$ a singular point, in particular $\eta,\ldots,\eta+\xi-1 \in \Psi$. By Proposition \ref{prop:dist<>} the separation condition implies that
\[ \frac{\rho_>^\e(x_i)}{\rho_<^\e(x_i)} \ge t^{-1} r^{-(k_i + \cdots + k_\nu)\ft-1} = t^{-(k_i + \cdots + k_\nu +1)} r^{-1}, \quad i = \eta,\ldots,\eta+\xi-1. \]
Thus, we may define $a_{i,j}$ for $i \in \Psi$ such that for any pair $a_{i,j}, a_{i',j'}$ with $j \in [[1,k_i]], j' \in [[1,k_{i'}]]$ and $i,i' \in [[\eta,\eta+\xi-1]]$, we have
\begin{align} \label{eq:aij2}
\frac{a_{i,j}}{\rho_<^\e(x_i)} > t^{-1-\delta}, \quad \frac{\rho_>^\e (x_i)}{a_{i,j}} > t^{-1-\delta}, \quad \frac{a_{i',j'}}{a_{i,j}} > t^{-1-\delta}, \quad (i,j) < (i',j')
\end{align}
for some fixed, small $\delta > 0$. Indeed, this follows from the separation condition and telescoping over $a_{i,j+1}/a_{i,j}$ with boundary cases
\[ a_{i,1}/\rho_<^\e(x_i), \quad \rho_>^\e(x_i)/a_{i,k_i}. \]
Although the separation condition is not optimal, it is nearly saturated in the case where $x_1 = \cdots = x_\nu$ all converge to the same singular point $V_\ell$ and are as close to $v_\ell$ as the separation condition allows.

From this choice of $a_{i,j}$, we construct the $z_{i,j}$-contour $\cC_{i,j}' = a_{i,j} \cC_{i,j}''$ similar to the proof of Theorem \ref{thm:mom}. By (\ref{eq:aij1}) and (\ref{eq:aij2}), the points $a_{i,j}$ satisfy (\ref{eq:multikpt}). Therefore the contours $\cC_{i,j}'$ meet the conditions of Theorem \ref{thm:obs}, and so we may express (\ref{eq:cum}) as (\ref{eq:intcum}).

Our next objective is to apply dominated convergence to (\ref{eq:intcum}). To this end, we rewrite (\ref{eq:intcum}) in a different form. Let $U \subset [[1,\nu]]$, $\cT(U)$ denote the set of undirected simple graphs with vertices labeled by $U$, and $\cL(U) \subset \cT(U)$ the subset of connected graphs. Given a graph $\Omega$, we denote by $E(\Omega)$ the edge set of $\Omega$. We show
\begin{align} \label{eq:Ccum}
\fC(Z_1,\ldots,Z_m) = \sum_{\Omega \in \cL([[1,\nu]])} \prod_{\substack{i < j \\ (i,j) \in E(\Omega)}} (C(Z_i,Z_j) - 1).
\end{align}
Define
\begin{align}
\cK(U) = \sum_{\Omega \in \cL([[1,\nu]])} \prod_{\substack{i < j \\ (i,j) \in E(\Omega)}} (C(Z_i,Z_j) - 1), ~~~ \cE(U) = \sum_{\Omega \in \cT([[1,\nu]])} \prod_{\substack{i < j \\ (i,j) \in E(\Omega)}} (C(Z_i,Z_j) - 1).
\end{align}
Then
\begin{align}
\cE(U) = \sum_{\substack{d > 0 \\ \{U_1,\ldots,U_d\} \in \Theta_U}} \prod_{\ell=1}^d \cK(U_\ell).
\end{align}
By Lemma \ref{lem:cuminv}, we have
\begin{align}
\cK(U) = \sum_{\substack{d > 0 \\ \{U_1,\ldots,U_d\} \in \Theta_U}} (-1)^{d-1} (d-1)! \prod_{\ell=1}^d \cE(U_\ell)
\end{align}
which agrees with the right hand side of (\ref{eq:fC}) when $U = [[1,\nu]]$. This proves (\ref{eq:Ccum}).

We also record that
\begin{align} \label{eq:Ceq}
C(Z_i,Z_j) - 1 = \sum_{\substack{\emptyset \neq S \subset [[1,k_i]] \\ \emptyset \neq T \subset [[1,k_j]]}} \prod_{(a,b) \in S \times T} \frac{(1 - q)(t^{-1} - 1) z_{i,a} z_{j,b}}{(z_{j,b} - q z_{i,a})(z_{j,b} - \frac{1}{t} z_{i,a})}
\end{align}

For each $i \in [[1,\nu]]$ and $S \subset [[1,k_i]]$, let $a_S$ be the minimal member of $S$ (we note this choice is arbitrary). For each edge $i,j \in [[1,\nu]]$ with $i < j$, and any pair of subsets $S \subset [[1,k_i]],T \subset [[1,k_j]]$, we have
\begin{align} \label{eq:Cbd}
\begin{split}
\left| \prod_{(a,b) \in S\times T} \frac{(1 - q)(t^{-1} - 1) z_{i,a} z_{j,b}}{(z_{j,b} - qz_{i,a})(z_{j,b} - \frac{1}{t} z_{i,a})} \right| & \le C \left| \frac{\e^2}{(z_{j,a_T} - qz_{i,a_S})(z_{j,a_T} - \frac{1}{t}z_{i,a_S})} \right|
\end{split}
\end{align}
for $z_{i,j} \in \cC_{i,j}'$. The inequality follows from removing all but one $(a_S,a_T) \in S\times T$ using the fact that $|z_{j,b} - qz_{i,a}|, |z_{j,b} - \frac{1}{t} z_{i,a}| \ge C \e$ by our choice of contours. For each $\Omega \in \cL([[1,\nu]])$, fix a complete subtree $\Omega'$ of $\Omega$. Then by (\ref{eq:Ccum}), (\ref{eq:Ceq}) and (\ref{eq:Cbd}), we have
\begin{align} \nonumber
|\fC(Z_1,\ldots,Z_\nu)| & \le C \sum_{\Omega \in \cL([[1,\nu]])} \prod_{\substack{i < j \\ (i,j) \in E(\Omega)}} \sum_{\substack{S \subset [[1,k_i]] \\ T \subset [[1,k_j]]}} \left| \frac{\e^2}{(z_{j,a_T} - qz_{i,a_S})(z_{j,a_T} - \frac{1}{t}z_{i,a_S})} \right| \\ \label{eq:Ccumbd}
& \le C \e^{2\nu-2} \sum_{\Omega \in \cL([[1,\nu]])} \prod_{\substack{i < j \\ (i,j) \in E(\Omega')}} \sum_{\substack{S \subset [[1,k_i]] \\ T \subset [[1,k_j]]}} \left| \frac{1}{(z_{j,a_T} - qz_{i,a_S})(z_{j,a_T} - \frac{1}{t}z_{i,a_S})} \right|
\end{align}
for $z_{i,j} \in \cC_{i,j}'$. In the last line, we used the fact that the number of edges in $\Omega'$ is $\nu - 1$ in pulling out the $\e$ factor. 

On the other hand, as with (\ref{eq:preFineq}) in the proof of Theorem \ref{thm:mom}, we have the bound
\begin{align} \label{eq:GZbd}
\left| G_{x_i}(Z_i) \frac{DZ_i}{dZ_i} \right| \le C \left|\frac{1}{(z_{i,2} - \frac{q}{t} z_{i,1}) \cdots (z_{i,k} - \frac{q}{t} z_{i,k-1})} \right| \prod_{j=1}^{k_i} |1 - t^\delta a_{ij}^{-1} z_{ij}|^{c'}
\end{align}
for some $c' > 0$ and $\delta > 0$ fixed along $z_{i,j} \in \cC_{i,j}'$ where we write $\frac{DZ_i}{dZ_i}$ to indicate that we remove the differentials from $DZ_i$. Then by (\ref{eq:Ccumbd}), (\ref{eq:GZbd}), we obtain the following bound for the integrand $F_\e(Z_1,\ldots,Z_\nu)$ of (\ref{eq:intcum}) as we had done with (\ref{eq:Fineq}) in the proof of Theorem \ref{thm:mom}:
\begin{align} \label{eq:Fbd}
\begin{multlined}
|F_\e(Z_1,\ldots,Z_\nu)| \le C \e^{\nu-2} \prod_{i=1}^\nu \left| \frac{1}{(z_{i,2} - \frac{q}{t}z_{i,1}) \cdots (z_{i,k_i} - \frac{q}{t}z_{i,k_i - 1})} \right|^{1 - c}  \\
\times \sum_{\Omega \in \cL([[1,\nu]])} \prod_{\substack{i < j \\ (i,j) \in E(\Omega')}} \sum_{\substack{S \subset [[1,k_i]] \\ T \subset [[1,k_j]]}} \left| \frac{1}{(z_{j,a_T} - qz_{i,a_S})(z_{j,a_T} - \frac{1}{t}z_{i,a_S})} \right|^{1-c} 
\end{multlined}
\end{align}
for some $c > 0$ along $z_{i,j} \in \cC_{i,j}'$ for each $j \in [[1,k_i]]$ and $i \in [[1,\nu]]$.

If we expand out the right hand side of (\ref{eq:Fbd}), we have that $|F_\e(Z_1,\ldots,Z_\nu)|$ is bounded on along $\cC_{i,j}'$ by a sum of finitely many terms of the form
\begin{align} \label{eq:sumbd}
\begin{multlined}
T_\e(Z_1,\ldots,Z_\nu) = C \e^{\nu-2} \prod_{i=1}^\nu \left| \frac{1}{(z_{i,2} - \frac{q}{t}z_{i,1}) \cdots (z_{i,k_i} - \frac{q}{t}z_{i,k_i - 1})} \right|^{1 - c}  \\
\times \prod_{\substack{i < j \\ (i,j) \in E(\Omega')}} \left| \frac{1}{(z_{j,b_{(i,j)}} - qz_{i,a_{(i,j)}})(z_{j,b_{(i,j)}} - \frac{1}{t}z_{i,a_{(i,j)}})}\right|^{1-c}
\end{multlined}
\end{align}
for some $\Omega \in \cL([[1,\nu]])$ and some $a_{(i,j)} \in [[1,k_i]]$, $b_{(i,j)} \in [[1,k_j]]$ for each $(i,j) \in E(\Omega')$. Since we are seeking an integrable function which dominates $F_\e(Z_1,\ldots,Z_\nu)$, it suffices to dominate $T_\e(Z_1,\ldots,Z_\nu)$ by an integrable function.

We may assume $c < 1$. Choose a distinguished element $(i_\Omega, j_\Omega) \in \Omega'$ where $i_\Omega < j_\Omega$. Then (\ref{eq:sumbd}) may be replaced by
\begin{align} \label{eq:Tbd1}
\begin{multlined}
T_\e(Z_1,\ldots,Z_\nu) \le C \e^{(\nu-2)c} \prod_{i=1}^\nu \left| \frac{1}{(z_{i,2} - \frac{q}{t} z_{i,1}) \cdots (z_{i,k_i} - \frac{q}{t}z_{i,k_i - 1})} \right|^{1 - c}  \\
\times \left|\frac{1}{(z_{j_\Omega,b_{(i_\Omega,j_\Omega)}} - qz_{i_\Omega,a_{(i_\Omega,j_\Omega)}})}\prod_{\substack{i < j \\ (i,j) \in E(\Omega')}} \frac{1}{(z_{j,b_{(i,j)}} - \frac{1}{t}z_{i,a_{(i,j)}})}\right|^{1-c}
\end{multlined}
\end{align}
where we used $\e^{(\nu-2)(1 - c)}$ to remove each $|z_{j,b_{(i,j)}} - qz_{i,a_{(i,j)}}|$ term except for the one corresponding to the distinguished edge $(i_\Omega,j_\Omega)$, recalling again that the number of edges of $\Omega'$ is $\nu - 1$. 

We set $z_{i,j} = \frac{a_{i,j}}{\lim_{\e to 0} a_{i,j}} w_{i,j}$ for the rest of the proof so that $w_{i,j}$ runs along $\cC_{i,j} := ( \lim_{\e to 0} a_{i,j} )\cC_{i,j}''$. Then, like (\ref{eq:zwithw}) in the proof of Theorem \ref{thm:mom}, we have
\begin{align} \label{eq:Tbd2}
\begin{multlined}
T(Z_1,\ldots,Z_\nu) \le C \e^{(\nu-2)c} \prod_{i=1}^\nu \left| \frac{1}{(w_{i,2} - w_{i,1}) \cdots (w_{i,k_i} - w_{i,k_i - 1})} \right|^{1 - c}  \\
\times \left|\frac{1}{(w_{j_\Omega,b_{(i_\Omega,j_\Omega)}} - w_{i_\Omega,a_{(i_\Omega,j_\Omega)}})}\prod_{\substack{i < j \\ (i,j) \in E(\Omega')}} \frac{1}{(w_{j,b_{(i,j)}} - w_{i,a_{(i,j)}})}\right|^{1-c} \\
=: \e^{(\nu-2)c} g(W_1,\ldots,W_\nu)
\end{multlined}
\end{align}
We show that $g(W_1,\ldots,W_\nu)$ is integrable on $w_{i,j} \in \cC_{i,j}$, $j \in [[1,k_i]]$, $i \in [[1,\nu]]$ with respect to the measure $\prod_{\substack{i \in [[1,\nu]] \\ j \in [[1,k_i]]}} d|w_{i,j}|$. We use the following lemma to this end.

\begin{lemma} \label{lem:singint}
Let $\Gamma$ be a graph with vertices labeled by some subset of $\{(i,j)\}_{\substack{i \in [[1,\nu]] \\ j \in [[1,k_i]]}}$. If $\Gamma$ is a tree and $c > 0$, then for any $(v_1',v_2') \in E(\Gamma)$ we have
\[ \oint \cdots \oint \frac{1}{|w_{v_1'} - w_{v_2'}|^{1 - c}} \prod_{(v_1,v_2) \in E(\Gamma)} \frac{1}{|w_{v_1} - w_{v_2}|^{1 - c}} \, \prod_{\substack{i \in[[1,\nu]] \\ j \in [[1,k_i]]}} |dw_{i,j}| < \infty\]
where $\cC_{i,j}$ is the $w_{i,j}$ contour.
\end{lemma}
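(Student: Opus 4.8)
}
The plan is to localize near the points at which the contours meet, reduce everything to one transversality estimate of the same flavor as Lemma \ref{lem:raybd}, and then integrate the variables out one at a time following the tree structure of $\Gamma$. Note first that the integrand is continuous, hence bounded, on the complement of any neighborhood of the finite set of points where two distinct contours $\cC_{i,j},\cC_{i',j'}$ intersect, since its only singularities occur where $w_{v_1}=w_{v_2}$ for an edge $(v_1,v_2)\in E(\Gamma)$. It therefore suffices to bound the integral over a small polydisc-neighborhood of one such configuration; without loss of generality I will treat the extremal case in which all the contours pass through a single common point $\rho$ (in general one localizes separately near each intersection point and the argument applies verbatim to the variables that are near it, the remaining factors being bounded). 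By the construction of the $\cC_{i,j}''$, near $\rho$ each contour consists of two line segments emanating from $\rho$ in the directions $\pm\theta_{i,j}$, and the $\theta_{i,j}$ are distinct in $(0,\pi)$.

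The key geometric input is that there is a constant $c_0>0$, depending only on the minimal angular separation of these directions, such that for the contours $\cC_a,\cC_b$ of two distinct variables $w_a,w_b$ one has
\[ |w_a - w_b| \ge c_0\,\max\bigl(|w_a-\rho|,\ |w_b-\rho|\bigr) \]
for all $w_a\in\cC_a,\,w_b\in\cC_b$. Indeed the ratio of the two sides extends to a positive continuous function on the compact set $\cC_a\times\cC_b$: away from $(\rho,\rho)$ it is positive by disjointness of the contours, and in the limit $(w_a,w_b)\to(\rho,\rho)$ it is bounded below by $\inf_{s,t\ge 0,\ \max(s,t)=1}|s e^{\i\alpha}-t e^{\i\beta}|>0$ for the relevant distinct ray directions $e^{\i\alpha}\neq e^{\i\beta}$.

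With this in hand I would root the tree $\Gamma$ at the vertex $v_1'$ and write $p(v)$ for the parent of $v\neq v_1'$. The product over $E(\Gamma)$ becomes $\prod_{v\neq v_1'}|w_v-w_{p(v)}|^{-(1-c)}$, and applying the estimate to each edge $(v,p(v))$ bounds this by $C\prod_{v\neq v_1'}|w_v-\rho|^{-(1-c)}$. For the extra distinguished factor I would keep the sum rather than a power: $|w_{v_1'}-w_{v_2'}|^{-(1-c)}\le C\bigl(|w_{v_1'}-\rho|+|w_{v_2'}-\rho|\bigr)^{-(1-c)}$. Now integrate in order: variables $w_{i,j}$ with $(i,j)\notin V(\Gamma)$ run over bounded contours and contribute a finite constant; each $w_v$ with $v\in V(\Gamma)\setminus\{v_1',v_2'\}$ appears only through $|w_v-\rho|^{-(1-c)}$ and $\int_{\cC_v}|w_v-\rho|^{-(1-c)}\,|dw_v|<\infty$ because $c>0$; then $w_{v_1'}$ appears only in the distinguished factor and $\int_{\cC_{v_1'}}\bigl(|w_{v_1'}-\rho|+a\bigr)^{-(1-c)}\,|dw_{v_1'}|\le \frac{C}{c}(L+a)^{c}$ is bounded uniformly in $a=|w_{v_2'}-\rho|$ (with $L$ the diameter of $\cC_{v_1'}$); finally $w_{v_2'}$ is left against $|w_{v_2'}-\rho|^{-(1-c)}$, which is integrable. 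Multiplying the finitely many bounds yields finiteness.

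The main obstacle, and the point requiring the care above, is that the edge $(v_1',v_2')$ appears with total exponent $2(1-c)$, so for $c\le\tfrac12$ the factor $|w_{v_1'}-w_{v_2'}|^{-2(1-c)}$ is not integrable along either single contour and one cannot treat all edges alike. The resolution is exactly the transversality: $|w_{v_1'}-w_{v_2'}|^{-2(1-c)}\lesssim\bigl(|w_{v_1'}-\rho|+|w_{v_2'}-\rho|\bigr)^{-2(1-c)}$, and $\iint(u+u')^{-2(1-c)}\,du\,du'<\infty$ for every $c>0$ (equivalently, integrating $w_{v_1'}$ first against the sum produces a bounded function of $w_{v_2'}$). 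Everything else is routine once this is isolated.
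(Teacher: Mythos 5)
Your proof is correct, but it follows a genuinely different route from the paper's. The paper integrates variable by variable following the tree structure: after factoring out the variables with $(i,j)\notin V(\Gamma)$, it repeatedly chooses a leaf $v_1\notin\{v_1',v_2'\}$, integrates $|w_{v_1}-w_{v_2}|^{c-1}$ over $\cC_{v_1}$ (obtaining a bounded function of $w_{v_2}$, uniformly even when the contours touch), and peels $v_1$ off; the induction bottoms out at the single-edge case $\oint\oint |w_{v_1'}-w_{v_2'}|^{-2(1-c)}\,|dw_{v_1'}||dw_{v_2'}|<\infty$, which is exactly where the transversality of the contours is used. You instead make the transversality input explicit up front via $|w_a-w_b|\ge c_0\max(|w_a-\rho|,|w_b-\rho|)$, use it to dominate each tree-edge factor by a univariate singular factor $|w_v-\rho|^{-(1-c)}$ (rooting at $v_1'$), and handle the doubled edge $(v_1',v_2')$ by retaining the cross term $(|w_{v_1'}-\rho|+|w_{v_2'}-\rho|)^{-(1-c)}$ and integrating $w_{v_1'}$ before $w_{v_2'}$. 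Both arguments rest on the same geometric fact about transversal intersections of the $\cC_{i,j}$; the paper's Fubini induction is a little cleaner and avoids localization, while your decoupling isolates where transversality enters and correctly pinpoints the doubled edge as the only place where a naive term-by-term bound would fail for $c\le\tfrac12$. Both are valid.
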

\begin{proof}
Since the contours $\cC_{i,j}$ have finite length, we can integrate out the variables independent of the integrand, so it suffices to show
\begin{align} \label{eq:vertint}
\oint \cdots \oint \frac{1}{|w_{v_1'} - w_{v_2'}|^{1 - c}} \prod_{(v_1,v_2) \in E(\Gamma)} \frac{1}{|w_{v_1} - w_{v_2}|^{1 - c}} \, \prod_{v \in V(\Gamma)} |dw_v|
\end{align}
where $V(\Gamma)$ is the vertex set of $\Gamma$.

For the case where the number of edges of $\Gamma$ is $1$, we have
\[ \oint \oint \frac{1}{|w_{v_1'} - w_{v_2'}|^{2(1 - c)}} |dw_{v_1'}||dw_{v_2'}| \]
is integrable even if $\cC_{v_1}$ and $\cC_{v_2}$ meet at a point.

For general $|E(\Gamma)| \ge 1$, choose a leaf vertex $v_1$ of $\Gamma$ so that $v_1$ is not a vertex of the edge $(v_1',v_2')$. There is a unique edge $e_1 = (v_1,v_2) \in E(\Gamma)$ containing $v_1$. If we integrate over $w_{v_1}$, then the $w_{v_1}$-dependent part of the integrand is $|w_{v_1} - w_{v_2}|^{c - 1}$ which is integrable even if $\cC_{v_2}$ and $\cC_{v_1}$ intersect. Then repeat this procedure with the graph $\Gamma \setminus \{v_1\}$ which is still a tree. After repeating this procedure, we will eventually return to the case $|E(\Gamma)| = 1$.
\end{proof}

We check that $g(W_1,\ldots,W_\nu)$ meets the conditions of Lemma \ref{lem:singint}. Our graph $\Gamma$ is the union of graph $\Gamma_i$ for $i \in [[0,\nu]]$ defined as follows. Let $\Gamma_i$ be the graph with edge set $E(\Gamma_i) = \{((i,1),(i,2)),\ldots,((i,k_i-1),(i,k_i))\}$ for $i \in [[1,\nu]]$ and $E(\Gamma_0) = \{((i,a_{(i,j)}),(j,b_{(i,j)})):(i,j) \in \Omega'\}$. We must check that $\Gamma$ is a tree. Indeed, if we collapse each $\Gamma_i$ to a point $v_i$ for $i \in [[1,\nu]]$ (alternatively said, we project away the second coordinate for the vertex labels), then $\Gamma$ becomes the tree $\Omega'$. Since each $\Gamma_i$ for $i \in [[1,\nu]]$ is a tree, this implies that $\Gamma$ is a tree. Thus by Lemma \ref{lem:singint},
\begin{align}
\int \cdots \int g(W_1,\ldots,W_\nu) \prod_{\substack{i \in[[1,\nu]] \\ j \in [[1,k_i]]}} d|w_{i,j}| < +\infty
\end{align}
where the $w_{i,j}$-contour is $\cC_{i,j}$.

If $\nu > 2$, then since $\e^{(\nu-2)c} \to 0$, we have that (\ref{eq:Tbd2}) converges to $0$. Thus for $\nu > 2$, the cumulant (\ref{eq:cum}) converges to $0$. If $\nu = 2$, then (\ref{eq:Tbd2}) allows us to apply dominated convergence so that (\ref{eq:cum}) converges to
\begin{align} \label{eq:covlim}
\begin{multlined}
\frac{1}{(2\pi\i)^{k_1 + k_2}}\oint \cdots \oint \left( \sum_{\substack{a \in [[1,k_1]] \\ b \in [[1,k_2]]}} \frac{w_{1,a} w_{2,b}}{(w_{1,a} - w_{2,b})^2} \right) \\
\times\prod_{i=1}^2 \frac{ \sum_{j=1}^{k_i} \frac{1}{w_{i,j}}}{(w_{i,2} - w_{i,1}) \cdots (w_{i,k_i} - w_{i,k_i-1})}  \prod_{j=1}^{k_i}[\cG_{<\mathbf{x}_i}(w_{i,j}) \cG_{>\mathbf{x}_i}(w_{i,j})]^{\ft} dw_{i,j}
\end{multlined}
\end{align}
where $\cC_{i,j}$ is the $z_{i,j}$-contour. As in the proof of Theorem \ref{thm:mom}, we can consider the family of integrals replacing $\cG_{<\x_i}^\ft$ with $G_{<x_i}$, $\cG_{>\x_i}^\ft$ with $G_{>x_i}$ and $\cC_{i,j}$ with $\cC_{i,j}'$ in (\ref{eq:covlim}). By applying Theorem \ref{thm:dimred} twice to this family and taking the limit as $\e \to 0$, we have that (\ref{eq:covlim}) is equal to
\[ \frac{k_1k_2}{(2\pi\i)^2} \oint_{\cC_2} \oint_{\cC_1} \frac{1}{(z - w)^2} [\cG_{<\mathbf{x}_1}(z) \cG_{>\mathbf{x}_1}(z)]^{k_1\ft} [\cG_{<\mathbf{x}_2}(w) \cG_{>\mathbf{x}_2}(w)]^{k_2\ft} \, dz \, dw \]
where $\cC_1 = \cC_{1,i}$ for some $i \in [[1,k_1]]$ and $\cC_2 = \cC_{2,j}$ for some $j \in [[1,k_2]]$. This proves the Claim, and therefore proves Theorem \ref{thm:gauss}.

\section{Complex Structure of the Liquid Region} \label{sec:cpx}
Throughout this section, we fix a family $\P^{B,r,\vec{s}}_{\alpha,\ft}$ satisfying the Limit Conditions, as defined in the beginning of Section \ref{sec:asymp}, with limiting back wall $\cB: I \to \R$. The \emph{liquid region}, denoted by $\sL$, is the subset of $\R^2$ consisting of $(x,y)$ such that the local proportions of $\lloz,\rloz,\hloz$ lozenges at $(x,y)$ are all positive. The goal of this section is to give a natural complex structure on $\sL$ through $\cG_x$. This complex structure will be used in Section \ref{sec:limgff} to describe the GFF fluctuations of $\P^{B,r,\vec{s}}_{\alpha,\ft}$ in the limit as $\e \to 0$.

The key object of study is the function
\begin{align} \label{eq:Gx}
\cG_x^\cB(\zeta) = e^{-\cB(x)} \cG_{<x}^\cB(\zeta)\cdot \cG_{>x}^\cB(\zeta)
\end{align}
Throughout this section, we write $\cG_x := \cG^\cB_x$.

\begin{definition}
For $(x,y) \in \R^2$, we define the $(x,y)$-\emph{companion equation}
\begin{align} \label{eq:compeq}
\cG_x(\zeta) = e^{-y}.
\end{align}
\end{definition}

The importance of the companion equation is due to its connection to the liquid region.

\begin{definition}
The \textit{liquid region} denoted by $\sL$ is the set of $(x,y) \in \R^2$ for which the $(x,y)$-companion equation has a nonreal pair of roots.
\end{definition}

We provided a different definition of the liquid region earlier: the \emph{liquid region} is the set of $(x,y)$ such that all the local proportions $p_{\lloz}, p_{\rloz}, p_{\hloz}$ are positive. The equivalence of these definitions can be seen in Section \ref{sec:limgff}. For now, we use the definition of the liquid region in terms of the companion equation. The following theorem is the main result of this section.

\begin{theorem} \label{thm:homeo}
For each $(x,y) \in \sL$, there exists a unique root $\zeta(x,y)$ of the $(x,y)$-companion equation in the upper half plane $\HH$. The map $\zeta: \sL \to \HH$ is a diffeomorphism.
\end{theorem}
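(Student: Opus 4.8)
The plan is to prove Theorem~\ref{thm:homeo} in three stages: first establish uniqueness of the root in $\HH$, then show smoothness of $\zeta$, and finally construct a smooth inverse.

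\textbf{Step 1: Counting roots of the companion equation.} I would begin by analyzing the function $\cG_x(\zeta)$ on the real line. From the product formulas (\ref{eq:G<>}) and (\ref{eq:Gx}), $\cG_x$ is a $p$th root of a rational function; write $\cG_x(\zeta)^p = e^{-p\cB(x)} R_x(\zeta)$ where $R_x$ is rational with all its zeros and poles real. The key structural input is Remark~\ref{rem:ae<1} / Lemma~\ref{lem:ae<1}: all poles of $\cG_{<x}$ lie in $[0,\rho_<(\x))$ and all poles of $\cG_{>x}$ lie in $(\rho_>(\x),\infty)$ with $\rho_<(\x)\le\rho_>(\x)$, and by the branch conventions $\cG_x$ is real-analytic and positive on $\zeta>\rho_>(\x)$ with $\cG_x(\zeta)\to e^{-\cB(x)}$ as $\zeta\to\infty$. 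On the interval $(\rho_<(\x),\rho_>(\x))$ the function $\cG_x$ is real, smooth, and nonvanishing (no zeros or poles there), so $y\mapsto$ the real roots of $\cG_x(\zeta)=e^{-y}$ behaves monotonically on each monotonicity subinterval. The argument principle (applied to the rational function $R_x$, then taking $p$th roots carefully, or directly to $\cG_x$ using that it is a genuine analytic function off a real slit) shows that the equation $\cG_x(\zeta)=e^{-y}$ has degree equal to the number of poles in $[0,\rho_<(\x))$ counted appropriately; since the coefficients are real, nonreal roots come in conjugate pairs, and I claim that outside $\sL$ all roots are real while on $\sL$ exactly one conjugate pair is nonreal. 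This is the standard mechanism for such ``genus zero'' spectral curves (cf. the uniform case in \cite{P}, \cite{BuG}): the real part of the curve, parametrized by $y$, has its branch points exactly where two real roots collide, and between consecutive branch points either zero or one pair goes complex. Uniqueness of the root in $\HH$ then follows because the nonreal pair is $\{\zeta(x,y),\overline{\zeta(x,y)}\}$.

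\textbf{Step 2: Smoothness of $\zeta$.} Once uniqueness on $\HH$ is known, smoothness is the implicit function theorem. Define $\Phi(x,y,\zeta)=\cG_x(\zeta)-e^{-y}$ as a function of $(x,y)\in\sL$ and $\zeta\in\HH$; it is real-analytic in $x$ (the breakpoints $V_\ell$ enter only through $\min,\max$ and $e^{\pm V_\ell}$ which are smooth on each chamber, and across $V_\ell$ one checks continuity of $\zeta$ separately) and holomorphic in $\zeta$. At a root $\zeta(x,y)\in\HH$ we need $\partial_\zeta\cG_x(\zeta)\ne 0$; this holds because a double root in $\HH$ would, by continuity and the conjugate-pair structure, force a collision with the real axis, contradicting that $(x,y)\in\sL$ is in the open liquid region. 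Hence $\zeta$ is smooth (indeed real-analytic on each chamber) with $\partial_y\zeta = e^{-y}/\partial_\zeta\cG_x(\zeta)$ and a similar formula for $\partial_x\zeta$.

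\textbf{Step 3: The inverse and global homeomorphism.} For surjectivity onto $\HH$ and injectivity: given $\zeta_0\in\HH$, I want to recover $(x,y)$. Fix $\zeta_0$; the map $x\mapsto\cG_x(\zeta_0)$ is, for each fixed $\zeta_0\in\HH$, a function whose argument $\arg\cG_x(\zeta_0)$ is strictly monotone in $x$ on the relevant range (this is the content of the local-proportion interpretation: $\arg\cG_x(\zeta_0)$ measures a combination of lozenge angles, cf. (\ref{eq:locprop}) with $z=\zeta_0 e^{-x}$, and strict monotonicity comes from the fact that each factor in (\ref{eq:G<>}) contributes a definite-sign derivative to the argument). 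So there is a unique $x$ with $\arg\cG_x(\zeta_0)=0$... more precisely, $\cG_x(\zeta_0)$ is real and negative exactly when $\zeta_0$ and $\overline{\zeta_0}$ are the nonreal pair, and this pins down $x$ uniquely; then $y=-\log|\cG_x(\zeta_0)|$ (noting $\cG_x(\zeta_0)<0$ forces reconsideration—in fact the right statement is $e^{-y}=\cG_x(\zeta_0)$ must be real positive, so one solves $\arg\cG_x(\zeta_0)=0$ for $x$ and reads off $y$). The resulting map $\HH\to\sL$ is continuous by the implicit function theorem again and is a two-sided inverse to $\zeta$ by construction. Continuity of $\zeta$ across the chamber walls $x=V_\ell$ follows since the companion equation's coefficients are continuous in $x$ and the root in $\HH$ is isolated. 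Finally, invariance of domain (or the explicit smooth inverse) upgrades the continuous bijection to a diffeomorphism.

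\textbf{Main obstacle.} The delicate point is Step~1: proving that \emph{exactly} one conjugate pair of roots is nonreal on all of $\sL$ and none is nonreal off $\sL$, uniformly over the possibly many chambers and for arbitrary period $p$ (where $\cG_x$ is a genuine $p$th-root and not rational). This requires a careful argument-principle / monotonicity analysis of the real restriction of $\cG_x$ on the ``window'' $(\rho_<(\x),\rho_>(\x))$ together with control of how real roots enter and leave through the pole intervals $[0,\rho_<(\x))$ and $(\rho_>(\x),\infty)$ as $y$ varies; the branch-cut bookkeeping for the $p$th root is where most of the real work lies, and I expect to lean on the frozen-boundary computation of Section~\ref{ssec:frozen} (which identifies exactly the branch points, i.e.\ the boundary $\partial\sL$) to close the loop.
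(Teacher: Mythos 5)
Your outline correctly identifies the architecture of the argument and even flags Step 1 as the crux, but the proposal as written leaves that crux unresolved, and Step 3 has a quantitative gap as well. For Step 1, the appeal to ``the standard mechanism for genus zero spectral curves'' and the argument principle is not a proof here: $\cG_x$ is a $p$th root of a rational function with an $x$-dependent set of branch cuts, and one needs to control which branch poles in $\cJ_x$ can feed a root that later becomes nonreal. The paper's actual argument for Proposition~\ref{prop:2roots} classifies the components $L\subset\hatR\setminus\cJ_x$ with $\arg_+\cG_x(L)=0$, shows (Lemma~\ref{lem:endpt}) that only the external component can have \emph{both} endpoints branch poles, tracks root functions $\zeta_\fa(y)$ emanating from each such pole as $y\to-\infty$ (Lemmas~\ref{lem:rootarg},~\ref{lem:Qrootcount}), and uses a parity argument to trap one root per internal component for all $y$. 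That parity/trapping step is what replaces your ``between consecutive branch points either zero or one pair goes complex'' heuristic, and it is genuinely case-dependent (in particular it uses $\cB'(V_0^-)=0$, $\cB'(V_n^+)=1$, and the structure of $J_V$ from Lemma~\ref{lem:Vchange}); you would need to reproduce something equivalent.

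For Step 3, your monotonicity observation -- that $\arg P_x(\zeta_0)=\frac{1}{p}\sum_\sigma\arg(1-e^{-x}\sigma\zeta_0)$ is strictly increasing in $x$ -- is correct, but it only gives injectivity, not surjectivity. To conclude that some $x$ solves $\arg\cG_x(\zeta_0)=0$ you must show that $\arg Q(\zeta_0)$ lies strictly inside the range $\left(-(\pi-\arg\zeta_0),\,0\right)$ swept out by $\arg P_x(\zeta_0)$; this is exactly the inequality $0<-\arg Q(\zeta)<\pi-\arg\zeta$ that the paper establishes by comparing the two telescoping expansions (\ref{eq:1-s}) and (\ref{eq:1+s}) of $Q$ (each summand has a definite sign in $\arg$). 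Absent that two-sided bound, strict monotonicity of $\arg\cG_x(\zeta_0)$ in $x$ does not guarantee a zero, so the inverse map is not constructed. Your Step 2 (implicit function theorem, nonvanishing of $\partial_\zeta\cG_x$ at a root in $\HH$ because a double root would force a real collision) is sound and matches what is implicit in the paper's smoothness argument.
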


Theorem \ref{thm:homeo} endows $\sL$ with the complex structure of $\HH$ given by the pullback of $\zeta(x,y)$.

We prove Theorem \ref{thm:homeo} in two parts. In Section \ref{sec:exist}, we show that the map $\zeta(x,y)$ in Theorem \ref{thm:homeo} exists. In Section \ref{sec:homeo}, we prove that the map $\zeta(x,y)$ is a diffeomorphism. In Section \ref{sec:frzbdry}, we give a parametrization of the boundary of $\sL$ which we call the \emph{frozen boundary}; this involves studying the double roots of (\ref{eq:compeq}).

Before proceeding, we show a convenient expression for $\cG_x$ as a product of two functions where one is independent of $x$ and the other depends on $x$. We also introduce some useful definitions. Define
\begin{align} \label{eq:Px}
P_x(z) &= \prod_{\sigma \in \cS} (1 - e^{-x} \sigma z)^{\frac{1}{p}} \\ \label{eq:Q}
Q(z) &= e^{\cB(V_0)} \prod_{\substack{\ell \in [[0,n]] \\ \sigma \in \cS}} (1 - e^{-V_\ell} \sigma z)^{\frac{1}{p}(\1[\cB'(V_\ell^+) \ge \varsigma_\sigma] - \1[\cB'(V_\ell^-) \ge \varsigma_\sigma])} \\ \label{eq:1-s}
&= e^{\cB(V_0)} \prod_{\sigma \in \cS} \left[ (1 - e^{-V_0} \sigma z)^{\frac{1}{p}} \prod_{\ell=1}^n \left( \frac{1 - e^{-V_\ell} \sigma z}{1 - e^{-V_{\ell-1}} \sigma z} \right)^{\frac{1}{p} \1[ \cB'(V_{\ell-1},V_\ell) < \varsigma_\sigma]} \right] \\ \label{eq:1+s}
&= e^{\cB(V_0)} \prod_{\sigma \in \cS} \left[ (1 - e^{-V_n} \sigma z)^{\frac{1}{p}} \prod_{\ell=1}^n \left( \frac{1 - e^{-V_{\ell-1}} \sigma z}{1 - e^{-V_\ell} \sigma z} \right)^{\frac{1}{p} \1[ \cB'(V_{\ell-1},V_\ell) \ge \varsigma_\sigma]} \right]
\end{align}
where the branches of $(1 - vz)^s$ above are taken so that $z < v^{-1}$ give positive real values. If $V_0 = -\infty$, then the expression above is to be interpreted as the limit of the expression as $V_0 \to -\infty$.

\begin{lemma}
For $x\in \R$, we have
\[ \cG_x(z) = \frac{P_x(z)}{Q(z)}. \]
\end{lemma}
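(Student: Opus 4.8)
The plan is to unwind the definitions of $\cG_x = e^{-\cB(x)}\cG_{<x}^\cB \cdot \cG_{>x}^\cB$ and of $P_x, Q$ from \eqref{eq:G<>}, \eqref{eq:Px}--\eqref{eq:1+s}, and verify the identity $\cG_x(z) = P_x(z)/Q(z)$ by comparing the two telescoping product representations of $Q$ with the piecewise-linear structure of $\cB$. The crucial bookkeeping fact is that, for a fixed $\sigma = \sigma_i \in \cS$, the indicator $\1[\cB'(V_\ell^-) \ge \varsigma_\sigma]$ records exactly whether the $\sigma$-factor appears in the $G_{<x}$-product on the slope interval $(V_{\ell-1},V_\ell)$, and dually $\1[\cB'(V_\ell^-) < \varsigma_\sigma]$ governs the $G_{>x}$-product; since $\1[\cdot \ge \varsigma_\sigma] + \1[\cdot < \varsigma_\sigma] = 1$, the $\sigma$-factors from the two pieces are complementary and their product over $\ell$ telescopes cleanly.

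Concretely, first I would fix $\sigma \in \cS$ and collect, from the definition \eqref{eq:G<>}, the $\sigma$-contribution to $\cG_{<x}(z)$: it is $\prod_{\ell: V_{\ell-1} < x,\, \cB'(V_\ell^-)\ge\varsigma_\sigma} \bigl(\frac{1 - e^{\min(V_\ell,x)}(\sigma z)^{-1}}{1 - e^{V_{\ell-1}}(\sigma z)^{-1}}\bigr)^{1/p}$, and similarly for $\cG_{>x}(z)$. Note that with the companion-equation conventions one must rewrite $1 - e^{V}(\sigma z)^{-1} = -e^{V}(\sigma z)^{-1}(1 - e^{-V}\sigma z)$; the prefactors $-e^V(\sigma z)^{-1}$ will have to be tracked and shown to recombine into the $e^{-\cB(x)}$ and $e^{\cB(V_0)}$ scalars (this is where the $1/p$ exponents summed over the $p$ elements of $\cS$, counted with multiplicity, produce integer powers and the $e^{\pm\cB}$ normalizations). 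Then I would split the index $\ell$ in $\cG_{<x}$ at the interval containing $x$: for $V_{\ell-1}<x\le V_\ell$ the factor is $\bigl(\frac{1-e^x(\sigma z)^{-1}}{1-e^{V_{\ell-1}}(\sigma z)^{-1}}\bigr)^{1/p}$, giving the $P_x(z) = \prod_\sigma(1-e^{-x}\sigma z)^{1/p}$ after the prefactor rewrite; the remaining factors, together with all the $\cG_{>x}$ factors, telescope over $\ell$ and match the representation \eqref{eq:1-s} (or \eqref{eq:1+s}) of $Q(z)$ via the complementarity of the two indicators.

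The main obstacle, I expect, is purely organizational rather than deep: carefully matching branch choices and the sign/scalar prefactors across the change from the $(\sigma z)^{-1}$-normalized factors in \eqref{eq:G<>} to the $\sigma z$-normalized factors in \eqref{eq:Px}--\eqref{eq:1+s}, and confirming that the accumulated prefactors $\prod_\sigma \prod_\ell (-e^{V_\ell}(\sigma z)^{-1})^{\pm 1/p}$ telescope to exactly $e^{-\cB(x)}\cdot e^{\cB(V_0)}$ and cancel the powers of $z$ and $\sigma$ — using that $\cB$ is continuous piecewise linear with $\cB(V_\ell) - \cB(V_{\ell-1}) = \cB'(V_{\ell-1},V_\ell)(V_\ell - V_{\ell-1})$ and that the slopes take values in $\{\varsigma_i\}$. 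A clean way to finish is to check the identity on a generic slope interval $(V_{\ell-1},V_\ell)$ by induction on $\ell$: verify it at $x$ just to the right of $V_0$, then show both sides transform identically as $x$ crosses each breakpoint $V_\ell$, since crossing $V_\ell$ multiplies $\cG_{<x}$ and $\cG_{>x}$ by explicit $\sigma$-factors dictated by $\cB'(V_\ell^-)$ vs.\ $\cB'(V_\ell^+)$, and multiplies $P_x/Q$ by the matching factors coming from \eqref{eq:Q}. Handling the $V_0 = -\infty$ (or $V_n = +\infty$) case is a routine limiting argument once the finite case is established.
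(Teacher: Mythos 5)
Your proposal is correct and takes essentially the same route as the paper: both track the exponential prefactors that appear when converting $\bigl(1-e^{V}(\sigma z)^{-1}\bigr)$-type factors to $(1-e^{-V}\sigma z)$-type factors, using the piecewise linearity of $\cB$ (i.e.\ $\cB(V_\ell)-\cB(V_{\ell-1})=\cB'\cdot(V_\ell - V_{\ell-1})$ together with $\cB'(V_\ell^-)=\frac{1}{p}\sum_{\sigma\in\cS}\1[\cB'(V_\ell^-)\ge\varsigma_\sigma]$) to show the prefactors are absorbed by $e^{-\cB(x)}$ and $e^{\cB(V_0)}$, then telescoping via the complementarity of the $\ge\varsigma_\sigma$ / $<\varsigma_\sigma$ indicators. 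The paper iterates the absorption of $e^{-\cB(x)}$ backwards from the interval containing $x$ down to $V_0$ rather than organizing it as a forward induction on $\ell$, but that is a cosmetic difference.
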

\begin{proof}
It is sufficient to prove this lemma for $V_0 >-\infty$, since we may then take $V_0 \to -\infty$. Suppose $a \in [[0,p-1]]$ is maximal so that $V_{a-1} < x$, then (\ref{eq:G<>}) implies
\begin{align} \label{eq:Gxpf}
e^{-\cB(x)} \cG_{<x}(z) = e^{-\cB(x)} \prod_{\sigma \in \cS} \prod_{\ell = 1}^a \left( \frac{1 - e^{\min(V_\ell,x)} (\sigma z)^{-1}}{1 - e^{V_{\ell-1}} (\sigma z)^{-1}} \right)^{\frac{1}{p} \1[\cB'(V_\ell^-) \ge \varsigma_\sigma]}.
\end{align}
Observe that if $x_1,x_2 \in [V_{\ell-1},V_\ell]$, then
\[ \cB(x_1) - \cB(x_2) = (x_1-x_2) \cB'(V_{\ell-1},V_\ell). \]
This implies
\begin{align} \label{eq:expbprop}
e^{-\cB(x)} = e^{-\cB'(V_a^-)(x-V_{a-1})} e^{- \cB(V_{a-1})}.
\end{align}
By Lemma \ref{lem:sigma_order} and the definition of $\varsigma_\sigma$, we have $\cB'(V_\ell^-) = \sum_{\sigma \in \cS} \frac{1}{p} \1[\cB'(V_\ell^-) \ge \varsigma_\sigma]$. Thus the first exponential term on the right hand side of (\ref{eq:expbprop}) can be used to turn
\[ \frac{1 - e^x (\sigma z)^{-1}}{1 - e^{V_{a-1}} (\sigma z)^{-1}} \quad \mbox{into} \quad \frac{1 - e^{-x} \sigma z}{1 - e^{-V_{a-1}} \sigma z} \]
for each $\sigma \in \cS$ in (\ref{eq:Gxpf}). The second factor in (\ref{eq:expbprop}) can be used to iterate this procedure where $V_{a-1}$ and $V_{a-2}$ play the role of $x$ and $V_{a-1}$ respectively; then with $V_{a-2}$ and $V_{a-3}$, etc. At the end of this procedure, we obtain
\begin{align*}
e^{-\cB(x)} \cG_{<x}(z) &= e^{-\cB(V_0)} \prod_{\sigma \in \cS} \prod_{\ell = 1}^a \left( \frac{1 - e^{-\min(V_\ell,x)} \sigma z}{1 - e^{-V_{\ell-1}} \sigma z} \right)^{\frac{1}{p} \1[\cB'(V_\ell^-) \ge \varsigma_\sigma]} \\
&= e^{-\cB(V_0)} \prod_{\sigma \in \cS} \left[ (1 - e^{-x} \sigma z)^{\frac{1}{p} \1[\cB'(V_a^-) \ge \varsigma_\sigma]} \prod_{\ell=0}^{a-1} (1 - e^{-V_a} \sigma z)^{\frac{1}{p} (\1[\cB'(V_a^-) \ge \varsigma_\sigma] -  \1[\cB'(V_a^+) \ge \varsigma_\sigma])} \right]
\end{align*}
where the second equality uses the fact that $\cB'(V_{\ell+1}^-) = \cB'(V_\ell^+)$. From (\ref{eq:G<>}), we may also write
\[ \cG_{>x}(z) = \prod_{\sigma \in \cS} \left[ (1 - e^{-x} \sigma z)^{\frac{1}{p} \1[\cB'(V_a^-) < \varsigma_\sigma]} \prod_{\ell=a}^n (1 - e^{-V_a} \sigma z)^{\frac{1}{p} (\1[\cB'(V_a^+) < \varsigma_\sigma] - \1[\cB'(V_a^-) < \varsigma_\sigma])} \right]. \]
From (\ref{eq:Gx}) and the fact that
\[ \1[ \cB'(V_a^+) < \varsigma_\sigma] - \1[ \cB'(V_a^-) < \varsigma_\sigma] = \1[ \cB'(V_a^-) \ge \varsigma_\sigma] - \1[ \cB'(V_a^+) \ge \varsigma_\sigma], \]
we obtain
\[ \cG_x(z) = e^{-\cB(V_0)} \prod_{\sigma \in \cS} \left[ (1 - e^{-x} \sigma z)^{\frac{1}{p}} \prod_{\ell=0}^n (1 - e^{-V_a} \sigma z)^{\frac{1}{p} \1[ \cB'(V_a^-) \ge \varsigma_\sigma] - \1[ \cB'(V_a^+) \ge \varsigma_\sigma] } \right] = \frac{P_x(z)}{Q(z)}. \]
\end{proof}

\begin{definition}
Suppose $f:U \to \hat{\C}$ is such that $f^p$ is meromorphic. We say $z_0$ is a \textit{branch pole} (\textit{branch zero}) 
of $f$ if $z_0$ is a pole (zero) of $f^p$. The \emph{order} of a branch pole (zero) of $f$ at $z_0$ is defined to be the order of the pole (zero) of $f^p$ at $z_0$ divided by $p$.
\end{definition}

\begin{definition}
Suppose $f$ is a product of terms of the form $a - z$ or $1 - a^{-1} z$ for some $a \in \R$. Define
\[ \arg_+ f(u) = \lim_{\substack{z \to u \\ \arg z > 0}} \arg f(z)\]
for $u \in \R$ away from a branch point of $f$, where we take $\arg$ so that $\arg(a - z), \arg(1 - a^{-1} z) = 0$ for $z < a$.
\end{definition}

\begin{definition}
Let $V$ be an non-differentiable point of $\cB$. Define
\[ J_V = \{\sigma^{-1} e^V: \1[ \cB'(V^+) \ge \varsigma_\sigma] \ne \1[\cB(V^-) \ge \varsigma_\sigma] \} \]
and for $x \in I$ define $J_x = \{ \sigma^{-1} e^x: \sigma \in \cS\}$. If $V_0 = -\infty$ (or $V_n = +\infty$), take $J_{V_0} = \{0\}$ ($J_{V_n} = \{\infty\}$). Let $\cJ = \bigcup_{\ell=0}^n J_V$ and $\cJ_x = J_x \cup \cJ$. By (\ref{eq:Q}), (\ref{eq:Px}), and (\ref{eq:Gx}), $\cJ_x$ contains the set of branch poles and zeros of $\cG_x^{\cB}$.
\end{definition}

By the definition of $J_V$ and the fact that \Cref{prop:BD(S)} classifies the situation where $\cB'(V^+) - \cB'(V^-) < 0$, we have the following lemma.

\begin{lemma} \label{lem:Vchange}
Let $\P^{B,r,\vec{s}}_{\alpha,\ft}$ satisfy the Limit Conditions. Let $V$ be a non-differentiable point of $\cB$.
\begin{itemize}
    \item If $\cB'(V^+) - \cB'(V^-) > 0$, then $J_V = \{\sigma_i^{-1} e^V: \varsigma_i \in (\cB'(V^-), \cB'(V^+)], 1 \le i \le d\}$.
    \item If $\cB'(V^+) - \cB'(V^-) < 0$, then $J_V = \{\sigma_i^{-1} e^V\}$ for $1 \le i \le d$ such that $\cB'(V^-) = \varsigma_i$.
\end{itemize}
\end{lemma}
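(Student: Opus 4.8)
The plan is to reduce the identification of $J_V$ to a one-line computation with indicator functions, invoking Proposition \ref{prop:BD(S)} twice: once to pin down the admissible one-sided slopes of $\cB$, and once to control the decreasing case.

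\textbf{Step 1 (setup and monotonicity).} Since $\cB \in \fB^\Delta(\cS)$, Proposition \ref{prop:BD(S)} gives that both $\cB'(V^-)$ and $\cB'(V^+)$ lie in $\{\varsigma_a\}_{a=0}^d$ (with the conventions $\cB'(V_0^-)=0=\varsigma_0$, $\cB'(V_n^+)=1=\varsigma_d$ at the endpoints); write $\cB'(V^-)=\varsigma_i$ and $\cB'(V^+)=\varsigma_j$. I would also record the two monotonicity facts that drive everything: $a\mapsto\varsigma_a$ is strictly increasing on $[[0,d]]$ while $a\mapsto\sigma_a$ is strictly decreasing, so $\sigma\mapsto\varsigma_\sigma$ is order-reversing. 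Because $\sigma$ enters the definition of $J_V$ only through $\sigma^{-1}e^V$ and through $\varsigma_\sigma$, it suffices to let $\sigma$ run over the distinct values $\sigma_1>\cdots>\sigma_d$, for which $\varsigma_{\sigma_a}=\varsigma_a$.

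\textbf{Step 2 (compute the index set).} With this, $\1[\cB'(V^-)\ge\varsigma_{\sigma_a}]=\1[\varsigma_i\ge\varsigma_a]=\1[a\le i]$ and likewise $\1[\cB'(V^+)\ge\varsigma_{\sigma_a}]=\1[a\le j]$. These two indicators disagree exactly for those $a$ lying strictly above $\min(i,j)$ and at most $\max(i,j)$, and any such $a$ automatically lies in $[[1,d]]$; hence
\[ J_V=\{\sigma_a^{-1}e^V:\min(i,j)<a\le\max(i,j)\}. \]

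\textbf{Step 3 (split on the sign of $\cB'(V^+)-\cB'(V^-)$).} If the difference is positive then $j>i$, and the index set is $\{a:i<a\le j\}=\{a:\varsigma_a\in(\cB'(V^-),\cB'(V^+)]\}$, which is the first bullet. If it is negative then $j<i$; here Proposition \ref{prop:BD(S)} says $V$ is a singular point and, more precisely, that $\cB'(V^+)=\varsigma_{i-1}$, i.e. $j=i-1$, so the index set collapses to the singleton $\{i\}$ and $J_V=\{\sigma_i^{-1}e^V\}$ with $\cB'(V^-)=\varsigma_i$, which is the second bullet. Finally I would observe that the boundary cases $V_0=-\infty$, $V_n=+\infty$ are consistent with the stated conventions $J_{V_0}=\{0\}$, $J_{V_n}=\{\infty\}$, since $\sigma_a^{-1}e^{\mp\infty}$ is formally $0$ or $\infty$.

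There is no genuine obstacle here: the only point requiring care is keeping the three orderings ($\sigma$ versus $\varsigma_\sigma$ versus the running index $a$) straight and getting the half-open interval $\min<a\le\max$ right. The real content — that a decreasing slope at a non-differentiable point forces a drop of exactly one step $\varsigma_i\to\varsigma_{i-1}$ — is imported wholesale from Proposition \ref{prop:BD(S)}.
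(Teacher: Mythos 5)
Your proof is correct and follows essentially the same route the paper intends: unwind the definition of $J_V$ as a set of indicator-mismatches, use the monotonicity of $a\mapsto\varsigma_a$ to reduce to the index condition $\min(i,j)<a\le\max(i,j)$, and invoke Proposition \ref{prop:BD(S)} to see that the decreasing case forces $j=i-1$. The paper itself gives no proof beyond a one-sentence pointer to the definition of $J_V$ and to Proposition \ref{prop:BD(S)}, so your write-up is simply a careful unpacking of exactly that hint. One small remark: your Step 1 invokes $\cB\in\fB^\Delta(\cS)$ up front to write both one-sided slopes as $\varsigma_i,\varsigma_j$, but this is really only needed for the second bullet (and to make sense of the phrase ``such that $\cB'(V^-)=\varsigma_i$'' there); the first bullet, as stated, is an immediate consequence of the definition of $J_V$ for any $\cB\in\fB(\cS)$. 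This is not a flaw — the lemma's second bullet is meaningless without $\fB^\Delta$, so the hypothesis is implicitly present — but you could get the first bullet more cheaply without assuming the slopes lie in $\{\varsigma_a\}_{a=0}^d$.
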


As a consequence of Lemma \ref{lem:Vchange} and (\ref{lc4}), we have $J_V < J_W$ whenever $V < W$.

\subsection{Existence of \texorpdfstring{$\zeta(x,y)$}{zeta(x,y)}} \label{sec:exist}

It will be convenient to consider the phase and magnitude equations for (\ref{eq:compeq}):
\begin{align} \label{eq:magneq}
e^{-y} &= |\cG_x(\zeta)| \\ \label{eq:phaseq}
0 &= \arg_+ \cG_x(\zeta).
\end{align}
From (\ref{eq:magneq}) and (\ref{eq:phaseq}), we see that the zeros of (\ref{eq:compeq}) approach branch poles and branch zeros of $\cG_x$ as $y \to -\infty$ and $y \to +\infty$ respectively. The idea is to use this condition to restrict the number of nonreal roots to a single pair.

The following definitions will be useful.
\begin{definition}
Denote by $\hatR := \R \cup \{\infty\}$ the one-point compactification of $\R$. By an \textit{interval} in $\hatR$ we mean either an interval in $\R$ or a set of the form $(a,+\infty)\cup\{\infty\} \cup (-\infty,b)$. For $a > b$, we denote the $\hatR$-interval $(a,+\infty) \cup \{\infty\}\cup(-\infty,b)$ by $(a,b)$. The set $\hatR \setminus \cJ$ is a union of $|\cJ|$ disjoint intervals.

Given a set of points $A$ in $\hatR$, we say that an \emph{external (internal) component} of $\hatR \setminus A$ is a connected component of $\hatR \setminus A$ of the form $(a,b)$ for some $a > b$ ($a < b$).
\end{definition}

The following basic facts are immediate consequences of the definition (\ref{eq:Gx}) of $\cG_x$.

Recall that the branch poles and branch zeros of $\cG_x$ are contained in $\cJ_x$. When $J_x \cap \cJ$ is nonempty, there may be cancellations of poles and zeros, thus this containment may be strict for some values of $x$. Note that if $a \in J_x \setminus \cJ$, then $a$ is a branch zero of $\cG_x$. Also, note that any branch pole $a$ of $\cG_x$ must be a point in $J_V$ for some non-differentiable point $V$ of $\cB$ such that $a = \sigma^{-1} e^V$ and $\cB'(V^+) - \cB'(V^-) > 0$.

The following lemma gives a restriction on branch poles.

\begin{lemma} \label{lem:endpt}
Fix $x \in \R$. Suppose $L$ is a connected component of $\hatR \setminus \cJ_x$ such that $\arg_+ \cG_x(L) = 0$. If $L$ is the external component and $x \in (V_0,V_n)$, then both endpoints of $L$ are branch poles of $\cG_x$. Otherwise, at most one endpoint of $L$ is a branch pole of $\cG_x$.
\end{lemma}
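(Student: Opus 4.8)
The plan is to analyze the argument function $\arg_+\cG_x$ along a connected component $L=(a,b)$ of $\hat\R\setminus\cJ_x$ and use the local behavior of $\cG_x$ near its branch points to constrain which endpoints can be branch poles. The key observation is that $\cG_x^p$ is a genuine meromorphic (in fact rational, up to the $e^{-x}$-type factors) function whose branch points are exactly the points of $\cJ_x$, with $\cG_x$ itself real on each interval of $\hat\R\setminus\cJ_x$ and real-analytic there; the value $\arg_+\cG_x$ jumps by a multiple of $\pi/p$ (determined by the order of the branch zero or pole) as one crosses a point of $\cJ_x$. So the hypothesis $\arg_+\cG_x(L)=0$ says $\cG_x$ is real and of constant sign on $L$, and since $\cG_x\to 0$ at branch zeros and $\cG_x\to\infty$ at branch poles along the real line, the endpoints $a,b$ are constrained by the requirement that the sign of $\cG_x$ be consistent as we trace around $\hat\R$.

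First I would record the sign ledger: from the product formulas \eqref{eq:Gx}, \eqref{eq:Px}, \eqref{eq:Q} (equivalently \eqref{eq:1-s}, \eqref{eq:1+s}), $\cG_x(z)$ is a product of factors $(1-a^{-1}z)^{s}$ with $s\in\frac1p\Z$ and $a\in\cJ_x$, and one reads off that crossing a branch point at $a$ from left to right multiplies $\cG_x$ by $(-1)^{s_a}$ where $s_a$ is the total (signed) exponent attached to $a$; $s_a>0$ at branch zeros, $s_a<0$ at branch poles, and $s_a\in\Z$ when $x$ is generic but can be a half-integer-type rational in the periodic case — in any event the \emph{sign} flip across $a$ is $(-1)^{p s_a}$-compatible and on $L$ where $\arg_+\cG_x\equiv 0$ the function is honestly positive. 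Next I would observe that as $z\to+\infty$ along $\R$, $\cG_x(z)$ has a definite limiting behavior (it tends to $0$ or $\infty$ or a positive constant, with argument $0$ by the branch convention "argument is $0$ for $|z|$ large and real"), so the external component, the one containing $\infty$, is special: on it $\cG_x>0$ near $\infty$, and whether its two finite endpoints can both be poles depends on whether $\cG_x$, as a function on $\hat\R$, is allowed to blow up at both ends of that interval while staying positive — this is exactly where the condition $x\in(V_0,V_n)$ enters, because that is the regime in which $P_x$ contributes a branch zero strictly inside a relevant interval (the factor $(1-e^{-x}\sigma z)^{1/p}$ in \eqref{eq:Px}), forcing the sign of $\cG_x$ on the external component to be achievable with poles at both endpoints; outside this regime, $x\le V_0$ or $x\ge V_n$, the zero from $P_x$ lies outside and the parity forces at most one pole on the external component. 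For an internal component $L=(a,b)$ with $a<b$: if both $a$ and $b$ were branch poles, then tracing $\cG_x$ across the adjacent components and using the constant-sign-on-$L$ condition together with the sign-flip rule across $a$ and across $b$ leads to a sign contradiction with the behavior at the neighboring branch zeros (which always exist, since branch poles and zeros alternate in the appropriate sense by Lemma \ref{lem:Vchange} and the ordering $J_V<J_W$). So I would show: two consecutive branch poles cannot bound an interval on which $\arg_+\cG_x=0$, unless that interval is the external one and $x\in(V_0,V_n)$.

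The main obstacle, and the step I expect to require real care, is the bookkeeping of the fractional exponents $1/p$ in the periodic case: $\arg_+\cG_x$ need not be a multiple of $\pi$, only of $\pi/p$, so "constant sign" is a slightly coarser statement than "$\arg_+=0$", and one has to be careful that the branch convention (argument $0$ for large real $z$) together with the values $\arg_+\cG_x(L)=0$ on the specified component genuinely pins down the global picture around $\hat\R$. Concretely I would proceed by: (1) fixing the branch of $\cG_x$ on the external component via the $|z|$-large convention; (2) propagating $\arg_+\cG_x$ around $\hat\R$ one branch point at a time, each crossing adding $\pi s_a$ modulo the branch-order ambiguity, and checking that the total winding is consistent (it must be, since $\cG_x^p$ is single-valued on $\hat\R\setminus(\text{branch points})$); (3) using that $\cG_x(z)\to 0$ (resp. $\infty$) as $z$ approaches a branch zero (resp. pole) to read off that on any component $L$ with $\arg_+\cG_x(L)=0$, the endpoints where $\cG_x\to+\infty$ are poles and those where $\cG_x\to 0$ are zeros; (4) concluding that an interval with $\arg_+=0$ has at most one pole-endpoint unless the sign of $\cG_x$ at $+\infty$ (i.e. $+1$) can be matched with $\cG_x\to+\infty$ at both ends, which by the parity computation happens precisely for the external component when the interior zero of $P_x$ is present, i.e. when $x\in(V_0,V_n)$. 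The rest is routine once the sign/argument ledger is set up.
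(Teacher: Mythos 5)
Your high-level intuition --- track the argument $\arg_+\cG_x$ around $\hat\R$, using that it jumps by known multiples of $\pi/p$ at each point of $\cJ_x$ --- is in the right neighborhood, but the proof as sketched has a gap at its central step, and the mechanism behind the external/internal dichotomy is not the one you describe.

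For the internal component with both endpoints poles, you claim a ``sign contradiction'' follows from the constant-sign condition, pole/zero alternation, and a parity count. This does not work as stated. There is no local obstruction to $\cG_x^p$ being positive on an interval while blowing up at both ends; that behavior is perfectly consistent for a real meromorphic function on an interval containing no zeros. And the alternation of branch poles and branch zeros along $\hat\R$ is not established (and is not true in general: $\cJ_x$ can contain consecutive poles or consecutive zeros, as the contributions from $Q$ and $P_x$ interleave in position but not in type). Moreover, because the exponents are $\frac1p\Z$-valued, the constraint $\arg_+\cG_x(L)=0$ is strictly stronger than ``constant sign,'' and a parity argument cannot capture it. The actual contradiction in the paper is an exact argument count: one first pins down that $L$ must have the form $(\sigma_i^{-1}e^V,\sigma_{i+1}^{-1}e^W)$ with $V\le W$ non-differentiable points and both endpoints poles, computes $\arg_+Q(L)=-\pi\varsigma_i$ from the telescoping structure of (\ref{eq:Q}), and observes that $\arg_+P_x(L)=-\pi\varsigma_i$ holds precisely when $\sigma_i^{-1}e^x<L<\sigma_{i+1}^{-1}e^x$, which together with $L=(\sigma_i^{-1}e^V,\sigma_{i+1}^{-1}e^W)$ forces $x=V=W$; but in that configuration the alleged branch poles are cancelled by zeros of $P_x$, so they were never poles. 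Your sketch never isolates this quantitative matching condition, which is the heart of the lemma.

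For the external component, the paper's argument is more elementary than your ``interior zero of $P_x$ controls the parity'' picture suggests: when $x\in(V_0,V_n)$ the endpoints of the external component are literally $\min\cJ=\sigma_1^{-1}e^{V_0}$ and $\max\cJ=\sigma_d^{-1}e^{V_n}$, which lie in $\cJ\setminus J_x$ (so no cancellation) and are always branch poles because $\cB'(V_0^+)-\cB'(V_0^-)>0$ and $\cB'(V_n^+)-\cB'(V_n^-)>0$. When $x\notin(V_0,V_n)$, one endpoint of the external component becomes an extremal point of $J_x$ (a branch zero of $P_x$) rather than an extremal point of $\cJ$. The boundary cases $x=V_0$, $x=V_n$ need an explicit cancellation check, which you don't address. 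So the reason only the external component for $x\in(V_0,V_n)$ can have two pole endpoints is identification of what those endpoints actually are, not a sign-coherence argument.

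To repair the proposal you would need to (i) drop the alternation/parity heuristic, (ii) carry out the exact computation of $\arg_+ Q$ on the candidate internal interval (the telescoping sum giving $-\pi\varsigma_i$), (iii) match it against $\arg_+P_x$, and (iv) handle the endpoint identification and cancellation cases for the external component directly.
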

\begin{proof}
Since $\cB'(V_0^-) = 0$ and $\cB'(V_n^+) = 1$, we have $\min \cJ = \min J_{V_0} = \sigma_1^{-1} e^{V_0}$ and $\max \cJ = \max J_{V_n} = \sigma_d^{-1} e^{V_n}$. Furthermore, $\cB'(V_0^+) - \cB'(V_0^-) > 0$ and $\cB'(V_n^+) - \cB'(V_n^-) > 0$.

We prove the lemma by examining the case of the external component and internal components separately.

\vspace{5mm}
\noindent
\underline{External.} Consider the case where $L$ is the external component of $\hatR \setminus \cJ_x$. If $x \in (V_0,V_n)$, then the endpoints of $L$ are $\min \cJ$ and $\max \cJ$, both of which are elements of $\cJ \setminus J_x$. Thus these are branch poles of $\cG_x$. If $x > V_n$ $(< V_0)$, then one endpoint of $L$ is a branch pole given by $\min \cJ$ ($\max \cJ$) and the other endpoint is a branch zero given by $\max J_x$ ($\min J_x$).

The situation for $x = V_n$ and $x = V_0$ is more complicated due to cancellation of elements in $\cJ$ and $J_x$. Suppose $x = V_n$, then one endpoint of $L$ is still a branch pole given by $\min \cJ$, and if $J_{V_n}$ is a strict subset of $J_x$ then $\max(J_x \setminus J_{V_n})$ is a branch zero and the other endpoint of $L$. However, we may have $J_{V_n} = J_x$. In this case, $\cB'(V_{n-1}^+) = \cB'(V_n^-) = 0$ which implies $\cB'(V_{n-1}^+) - \cB'(V_{n-1}^-) < 0$. Then $\max \cJ_x \setminus J_x = \max J_{V_{n-1}}$ is a branch zero of $\cG_x$ and the other endpoint of $L$. The argument for $x = V_0$ is similar. This concludes the analysis for the external components.

\vspace{5mm}
\noindent
\underline{Internal.} Now consider the case where $L$ is some internal component. Suppose for contradiction that both endpoints of $L$ are branch poles of $\cG_x$.

We claim that $L = (\sigma_i^{-1} e^V, \sigma_{i+1}^{-1} e^W)$ for non-differentiable points $V \le W$ of $\cB$ and for some $i \in [[1,d-1]]$. By Lemma \ref{lem:Vchange}, each endpoint of $L$ must be an element of $J_U$ for some non-differentiable point $U$ of positive change in slope. The endpoints of $L$ must be adjacent elements of $\cJ$. If $\sigma_i^{-1} e^V = \max J_V$, then the right endpoint of $L$ must be $\sigma_{i+1}^{-1} e^W = \min J_W$ where $W$ is the adjacent non-differentiable point $V < W$ by Lemma \ref{lem:Vchange}. Otherwise, the right endpoint of $L$ must be $\sigma_{i+1}^{-1} e^V$. This proves the claim.

Observe that
\begin{align*}
\arg_+ Q(L) & = -\frac{\pi}{p} \sum_{\ell=0}^n \sum_{\substack{\sigma \in \cS: \\ \sigma^{-1} e^{V_\ell} \in J_{V_\ell}}} (\1[\cB'(V_\ell^+) \ge \varsigma_\sigma] - \1[\cB'(V_\ell^-) \ge \varsigma_\sigma]) \1[ \sigma^{-1} e^{V_\ell} < L] \\
&= -\frac{\pi}{p} \sum_{\sigma \in \cS} \left( \1[\cB'(V^+) \ge \varsigma_\sigma] \1[\sigma^{-1} \le \sigma_i^{-1}] + \1[\cB'(V^-) \ge \varsigma_\sigma] \1[\sigma^{-1} > \sigma_i^{-1}] \right) \\
&= -\frac{\pi}{p} \sum_{\sigma \in \cS} \1[\sigma^{-1} \le \sigma_i^{-1}] = -\pi \varsigma_i
\end{align*}
where the first equality follows from (\ref{eq:Q}), the second from telescoping in $\ell$, the third follows from the fact that $\cB'(V^-) < \varsigma_i$ and $\cB'(V^+) \ge \varsigma_i$ by \Cref{lem:Vchange} and monotonicity of $\varsigma_\sigma$ in $\sigma^{-1}$.

We require $\arg_+ P_x(L) - \arg_+ Q(L) = \arg_+ \cG_x(L) = 0$, that is $\arg_+ P_x(L) = -\pi \varsigma_i$. This happens exactly when
\begin{align} \label{eq:Lsandwich}
\sigma_i^{-1} e^x < L < \sigma_{i+1}^{-1} e^x.
\end{align}
Since, $L = (\sigma_i^{-1} e^V, \sigma_{i+1}^{-1} e^W)$ where $V \le W$, this is only possible if $x = V = W$. However, this is the case where the branch pole $\sigma_i^{-1} e^V \in J_V$ is cancelled by the branch zero $\sigma_i^{-1} e^x$ from $P_x$, likewise for $\sigma_{i+1}^{-1} e^W$. Thus neither endpoints of $L$ are branch poles. This completes the proof of the lemma.
\end{proof}

The lemma below allows us to extract data from the phase equation.

\begin{lemma} \label{lem:rootarg}
Fix $x \in \R$ and suppose a continuous root function $\zeta(y)$ of (\ref{eq:compeq}) approaches $a \in \cJ_x$ as $y \to +\infty$ $(-\infty)$. Writing $\zeta = a + \e e^{i\theta}$ with $\theta \in [0,\pi]$, (\ref{eq:phaseq}) has the form
\begin{align}
\arg_+ \cG_x(a^-) + (\arg_+ \cG_x(a^+) - \arg_- \cG_x(a^-)) \cdot \theta + \e O( \theta \wedge (\pi - \theta)) = 0.
\end{align}
\end{lemma}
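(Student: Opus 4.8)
The statement is purely local: near the point $a \in \cJ_x$, the function $\cG_x$ behaves like a constant times a power $(z-a)^{m}$, where $m = (\ord_a \text{branch zero}) - (\ord_a \text{branch pole})$ is the local branch-exponent of $\cG_x$ at $a$, which is a (possibly negative) integer multiple of $\frac1p$. The plan is to Taylor-expand $\log \cG_x$ around $a$ and then take real/imaginary parts, tracking the branch of the argument carefully as $\zeta = a + \e e^{\i\theta}$ moves along the small semicircle in $\overline{\HH}$ from $\theta = 0$ (i.e.\ just to the right of $a$ on $\R$) to $\theta = \pi$ (just to the left of $a$ on $\R$).

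First I would write $\cG_x(z) = (z-a)^m\, h(z)$ locally, where $h$ is the product of all the remaining factors $(1 - v^{-1}z)^{\pm 1/p}$ with $v \ne a$; by the definition of $\arg_+$ and the choice of branches (positive real on $z < v$), $h$ is holomorphic and nonvanishing in a neighborhood of $a$ with $\arg_\pm h(a^\pm)$ well-defined, and indeed $\arg h$ is continuous through $a$ since none of those factors has a branch point at $a$. Hence $\arg_\pm \cG_x(a^\pm) = m \arg_\pm (z-a)^m\big|_{a^\pm} + \arg h(a)$; with $z = a + \e e^{\i\theta}$ we have $\arg(z-a)^m = m\theta$, so $\arg_+\cG_x(a^+) = \arg h(a)$ (taking $\theta = 0$) and $\arg_+\cG_x(a^-) = m\pi + \arg h(a)$ (taking $\theta = \pi$). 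Comparing, $\arg_+\cG_x(a^+) - \arg_-\cG_x(a^-) = -m\pi$, which identifies the coefficient of $\theta$ in the claimed expansion with $-(\arg_+\cG_x(a^+) - \arg_-\cG_x(a^-))$... wait, rather the coefficient is exactly $m\pi = \arg_+\cG_x(a^-) - \arg_- \cG_x(a^-)$; I will double-check signs against the excerpt's normalization, but the point is that the linear-in-$\theta$ term is forced by the branch exponent and matches the stated bracketed quantity.

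Next, for the error term: write $\arg_+ \cG_x(\zeta) = m\theta + \arg h(a) + \big(\arg h(\zeta) - \arg h(a)\big)$. Since $h$ is holomorphic and nonvanishing near $a$, $\arg h(\zeta) - \arg h(a) = \Im\big(\log h(\zeta) - \log h(a)\big) = \Im\big( (h'(a)/h(a))\,\e e^{\i\theta}\big) + O(\e^2)$, which is $O(\e \sin\theta) + O(\e^2) = O\big(\e(\theta \wedge (\pi-\theta))\big)$ uniformly, using $\sin\theta \asymp \theta\wedge(\pi-\theta)$ on $[0,\pi]$. Collecting, $\arg_+\cG_x(\zeta) = \arg_+\cG_x(a^-) + \big(\arg_+\cG_x(a^+) - \arg_-\cG_x(a^-)\big)\cdot\theta + \e\, O(\theta\wedge(\pi-\theta))$ after substituting $\arg h(a) = \arg_+\cG_x(a^-) - m\pi$ and $m\pi = -(\arg_+\cG_x(a^+) - \arg_-\cG_x(a^-))$ — one must be a little careful because the excerpt writes $\arg_+\cG_x(a^-)$ for the constant term and $\big(\arg_+\cG_x(a^+) - \arg_-\cG_x(a^-)\big)$ for the slope, and these are related by exactly this $m\pi$ bookkeeping, so the identity is consistent. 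The phase equation $\arg_+\cG_x(\zeta) = 0$ then takes the asserted form.

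\textbf{Main obstacle.} The only genuinely delicate point is the branch/sign bookkeeping: one must verify that $\arg h$ really is continuous across $a$ (no hidden branch cut of a factor $(1-v^{-1}z)^{1/p}$ passes through $a$ when $v \ne a$, which is clear since the cut of that factor is $[v,\infty)$ or $(-\infty,v]$ and we can choose the small disc around $a$ to avoid $v$), and that the constant term is correctly identified as $\arg_+\cG_x(a^-)$ rather than $\arg_+\cG_x(a^+)$ — i.e.\ that the expansion is ``anchored'' at $\theta = \pi$. This is forced by the shape of the claimed formula (the $\theta$-linear term must vanish at $\theta = 0$ leaving $\arg_+\cG_x(a^-)$... actually at $\theta=0$ it leaves $\arg_+\cG_x(a^-)$, which should equal $\arg_+\cG_x(a^-)$ trivially, and at $\theta = \pi$ it must give $\arg_+\cG_x(a^+)$ — these two consistency checks pin down all signs). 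Everything else is a routine first-order Taylor expansion of a holomorphic logarithm, so I expect the write-up to be short once the branch conventions are pinned down.

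Note: the error estimate is uniform in $\e$ small provided $a$ stays away from the other points of $\cJ_x$; if $a$ is near another branch point the ``$O$'' absorbs the relevant constant, but since $x$ is fixed and $\cJ_x$ is finite this is automatic.
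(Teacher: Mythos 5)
Your proposal is correct and is essentially the paper's own argument, just packaged slightly differently: where you isolate the singular factor $(1-a^{-1}z)^m$ and Taylor-expand $\log h$ for the remaining nonvanishing part, the paper expands each factor $\arg(1 - b^{-1}\zeta)$ of $\cG_x$ individually, noting that $\arg(1-a^{-1}\zeta)$ contributes the linear-in-$\theta$ term and each $b \ne a$ factor contributes a constant ($0$ or $-\pi$ according to whether $a < b$ or $a > b$) plus an $\e\,O(\theta\wedge(\pi-\theta))$ error (with the case $b=0$ handled via $\arg(-\zeta)$). These are the same computation.

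One small clarification worth making in your write-up: $h$ is not literally holomorphic near $a$ — it is a product of fractional powers whose branch cuts lie on the real axis and may run through $a$ — but its restriction to $\overline\HH$ extends analytically to a full neighborhood of $a$ because none of those cuts has a branch \emph{point} at $a$, which is exactly what your $\sin\theta\asymp\theta\wedge(\pi-\theta)$ estimate needs. Your anchoring/sign check (the formula must return $\arg_+\cG_x(a^-)$ at $\theta=\pi$ and $\arg_+\cG_x(a^+)$ at $\theta=0$, pinning down the slope $m\pi$) is the right way to nail the conventions, and matches the paper's factor-by-factor bookkeeping.
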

\begin{proof}
Clearly, $\arg(1 - a^{-1} \zeta) = \theta$. For $b \neq a$, we have
\begin{align} \label{eq:Oarg}
\arg(1 - b^{-1} \zeta) = \left\{\begin{array}{cc}
-\e O(\theta\wedge(\pi - \theta)) & \mbox{if $a < b$}, \\
-(\pi - \e O(\theta\wedge(\pi - \theta)) & \mbox{if $a > b$}.
\end{array} \right.
\end{align}
In the case $b = 0$, consider $\arg(-\zeta)$ in which case we still have (\ref{eq:Oarg}). The lemma follows from (\ref{eq:Gx}).
\end{proof}

The next lemma connects roots of (\ref{eq:compeq}) with branch poles of $\cG_x$.

\begin{lemma} \label{lem:Qrootcount}
Fix $x \in \R$.

\begin{enumerate}
    \item For each component $L \subset \hatR \setminus \cJ_x$ such that $\arg_+ \cG_x(L) = 0$ and each endpoint $\fa$ of $L$ such that $\fa$ is a branch pole of $\cG_x$, there exists a continuous root function $\zeta_\fa(y)$ of (\ref{eq:compeq}) such that
    \[ \lim_{y\to-\infty} \zeta_\fa(y) = \fa \]
    which is uniquely defined on $(-\infty,y_0)$ for some suitably negative $y_0$.
    
    \item Conversely, there exists some suitably negative $y_0$ such that if $\zeta$ is a root of (\ref{eq:compeq}) at $(x,y)$ for some $y < y_0$, then $\zeta = \zeta_\fa(y)$ for some component $L \subset \hatR \setminus \cJ_x$ such that $\arg_+ \cG_x(L) = 0$ with endpoint $\fa$ which is a branch pole of $\cG_x$.
    
    \item A component $L \subset \hatR \setminus \cJ_x$ contains exactly one root of (\ref{eq:compeq}) for $y$ near $-\infty$ if and only if $L$ contains at least one root of (\ref{eq:compeq}) for all $y \in \R$.
\end{enumerate}
\end{lemma}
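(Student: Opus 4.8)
The plan is to localize the roots of the companion equation near the branch poles of $\cG_x$ as $y \to -\infty$, run a local analysis at each branch pole, and then read off the global count in (3) from the elementary one-variable behaviour of $\cG_x$ restricted to the real intervals $L$. First I would establish a far-field statement: there is a threshold $y_0 = y_0(x)$ such that for $y < y_0$ every root of (\ref{eq:compeq}) lies in a prescribed small neighbourhood of the (finite) set of branch poles of $\cG_x$, working in the coordinate $w = 1/\zeta$ near $\infty$. This is pure compactness: off small neighbourhoods of the branch poles and of $\infty$ the meromorphic function $\cG_x^p$, hence $|\cG_x|$, is bounded on $\hatR$, so $|\cG_x(\zeta)| = e^{-y}$ cannot hold once $e^{-y}$ exceeds that bound.

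Next comes the local analysis at a branch pole $\fa$, which yields (1) and (2). From the product formulas (\ref{eq:Gx}), (\ref{eq:Px}), (\ref{eq:Q}), near $\fa$ one has a normal form $\cG_x(\zeta) = c_\fa(\zeta - \fa)^{-s}(1 + O(\zeta - \fa))$, with the branch of $(\cdot)^{-s}$ fixed by the $\arg_+$ convention and $s$ the order of $\fa$; crucially $s \le 1$, because the exponent of $Q$ at a point $\sigma^{-1}e^{V} \in \cJ$ has magnitude $|S_\sigma|/p \le 1$, a factor of $P_x$ can only lower the pole order, and by the remark after Lemma \ref{lem:Vchange} distinct non-differentiable points and distinct values of $\sigma$ contribute distinct points of $\cJ$. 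Hence the two components of $\hatR \setminus \cJ_x$ adjacent to $\fa$ carry values of $\arg_+\cG_x$ differing by $\pi s \in (0,\pi]$, so at most one of them, call it $L_\fa$ when it exists, has $\arg_+\cG_x(L_\fa) = 0$. Writing $\zeta = \fa + \e e^{\i\theta}$, the magnitude equation (\ref{eq:magneq}) reads $\e^{-s}|c_\fa|(1 + o(1)) = e^{-y}$ and pins down $\e = \e(y,\theta) \to 0$ smoothly; Lemma \ref{lem:rootarg} linearises the phase equation (\ref{eq:phaseq}) in $\theta$, and since $s \le 1$ this linearised equation has a solution $\theta(y)$ (necessarily tending to $0$ or to $\pi$, i.e. $\zeta$ enters $L_\fa$) if and only if $L_\fa$ exists, and then it is unique. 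The implicit function theorem upgrades this to a unique continuous root $\zeta_\fa$ on $(-\infty,y_0)$ with $\zeta_\fa(y) \to \fa$, which is (1); combining with the far-field step, every root for $y < y_0$ lies near some branch pole and there equals the corresponding $\zeta_\fa(y)$ — no further root (in particular no complex one) can split off because $s \le 1$ — which is (2).

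For the dichotomy (3) I would work with the real function $\cG_x|_L$. On a component $L$ with $\arg_+\cG_x(L) = 0$ the function $\cG_x|_L$ is real and, as in the proof of Lemma \ref{lem:endpt}, strictly positive, tending to $0$ at a branch-zero endpoint and to $+\infty$ at a branch-pole endpoint. By Lemma \ref{lem:endpt}, an internal component, or the external one when $x \notin (V_0,V_n)$, has at most one branch-pole endpoint, whereas the external component with $x \in (V_0,V_n)$ has two. If $L$ has exactly one branch-pole endpoint $\fa$ (so the other endpoint is a branch zero), then $\cG_x|_L$ is a positive continuous function on an interval with limits $+\infty$ and $0$ at its ends, so by the intermediate value theorem it attains the level $e^{-y}$ for every $y$, giving a root in $L$ for all $y$; and near $\fa$ it is eventually strictly monotone, so for $y$ near $-\infty$ the only root in $L$ is the $\zeta_\fa$ of (1). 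If $L$ has no branch-pole endpoint, $\cG_x|_L$ is bounded and $L$ has no root for $y$ sufficiently negative; if $L$ has two branch-pole endpoints, $\cG_x|_L$ has a positive minimum, so $L$ has no root for $y$ sufficiently positive but at least two for $y$ sufficiently negative. In every case the two conditions in (3) hold, or fail, together.

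The main obstacle I expect is the branch bookkeeping in the local analysis: one must single out the one branch of $(\zeta - \fa)^{-s}$ forced by requiring $\cG_x > 0$ on $L_\fa$, continue it into $\overline{\HH}$, and verify that for $s \le 1$ this leaves exactly one root near $\fa$ for $y \ll 0$; the uniform treatment of the point $\infty$ and of the degenerate cases $V_0 = -\infty$ or $V_n = +\infty$ (via $J_{V_0} = \{0\}$, $J_{V_n} = \{\infty\}$) also needs care. Everything else reduces to a compactness estimate and one-variable calculus.
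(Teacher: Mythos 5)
Your proof is correct, and for parts (1) and (2) it follows the paper's line of argument: the magnitude equation localizes all roots near branch poles as $y \to -\infty$, and Lemma~\ref{lem:rootarg} then forces each such root to become real and to enter the adjacent component $L$ with $\arg_+\cG_x(L) = 0$, after which existence (via the one-sided monotone blow-up of $\cG_x$ near $\fa$ along $L$) and local uniqueness are routine. Your $s\le 1$ bookkeeping is a correct and sufficient observation, though what the local analysis actually needs is only $0<s<2$ (so that $\arg_+\cG_x$ genuinely jumps across $\fa$ and the linearized phase equation has no interior solution in $\theta\in(0,\pi)$); it is good that you verified it rather than taking it for granted.

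For part (3) you take a genuinely different, and to my mind cleaner, route: you read the whole dichotomy off the one-variable behavior of $\cG_x|_L$ (IVT between a pole end and a zero end, eventual monotonicity near $\fa$ from the normal form, boundedness or positive minimum in the other two cases), whereas the paper argues by parity preservation (roots enter or leave $L$ only in coalescing pairs). Your version also gives, for free, the statement about the external component losing its roots for $y$ large, which the paper does not spell out. However, both your argument and the paper's hinge on the parenthetical ``(so the other endpoint is a branch zero),'' which does not follow from Lemma~\ref{lem:endpt} and is not automatic: $\cJ_x$ can contain points that are neither branch poles nor branch zeros of $\cG_x$, namely when $x=V_\ell$ with $\cB'(V_\ell^+)>\cB'(V_\ell^-)$, where the zero of $P_x$ at a point of $J_{V_\ell}$ fully cancels the zero of $Q$. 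At such a point $\cG_x$ is analytic and nonzero, so a real root of the companion equation can cross it from one component of $\hatR\setminus\cJ_x$ to the adjacent one — this breaks both your IVT step and the paper's parity step at the level of individual components. You inherit this from the paper, and the downstream use in Proposition~\ref{prop:2roots} is unaffected because only the count over unions of the affected adjacent components is needed; still, your write-up should either justify the branch-zero claim for the range of $x$ actually used, or phrase (3) in terms of components of $\hatR$ minus the genuine branch poles and branch zeros of $\cG_x$.
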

\begin{proof}
By (\ref{eq:magneq}), as $y \to -\infty$, any root of (\ref{eq:compeq}) must get arbitrarily close to a branch pole of $\cG_x$. By Lemma \ref{lem:rootarg}, the set of such branch poles $\fa$ must be those for which $\arg \cG_x(\fa^+) = 0$ or $\arg \cG_x(\fa^-) = 0$ (both cannot hold simultaneously); thus $\fa$ is an endpoint of some component $L \subset \hatR \setminus \cJ_x$ such that $\arg_+ \cG_x(L) = 0$. \Cref{lem:rootarg} further indicates that if $\zeta$ is near $\fa$ for $y$ sufficiently negative, then $\arg_+ \cG_x(\zeta) = 0$ which means that $\zeta \in L$.

(1) For each $\fa$ which is a branch pole of $\cG_x$ and an endpoint of some $L$ with $\arg_+ \cG_x(L)$, we can see from (\ref{eq:compeq}) that there exists some root $\zeta_\fa(y) \in L$ near $\fa$ for $y$ sufficiently negative. By (\ref{eq:compeq}) and the constraint that roots near $\fa$ must be real for $y$ sufficiently negative, we have that $\zeta_\fa(y)$ is uniquely defined on $(-\infty,y_0)$ for some suitably negative $y_0$. This root function is necessarily continuous for $y \in (-\infty,y_0)$.

(2) By the discussion in the first paragraph and uniqueness of $\zeta_\fa$, we have that any root $\zeta$ of (\ref{eq:compeq}) must coincide with some $\zeta_\fa(y)$ for $y$ sufficiently small.

(3) If $L$ contains exactly one root of (\ref{eq:compeq}), then one endpoint is a branch pole and the other is a branch zero. Roots are added to $L$ if nonreal pairs coalesce and are removed from $L$ if real roots coalesce. This implies that the parity of the number of roots in $L$ must be preserved. Thus for any value of $y$, $L$ must contain a root of (\ref{eq:compeq}).
\end{proof}

We are now in a position to prove the existence of the map $\zeta(x,y)$.

\begin{proposition} \label{prop:2roots}
The $(x,y)$-companion equation has at most $2$ nonreal roots for a given $(x,y) \in \R^2$. If (\ref{eq:compeq}) has a nonreal root, then $x\in (V_0,V_n)$
\end{proposition}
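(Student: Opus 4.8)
The plan is to prove this by a global argument counting roots of the companion equation via the argument principle, or more directly via a monotonicity/counting argument that ties the number of nonreal root pairs to the combinatorial data extracted in Lemmas \ref{lem:endpt}, \ref{lem:rootarg}, and \ref{lem:Qrootcount}. The key organizing observation is that the phase equation $\arg_+ \cG_x(\zeta) = 0$ partitions $\hatR \setminus \cJ_x$ into components on which $\arg_+ \cG_x$ is constant, jumping by $\pm \pi/p$ across each branch point (each $\sigma$-factor contributes $1/p$), and roots of the companion equation must lie in the closure of components $L$ with $\arg_+ \cG_x(L) = 0$ for $|y|$ large, by Lemma \ref{lem:rootarg}. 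So I would first count the components $L$ with $\arg_+ \cG_x(L) = 0$ and track which endpoints of each such $L$ are branch poles versus branch zeros using Lemma \ref{lem:endpt}.

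The core of the argument: fix $x$. The total number of (complex, with multiplicity) roots of $\cG_x(\zeta) = e^{-y}$ is a fixed integer $D$ for all $y$ (degree of $P_x/Q$ after clearing denominators — roughly $p$ over $p$, i.e. $D = 1$ generically, but one must be careful since $P_x$ and $Q$ are products of $1/p$-power factors whose exponents sum to integers). By Lemma \ref{lem:endpt}, when $x \in (V_0, V_n)$ the external component $L$ has \emph{both} endpoints branch poles, and by Lemma \ref{lem:Qrootcount}(1)--(2) each branch pole that is an endpoint of a zero-phase component is the $y \to -\infty$ limit of a unique continuous real root function; moreover any internal zero-phase component has at most one branch-pole endpoint. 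Counting: the external component produces $2$ real roots emerging from its two pole-endpoints as $y \to -\infty$; as $y$ increases these two real roots move toward each other, collide at some $y$, and become a nonreal conjugate pair — this is the pair $\zeta(x,y)$ we want. Every other root of the companion equation, by Lemma \ref{lem:Qrootcount}, stays inside an internal component (which contributes at most one root each), hence stays real for all $y$ by part (3). Thus for each $(x,y)$ there are at most $2$ nonreal roots. I would make the collision/coalescence argument rigorous by noting that $\cG_x$ restricted to the external component $L = (a,b)$ (with $a > b$ in the $\hatR$ sense, so $L$ passes through $\infty$) is real-valued, tends to $\pm\infty$ at the two pole endpoints with the same sign, hence attains a finite extremum; the two real root branches exist for $e^{-y}$ beyond that extremal value and merge at it, with no real roots for $e^{-y}$ on the other side — and by a standard implicit-function / degree argument the merged roots become a single nonreal conjugate pair.

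For the second claim — that a nonreal root forces $x \in (V_0, V_n)$ — I would argue the contrapositive: if $x \le V_0$ or $x \ge V_n$, then by the external-component analysis in Lemma \ref{lem:endpt}, the external (and in fact every zero-phase) component of $\hatR \setminus \cJ_x$ has \emph{at most one} branch-pole endpoint. By Lemma \ref{lem:Qrootcount}(3), a component with exactly one real root for $y$ near $-\infty$ (one pole endpoint, one zero endpoint) retains odd parity, hence exactly one real root, for all $y$; a component with zero pole-endpoints never acquires a root. Summing over all components, every root of the companion equation is real for all $y$, so there are no nonreal roots. The delicate points here are the bookkeeping of $x = V_0$ and $x = V_n$ (the boundary cases where branch poles from $\cJ$ can cancel against branch zeros of $P_x$, already handled in the proof of Lemma \ref{lem:endpt}), and confirming that the total root count $D$ is consistent with the parity argument — i.e. that no roots are "lost at infinity."

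The main obstacle I anticipate is making the coalescence argument fully rigorous without appealing to a heavy complex-analytic machine: one needs that as $y$ varies, roots of $\cG_x(\zeta) = e^{-y}$ move continuously and that a pair of real roots colliding transversally produces exactly one nonreal conjugate pair (not, say, a higher-order degeneracy spawning several). This is where the structure of $\cG_x$ as a product of simple $1/p$-power factors is essential — away from $\cJ_x$ the function $\cG_x^p$ is rational with simple-ish critical structure, so $\cG_x$ restricted to the real axis is piecewise monotone with controlled critical points, and I would use this to bound the number of real roots in each component and hence, via the constancy of $D$, pin down the nonreal count at exactly $0$ or $2$. The cleanest formalization is probably: count real roots in each component as a function of $y$ using monotonicity of $\cG_x$ on that component and the behavior at the two endpoints, sum up, subtract from $D$, and observe the remainder is $\le 2$ and is nonzero only when the external component is "doubly-poled," which by Lemma \ref{lem:endpt} happens exactly when $x \in (V_0, V_n)$.
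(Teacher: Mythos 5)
Your approach is essentially the same as the paper's: use the constancy of the total root count in $y$ (argument principle), apply Lemma \ref{lem:Qrootcount} to get at least one persistent real root per internal zero-phase component, and invoke Lemma \ref{lem:endpt} to show the external component's double-pole structure occurs exactly for $x \in (V_0, V_n)$. The one simplification the paper makes that you should note: you worry about tracking the coalescence dynamics (transversality, no higher-order degeneracy) to show the surplus pair becomes exactly one conjugate pair, but this is unnecessary — once the total count is $|\cL_x| + 2$ and at least $|\cL_x|$ roots are pinned to internal components, at most $2$ roots remain unaccounted for, and since nonreal roots come in conjugate pairs there are either $0$ or $2$, with no analysis of the merging mechanism required.
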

\begin{proof}
Fix $x \in \R$. By the contour integral formula for roots of functions applied to $\cG_x(\zeta) - e^{-y}$ which is analytic in $w = e^{-y}$, we have that the number $\nu$ of roots of (\ref{eq:compeq}) is constant as $y$ varies in $\R$.

Let $\cL_x$ be the set of components of $\hatR \setminus \cJ_x$ containing exactly one root of (\ref{eq:compeq}) for $y$ near $-\infty$. By Lemma \ref{lem:Qrootcount}, if $x \in (V_0,V_n)$ then the total number of roots of (\ref{eq:compeq}) is
\[ \nu = |\cL_x| + 2 \]
where the $2$ is contributed by the two endpoints of the external component. Moreover, since there is at least one root contained in each component $L \in \cL_x$ for all $y \in \R$, we must have at least $|\cL_x|$ real roots of (\ref{eq:compeq}) at any $y \in \R$. Thus there are at most two nonreal roots of (\ref{eq:compeq}) at any given $(x,y) \in\R^2$.

Similarly by Lemma \ref{lem:Qrootcount}, if $x \notin (V_0,V_n)$ then the total number of roots of (\ref{eq:compeq}) is $\nu = |\cL_x|$. Moreover, since there is at least one root contained in each component $L \in \cL_x$ for all $y \in \R$, all the roots are real for all $y \in \R$.
\end{proof}

\begin{remark} \label{rem:2roots}
We note that a continuous extension $\wzeta(x,y)$ of $\zeta(x,y)$ onto $\R^2$ is not uniquely defined. However, the component of $\hatR \setminus \cJ_x$ which $\wzeta(x,y)$ belongs to when it is real, i.e. $(x,y) \notin \sL$, \emph{is} well-defined. From the proof of Proposition \ref{prop:2roots}, for any fixed $x \in I$ we see that $\wzeta(x,y)$ is contained in the external component for $y$ sufficiently negative. This is because the external component has $2$ roots and all other components have at most one root for $y$ sufficiently negative, and we require a coalescing of real roots to get a pair of nonreal roots. Thus we have the freedom to choose $\wzeta$ so that $\wzeta(x,y) \to \min \cJ_x$ as $y \to -\infty$ for all $x\in I$, or so that $\wzeta(x,y) \to \max \cJ_x$ as $y \to -\infty$ for all $x\in I$; note that it is not clear that $\wzeta$ is uniquely determined upon making this choice, and we will not need this fact.
\end{remark}

\begin{remark} \label{rem:2roots+}
As $y$ increases from $-\infty$, we have that the first coalescing of roots (i.e. the first double root of (\ref{eq:compeq})) appears when the two roots starting at opposite endpoints of the external component coalesce. If $y = y_0$ is  when the first double root appears, then each point in the external component must have been a root of the $(x,y)$-companion equation for some $y \le y_0$.
\end{remark}

\subsection{Diffeomorphism between Roots and \texorpdfstring{$\HH$}{H}} \label{sec:homeo}
Proposition \ref{prop:2roots} gives us the existence of a map $\zeta:\sL \to \HH$. We now show that this map is a diffeomorphism. Throughout this subsection, it will be convenient to use the symbol $\zeta$ as a variable in $\C$. To avoid ambiguity we will write $\zeta(x,y)$ when referring to the root function.

It is useful to consider the spectral curve of the RPP (see \cite{KOS}), as they can be viewed as the source of the surjectivity of $\zeta(x,y)$. To this end, we start by rewriting the companion equation into an alternative convenient form.

\begin{definition}
Define the analytic function
\begin{align} \label{eq:Qrel}
\cQ(\zeta,\eta) = \eta \cdot Q(\zeta)^{-1} - 1
\end{align}
and the \textit{spectral curve for periodic weights} $s_0,\ldots,s_{p-1}$
\begin{align}
P(z,w) = \prod_{\sigma \in \cS} (1 - \sigma z)^{\frac{1}{p}} - w
\end{align}
where the branch is chosen so that the argument of $1 - \sigma z$ is $0$ when $z < \sigma^{-1}$.
\end{definition}

We will write $(z,w) \in P$ to mean $P(z,w) = 0$. Then $\zeta \in \HH$ is a solution to the companion equation for some $(x,y) \in \R^2$ if and only if there exists $(z,w) \in P$ such that
\begin{align} \label{eq:char}
\cQ(\zeta,\eta) = 0, \\ \label{coord}
(\zeta,\eta) = (e^x z, e^y w). 
\end{align}

\begin{remark}
This definition of the spectral curve differs from the definition in \cite{KOS}, which gives $\prod_{\sigma \in \cS}(1 - \sigma z) - w^p$. This difference arises from a rescaling of the $x$-axis, namely that \cite{KOS} contracts the $x$-direction by $p$ to make the corresponding fundamental domain $\Z\times \Z$ periodic whereas our fundamental domain is $p\Z \times \Z$ periodic.
\end{remark}

\begin{lemma} \label{lem:spec}
The set of $(z,w) \in P$ with $z \in \HH$ satisfies $0 < - \arg w < \pi - \arg z$. Moreover, for any pair $\varphi,\theta$ with $0 < \varphi < \pi - \theta$, there exists a unique $(z,w) \in P$ with $\arg w = -\varphi$, $\arg z = \theta$.
\end{lemma}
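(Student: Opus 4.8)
\textbf{Proof plan for Lemma \ref{lem:spec}.}

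The plan is to analyze the function $g(z) = \prod_{\sigma \in \cS}(1 - \sigma z)^{1/p}$ on the upper half plane, since $w = g(z)$ for $(z,w) \in P$. First I would establish the argument bound. For $z \in \HH$ and each $\sigma \in \cS$ (with $\sigma > 0$), the point $1 - \sigma z$ lies in the lower half plane, so with the chosen branch (argument $0$ for $z < \sigma^{-1}$) we have $\arg(1 - \sigma z) \in (-\pi, 0)$. Hence $\arg w = \frac{1}{p}\sum_{\sigma \in \cS} \arg(1 - \sigma z) \in (-\pi, 0)$, giving $-\arg w = \varphi \in (0, \pi)$. For the sharper bound $\varphi < \pi - \theta$ where $\theta = \arg z$: I would show that for each fixed $z \in \HH$, the map $\sigma \mapsto \arg(1 - \sigma z)$ is decreasing in $\sigma > 0$, with infimum (as $\sigma \to \infty$) equal to $-(\pi - \theta)$; indeed $1 - \sigma z = -\sigma z(1 - (\sigma z)^{-1})$ and as $\sigma \to \infty$ this tends in argument to $\arg(-z) = \theta - \pi$. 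So each summand is strictly greater than $\theta - \pi$, and averaging over the $p$ terms gives $\arg w > \theta - \pi$, i.e. $\varphi < \pi - \theta$. This proves the first assertion.

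For the second assertion — existence and uniqueness of $(z,w) \in P$ with prescribed $\arg z = \theta \in (0,\pi)$ and $\arg w = -\varphi \in (-(\pi - \theta), 0)$ — I would fix $\theta$ and let $z = \rho e^{i\theta}$ range over the ray $R_\theta = \{\rho e^{i\theta} : \rho > 0\}$, then study the function $\rho \mapsto \Phi(\rho) := -\arg g(\rho e^{i\theta}) = -\frac{1}{p}\sum_{\sigma} \arg(1 - \sigma \rho e^{i\theta})$. The key claims are: (i) $\Phi$ is continuous on $(0,\infty)$; (ii) $\Phi(\rho) \to 0$ as $\rho \to 0^+$ (since $1 - \sigma z \to 1$); (iii) $\Phi(\rho) \to \pi - \theta$ as $\rho \to \infty$ (by the limiting-argument computation above, each term $\to \theta - \pi$); and (iv) $\Phi$ is strictly monotone increasing in $\rho$. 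Claims (i)–(iii) together with the intermediate value theorem give existence of some $\rho$ with $\Phi(\rho) = \varphi$; claim (iv) gives uniqueness of $\rho$ on the ray, and then $w = g(z)$ is determined, so $(z,w) \in P$ is unique. The monotonicity in (iv) is the crux: for a single factor, $\frac{d}{d\rho}\left(-\arg(1 - \sigma\rho e^{i\theta})\right) = \frac{d}{d\rho}\left(\arg\frac{1}{1 - \sigma\rho e^{i\theta}}\right)$, and one computes that $\arg(1 - \sigma \rho e^{i\theta})$ is strictly decreasing in $\rho > 0$ for $\theta \in (0,\pi)$ — geometrically, as $\rho$ increases the point $1 - \sigma\rho e^{i\theta}$ moves along a ray in direction $-e^{i\theta}$ (lower half plane) away from $1$, and its argument strictly decreases toward $\theta - \pi$. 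Summing $p$ such monotone terms preserves strict monotonicity.

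I expect the main obstacle to be the rigorous verification of the strict monotonicity in claim (iv), in particular handling the branch of the argument consistently across all $\rho \in (0,\infty)$ and all $\sigma \in \cS$ simultaneously (the summands start at $0$ and decrease continuously without wrapping past $-\pi$, which must be checked). A clean way to do this is to parametrize: write $1 - \sigma\rho e^{i\theta} = 1 - \sigma\rho\cos\theta - i\sigma\rho\sin\theta$, so with $\sin\theta > 0$ the imaginary part is strictly negative for $\rho > 0$, the real part decreases from $1$, and $\arg = -\arctan\!\left(\frac{\sigma\rho\sin\theta}{1 - \sigma\rho\cos\theta}\right)$ interpreted continuously (passing through $-\pi/2$ when the real part vanishes); an explicit derivative computation then shows this is strictly decreasing in $\rho$. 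Everything else is elementary continuity and limit bookkeeping.
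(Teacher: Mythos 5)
Your proposal is correct and takes essentially the same route as the paper: the paper's proof also fixes $\theta = \arg z$, observes that $\rho \mapsto \arg(1 - \sigma\rho e^{i\theta})$ strictly decreases from $0$ to $-(\pi - \theta)$, and concludes that the averaged sum $\vartheta(\rho) = \frac{1}{p}\sum_{\sigma\in\cS}\arg(1-\sigma\rho e^{i\theta})$ is a strictly decreasing bijection from $(0,\infty)$ onto $(-(\pi-\theta),0)$, which gives both assertions at once. The only cosmetic difference is that you establish the first assertion separately via monotonicity in $\sigma$, whereas the paper reads it off as the image of $\vartheta$.
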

\begin{proof}
Fix $\theta \in(0,\pi)$. By geometric considerations, the map $\rho \mapsto \arg(1 - \rho e^{i\theta})$ strictly decreases on $(0,\infty)$, approaching $0$ as $\rho \to 0$ and approaching $-(\pi - \theta)$ as $\rho \to \infty$. Thus the map
\[\vartheta: \rho \mapsto \frac{1}{p} \sum_{\sigma \in \cS} \arg(1 - \sigma \rho e^{i\theta}) \]
also strictly decreases on $(0,\infty)$, approaching $0$ as $\rho \to 0$ and approaching $-(\pi - \theta)$ as $\rho \to \infty$.

Taking $z = \rho e^{i\theta}$, we have that $\vartheta(\rho) = \arg w$. Since $\vartheta$ is a bijection, any point $(\theta,-\varphi)$ such that $0 < \varphi < \pi - \theta$ uniquely determines a point $(z,w) \in P$ where $(\arg z, \arg w) = (\theta,-\varphi)$.
\end{proof}

\begin{proof}[Proof of Theorem \ref{thm:homeo}]
We first establish that $\zeta(x,y)$ is a bijection from $\sL$ onto $\HH$. Given $\zeta \in \HH$, the pair $(\zeta,\eta) = (\zeta,Q(\zeta))$ solves (\ref{eq:char}). To show that $\zeta$ is a solution to the companion equation for some $(x,y) \in \R^2$ (and therefore in $\sL$), we must find $(z,w) \in P$ so that $(\zeta,\eta) = (e^x z, e^y w)$. Alternatively said, we want $(z,w) \in P$ so that $\arg(\zeta,\eta) = \arg(z,w)$, and by Lemma \ref{lem:spec} this $(z,w) \in P$ is unique if it exists. By Lemma \ref{lem:spec} again, it is enough to show that $0 < -\arg \eta < \pi - \arg \zeta$.

By geometric considerations (see proof of Lemma \ref{lem:spec}), given $\zeta \in \HH$ and $a,b \in [0,\infty]$ such that $a < b$, we have
\[ -(\pi -  \arg \zeta) \leq \arg(1 - a^{-1} \zeta) < \arg(1 - b^{-1} \zeta) \leq 0 \]
where we mean $\arg(-\zeta) = -(\pi - \arg \zeta)$ for $\arg(1 - \infty \zeta)$. By (\ref{eq:1-s})
\begin{align} \label{eq:eta1}
\begin{multlined}
\arg \eta = \frac{1}{p}  \sum_{\sigma \in \cS} \Biggl[ \arg(1 - e^{-V_0} \sigma \zeta) \\
+ \sum_{\ell=1}^n \1[ \cB'(V_{\ell-1},V_\ell) < \varsigma_\sigma] \left( \arg(1 - e^{-V_\ell} \sigma \zeta) - \arg(1 - e^{-V_{\ell-1}} \sigma \zeta) \right) \Biggr]
\end{multlined}
\end{align}
and by (\ref{eq:1+s})
\begin{align} \label{eq:eta2}
\begin{multlined}
\arg \eta = \frac{1}{p} \sum_{\sigma \in \cS} \Biggl[ \arg(1 - e^{-V_n} \sigma \zeta) \\
+ \sum_{\ell=1}^n \1[ \cB'(V_{\ell-1},V_\ell) \ge \varsigma_\sigma ] \left( \arg(1 - e^{-V_{\ell-1}} \sigma \zeta) - \arg(1 - e^{-V_\ell} \sigma \zeta) \right) \Biggr].
\end{multlined}
\end{align}
In (\ref{eq:eta1}), each summand in the summation over $\ell$ is $\ge 0$ with some $> 0$. This implies
\[ \arg \eta > \sum_{\sigma \in \cS} \frac{1}{p} \arg(1 - e^{-V_0} \sigma \zeta) \ge -(\pi - \arg\zeta). \]
In (\ref{eq:eta2}), each summand in the summation over $\ell$ is $\le 0$ with some $< 0$. This implies
\[ \arg \eta < \sum_{\sigma \in \cS} \frac{1}{p} \arg(1 - e^{-V_n} \sigma \zeta) \le 0. \]
So we have $0 < -\arg \eta < \pi - \arg \zeta$. This proves $\zeta(x,y)$ is surjective. Moreover, we see that the procedure from $\zeta$ to $(x,y)$ uniquely determines $(x,y)$, thus $\zeta(x,y)$ is also injective.

Since $\zeta(x,y)$ is differentiable on $\sL$, it remains to show the differentiability of the inverse map. However, the chain of procedures $\zeta \mapsto (\zeta,\eta) = (\zeta,Q(\zeta)) \mapsto (\arg \zeta,\arg\eta)$ is differentiable. Furthermore, the map $(\theta,-\varphi) \mapsto (z,w) \in P$ where $\arg(z,w) = (\theta,-\varphi)$ is differentiable. Therefore $\zeta \mapsto \left( \frac{\zeta}{z},\frac{\eta}{w}\right) = (e^x,e^y)$ is differentiable. It follows that the inverse of $\zeta(x,y)$ is differentiable.
\end{proof}

\subsection{Double Roots and the Frozen Boundary} \label{ssec:frozen}
In this section, we describe the \emph{frozen boundary} which we define to be the boundary of the liquid region $\sL$. This requires understanding the double roots of the companion equation.

\label{sec:frzbdry}
By taking the logarithm and differentiating, a root of the companion equation is a double root if it solves
\begin{align} \label{eq:ogdub}
\frac{1}{p} \sum_{\sigma \in \cS} \frac{1}{\zeta - e^x\sigma^{-1}} - \Sigma(\zeta) = 0
\end{align}
where
\begin{align}
\Sigma(\zeta) &= \frac{1}{p} \sum_{\substack{\ell \in [[0,n]] \\ \sigma \in \cS}} \left(\1[\cB'(V_\ell^+) \ge \varsigma_\sigma] - \1[\cB'(V_\ell^-) \ge \varsigma_\sigma] \right) \frac{1}{\zeta - e^{V_\ell} \sigma^{-1}}
\end{align}
and we take $e^{V_0} = 0$ if $V_0 = -\infty$ and $\frac{1}{\zeta - e^{V_n}\alpha^{-1}} = 0$ if $V_n = +\infty$. We have the alternative expressions, corresponding to (\ref{eq:1-s}) and (\ref{eq:1+s}) respectively,
\begin{align} \label{eq:V0sigi}
\Sigma(\zeta) &= \frac{1}{p} \sum_{\sigma \in \cS} \Biggl[ \frac{1}{\zeta - e^{V_0} \sigma^{-1}} + \sum_{\ell=1}^n \1[ \cB'(V_{\ell-1},V_\ell) < \varsigma_\sigma] \left( \frac{1}{\zeta - e^{V_\ell} \sigma^{-1}} - \frac{1}{\zeta - e^{V_{\ell-1}}\sigma^{-1}} \right) \Biggr] \\ \label{eq:Vnsigi}
&= \frac{1}{p} \sum_{\sigma \in \cS} \Biggl[ \frac{1}{\zeta - e^{V_n} \sigma^{-1}} + \sum_{\ell=1}^n \1[\cB'(V_{\ell-1},V_\ell) \ge \varsigma_\sigma] \left( \frac{1}{\zeta - e^{V_{\ell-1}} \sigma^{-1}} - \frac{1}{\zeta - e^{V_\ell} \sigma^{-1}} \right) \Biggr]
\end{align}
taking the same modifications as above for $V_0 = -\infty$ and $V_n = +\infty$.

We reexpress the double root equation to obtain a parametrization of the double roots of (\ref{eq:compeq}) in terms of $\zeta \in \R$. Define the function
\[ f(u) = \frac{1}{p} \sum_{\sigma \in \cS} \frac{1}{1 - \sigma^{-1} u} \]
to rewrite the double root equation as 
\begin{equation} \label{eq:fdub}
f(e^x \zeta^{-1}) = \zeta \Sigma(\zeta).
\end{equation}
We want to invert $f$, though we require multiple inverses.

Take $f(\infty) = 0$ so that $\lim_{|u| \to \infty} f(u) = f(\infty)$. Then $f$ is defined on $\hatR \setminus \{\sigma\}_{\sigma \in \cS}$ and is invertible on each connected component. The connected components are $E_0 = (\sigma_1,0)$ (that is $(\sigma_1,+\infty) \cup \{\infty\} \cup(-\infty,0)$), $E_j = (\sigma_{j+1},\sigma_j)$ for $1 \le j < d$, and $E_d = (0,\sigma_d)$. Let $\varphi_j$ denote the inverse of $f$ restricted to $E_j$. For $j \in [[1,d-1]]$, we have $\varphi_j:\R \to E_j$ whereas $\varphi_0:(-\infty,1) \to E_0$ and $\varphi_d:(1,+\infty) \to E_d$. Note that $\varphi_0$ maps $(-\infty,0)$ to $(\sigma_1,+\infty)$, $0$ to $\infty$, and $(0,1)$ to $(-\infty,0)$.

Suppose $e^x \zeta^{-1} \in E_j$ for some $j \in [[0,d]]$. Then $\zeta \in (-\infty,\sigma_1^{-1}e^x)$ if $j = 0$, $\zeta \in (e^x \sigma_j^{-1}, e^x \sigma_{j+1}^{-1})$ if $j \in [[1,d-1]]$, and $\zeta \in (e^x \sigma_d^{-1},+\infty)$ if $j = d$. In particular, $\arg_+ P_x(\zeta) = \varsigma_j$.

By inverting (\ref{eq:fdub}), the condition for being a double root of (\ref{eq:compeq}) becomes
\begin{align} \label{eq:xeq}
e^x &= \zeta \varphi(\zeta \Sigma(\zeta))\\ \label{eq:yeq}
e^y &= \frac{Q(\zeta)}{P_x(\zeta)} = \frac{Q(\zeta)}{\prod_{\sigma \in \cS} (1 - \frac{\sigma}{\varphi(\zeta \Sigma(\zeta))})^{\frac{1}{p}}}
\end{align}
where the choice of inverse $\varphi$ must make the right hand side of (\ref{eq:yeq}) positive. More specifically, we have $\arg_+ Q(\zeta) = \varsigma_j$ for some $j \in [[0,d]]$. Then choose $\varphi = \varphi_j$ which implies $\arg_+ P_x(\zeta) = \varsigma_j$ by our discussion above. Thus the argument $\arg_+$ of the right hand side of (\ref{eq:yeq}) is $0$. Moreover, $\varphi_j$ is the only inverse of $f$ which allows the right hand side of (\ref{eq:yeq}) to be positive. We record our choice of $\varphi$ below:
\begin{align} \label{eq:varphi}
\varphi = \varphi_j:~\arg_+ Q(\zeta) = \varsigma_j
\end{align}

The following proposition shows that (\ref{eq:xeq}) and (\ref{eq:yeq}) provides an $\R \setminus \cJ$-parametrization of the frozen boundary.

\begin{proposition}
Each $\zeta \in \R \setminus \cJ$ is a double root of (\ref{eq:compeq}) for some unique $(x,y) \in \R^2$.
\end{proposition}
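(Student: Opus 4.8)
The plan is to solve the double-root conditions explicitly for $(x,y)$. A real number $\zeta\notin\cJ$ is a double root of the $(x,y)$-companion equation exactly when $(\log\cG_x)'(\zeta)=0$ and $\cG_x(\zeta)=e^{-y}$, and since $e^{-y}>0$ the second condition forces $\cG_x(\zeta)=P_x(\zeta)/Q(\zeta)$ to be a positive real number. Because $\zeta\notin\cJ$, neither $P_x(\zeta)$ nor $Q(\zeta)$ vanishes and $\Sigma(\zeta)=(\log Q)'(\zeta)$ is finite (its poles lie in $\cJ$), so $\cG_x(\zeta)>0$ is equivalent to $\arg_+P_x(\zeta)=\arg_+Q(\zeta)$. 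The value $\arg_+Q(\zeta)$ determines, via the selection rule (\ref{eq:varphi}) (cf. the discussion preceding it, using Lemma \ref{lem:Vchange} through (\ref{eq:Q})), a unique index $j=j(\zeta)\in[[0,d]]$, and the locus of $x$ with $\arg_+P_x(\zeta)=\arg_+Q(\zeta)$ is precisely $\{x:e^x\zeta^{-1}\in E_j\}$ (all of $\R$ when $\zeta<0$, since then $P_x(\zeta)>0$ automatically). Thus $\zeta$ can be a double root only for $x$ in this distinguished range, on which $\varphi=\varphi_j$ is the only admissible branch of $f$.

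With the branch pinned, uniqueness is immediate: on $\{x:e^x\zeta^{-1}\in E_j\}$ the double-root equation (\ref{eq:ogdub}), rewritten as (\ref{eq:fdub}), reads $f(e^x\zeta^{-1})=\zeta\Sigma(\zeta)$, and since $f|_{E_j}=\varphi_j^{-1}$ this forces $e^x\zeta^{-1}=\varphi_j(\zeta\Sigma(\zeta))$, i.e. $x$ is given by (\ref{eq:xeq}) and then $y=-\log\cG_x(\zeta)$ by (\ref{eq:yeq}). For existence one must check that the right-hand side of (\ref{eq:xeq}) is a genuine positive real: that $\zeta\Sigma(\zeta)$ lies in the domain of $\varphi_j$ (this domain is $\R$ when $1\le j\le d-1$, and a half-line $(-\infty,1)$ or $(1,\infty)$ when $j\in\{0,d\}$), and that $\zeta$ and $\varphi_j(\zeta\Sigma(\zeta))\in E_j$ have the same sign, so that $x:=\log\big(\zeta\,\varphi_j(\zeta\Sigma(\zeta))\big)$ makes sense. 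Granting these, $e^x\zeta^{-1}\in E_j$ yields $\arg_+P_x(\zeta)=\arg_+Q(\zeta)$, hence $\cG_x(\zeta)>0$ and $y\in\R$ is well defined; and by construction $\cG_x(\zeta)=e^{-y}$ and $(\log\cG_x)'(\zeta)=0$, so $\zeta$ really is a double root. When $j\in[[1,d-1]]$ the component of $\hatR\setminus\cJ$ containing $\zeta$ lies in $(0,\infty)$ and $E_j\subset\R_{>0}$, so both the domain condition (domain $=\R$) and the sign condition are automatic.

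The main obstacle is the case $j\in\{0,d\}$ — i.e. $\zeta$ lying outside $[\min\cJ,\max\cJ]$ (or in a few internal components where the branch index is extreme) — where $\zeta\Sigma(\zeta)$ must be controlled precisely. I would do this using $\Sigma=(\log Q)'$ together with the telescoping expressions (\ref{eq:V0sigi}) and (\ref{eq:Vnsigi}): writing $h^{\sigma}_\ell=\frac{\sigma\zeta}{\sigma\zeta-e^{V_\ell}}$, the $\sigma$-summand of $\zeta\Sigma(\zeta)$ has the form $\tfrac1p\big(h^\sigma_0+\sum_{\ell\in A_\sigma}(h^\sigma_\ell-h^\sigma_{\ell-1})\big)$ for some $A_\sigma\subseteq[[1,n]]$, and since $\ell\mapsto h^\sigma_\ell$ is monotone (its derivative in $V$ has the sign of $\zeta$), each summand is squeezed between $\tfrac1p h^\sigma_0$ and $\tfrac1p h^\sigma_n$; summing over $\sigma\in\cS$ puts $\zeta\Sigma(\zeta)$ between $\tfrac1p\sum_\sigma\min(h^\sigma_0,h^\sigma_n)$ and $\tfrac1p\sum_\sigma\max(h^\sigma_0,h^\sigma_n)$. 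For $\zeta\notin\cJ$ one then checks the endpoint quantities lie in the needed window: $\zeta\Sigma(\zeta)\in(0,1)$ when $\zeta<0$, $\zeta\Sigma(\zeta)\in(-\infty,0)$ when $0<\zeta<\min\cJ$, $\zeta\Sigma(\zeta)\in(1,\infty)$ when $\zeta>\max\cJ$, and the analogous bound on the remaining components — in each case exactly what is required for $\zeta\Sigma(\zeta)$ to land in the correct portion of $\mathrm{dom}(\varphi_j)$ with $\operatorname{sign}\varphi_j(\zeta\Sigma(\zeta))=\operatorname{sign}\zeta$. Together with the uniqueness argument, this completes the proof.
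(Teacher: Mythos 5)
Your framework — pinning $\varphi_j$ from $\arg_+ Q(\zeta)$ via (\ref{eq:varphi}), deducing $x$ from $f(e^x\zeta^{-1}) = \zeta\Sigma(\zeta)$, and then checking $\zeta\Sigma(\zeta)$ lands in the correct portion of the domain of $\varphi_j$ with the right sign — is precisely the paper's route, and your handling of $j\in[[1,d-1]]$ and of the uniqueness statement is correct. The gap is in the squeezing step for the $j\in\{0,d\}$ case, which is exactly where all the work lies.

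You claim $\ell\mapsto h^\sigma_\ell$ is monotone because the derivative of $V\mapsto \tfrac{\sigma\zeta}{\sigma\zeta - e^V}$ has the sign of $\zeta$, and deduce that a partial telescoping sum $h^\sigma_0 + \sum_{\ell\in A_\sigma}(h^\sigma_\ell - h^\sigma_{\ell-1})$ is trapped between $h^\sigma_0$ and $h^\sigma_n$. This is false whenever $\zeta>0$ and $\sigma^{-1}e^{V_0}<\zeta<\sigma^{-1}e^{V_n}$: there $V\mapsto \tfrac{\sigma\zeta}{\sigma\zeta-e^V}$ has a pole at $V=\log(\sigma\zeta)\in(V_0,V_n)$, it is increasing on each side but jumps from $+\infty$ to $-\infty$ across the pole, so it is not globally monotone and a partial telescoping sum can lie far outside $[\min(h^\sigma_0,h^\sigma_n),\max(h^\sigma_0,h^\sigma_n)]$. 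A concrete counterexample to your squeeze is $p=1$ with a back wall whose slopes alternate $1,0,1,0$ on four consecutive intervals: for $\zeta\in(e^{V_1},e^{V_2})$ one has $j=0$, and (\ref{eq:Vn}) gives the single $\sigma$-summand $h_4 + (h_0-h_1) + (h_2-h_3)$, which is strictly less than $h_4=h_n<0$ and therefore below the lower endpoint $\min(h_0,h_n)=h_n$ of the interval you claim squeezes it. These are exactly the ``remaining components'' your proposal waves past, and they are generic (for $p=1$ every component has $j\in\{0,d\}$).

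What actually makes the bound work — and what the paper uses — is that the value of $\arg_+ Q(\zeta)$ removes precisely the pole-crossing index $\ell_\sigma$ from the telescoping sum, so every remaining increment has a definite sign. For $j=d$: $\cB'(V_{\ell_\sigma}^-)\ge\varsigma_\sigma$ excludes $\ell_\sigma$ from the index set in (\ref{eq:V0}), every remaining increment $h^\sigma_\ell-h^\sigma_{\ell-1}$ stays on one side of the pole and is positive, and since $\ell_\sigma\ge 1$ one has $h^\sigma_0>1$, giving $\zeta\Sigma(\zeta)>1$. For $j=0$ with $\zeta>0$: $\cB'(V_{\ell_\sigma}^-)<\varsigma_\sigma$ excludes $\ell_\sigma$ from (\ref{eq:Vn}), every remaining increment $h^\sigma_{\ell-1}-h^\sigma_\ell$ is negative, and $h^\sigma_n<0$, giving $\zeta\Sigma(\zeta)<0$. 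Your squeeze is valid when the pole is outside $[V_0,V_n]$ (i.e.\ $\zeta<0$, or $0<\zeta<\min\cJ$, or $\zeta>\max\cJ$), but it does not extend to the internal components with extreme $j$, and the correct argument is not a uniform squeeze but the selective use of (\ref{eq:V0}) vs.\ (\ref{eq:Vn}) driven by the arg condition. A lesser omission: multiplying (\ref{eq:ogdub}) by $\zeta$ to obtain (\ref{eq:fdub}) is lossless only for $\zeta\ne 0$, and (\ref{eq:xeq}) degenerates to $0\cdot\infty$ at $\zeta=0$; when $V_0>-\infty$ the point $\zeta=0$ does lie in $\R\setminus\cJ$ and needs the separate, direct solve the paper gives it.
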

\begin{proof}
Let $\zeta \in \R \setminus \cJ$. Consider first $\zeta = 0$ (when $0 \notin \cJ$); note that $V_0 > -\infty$ is equivalent to $0 \notin \cJ$. Then (\ref{eq:ogdub}) becomes
\[ \frac{1}{p} \sum_{\sigma \in \cS} e^{-x} \sigma = -\Sigma(0). \]
The right hand side is positive by (\ref{eq:Vnsigi}), and so we can solve for $x$. The companion equation also gives us
\[ e^{-y} = \cG_x(0) = e^{-\cB(V_0)} \]
so that we can solve for $y$. This proves the $\zeta = 0$ case.

For $\zeta \neq 0$, we show that the right hand sides of (\ref{eq:xeq}) and (\ref{eq:yeq}) are positive. The positivity of the right hand side of (\ref{eq:yeq}) follows from the choice of inverse $\varphi$. We note that the uniqueness of $(x,y)$ follows from the fact that the choice of inverse $\varphi$ of $f$ is the only one that gives positivity (of (\ref{eq:yeq})) as mentioned above.

It remains to show that the right hand side of (\ref{eq:xeq}) is positive. Suppose $j \in [[0,d]]$ so that $\varphi = \varphi_j$. We proceed by case analysis.

Suppose $j \neq 0,d$. Then $\varphi(\zeta \Sigma(\zeta)) \in E_j \subset \R_{> 0}$ and $\arg_+ P_x(\zeta) = \varsigma_j$. These imply $\varphi_j(\zeta \Sigma(\zeta)) > 0$ and $\zeta > 0$. Thus the right hand side of (\ref{eq:xeq}) is positive.

In preparation for the remaining cases, we note that (\ref{eq:V0sigi}) and (\ref{eq:Vnsigi}) imply
\begin{align} \label{eq:V0}
\zeta \Sigma(\zeta) &= \frac{1}{p} \sum_{\sigma \in \cS} \Biggl[ \frac{1}{1 - e^{V_0} (\sigma\zeta)^{-1}} + \sum_{\ell=1}^n \1[\cB'(V_\ell^-) < \varsigma_\sigma] \left( \frac{1}{1 - e^{V_\ell} (\sigma\zeta)^{-1}} - \frac{1}{1 - e^{V_{\ell-1}}(\sigma\zeta)^{-1}} \right) \Biggr] \\ \label{eq:Vn}
&= \frac{1}{p} \sum_{\sigma \in \cS} \Biggl[ \frac{1}{1 - e^{V_n} (\sigma\zeta)^{-1}} + \sum_{\ell=1}^n \1[\cB'(V_\ell^-) \ge \varsigma_\sigma] \left( \frac{1}{1 - e^{V_{\ell-1}} (\sigma\zeta)^{-1}} - \frac{1}{1 - e^{V_\ell} (\sigma\zeta)^{-1}} \right) \Biggr]
\end{align}

We also record
\begin{align} \nonumber
\arg_+ Q(\zeta) &= -\frac{\pi}{p} \sum_{\ell=0}^n \sum_{\substack{\sigma \in \cS \\ \sigma^{-1} e^{V_\ell} \in J_{V_\ell}}} (\1[\cB'(V_\ell^+) \ge \varsigma_\sigma] - \1[\cB'(V_\ell^-) \ge \varsigma_\sigma]) \1[ \sigma^{-1} e^{V_\ell} < \zeta ] \\ \label{eq:Qarg}
&= - \frac{\pi}{p} \sum_{\sigma \in \cS} \1[\cB'(V_{\ell_\sigma-1}^+) \ge \varsigma_\sigma] = - \frac{\pi}{p} \sum_{\sigma \in \cS} \1[ \cB'(V_{\ell_\sigma}^-) \ge \varsigma_\sigma]
\end{align}
where $\ell_\sigma = \max\{\ell: \sigma^{-1} e^{V_{\ell-1}} < \zeta\}$.

\underline{If $j = d$}, we must check $\zeta \Sigma(\zeta) > 1$, i.e. is in the domain of $\varphi_d$. Since $\arg_+ Q(\zeta) = -\pi$, by (\ref{eq:Qarg}) we have $\cB'(V_{\ell_\sigma}^-) \ge \varsigma_\sigma$ for all $\sigma \in \cS$. Plugging this into (\ref{eq:V0}), along with the fact that $\zeta \in (\sigma^{-1} e^{V_{\ell_\sigma}-1}, \sigma^{-1} e^{V_{\ell_\sigma}})$, we have $\zeta \Sigma(\zeta) > 1$. Thus $\zeta \Sigma(\zeta)$ is in the domain of $\varphi$. Since $\varphi(\zeta \Sigma(\zeta)) \in E_0$ and $\zeta > 0$, the right hand side of (\ref{eq:xeq}) is positive.

\underline{If $j = 0$}, we must check that $\zeta \Sigma(\zeta) < 0$ or $0 < \zeta \Sigma(\zeta) < 1$, i.e. is in the domain of $\varphi_0$. If $\zeta < 0$, then (\ref{eq:V0}) implies $\zeta \Sigma(\zeta) > 0$ and (\ref{eq:Vn}) implies $\zeta \Sigma(\zeta) < 1$. We then have $\varphi(\zeta \Sigma(\zeta)) < 0$. Thus $\zeta \varphi(\zeta \Sigma(\zeta)) > 0$. 

Otherwise, $\zeta > 0$. Since $\arg_+ Q(\zeta) = 0$, by (\ref{eq:Qarg}) we have $\cB'(V_{\ell_\sigma}^-) < \varsigma_\sigma$ for all $\sigma \in \cS$. Plugging this into (\ref{eq:Vn}), we have $\zeta \Sigma(\zeta)) < 0$. Then $\varphi(\zeta \Sigma(\zeta)) > 0$ so that the right hand side of (\ref{eq:xeq}) is positive.
\end{proof}

The next proposition explains what happens as $\zeta$ approaches an element of $\cJ$.

\begin{proposition} \label{prop:drparam}
Let $V$ be an non-differentiable point of $\cB$. As $\zeta \to \sigma^{-1} e^V \in J_V$ in (\ref{eq:xeq}) and (\ref{eq:yeq}), we have $x \to V$ and $y$ converges to a finite limit if $\cB'(V^+) - \cB'(V^-) > 0$, and $y \to +\infty$ if $\cB'(V^+) - \cB'(V^-) < 0$.
\end{proposition}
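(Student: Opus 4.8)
## Proof proposal for Proposition~\ref{prop:drparam}

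The plan is to analyze the behavior of the right-hand sides of \eqref{eq:xeq} and \eqref{eq:yeq} as $\zeta$ tends to a branch point $a := \sigma^{-1}e^V \in J_V$, treating the two cases of the sign of $\cB'(V^+) - \cB'(V^-)$ separately. The key is to track which term of $\Sigma(\zeta)$ blows up and how that interacts with the chosen inverse $\varphi_j$ of $f$.

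First I would examine $\zeta\Sigma(\zeta)$ as $\zeta \to a$. By Lemma~\ref{lem:Vchange}, the point $a$ lies in $J_V$ precisely because some indicator $\1[\cB'(V^\pm)\ge\varsigma_\sigma]$ jumps across $V$; the term $\frac{1}{\zeta - e^V\sigma^{-1}}$ in $\Sigma$ appears with coefficient $\1[\cB'(V^+)\ge\varsigma_\sigma] - \1[\cB'(V^-)\ge\varsigma_\sigma]$, which is $+\frac1p$ per relevant $\sigma$ in the positive-change case and $-\frac1p$ in the negative-change case (counting multiplicities in $\cS$). Hence $\zeta\Sigma(\zeta) \to +\infty$ as $\zeta \to a$ from the appropriate side when $\cB'(V^+) > \cB'(V^-)$, and $\to -\infty$ when $\cB'(V^+) < \cB'(V^-)$; the sign of the divergence and the side of approach are pinned down by the requirement (from the proof of the previous proposition, via \eqref{eq:V0} and \eqref{eq:Vn}) that the argument of $\zeta\Sigma(\zeta)$ places it in the domain of the correct inverse $\varphi_j$.

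Next, for \eqref{eq:xeq}: one has $e^x = \zeta\,\varphi_j(\zeta\Sigma(\zeta))$. In the positive-change case, $\zeta\Sigma(\zeta)\to+\infty$, and since $\varphi_j$ is the inverse of $f$ on the component $E_j$, I would use the fact that $f(u)\to\infty$ exactly as $u$ approaches the pole $\sigma$ of the corresponding summand, i.e. $\varphi_j(\zeta\Sigma(\zeta)) \to \sigma$ as its argument $\to\infty$ along $E_j$; therefore $e^x \to a\cdot\sigma = \sigma^{-1}e^V\cdot\sigma = e^V$, giving $x\to V$. (In the negative-change case $\zeta\Sigma(\zeta)\to-\infty$, and a parallel analysis of $f$ near the same pole again forces $\varphi_j \to \sigma$, so again $x\to V$; one must be careful that here $\varphi_j$ is $\varphi_0$, whose domain is $(-\infty,1)$, and check the limit is reached through $(-\infty,0)\to(\sigma_1,+\infty)$ as recorded after \eqref{eq:varphi}.) Then for \eqref{eq:yeq}, $e^y = Q(\zeta)/P_x(\zeta)$; in the positive-change case, $a\in J_V$ with $\cB'(V^+) - \cB'(V^-)>0$ means $a$ is a \emph{branch pole} of $\cG_x = P_x/Q$ for $x$ near $V$, equivalently a branch zero of $Q$, so $Q(\zeta)\to 0$ while $P_x(\zeta)$ stays bounded away from $0$ and $\infty$ at $\zeta=a$ (since $x\to V$ but $a$ is generically not a branch point of $P_V$ unless cancellation occurs, in which case one checks $Q$ has a matching zero making the ratio finite) — hence $e^y$ tends to a finite limit; in the negative-change case $a$ is a branch zero of $P_x$ (as $a = \sigma^{-1}e^V \in J_x$ when $x \to V$ and $\cB'(V^-) = \varsigma_i$) while $Q$ does not vanish there, so $P_x(\zeta)\to 0$ and $e^y\to+\infty$, i.e. $y\to+\infty$.

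The main obstacle I anticipate is the bookkeeping at the degenerate values of $\zeta$ and $x$ where branch poles of $Q$ and branch zeros of $P_x$ can coincide and cancel — precisely the cancellation phenomenon flagged in the proof of Lemma~\ref{lem:endpt}. There one must verify that in the positive-change case the cancellation, if present, is exact so the ratio $Q/P_x$ has a genuine finite nonzero limit rather than an indeterminate one, and in the negative-change case that no such cancellation occurs so the divergence $y\to+\infty$ is real. Resolving this cleanly will require invoking Proposition~\ref{prop:BD(S)} (which classifies negative slope changes and gives $\cB'(V^-)=\varsigma_i \Rightarrow \cB'(V^+)=\varsigma_{i-1}$) together with the strict inequality \eqref{lc4} to rule out the borderline configurations, much as in Lemma~\ref{lem:endpt}. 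The rest is a routine but somewhat tedious limit computation on $f$ and its inverses.
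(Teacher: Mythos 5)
Your overall strategy matches the paper: trace the pole of $\Sigma$ at $\eta = \sigma_i^{-1}e^V$ through the inverse $\varphi_j$ to get $x\to V$, then compare branch zero/pole orders of $Q$ and $P_x$ to control $e^y$. However, there is a genuine error in your treatment of the positive-change case. You assert that ``$a$ is generically not a branch point of $P_V$'' so that $P_x(\zeta)$ stays bounded away from $0$, and treat the cancellation by a matching zero of $Q$ as an exceptional circumstance. This is backwards: since $P_x(z) = \prod_{\sigma\in\cS}(1 - e^{-x}\sigma z)^{1/p}$ and $x\to V$, the function $P_V$ \emph{always} has a branch zero at $a = \sigma_i^{-1}e^V$, of order $|S_{\sigma_i}|/p$, so $P_x(\zeta)\to 0$ alongside $Q(\zeta)$. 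Under your stated assumption the ratio $e^y = Q/P_x$ would tend to $0$, i.e.\ $y\to -\infty$, which contradicts the very statement you are trying to prove. The correct observation, and what the paper's proof records, is that by Lemma~\ref{lem:Vchange} and \eqref{eq:Q} the branch zero of $Q$ at $a$ has order exactly $|S_{\sigma_i}|/p$, matching that of $P_V$, so the ratio has a finite nonzero limit; the cancellation is the rule, not a degenerate exception.

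Two smaller imprecisions worth flagging. In the negative-change case you say $Q$ ``does not vanish'' at $a$; in fact $Q$ has a branch \emph{pole} there of order $|S_{\sigma_i}|/p$ (the relevant exponent in \eqref{eq:Q} is $-\1[\cB'(V^-)\ge\varsigma_{\sigma_i}]\cdot|S_{\sigma_i}|/p = -|S_{\sigma_i}|/p$), which combines with the branch zero of $P_x$ to force $e^y\to+\infty$. Your conclusion is right, but the intermediate claim understates what $Q$ does. And the sign of the divergence of $\zeta\Sigma(\zeta)$ is governed by the side from which $\zeta$ approaches $\eta$, not solely by $\mathrm{sgn}(\cB'(V^+)-\cB'(V^-))$: the index $j$ in $\varphi_j$ jumps across $\eta$ because $\arg_+Q(\eta^+)\ne\arg_+Q(\eta^-)$, and one must check separately that each one-sided limit gives $\varphi_j\to\sigma_i$, as the paper does.
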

\begin{proof}
Let $\eta = \sigma_i^{-1} e^V$ for some $i \in [[1,d]]$. By (\ref{eq:Qarg})
\begin{align}
\arg_+ Q(\eta_+) = \frac{1}{p} \sum_{\tau^{-1} \le \sigma_i^{-1}} \1[\cB'(V^+) \ge \varsigma_\tau] + \frac{1}{p} \sum_{\tau^{-1} > \sigma_i^{-1}} \1[\cB'(V^-) \ge \varsigma_\tau].
\end{align}
If $\cB'(V^+) - \cB'(V^-) > 0$ ($< 0$), then by Lemma \ref{lem:Vchange},
\begin{align*}
\arg_+ Q(\eta^+) &= - \pi \varsigma_i ~(= -\pi \varsigma_{i-1}), \\
\arg_+ Q(\eta^-) &= - \pi \varsigma_{i-1} ~(= -\pi \varsigma_i),
\end{align*}
and $\Sigma(\eta^\pm) = \pm\infty (= \mp\infty)$. By (\ref{eq:varphi}) and the manner in which $\varphi_j$ maps into $E_j$ for $j \in [[0,d]]$, we have $\lim_{\zeta \to \eta^\pm} \varphi(\zeta \Sigma(\zeta)) = \sigma_i$. Thus $\lim_{\zeta \to \eta^\pm} \zeta \varphi(\zeta \Sigma(\zeta)) = e^V$. We also have that $Q$ has a branch zero (pole) at $\sigma_i^{-1} e^V$ of order $\frac{|S_{\sigma_i}|}{p}$.

Thus if $\cB'(V^+) - \cB'(V^-) > 0$, then as $\zeta \to \eta$ we have $e^y = \frac{Q(\zeta)}{P_x(\zeta)}$ has a finite limit since the branch zero of $Q$ is met by the branch zero of $P_x$ of equal order. In the case $\cB'(V^+) - \cB'(V^-) < 0$, then $e^y = \frac{Q(\zeta)}{P_x(\zeta)}$ approaches $+\infty$ as $\zeta \to \eta$. This proves the proposition.
\end{proof}

\begin{remark}
Proposition \ref{prop:drparam} asserts that the points $\sigma^{-1} e^V \in J_V$ such that $\cB'(V^+) - \cB'(V^-) < 0$ correspond to parts of the frozen region where the vertical coordinate is unbounded. These are the \emph{tentacles} in the frozen region which arise due to singular points of the back wall, see Figures \ref{fig:fb1} and \ref{fig:fb2}. The cusps correspond to other points in $J_V$.
\end{remark}

\begin{figure}[ht]
    \centering
    \includegraphics[scale=0.75]{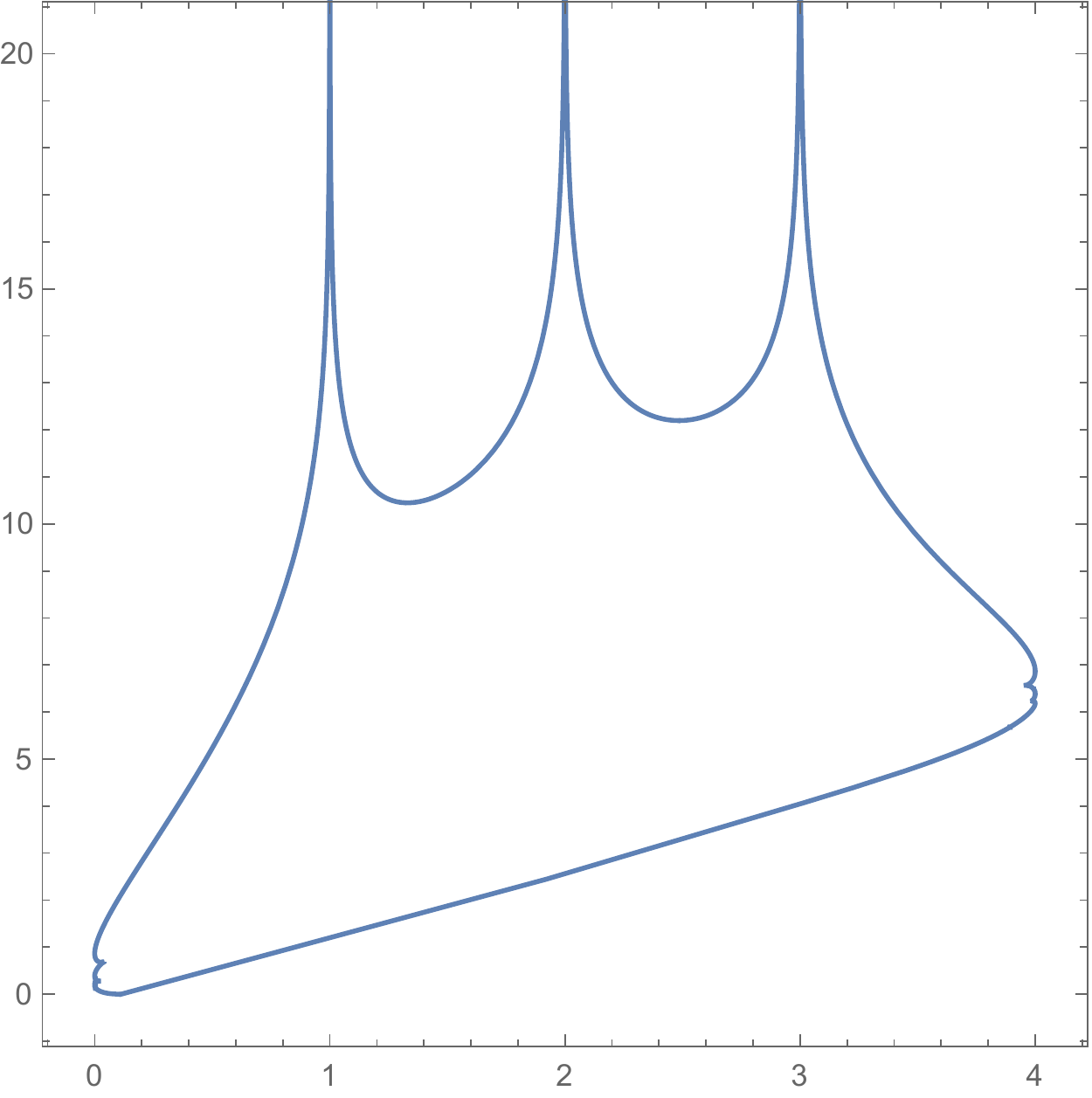}
    \caption{Frozen boundary for periodic weights $2,2,\frac{1}{4}$, $4$ linear back wall pieces with slopes $0, \frac{1}{3}, \frac{2}{3}, 1$ from left to right. Each non-differentiable point is equally spaced from one another and corresponds to a singular point.} \label{fig:fb1}
\end{figure}

\begin{figure}[ht]
    \centering
    \includegraphics[scale=0.75]{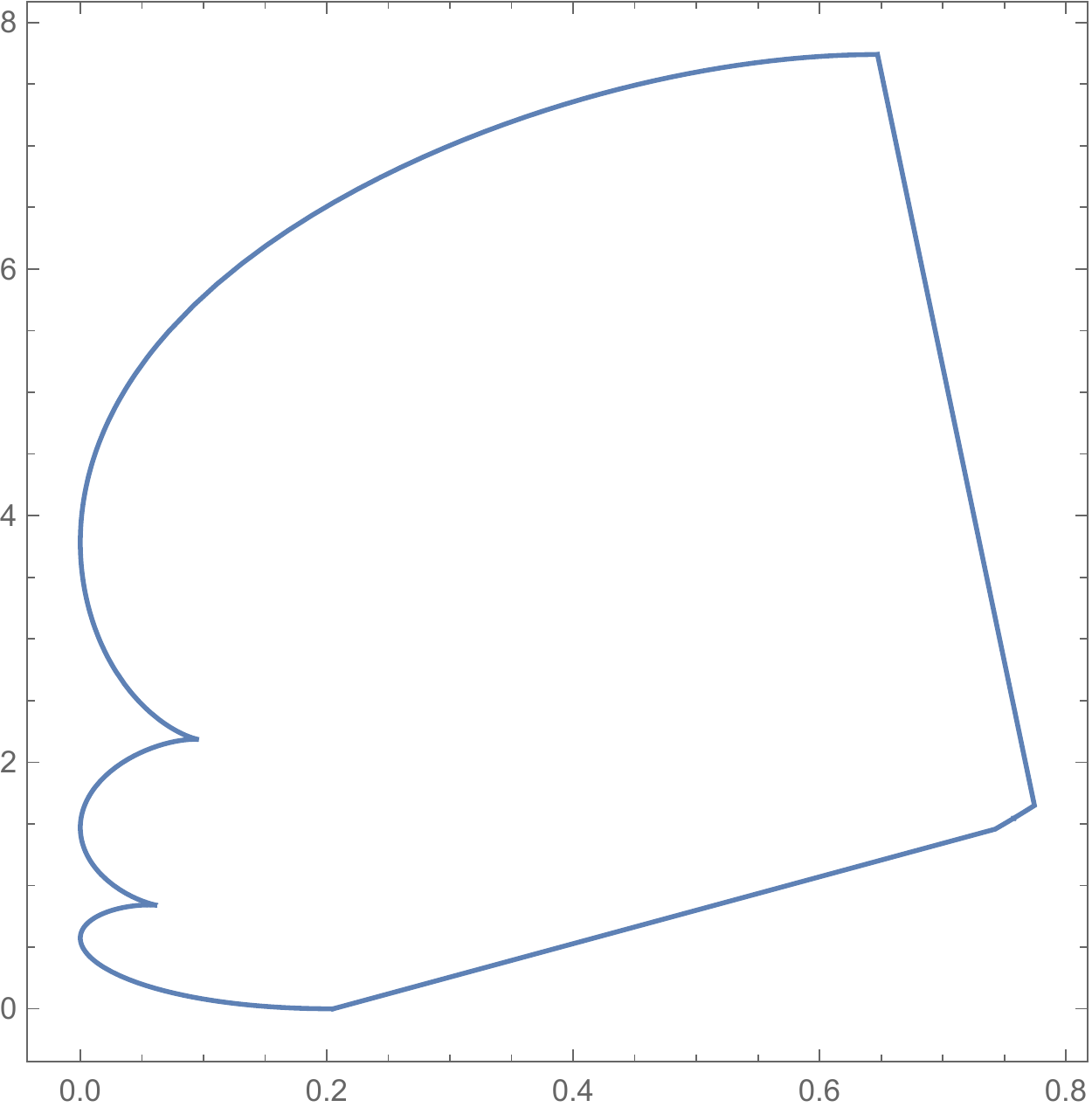}
    \caption{Frozen boundary for periodic weights $4,\frac{1}{4},2,\frac{1}{2},\frac{5}{4}, \frac{4}{5}$, $1$ linear back wall piece with slope $\frac{1}{2}$. See Figure \ref{fig:limshape} for a large sample.} \label{fig:fb2}
\end{figure}

\newpage
\section{Limit Shapes and the Gaussian Free Field} \label{sec:limgff}
In this section, we combine the results from the previous sections to obtain the main results of this article: the limit shape and Gaussian free field fluctuations for the height functions. These correspond to Theorems \ref{thm:macLLN} and \ref{thm:macGFF} given in Section \ref{sec:modelresults}. However, we provide more precise reformulations of these theorems.

Throughout this section, we fix a family $\P^{B,r,\vec{s}}_{\alpha,\ft}$ satisfying the Limit Conditions, as defined in the beginning of Section \ref{sec:backwall}, with limiting back wall $\cB: I \to \R$. We write $\cG_\x := \cG_\x^{\cB}$.

\begin{definition}
We say $x(\e) \in I^\e$ is \emph{separated from singular points} if $x(\e)$ is $d$-\emph{separated from singular points} (see Definition \ref{def:singpt}) for every $d > 0$.
\end{definition}

Let $\zeta = \zeta^{\cB}:\sL \to \HH$ denote the diffeomorphism from Theorem \ref{thm:homeo}. As in Remark \ref{rem:2roots}, we can extend $\zeta$ to a function $\wzeta: I \times \R \to \HH$ such that
\[ \lim_{\y\to-\infty} \wzeta(\x,\y) = \min \cJ_\x = \sigma_1^{-1} e^{V_0}. \]

\begin{theorem} \label{thm:MACLLN}
Suppose $\P^{B,r,\vec{s}}_{\alpha,\ft}$ satisfies the Limit Conditions with $\cB \in \fB^\Delta(\cS)$. There exists a deterministic Lipschitz $1$ function $\cH := \cH^{\cB}: I \times \R \to \R$ \emph{independent of $\alpha,\ft$} such that for any $(\x,\y) \in I\times \R$ and $x(\e) \in I^\e$ separated from singular points with $\e x(\e) \to \mathbf{x}$, we have the convergence of random measures on $\y \in \R$
\begin{align}
\e h\left( x(\e), \frac{\y}{\e}\right) \to \frac{1}{\alpha}\cH(\x,\y)
\end{align}
weakly in probability as $\e \to 0$. We have the following explicit description of $\cH$. For each $\x \in I$, we have $\cH(\x,\y) = 0$ for sufficiently negative $\y$, and
\begin{align} \label{eq:htarg}
\nabla \cH(\x,\y) := (\partial_\x\cH(\x,\y),\partial_\y \cH(\x,\y)) = \frac{1}{\pi}\left(\sum_{\sigma \in \cS} \frac{1}{p} \arg \left( 1 - \sigma^{-1} e^\x \wzeta(\x,\y) \right),\pi \cdot \1_{e^{-y} < \cG_\x(0)} - \arg \wzeta(\x,\y) \right)
\end{align}
for $(\x,\y) \in I\times \R$ such that the right hand side is well-defined and where the arg branches are chosen so that $\arg z = 0$ for $z > 0$.
\end{theorem}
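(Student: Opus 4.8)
\textbf{Proof proposal for Theorem \ref{thm:MACLLN}.}

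The plan is to derive the law of large numbers for the height function from the convergence of the observables $\wp_k$ established in Theorem \ref{thm:mom}, using Proposition \ref{prop:aht} as the dictionary between $\wp_k$ and $h$. First, recall from Proposition \ref{prop:aht} that $\int h(x,y) t^{ky}\,dy = \frac{t^{kB(x)}}{\alpha k^2 (\log t)^2}\wp_k(\pi^x;t^\alpha,t)$. After rescaling $y \mapsto y/\e$, $x \mapsto x(\e)$ and recalling $t = r^{\ft} = e^{-\e\ft}$, this says that $\int \e h(x(\e),y/\e) e^{-k\ft y}\,dy$ is (up to the explicit prefactor, which behaves like $\frac{1}{\alpha}\cdot\frac{1}{(k\ft)^2}$ after absorbing powers of $\e$, together with $t^{kB(x)} \to e^{-k\ft \cB(\x)}$) exactly $\e^2$ times $\wp_k(\pi^{x(\e)};r^{\ft\alpha},r^{\ft})$. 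Dividing by the right powers of $\e$ and applying Theorem \ref{thm:mom} gives, in probability,
\[
\int \e h\!\left(x(\e),\frac{y}{\e}\right) e^{-k\ft y}\,dy \;\longrightarrow\; \frac{1}{\alpha}\cdot\frac{e^{-k\ft\cB(\x)}}{(k\ft)^2}\cdot\frac{1}{2\pi\i}\oint [\cG^{\cB}_{<\x}(z)\cG^{\cB}_{>\x}(z)]^{k\ft}\,\frac{dz}{z}.
\]
Since $\cG_\x = e^{-\cB(\x)}\cG_{<\x}\cG_{>\x}$, the right side is $\frac{1}{\alpha (k\ft)^2}\cdot\frac{1}{2\pi\i}\oint \cG_\x(z)^{k\ft}\frac{dz}{z}$. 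Because this holds for all $k \in \Z_{>0}$, and because the family of measures $\e h(x(\e),\cdot/\e)\,dy$ is tight (each diagonal partition has bounded length controlled by $B$, the Lipschitz-$1$ property of $h$ gives uniform control, and the limiting exponential moments determine a unique measure), the exponential-moment convergence upgrades to weak convergence in probability of the measures to a deterministic limit measure whose density is $\frac{1}{\alpha}\partial_\y\cH(\x,\y)$. This identifies $\partial_\y\cH$ via its Laplace transform in $\y$ and shows the limit is independent of $\alpha,\ft$ (the $\ft$-dependence of the contour integral is precisely the substitution $k \mapsto k\ft$, which after the change of variables is just a reparametrization; the $\frac{1}{(k\ft)^2}$ and the $\ft$ in the exponent cancel against the $\frac{1}{\e^2}$ versus $\frac{1}{\e}$ bookkeeping — this is where one must be careful).

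Next I would extract the explicit formula \eqref{eq:htarg}. The key is to evaluate $\frac{1}{2\pi\i}\oint \cG_\x(z)^{s}\frac{dz}{z}$ as a function of $s = k\ft > 0$ by deforming the contour. The contour encircles $[0,\rho_<(\x))$, i.e.\ the branch poles and zeros of $\cG_\x$ coming from $\cG_{<\x}$, but excludes $(\rho_>(\x),\infty)$. Writing the integral as $\frac{1}{2\pi\i}\oint e^{s\log\cG_\x(z)}\frac{dz}{z}$ and using the analysis of Section \ref{sec:cpx} — in particular that the companion equation $\cG_\x(\zeta)=e^{-y}$ has for $(\x,\y)\in\sL$ a unique root $\wzeta(\x,\y)\in\HH$, and that the roots sweep out the relevant interval of $\R$ as $\y$ varies (Remark \ref{rem:2roots+}) — one recognizes the cumulative distribution function: integrating the density against $e^{-sy}$ and comparing, $\partial_\y\cH(\x,\y)$ must equal $1 - \frac{1}{\pi}\arg\wzeta(\x,\y)$ on the liquid region (so that the local proportion $p_{\hloz} = \frac{1}{\pi}\arg\wzeta$ as in \eqref{eq:locprop}), with the $\1_{e^{-y}<\cG_\x(0)}$ accounting for the frozen values $\partial_\y\cH \in \{0,1\}$ outside $\sL$. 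Concretely, one shows $\int_{-\infty}^{\infty} \partial_\y\cH(\x,\y)\,e^{-sy}\,dy$ equals the contour integral by a residue/branch-cut computation: parametrize the boundary values of $\cG_\x$ along $\R$ via $\arg_+\cG_\x$, use that $e^{-y}\mapsto \wzeta$ inverts $\cG_\x$, change variables $y \to \wzeta$, and the Jacobian produces exactly $\partial_\y(\frac{1}{\pi}\arg\wzeta)$.

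For $\partial_\x\cH$, I would use the Cauchy–Riemann-type relations for the complex structure: once $\partial_\y\cH = 1 - \frac{1}{\pi}\arg\wzeta$ is established and $\wzeta$ is a diffeomorphism onto $\HH$ (Theorem \ref{thm:homeo}), the height function is determined up to its boundary values, and one checks directly that $(\x,\y)\mapsto \frac{1}{\pi}(\sum_\sigma \frac{1}{p}\arg(1-\sigma^{-1}e^\x\wzeta), \pi\1_{e^{-y}<\cG_\x(0)} - \arg\wzeta)$ is a closed $1$-form (the mixed partials agree, using $\cG_\x(z) = P_\x(z)/Q(z)$ with $P_\x(z) = \prod_\sigma(1-e^{-\x}\sigma z)^{1/p}$ and differentiating the identity $\cG_\x(\wzeta) = e^{-\y}$ in $\x$ and $\y$). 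Integrating this form from $\y=-\infty$ (where $\cH=0$) gives $\cH$, and Lipschitz-$1$ follows because $\nabla\cH$ has the stated form with $p_{\hloz}+p_{\lloz}+p_{\rloz}=1$ and all nonnegative. Finally, the condition that $x(\e)$ be \emph{separated from singular points} (all $d>0$) is exactly what lets Theorem \ref{thm:mom} apply for every $k$ without the $k\ft$-separation caveat, and Proposition \ref{prop:dist<>}(\ref{dist<>:1})–(\ref{dist<>:2}) ensures $\rho^\e_{<}(x(\e)), \rho^\e_{>}(x(\e))$ converge to $\rho_<(\x),\rho_>(\x)$, so the limit is genuinely a function of $\x$ alone.

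The main obstacle I anticipate is the passage from exponential-moment convergence to weak convergence of the random measures together with the deterministic identification of the limit — i.e.\ the tightness argument and the uniqueness of the measure with the prescribed Laplace transform, carried out uniformly enough to conclude convergence \emph{in probability}. The moment-to-shape step also requires showing the limiting measure is actually the distributional derivative of a Lipschitz function (rather than having atoms or singular parts), which is where one genuinely needs input from the liquid-region analysis of Section \ref{sec:cpx} rather than soft arguments. The explicit formula \eqref{eq:htarg}, by contrast, is a (somewhat intricate) residue computation that should go through once the right change of variables $e^{-y}\leftrightarrow\wzeta$ is set up.
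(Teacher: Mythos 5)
Your skeleton matches the paper's: Proposition \ref{prop:aht} translates $\wp_k$ into an exponential moment of the height density, Theorem \ref{thm:mom} supplies the limit of those moments, Lipschitz/monotonicity plus compact support (via a Markov-type bound on the auxiliary measure) upgrade this to weak convergence of a deterministic measure, and the $\alpha,\ft$-independence follows exactly as you say by the substitution $k\mapsto k\ft$. The serious gap is in your ``residue/branch-cut computation'' for $\partial_\y\cH$. The integrand $\cG_\x(z)^{k\ft}/z$ is \emph{not} meromorphic inside the contour: $\cG_\x$ carries $p$-th-root branch cuts along $\R$, so deforming to branch points and ``recognizing the CDF'' has no direct residue justification. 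The paper's resolution is a genuine device you would need to (re)discover: pass to the Stieltjes transform $S_{\nu_x}(v)$ of the pushed-forward density, then introduce the auxiliary function $\Ups_x(v)=\log(1-\cG_x(0)^p/v^p)+\frac{1}{2\pi\i}\oint_\gamma\log(1-\cG_x(z)^p/v^p)\frac{dz}{z}$ whose integrand \emph{is} meromorphic, and prove $\Im_+\Ups_x=\Im_+S_{\nu_x}$ by the $p$-th-root-of-unity symmetry $\Ups_x(v)=\sum_{j}S_{\nu_x}(v\omega^{-j})$ together with $\overline{S_{\nu_x}(v)}=S_{\nu_x}(\bar v)$ (Lemma \ref{lem:ups}). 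After that, one must still carefully track the family of root functions $z_{\fp}(x,v)$ of $\cG_x(z)^p=v^p$ across the discriminant locus (Lemma \ref{lem:upsform}, properties (i)--(iv)) and run a connectedness argument ($\sC=\sL$) to identify which single root function coincides with $\wzeta$ and contributes the $\arg\wzeta$ term. These are the actual technical content; without the $p$-th-power trick the density extraction does not get off the ground.

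Your closed-$1$-form route to $\partial_\x\cH$ is a genuinely different idea from the paper's. The paper computes $\partial_\x\cH$ by the same Stieltjes/auxiliary-function machinery applied to $\Xi_x(v)$ (differentiating (\ref{eq:hemom}) in $x$), and finishes with another connectedness lemma. Your plan --- derive $\partial_\x\cH$ from $\partial_\y\cH$ by verifying closedness via implicit differentiation of $\cG_\x(\wzeta)=e^{-\y}$ --- would also work, and is arguably cleaner conceptually. The tradeoff: you must first establish that $\cH$ is $C^1$ away from the frozen boundary (so that integrating the $1$-form from $\y=-\infty$ is legitimate and the boundary term vanishes), which the paper sidesteps by obtaining an explicit Stieltjes-transform formula for $\partial_\x\cH$ directly. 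Finally, you correctly identify that the passage from exponential moments to weak-in-probability convergence is where compactness of support and uniqueness of the limiting measure must be proved; the paper does this with the auxiliary measure $\mu_x^\ft$ in Lemma \ref{lem:bdblw}, and you should say more explicitly how the moment growth bound $\le L^k$ is extracted from the contour integral.
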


\begin{remark} \label{rem:wd}
Although the choice of $\wzeta$ was not shown to be well-defined, we note that $\arg \wzeta(\x,\y)$ and $\arg(1 - \sigma^{-1} e^\x \wzeta(\x,\y))$ are well-defined (upon specification of the branch). This follows from Remark \ref{rem:2roots}.
\end{remark}

The condition in the indicator function $e^{-\y} < \cG_\x(0)$ is always true in the liquid region. To see this, let $\sL_{\x_0}$ be the slice $\{(\x_0,\y): \y \in \R, (\x_0,\y) \in \sL\}$. If $e^{-\y} \ge \cG_\x(0)$, then $0$ is a root of the $(\x,-\log \cG_\x(0))$-companion equation. However, by Remark \ref{rem:2roots+}, this implies that $-\log \cG_\x(0) \le \min \sL_\x$.

By the preceding discussion, we have the following corollary:
\begin{corollary}
The limiting height function $\cH$ from Theorem \ref{thm:MACLLN} has the following gradient for $(\x,\y) \in \sL$:
\begin{align}
\nabla \cH(\x,\y) = \frac{1}{\pi}\left(\sum_{\sigma \in \cS} \frac{1}{p} \arg \left( 1 - \sigma^{-1} e^\x \zeta(\x,\y) \right),\pi - \arg \zeta(\x,\y) \right).
\end{align}
\end{corollary}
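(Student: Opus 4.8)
The corollary follows almost immediately from Theorem \ref{thm:MACLLN} together with the observations in the preceding two paragraphs, so the proof is short. The plan is as follows.

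\textbf{Step 1: Reduce to the indicator computation.} By Theorem \ref{thm:MACLLN}, for $(\x,\y) \in I \times \R$ at which the right-hand side of \eqref{eq:htarg} is well-defined we have
\[
\nabla \cH(\x,\y) = \frac{1}{\pi}\left(\sum_{\sigma \in \cS} \frac{1}{p} \arg\left(1 - \sigma^{-1} e^\x \wzeta(\x,\y)\right),\ \pi \cdot \1_{e^{-\y} < \cG_\x(0)} - \arg \wzeta(\x,\y)\right).
\]
So it suffices to show two things when $(\x,\y) \in \sL$: first, that $\wzeta(\x,\y)$ may be replaced by $\zeta(\x,\y)$, and second, that the indicator $\1_{e^{-\y} < \cG_\x(0)}$ equals $1$ on $\sL$.

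\textbf{Step 2: Identify $\wzeta$ with $\zeta$ on $\sL$.} On the liquid region, by Theorem \ref{thm:homeo} (equivalently Proposition \ref{prop:2roots}) the $(\x,\y)$-companion equation has exactly one root in $\HH$, namely $\zeta(\x,\y)$, and the extension $\wzeta$ is defined in Remark \ref{rem:2roots} to take the value of this unique nonreal root precisely when $(\x,\y)\in\sL$. Hence $\wzeta(\x,\y) = \zeta(\x,\y)$ for $(\x,\y) \in \sL$, and in particular the right-hand side of \eqref{eq:htarg} is well-defined there (the arguments are of the nonreal quantity $\zeta(\x,\y) \in \HH$ and of $1 - \sigma^{-1} e^\x \zeta(\x,\y)$, which is nonzero since $e^\x \zeta(\x,\y)$ is nonreal).

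\textbf{Step 3: The indicator is $1$ on $\sL$.} This is exactly the content of the paragraph following Remark \ref{rem:wd}: if $e^{-\y} \ge \cG_\x(0)$, then (since $\cG_\x(0) = e^{-\cB(V_0)} > 0$ and the companion equation at $\zeta = 0$ reads $\cG_\x(0) = e^{-\y'}$ with $\y' = -\log\cG_\x(0) \le \y$) the value $0$ is a real root of the $(\x,-\log\cG_\x(0))$-companion equation, and by Remark \ref{rem:2roots+} every point of the external component of $\hatR \setminus \cJ_\x$ — in particular $0$, which lies in the external component since $0 = \min\cJ_\x$ when $V_0 > -\infty$, and the limiting case $V_0 = -\infty$ is handled by continuity — is a root of the $(\x,y)$-companion equation only for $y \le -\log\cG_\x(0)$; thus $-\log\cG_\x(0) \le \min\{\y : (\x,\y)\in\sL\}$. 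Equivalently, $(\x,\y)\in\sL$ forces $\y > -\log\cG_\x(0)$, i.e. $e^{-\y} < \cG_\x(0)$. Substituting $\1_{e^{-\y}<\cG_\x(0)} = 1$ and $\wzeta(\x,\y) = \zeta(\x,\y)$ into \eqref{eq:htarg} gives
\[
\nabla \cH(\x,\y) = \frac{1}{\pi}\left(\sum_{\sigma \in \cS}\frac{1}{p}\arg\left(1 - \sigma^{-1} e^\x \zeta(\x,\y)\right),\ \pi - \arg\zeta(\x,\y)\right),
\]
as claimed.

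There is no real obstacle here; the only point requiring a little care is Step 3, namely confirming that the threshold $-\log\cG_\x(0)$ lies at or below the bottom of the liquid slice $\sL_\x$, which is precisely what Remark \ref{rem:2roots+} provides via the external-component argument, together with handling the boundary case $V_0 = -\infty$ (where $0 \notin \cJ_\x$ and one passes to the limit). Everything else is bookkeeping: reading off \eqref{eq:htarg} and invoking the already-established identification of $\wzeta$ with $\zeta$ on $\sL$.
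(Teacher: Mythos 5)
Your proposal is correct and takes the same route as the paper: the paper's justification is exactly the discussion preceding the Corollary (that $\1_{e^{-\y}<\cG_\x(0)}=1$ on $\sL$ via the Remark \ref{rem:2roots+} external-component argument, plus the tacit identification $\wzeta=\zeta$ on $\sL$). You have merely made those steps explicit.
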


\begin{theorem} \label{thm:MACGFF}
Suppose $\P^{B,r,\vec{s}}_{\alpha,\ft}$ satisfies the Limit Conditions with $\cB \in \fB^\Delta(\cS)$, $\zeta(x,y) := \zeta^{\cB}(x,y)$ denotes the homeomorphism of Theorem \ref{thm:homeo}, and
\[ \overline{h}(x,y) = h(x,y) - \E h(x,y) \]
denotes the centered height function. Then $\alpha \ft \sqrt{\pi} \cdot \overline{h}$ converges to the $\zeta$-pullback of the GFF in the following sense. For any $\mathbf{x}_1,\ldots,\mathbf{x}_m \in I$, integers $k_1,\ldots,k_m > 0$, and $x_1(\e),\ldots,x_m(\e) \in I^\e$ such that $x_i(\e)$ is $\ft k_i$-separated from singular points and $\e x_i(\e) \to \mathbf{x}_i$ for each $i \in [[1,m]]$, we have the convergence of random vectors
\begin{align}
\left( \alpha \ft \sqrt{\pi} \int \overline{h}\left( x_i(\e),\frac{\y}{\e} \right) e^{-k_i\ft \y} \, d\y \right)_{i \in [[1,m]]} \to \left(\int \fH(\zeta(\x_i,\y)) e^{-k_i\ft \y} \, d\y \right)_{i \in [[1,m]]}.
\end{align}
in distribution as $\e \to 0$, where $\fH$ is the Gaussian free field on $\HH$.
\end{theorem}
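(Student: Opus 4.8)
\textbf{Proof proposal for Theorem \ref{thm:MACGFF}.}

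The plan is to translate the GFF convergence statement into the language of the moment observables $\wp_k$, which is where all the analytic work from Sections \ref{sec:obs} and \ref{sec:asymp} lives, and then to identify the limiting covariance with the covariance of the $\zeta$-pullback of $\fH$. The starting point is Proposition \ref{prop:aht}, which gives the exact identity
\[
\int_{-\infty}^\infty h(x,y)\, t^{ky}\, dy = \frac{t^{kB(x)}}{\alpha k^2 (\log t)^2}\, \wp_k(\pi^x; t^\alpha, t).
\]
Writing $t = r^{\ft} = e^{-\e\ft}$, so that $\log t = -\e\ft$ and $t^{ky} = e^{-k\ft(\e y)}$, and substituting $y \mapsto \y/\e$ in the integral, this becomes (up to the deterministic prefactor $t^{kB(x)}/(\alpha k^2 \e^2\ft^2)$) an exact expression for $\int h(x(\e), \y/\e)\, e^{-k\ft\y}\, d\y$ in terms of $\wp_k(\pi^{x(\e)}; q, t)$. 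Centering kills the $\E h$ part, so $\alpha\ft\sqrt{\pi}\int \overline h(x(\e),\y/\e) e^{-k\ft\y}d\y$ equals $\frac{\sqrt{\pi}}{k^2\ft}\cdot\frac{1}{\e}\big(\wp_k(\pi^{x(\e)};q,t) - \E\wp_k(\pi^{x(\e)};q,t)\big)$ times $r^{-k\ft B(x(\e))}$; note $r^{-k\ft B(x(\e))} = e^{k\ft\cdot\e B^\e(x(\e))} \to e^{k\ft\cB(\x)}$ by the Limit Conditions. So the left-hand random vector is, asymptotically, a fixed scalar multiple of $\big(\tfrac{1}{\e}(\wp_{k_i}(\pi^{x_i(\e)}) - \E\wp_{k_i}(\pi^{x_i(\e)}))\big)_i$, and Theorem \ref{thm:gauss} already tells us this converges to a centered Gaussian vector $(\fD_{k_i}(\x_i))_i$ with the explicit covariance \eqref{eq:thmcov}. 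The $\ft k_i$-separation hypothesis is exactly what Theorem \ref{thm:gauss} (with $k=k_i$) requires — actually Theorem \ref{thm:gauss} asks for $2k_i\ft$-separation, so I would either invoke the strengthened claim proved in that section or note that $\ft k_i$-separation suffices after the reduction-to-separated-points argument in the proof of Theorem \ref{thm:gauss} (the Lemma deriving Theorem \ref{thm:gauss} from the Claim uses $L^2$-approximation by well-separated points, which only needs \emph{some} separation at the target).

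The substantive remaining step is the identification of the covariance. After carrying the deterministic prefactors through, the covariance of the limit of $\alpha\ft\sqrt\pi\int\overline h(x_i(\e),\cdot/\e)e^{-k_i\ft\y}d\y$ is
\[
\frac{\pi\, e^{k_i\ft\cB(\x_i)+k_j\ft\cB(\x_j)}}{k_i^2 k_j^2 \ft^2}\,\cov(\fD_{k_i}(\x_i),\fD_{k_j}(\x_j))
= \frac{e^{k_i\ft\cB(\x_i)+k_j\ft\cB(\x_j)}}{(2\pi\i)^2}\oint\oint \frac{[\cG_{<\x_i}\cG_{>\x_i}(z)]^{k_i\ft}[\cG_{<\x_j}\cG_{>\x_j}(w)]^{k_j\ft}}{(z-w)^2}\,dz\,dw,
\]
using \eqref{eq:thmcov}. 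Now recall $\cG_\x(z) = e^{-\cB(\x)}\cG_{<\x}(z)\cG_{>\x}(z)$ from \eqref{eq:Gx}, so $e^{k_i\ft\cB(\x_i)}[\cG_{<\x_i}\cG_{>\x_i}(z)]^{k_i\ft} = \cG_{\x_i}(z)^{k_i\ft}$; the prefactors are absorbed cleanly and the covariance becomes
\[
\frac{1}{(2\pi\i)^2}\oint\oint \frac{\cG_{\x_i}(z)^{k_i\ft}\,\cG_{\x_j}(w)^{k_j\ft}}{(z-w)^2}\,dz\,dw .
\]
On the other side, the target vector is $\big(\int\fH(\zeta(\x_i,\y))e^{-k_i\ft\y}d\y\big)_i$, a Gaussian vector whose covariance is $\int\int \big(-\tfrac{1}{2\pi}\log|\tfrac{\zeta(\x_i,\y)-\zeta(\x_j,\y')}{\zeta(\x_i,\y)-\overline{\zeta(\x_j,\y')}}|\big) e^{-k_i\ft\y - k_j\ft\y'}\,d\y\, d\y'$. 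I would match the two by deforming the $z,w$ contours to the images, under $\zeta(\x_i,\cdot)$ and $\zeta(\x_j,\cdot)$, of the vertical lines $\{\Re = \pm\infty\}$ — i.e. use that along the boundary arc of $\sL$ traced by $\zeta(\x_i,\y)$ as $\y$ ranges over the relevant interval, $\cG_{\x_i}(\zeta(\x_i,\y)) = e^{-\y}$, which is precisely the companion equation \eqref{eq:compeq}. Changing variables $z = \zeta(\x_i,\y)$ (so $\cG_{\x_i}(z)^{k_i\ft} = e^{-k_i\ft\y}$ and $dz = \partial_\y\zeta(\x_i,\y)\,d\y$) and similarly for $w$, and then using the standard residue computation $\frac{1}{(2\pi\i)^2}\oint\oint\frac{f'(\y)g'(\y')}{(f(\y)-g(\y'))^2} = -\frac{1}{(2\pi)^2}\cdot(\text{boundary/log terms})$, one recovers the logarithmic kernel with the Schwarz-reflection structure $\log|\tfrac{z-w}{z-\bar w}|$ once the contributions of both $\zeta$ and $\bar\zeta$ (the two roots in Proposition \ref{prop:2roots}) are combined. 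This computation is essentially the one already done in the noninteracting / Schur GFF literature (e.g. the Borodin--Gorin type arguments cited in the introduction); I would present it as a lemma: \emph{the double contour integral $\tfrac{1}{(2\pi\i)^2}\oint\oint \cG_{\x}(z)^{a}\cG_{\x'}(w)^{b}(z-w)^{-2}dz\,dw$ equals the GFF-pullback covariance $\int\int(-\tfrac{1}{2\pi})\log|\tfrac{\zeta(\x,\y)-\zeta(\x',\y')}{\zeta(\x,\y)-\overline{\zeta(\x',\y')}}|e^{-a\y-b\y'}d\y\,d\y'$}, with the proof being the contour deformation plus residue bookkeeping, relying on Theorem \ref{thm:homeo} to guarantee $\zeta$ is a diffeomorphism so the change of variables is legitimate.

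The main obstacle I anticipate is this covariance identification lemma, and specifically two sub-points inside it: first, justifying the contour deformation — one must check that as the $z$-contour (a loop around $[0,\rho_<(\x_i))$ avoiding $(\rho_>(\x_i),\infty)$) is pushed toward the frozen boundary, no poles of $\cG_{\x_i}(z)^{k_i\ft}$ are crossed and the $(z-w)^{-2}$ singularity is handled correctly, including the boundary case $\rho_<(\x_i) = \rho_>(\x_j)$ where the contours touch (this is flagged in Theorem \ref{thm:gauss} and likely forces a principal-value or limiting-contour argument); and second, correctly accounting for the branch structure of $\cG_\x^{k_i\ft}$ when $\ft$ is not an integer — the function $\cG_\x(z)^{\ft}$ has $p$-th-root-type branch points (this is exactly the ``$p$th roots of rational functions'' phenomenon advertised in the introduction), so the contour integral must be set up on the correct Riemann surface / with consistent branch cuts, and one must verify the deformation respects these cuts. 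A secondary technical point is the exchange of limits and integration needed to pass from convergence of the exponential-moment integrals $\int\overline h(\cdot)e^{-k\ft\y}d\y$ (which is what Proposition \ref{prop:aht} and Theorem \ref{thm:gauss} deliver) back to the statement as phrased; since the statement is already phrased in terms of these integrated observables, this is mild, but one should remark that the family $\{k \in \Z_{>0}\}$ of Laplace-type transforms determines the (generalized) field, so convergence of all these linear functionals is the right notion of GFF convergence here (matching the formulation of Theorem \ref{thm:GFF}). Finally, I would note that $\alpha$ and $\ft$ enter only through the overall scaling $\alpha\ft\sqrt\pi$ and the exponent $k\ft$, and that the limit is manifestly $\alpha,\ft$-independent after the substitution — this is the ``fluctuations depend on the interaction parameter only by a scaling factor'' assertion from the introduction, and it falls out automatically from the bookkeeping above rather than requiring separate argument.
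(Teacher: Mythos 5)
Your plan tracks the paper's proof step for step --- translate to $\wp_k$ observables via Proposition \ref{prop:aht}, apply Theorem \ref{thm:gauss}, absorb the $t^{kB(x)}$ prefactor into $\cG_\x$ via $\cG_\x = e^{-\cB(\x)}\cG_{<\x}\cG_{>\x}$, deform the double contour to the frozen boundary and integrate by parts --- so the overall architecture is exactly right. You also correctly spot the mismatch between the $\ft k_i$-separation hypothesis here and the $2k_i\ft$-separation hypothesis of Theorem \ref{thm:gauss}; the paper silently glosses this, and your route through the $L^2$-approximation lemma inside the proof of Theorem \ref{thm:gauss} is the appropriate fix.

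The step you defer as the ``main obstacle'' --- justifying the contour deformation past the branch cuts of $\cG_\x^{k\ft}$ when $\ft \notin \Z$ --- is precisely what Lemma \ref{lem:deform} is for: $\cD^\C_{x_0}$ contains a neighborhood of $J_{V_\ell}$ whenever $V_\ell < x_0$ and is separated from $J_{V_\ell}$ whenever $V_\ell > x_0$, so the $z$-contour of Theorem \ref{thm:gauss} (which encircles $[0,\rho_<(\x_i))$ and avoids $(\rho_>(\x_i),\infty)$) can be pushed onto $\partial\cD^\C_{\x_i}$ without crossing a branch point of $\cG_{\x_i}^{k_i\ft}$; the boundary terms in the subsequent integration by parts cancel because $\zeta(\x_i,\cdot)$ is real at the endpoints of each component of the $\y$-slice. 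You should actually carry out this change of variables rather than cite it as standard, because your bookkeeping contains two slips that would be caught there. First, the deterministic prefactor from Proposition \ref{prop:aht} is $t^{kB(x)} = r^{+k\ft B(x)} \to e^{-k\ft\cB(\x)}$, not $r^{-k\ft B(x)}$; only with this sign does $t^{k_iB(x_i)}\left[\cG_{<\x_i}\cG_{>\x_i}\right]^{k_i\ft} \to \cG_{\x_i}^{k_i\ft}$ absorb cleanly. Second, the coefficient in front of your final double integral should be $\pi/((2\pi\i)^2 k_ik_j\ft^2)$, not $1/(2\pi\i)^2$: the $\pi/(k_ik_j\ft^2)$ you drop is precisely what the two integrations by parts consume --- each $\partial_\y$ pulls down a $k\ft$ from $e^{-k\ft\y}$, and the residual $\pi$ combines with $(2\pi\i)^{-2}$ and the four $\zeta/\bar\zeta$ cross-terms to yield the $-\tfrac{1}{2\pi}$ in front of the logarithmic GFF kernel, so the correct normalization depends on tracking it.
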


Let $x_0 \in \R$ and define
\begin{align}
\cD_{x_0} & = \{\zeta(x,y): x \le x_0, y \in \R, (x,y) \in \sL\}, \\
\cD_{x_0}^{\C} & = \cl(\{z: z \in \cD_{x_0}~\mbox{or}~ \bar{z} \in \cD_{x_0} \}).
\end{align}
The boundary of $\cD_{x_0}^{\C}$ is
\begin{align} \label{eq:Dbdry}
\partial \cD_{x_0}^{\C} = \cl\left(\{\zeta(x_0,y):y \in \R, (x_0,y) \in \sL\}\cup\{\overline{\zeta}(x_0,y):y \in \R, (x_0,y) \in \sL\}\right).
\end{align}
The points gained in taking the closure are all in $\R$.

By the parametrization of the frozen boundary in Section \ref{sec:frzbdry} and Proposition \ref{prop:drparam}, we have:

\begin{lemma} \label{lem:deform}
The set $\cD_{x_0}^{\C}$ contains a neighborhood of $J_{V_\ell}$ if $V_\ell < x_0$, and is separated from $J_{V_\ell}$ if $V_\ell > x_0$.
\end{lemma}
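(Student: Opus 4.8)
\textbf{Proof plan for Lemma \ref{lem:deform}.}

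The plan is to read off the geometry of $\cD_{x_0}^{\C}$ from the explicit frozen-boundary parametrization \eqref{eq:xeq}--\eqref{eq:yeq} together with Proposition \ref{prop:drparam}. First I would recall that $\partial \cD_{x_0}^{\C}$ is described by \eqref{eq:Dbdry} as the closure of the arc $\{\zeta(x_0,y)\}$ and its conjugate; this arc is precisely (half of) the frozen boundary curve for the slice $x = x_0$, and the points added in the closure all lie in $\R \setminus \cJ_{x_0}$ except possibly at limit points where $\zeta(x_0,y)$ tends to an element of $\cJ_{x_0}$. So the question of whether $\cD_{x_0}^{\C}$ contains a neighborhood of a given $\mathfrak{p} = \sigma^{-1} e^{V_\ell} \in J_{V_\ell}$ reduces to understanding the behavior of the double-root parametrization near that point, and to locating the branch pole/zero structure of $\cG_{x_0}$ relative to $\mathfrak{p}$.

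The key steps, in order: (1) Fix $V_\ell$ with $V_\ell < x_0$. Since $J_{V_{\ell'}} < J_{V_{\ell''}}$ whenever $V_{\ell'} < V_{\ell''}$ (the consequence of Lemma \ref{lem:Vchange} and \eqref{lc4} noted in the excerpt), and since $J_{x_0} = \{\sigma^{-1} e^{x_0}\}_{\sigma \in \cS}$ sits strictly to the right of $J_{V_\ell}$ (because $x_0 > V_\ell$ and $e^{x_0} > e^{V_\ell}$), the point $\mathfrak{p}$ is an interior branch point of $\cG_{x_0}$: it is a branch pole of $\cG_{x_0}$ if $\cB'(V_\ell^+) - \cB'(V_\ell^-) > 0$ and a branch zero if $\cB'(V_\ell^+) - \cB'(V_\ell^-) < 0$, with no cancellation from $P_{x_0}$ since $\mathfrak{p} \notin J_{x_0}$. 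By Lemma \ref{lem:Qrootcount}(1)--(3), each branch pole of $\cG_{x_0}$ that is an endpoint of a component $L$ with $\arg_+ \cG_{x_0}(L) = 0$ carries a continuous real root function $\zeta_{\mathfrak{p}}(y) \to \mathfrak{p}$ as $y \to -\infty$, which persists in $L$ for all $y$; as $y$ increases these real roots coalesce to produce nonreal pairs, so a full real neighborhood of $\mathfrak{p}$ within such a component $L$ is swept out by roots $\zeta(x_0, y)$, and by Remark \ref{rem:2roots+}/continuity the nonreal roots $\zeta(x, y)$ for $x$ slightly less than $x_0$ fill a two-dimensional neighborhood of $\mathfrak{p}$. (2) For the branch-zero case one argues symmetrically, using that $\zeta(x,y)$ approaches a branch zero as $y \to +\infty$ and that $\mathfrak{p}$ lies between two components with $\arg_+ \cG_{x_0} = 0$ on each side, so again a neighborhood of $\mathfrak{p}$ is covered. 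Concretely, Proposition \ref{prop:drparam} tells us that along the double-root curve $\zeta \to \mathfrak{p}$ forces $x \to V_\ell < x_0$, so the frozen boundary $\partial\cD_{x_0}^{\C}$ does not reach $\mathfrak{p}$; combined with the fact that $\cD_{x_0}$ is (the image under $\zeta$ of) $\{x \le x_0\} \cap \sL$ and is a domain, $\mathfrak{p}$ lies in the interior. (3) For $V_\ell > x_0$: now $J_{V_\ell}$ lies strictly to the right of $J_{x_0}$ and of the entire frozen-boundary arc for the slice $x = x_0$, because the $x$-coordinate \eqref{eq:xeq} of any double root near $J_{V_\ell}$ tends to $V_\ell > x_0$ (Proposition \ref{prop:drparam} again), and monotonicity in $x$ of the relevant roots shows $\zeta(x,y)$ for $x \le x_0$ stays bounded away from $J_{V_\ell}$. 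Hence $\overline{\cD_{x_0}^{\C}}$ is separated from $J_{V_\ell}$.

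I expect the main obstacle to be step (1)/(2): making rigorous that a genuine two-dimensional neighborhood of an interior branch point $\mathfrak{p}$ is contained in $\cD_{x_0}^{\C}$, rather than merely that $\mathfrak{p}$ is a limit point of roots. This requires combining the real-root bookkeeping of Lemma \ref{lem:Qrootcount} (which controls where roots live on $\hatR$ for $y$ near $\pm\infty$ and the parity argument for all $y$) with the local analysis near $\mathfrak{p}$: one must check that as the real root pair straddling $\mathfrak{p}$ coalesces at some finite $y$, the resulting nonreal branch emanates into $\HH$ on both sides of the vertical line through $\mathfrak{p}$ as $x$ varies below $x_0$, so that $\zeta$ of a neighborhood of $(x_0, y)$-type points in $\sL$ covers a neighborhood of $\mathfrak{p}$. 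The openness of $\cD_{x_0}$ in the interior (away from $\partial\cD_{x_0}^{\C}$) follows from Theorem \ref{thm:homeo} since $\zeta$ is a diffeomorphism, but the boundary behavior near $\mathfrak{p}$ needs the explicit parametrization; the separation statements in step (3) are comparatively routine once the ordering $J_{V_\ell} < J_{x_0} \le$ (frozen arc) is established.
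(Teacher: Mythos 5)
Your core argument matches the paper's: the lemma is a direct consequence of the frozen-boundary parametrization \eqref{eq:xeq}--\eqref{eq:yeq} and Proposition \ref{prop:drparam}, which is exactly the sentence beginning ``Concretely, Proposition \ref{prop:drparam} tells us\ldots'' in your step (1). Since $x \to V_\ell$ as the double-root parameter $\zeta$ approaches $\fp \in J_{V_\ell}$ and $V_\ell \neq x_0$, the arc $\{\zeta(x_0,y)\}$ (and hence $\partial\cD_{x_0}^\C$ via \eqref{eq:Dbdry}) stays bounded away from $J_{V_\ell}$; meanwhile, roots $\zeta(x,y)$ approach $J_{V_\ell}$ only when $x$ is near $V_\ell$, which lies in $\{x \le x_0\}$ exactly when $V_\ell < x_0$. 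Those two facts give both halves of the statement, and this is precisely the route the paper intends.

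However, one intermediate claim in your step (1) is false: that ``the nonreal roots $\zeta(x,y)$ for $x$ slightly less than $x_0$ fill a two-dimensional neighborhood of $\fp$.'' By continuity of the diffeomorphism $\zeta$ on $\sL$, for $x$ near $x_0$ the values $\zeta(x,y)$ stay near the arc $\{\zeta(x_0,y')\}$, which Proposition \ref{prop:drparam} keeps bounded away from $J_{V_\ell}$; the neighborhood of $\fp$ is filled by roots with $x$ near $V_\ell$, not near $x_0$. The preceding branch-pole analysis of $\cG_{x_0}$ via Lemma \ref{lem:Qrootcount} is a detour that isn't needed for this lemma and also contains a small slip: $\fp \notin J_{x_0}$ does not follow merely from $e^{x_0} > e^{V_\ell}$, since one can have $\sigma^{-1}e^{V_\ell} = \tau^{-1}e^{x_0}$ for distinct $\sigma,\tau \in \cS$ (a possible cancellation changes whether $\fp$ is a branch pole of $\cG_{x_0}$, though not the conclusion of the lemma). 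Similarly, ``monotonicity in $x$ of the relevant roots'' invoked in step (3) is not established in the paper and not needed --- continuity of $\zeta^{-1}$ plus Proposition \ref{prop:drparam} suffices. Trimmed of these detours, your proposal is correct and follows the paper's route.
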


\subsection{Limit Shape: Proof of Theorem \ref{thm:MACLLN}} \label{sec:LLN}
Let $\x \in I$ and $x(\e) \in I^\e$ be separated from singular points with $\e x(\e) \to \x$. By Theorem \ref{thm:mom} and Proposition \ref{prop:aht}, the rescaled height function $\e h\left(x(\e), \frac{\y}{\e}\right)$ converges to some limiting distribution $\cH^{\alpha,\ft}$ determined by
\begin{align} \label{eq:hemom}
\int e^{-\ft k\y} \cH^{\alpha,\ft}(\x,\y)\,d\y = \frac{1}{\alpha k^2 \ft^2} \frac{1}{2\pi\i} \oint_{\cC} \! \frac{\cG_\x(z)^{k\ft}}{z} \, dz
\end{align}
for $k \in \Z_{> 0}$ and where the contour $\cC$ is described in Theorem \ref{thm:mom}.

We first show that $\cH^{\alpha,\ft}$ is independent of $\ft$, and dependent on $\alpha$ by a simple scaling factor. The function $y\mapsto \E \e h\left( x(\e),\frac{y}{\e} \right)$ is $\frac{1}{\alpha}$-Lipschitz by (\ref{eq:aht}), nonnegative and monotonically increasing. By considering a subsequence of $y\mapsto \E \e h\left( x(\e),\frac{y}{\e} \right)$, we deduce that $\cH^{\alpha,\ft}(x,y)$ is $\frac{1}{\alpha}$-Lipschitz, nonnegative, and monotonically increasing in $y$ in the sense that its weak derivative is a positive measure.

\begin{definition}
Given a measure $\mu$ on $\R$, let $\left\{ \int e^{-k\ft u} \mu(du)\right\}_{k \in \Z_{>0}}$ be the set of $\ft$-\emph{exponential moments} of $\mu$; for $\ft = 1$ we simply say \emph{exponential moments} of $\mu$. Denote by $E^{\ft}\mu$ the measure defined by
\[ (E^{\ft}\mu)(du) = \mu(d(\exp(-\ft u))). \]
Let us write $E\mu = E^1 \mu$.
\end{definition}

\begin{lemma} \label{lem:bdblw}
Fix $x \in I$. There exists some $M$ such that $\cH^{\alpha,\ft}(x,y) = 0$ on $\{y \le M\}$, and $\cH^{\alpha,\ft}$ is determined by its $\ft$-exponential moments.
\end{lemma}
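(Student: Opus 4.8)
\textbf{Proof proposal for Lemma \ref{lem:bdblw}.}

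The plan is to establish the two assertions separately, both building on structural facts already available. First I would prove the vanishing statement $\cH^{\alpha,\ft}(x,y) = 0$ for $y \le M$. The key input is the finite-length observation made just after Lemma \ref{lem:ae<1}: for the back wall $B^\e$, each diagonal partition $\pi^{x(\e)}$ of $\pi \in \cP_{B^\e}$ has bounded length $\ell(\pi^{x(\e)})$, and more importantly the height function $h(x(\e),y)$ is supported (as a function of $y$, in the sense of being nonconstant / nonzero) only for $y$ above a level determined by $B^\e(x(\e)) - \ell(\pi^{x(\e)})$; indeed by the explicit formula (\ref{eq:aht}), $h(x,y) = \frac{1}{\alpha}(y - B(x) + \int_{-1}^0 |\{i : Y_i + u \ge y\}|\,du)$ and $\partial_y h$ vanishes on $(-\infty, -\ell(\pi^x) + B(x))$, so $h(x,y) = 0$ whenever $y \le -\ell(\pi^x) + B(x)$. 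After rescaling, $\e h(x(\e), \y/\e)$ is therefore identically zero on $\{\y \le \e(-\ell(\pi^{x(\e)}) + B^\e(x(\e)))\}$; since $\e B^\e(x(\e)) \to \cB(\x)$ and $\e \ell(\pi^{x(\e)})$ stays bounded (the number of diagonal boxes at $x(\e)$ is $O(1/\e)$ by the convergence of back walls in the Limit Conditions), one gets a uniform lower threshold $M$ below which the limiting measure $\cH^{\alpha,\ft}(x,\cdot)$ must vanish. Concretely one can read $M$ off directly from $\cG_\x$: by the contour formula (\ref{eq:hemom}) together with the analysis in Section \ref{sec:cpx} (Remark \ref{rem:2roots+}), $\cH^{\alpha,\ft}(x,y)$ is supported on $\{y \ge \min \sL_x\}$, and $-\log \cG_\x(0) \le \min \sL_\x$, so $M = -\log \cG_\x(0)$ works.

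Second, for the moment-determinacy, the point is that $\cH^{\alpha,\ft}(x,\cdot)$ is a measure (its weak derivative $\partial_\y \cH^{\alpha,\ft}(x,\cdot)$ is a positive measure, as already observed, with total mass $\le$ something finite since $\cH^{\alpha,\ft}$ is $\frac{1}{\alpha}$-Lipschitz and eventually equals a linear function $\frac{1}{\alpha}(\y - \text{const})$ for $\y$ large — wait, one needs to be slightly careful: $\cH$ itself grows linearly, but $\partial_\y \cH$ is bounded by $\frac{1}{\alpha}$ and supported on $[M,\infty)$ with $\partial_\y\cH \to \frac1\alpha$ at $+\infty$, so it is a locally finite measure, not a finite one). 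The right way to phrase determinacy: after the change of variables $u = \exp(-\ft \y)$, the measure $E^\ft(\partial_\y \cH^{\alpha,\ft}(x,\cdot))$ is a measure supported on the bounded interval $(0, e^{-\ft M}]$ (because $\partial_\y\cH$ is supported on $\y \ge M$), and it is finite on this interval since $\partial_\y \cH \le \frac1\alpha$ translates, under the substitution, into a density bounded by $\frac{1}{\alpha \ft u}$ which is integrable near $u = 0$... actually $\frac1{\alpha\ft u}$ is \emph{not} integrable near $0$, so instead I would argue: $\cH^{\alpha,\ft}(x,\cdot)$ grows at most linearly, hence $y \mapsto \cH^{\alpha,\ft}(x,y) e^{-\ft k y}$ is integrable for every $k > 0$, i.e. all $\ft$-exponential moments $\int e^{-k\ft\y}\cH^{\alpha,\ft}(x,\y)\,d\y$ are finite. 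Now a measure on $\R$ with support bounded below and with all exponential moments finite is determined by those moments: this is a classical fact (push forward by $u = e^{-\ft\y}$ to a measure on a half-line $(0, C]$ which is then determined by its power moments via Weierstrass/Stone–Weierstrass, since continuous functions on a compact interval are uniformly approximable by polynomials in $u$). Applying this to $\cH^{\alpha,\ft}(x,\cdot)$ (or to its derivative measure) gives that $\cH^{\alpha,\ft}$ is determined by its $\ft$-exponential moments.

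The main obstacle I anticipate is not conceptual but bookkeeping: pinning down precisely which object — the function $\cH^{\alpha,\ft}(x,\cdot)$, or its weak derivative $\partial_\y\cH^{\alpha,\ft}(x,\cdot)$ — is the finite measure that the Stone–Weierstrass argument applies to, and making sure the compactification-of-the-support step is legitimate (the support is bounded \emph{below} by $M$ but unbounded above, so one pushes forward by $u = e^{-\ft\y}$ to land on a \emph{bounded} interval $(0, e^{-\ft M}]$, where the determinacy is elementary). One must check that the linear growth of $\cH$ at $+\infty$ does not spoil finiteness of the transformed measure — it does not, because the growth is only linear while $e^{-\ft k\y}$ decays exponentially. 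Once the correct measure is identified, both halves of the lemma are short; the vanishing half is essentially immediate from (\ref{eq:aht}) and the boundedness of diagonal lengths, and the determinacy half is the standard moment argument on a compact interval.
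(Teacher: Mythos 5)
Your proof proposal takes a genuinely different route from the paper's, and the difference matters: the paper proves \emph{both} claims of the lemma (the vanishing below a threshold and the moment determinacy) directly from the contour-integral formula~(\ref{eq:hemom}), by pushing forward to the auxiliary measure $\mu_x^{\ft}(du) = u^{\ft-1}\cH^{\alpha,\ft}(\x,-\log u)\,du$, observing that its $k$-th moments grow at most geometrically (the integrand is $\cG_\x(z)^{k\ft}$ with a fixed compact contour, so $|\cG_\x|^{k\ft}\le L^k$), and then invoking the standard Markov-inequality criterion to deduce that $\mu_x^{\ft}$ is compactly supported, with determinacy then following automatically. Your determinacy argument is the same in spirit (change variables to a half-line, use Stone--Weierstrass), and the bookkeeping issues you flag (the na\"\i ve pushforward of $\cH\,dy$ under $u=e^{-\ft y}$ is not a finite measure; one must first multiply by an appropriate power of $u$, as the paper does) are real but repairable. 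The genuine problem is in your vanishing argument.

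The deterministic bound $h(x,y) = 0$ for $y \le B(x) - \ell(\pi^x)$ is correct, but your assertion that $\e\ell(\pi^{x(\e)})$ stays bounded ``by the convergence of back walls in the Limit Conditions'' is not justified when $V_0 = -\infty$ or $V_n = +\infty$. In that regime the Limit Conditions only force $\e v_0(\e)\to-\infty$ (at an unspecified rate; the construction in the proof of Theorem~\ref{thm:nec_suff} uses $v_0 = -\lfloor 1/\e^2\rfloor$), so the back-wall domain $I^\e$ --- and hence the number of boxes in the diagonal at $x(\e)$ --- can be of order $1/\e^2$, not $1/\e$. For example, take $\cB(\x)=\max(0,\x)$ on $I = \R$ (ordinary partitions, $\mu=\emptyset$, $N=M\sim 1/\e^2$): the diagonal at $x(\e)=0$ has $\sim 1/\e^2$ boxes, so the deterministic threshold $\e(-\ell(\pi^{x(\e)})+B(x(\e)))$ is of order $-1/\e$ and does not yield a finite $M$. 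The paper's own remark after Lemma~\ref{lem:ae<1} that diagonal partitions have bounded length is stated explicitly ``for $I$ of finite length''; you are applying it outside its hypothesis. To repair this route you would need a probabilistic tail bound on $\ell(\pi^{x(\e)})$, and proving that without already knowing the limit shape is essentially a moment argument --- which is precisely what the paper does. Your fallback, reading $M = -\log\cG_\x(0)$ from Remark~\ref{rem:2roots+} and ``the analysis in Section~\ref{sec:cpx}'', is circular: the claim that $\cH(x,\cdot)$ is supported on $\{\y\ge\min\sL_\x\}$ is a consequence of the explicit formula~(\ref{eq:htarg}), which is derived later in Section~\ref{sec:limgff} \emph{using} Lemma~\ref{lem:bdblw}. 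The moment-growth/Markov argument is the correct self-contained route.
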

\begin{proof}
Introduce the auxiliary measure $\mu_x^{\ft}$ with moments defined
\begin{align}\label{eq:mux}
\int_0^\infty \! u^{\ft(k-1)} \mu_x^{\ft}(du) = \frac{1}{\alpha(k\ft)^2} \frac{1}{2\pi\i} \oint_{\cC} \! \frac{G_x(z)^{k\ft}}{z} \, dz
\end{align}
for $k \in \Z_{> 0}$. Then $\mu_x^{\ft} = E^{\ft}\cH^{\alpha,\ft}(x,\cdot)$. It follows that $\mu_x^{\ft}$ is a finite positive measure supported in $[0,+\infty)$. By scaling by an appropriate $c > 0$, we may view $c\mu_x^{\ft}$ as a probability measure. From the form of the contour integral, we see that there is a large enough $L > 0$ such that $\E_{c\mu_x^{\ft}} u^{\ft k} \le L^k$. By a standard Markov inequality argument, this implies $\mu_x^{\ft}$ is compactly supported in $[0,+\infty)$. By changing variables, we obtain the lemma.
\end{proof}

Let $\cH := \cH^{1,1}$. The exponential moments of $\cH$ can be analytically extended in the variable $k$. By the preceding lemma and (\ref{eq:hemom}), for any $\alpha,\ft > 0$ we have 
\[ \cH^{\alpha,\ft}(x,y) = \frac{1}{\alpha} \cH^{1,1}(x,y) =: \cH(x,y). \]

To prove Theorem \ref{thm:MACLLN}, it remains to show (\ref{eq:htarg}). Note that we don't know $\partial_x \cH(x,y)$ exists at the moment. However, by Lemma \ref{lem:bdblw}, $\cH(x,y)$ is determined by $\partial_y \cH(x,y)$, and we will first compute $\partial_y \cH(x,y)$ from which it will be apparent that $\partial_x \cH(x,y)$ exists.

We reduce to the case $-\infty < V_0$, $V_n < \infty$. Suppose $V_0 = -\infty$ or $V_n = +\infty$. Let us revive the superscripts for the back walls for the sake of this reduction. We have our limiting back wall $\cB$, and construct a sequence $\cB_n$ of back walls which is simply the restriction of $\cB$ to $I \cap [-N,N]$ so $V_0 = -N$ and $V_n = N$. Since $\cG_x^{\cB_N} \to \cG_x^{\cB}$ uniformly on compactum in $\C \setminus \R$, we see that the exponential moments of $\cH^{\cB^N}$ converge to the exponential moments of $\cH^{\cB}$. Moreover, $\widetilde{\zeta}^{\cB_N}$ which solves (\ref{eq:compeq}) for $\cG^{\cB_N}$ converges to $\widetilde{\zeta}^{\cB}$ pointwise. Thus the reduction is valid by (\ref{eq:htarg}). We suppress the $\cB$ superscript for the remainder of the proof.

As mentioned above we first compute $\partial_y \cH(x,y)$. The computation for $\partial_x \cH(x,y)$ will then be similar and use many of facts collected from the computation of $\partial_y \cH(x,y)$.\\

\noindent
\textbf{Computation of Density for $\partial_y \cH(x,y)$.} By (\ref{eq:hemom}), we have
\[ \int \partial_y \cH(x,y) e^{-k y}\,dy = k \int \cH(x,y) e^{-k y} \, dy = \frac{1}{2\pi \i} \oint_{\cC} \frac{\cG_x(z)^k}{kz}\,dz\]
for $k \in \Z_{> 0}$. By Lemma \ref{lem:bdblw}, we know $\nu_x = E(\partial_y \cH(x,\cdot))$ is compactly supported in $[0,\infty)$. Since $V_0 > -\infty$, from our explicit formula (\ref{eq:Gx}) we see that the poles of $\cG_x$ are strictly positive. The set of poles of the integrand contained in $\cC$ is exactly $\{0\} \cup \fP_x$ where
\begin{align} \label{eq:fPx}
\fP_x = \{ \mbox{branch poles $p$ of $\cG_x^{\cB}$ in $J_V$ such that $V < x$} \} \subset \bigcup_{\substack{\cB'(V^+) - \cB'(V^-) < 0 \\V < x}} J_V.
\end{align}
Let us split $\cC$ into a contour $\gamma_0$ containing $0$ but no poles in $\fP_x$ and a contour $\gamma$ which does not contain $0$ but contains $\fP_x$. For $v$ large enough so that $\supp(\nu_x) < v$ and $\left| \frac{\cG_x(z)}{v} \right| < 1$ along $z\in\gamma$, we have the Stieltjes transform
\begin{align} \nonumber
S_{\nu_x}(v) &= \int_0^\infty \frac{\nu_x(du)}{u - v} = -\sum_{k=0}^\infty \int_0^\infty \frac{u^k \nu_x(du)}{v^{k+1}} = - \frac{1}{2\pi\i} \sum_{k=1}^\infty \oint_{\cC} \frac{\cG_x(z)^k}{v^k k} \frac{dz}{z} \\ \label{eq:StY}
&= - \sum_{k=1}^\infty \frac{\cG_x(0)^k}{v^k k} - \frac{1}{2\pi\i} \sum_{k=1}^\infty \oint_\gamma \frac{\cG_x(z)^k}{v^k k} \frac{dz}{z} \\ \nonumber
&= \log\left(1 - \frac{\cG_x(0)}{v} \right) + \frac{1}{2\pi\i} \oint_\gamma \log\left(1 - \frac{\cG_x(z)}{v} \right) \frac{dz}{z}.
\end{align}
For $v$ large enough so that $\supp(\nu_x) < v$ and $\left| \frac{\cG_x(z)}{v} \right| < 1$ along $\gamma$, define the auxiliary function
\begin{align}
\Ups_x(v) &= \log\left( 1 - \frac{\cG_x(0)^p}{v^p} \right) + \frac{1}{2\pi\i} \oint_\gamma \! \log\left( 1- \frac{\cG_x(z)^p}{v^p} \right) \frac{dz}{z}.
\end{align}
We can extend $\Ups_x(v)$ to the domain $\{ |\arg z| < 2\pi/p \} \setminus \supp \nu_x$. Indeed, setting $\omega = e^{2\pi\i/p}$, we have
\begin{align} \label{eq:UpS}
\Ups_x(v) = \sum_{j=0}^{p-1} \log\left( 1 - \frac{\omega^j\cG_x(0)}{v} \right) + \frac{1}{2\pi\i} \oint_\gamma \sum_{j=0}^{p-1} \frac{\log\left( 1 - \frac{\omega^j G_x(z)}{v} \right)}{z} \, dz = \sum_{j=0}^{p-1} S_{\nu_x}(v \omega^{-j})
\end{align}
for $v$ large, but since the right hand side is an analytic function in $\{ |\arg z| < 2\pi/p \} \setminus \supp \nu_x$ this identity extends to this larger domain.

The idea is that we want to evaluate the residues for the contour integral formula of the Stieltjes transform, but since the integrand is not meromorphic we consider an the auxiliary function $\Ups_x(v)$. The auxiliary function allows us to consider an alternative meromorphic integrand. The following lemma establishes that $\Ups_x(v)$ may be considered instead of the Stieltjes transform.

\begin{lemma} \label{lem:ups}
For $v_0 > 0$,
\begin{align} \label{eq:UpSrho}
\frac{1}{\pi}\Im_+ \Ups_x(v_0)= \frac{1}{\pi}\Im_+ S_{\nu_x}(v_0) = \rho_x(v_0)
\end{align}
where $\rho_x$ is the density of $\nu_x$ (with respect to the Lebesgue measure) and $\Im_+$ is the imaginary part taken from the limit from the upper half plane.
\end{lemma}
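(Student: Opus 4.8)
The plan is to dispatch the two equalities in turn, with the first --- relating the auxiliary function $\Ups_x$ to the Stieltjes transform of $\nu_x$ --- carrying the content, and the second being a standard boundary-value fact.

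First I would invoke the identity (\ref{eq:UpS}), $\Ups_x(v) = \sum_{j=0}^{p-1} S_{\nu_x}(v\omega^{-j})$ with $\omega = e^{2\pi\i/p}$, valid on $\{|\arg v| < 2\pi/p\}\setminus\supp\nu_x$, and take boundary values along $v = v_0 + \i\e$, $\e\downarrow 0$, for fixed $v_0 > 0$. The $j=0$ summand contributes exactly $\Im_+ S_{\nu_x}(v_0)$. For $1 \le j \le p-1$ the points $(v_0+\i\e)\omega^{-j}$ remain in $\C\setminus[0,\infty)\subseteq\C\setminus\supp\nu_x$ (by Lemma \ref{lem:bdblw}, $\supp\nu_x$ is a compact subset of $[0,\infty)$), so $S_{\nu_x}$ is analytic, hence continuous, there and the limit is simply $S_{\nu_x}(v_0\omega^{-j})$; in particular there is nothing to justify in interchanging the limit with this finite sum. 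It then remains to show that $\sum_{j=1}^{p-1} S_{\nu_x}(v_0\omega^{-j})$ is real, which I would get from $S_{\nu_x}(\bar v) = \overline{S_{\nu_x}(v)}$ (valid because $\nu_x$ is a real measure) together with $S_{\nu_x}(w)\in\R$ for $w<0$: pairing $j$ with $p-j$, for which $v_0\omega^{-(p-j)} = \overline{v_0\omega^{-j}}$, groups the sum into terms $2\Re S_{\nu_x}(v_0\omega^{-j})$, plus --- when $p$ is even --- the single middle term $S_{\nu_x}(-v_0)$, all of which are real. This yields $\Im_+\Ups_x(v_0) = \Im_+ S_{\nu_x}(v_0)$.

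For the second equality I would use the Stieltjes--Perron inversion theorem: $\Im S_{\nu_x}(v_0+\i\e) = \int_0^\infty \frac{\e\,\nu_x(du)}{(u-v_0)^2+\e^2}$ is the Poisson integral of $\nu_x$, so $\frac{1}{\pi}\Im_+ S_{\nu_x}(v_0)$ exists for a.e.\ $v_0>0$ and equals the density of the absolutely continuous part of $\nu_x$. To see that this is the full density $\rho_x$ on all of $(0,\infty)$ --- i.e.\ that $\nu_x$ has no atoms or singular continuous part there --- I would use the explicit formula (\ref{eq:Gx}) for $\cG_x$: the integrand defining $S_{\nu_x}$ has only the finitely many poles $\{0\}\cup\fP_x$ inside the contour, and $\cG_x$ extends continuously to $(0,\infty)$ away from its finitely many branch points, so the boundary function $\Im_+ S_{\nu_x}$ is locally bounded on $(0,\infty)$, which rules out a singular part.

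\textbf{Main obstacle.} There is no serious difficulty; the one point requiring care is the bookkeeping that renders $\sum_{j\ge 1}S_{\nu_x}(v_0\omega^{-j})$ real, i.e.\ the conjugation-symmetry pairing of indices together with the separate treatment of the middle index when $p$ is even. Everything else (continuity of $S_{\nu_x}$ off $[0,\infty)$, the Poisson-kernel computation, and the local boundedness argument precluding a singular part) is routine.
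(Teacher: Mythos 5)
Your proposal is correct and follows essentially the same route as the paper: expand $\Ups_x$ via the identity $\Ups_x(v)=\sum_{j=0}^{p-1}S_{\nu_x}(v\omega^{-j})$, observe that for $j\neq 0$ the points $v_0\omega^{-j}$ lie off the support so those summands contribute nothing imaginary (via the conjugation symmetry $\overline{S_{\nu_x}(v)}=S_{\nu_x}(\bar v)$, pairing $j$ with $p-j$ and treating $j=p/2$ separately when $p$ is even), and then invoke the standard Stieltjes inversion for the second equality. You fill in slightly more detail than the paper does on the boundary passage for the $j=0$ term and on why $\nu_x$ is absolutely continuous, but the underlying argument is the same.
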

\begin{proof}
The second equality of (\ref{eq:UpSrho}) is a property of the Stieltjes transform of a measure. We prove the first equality of (\ref{eq:UpSrho}).

Take $v_0 > 0$. Since $\overline{S_{\nu_x}(v)} = S_{\nu_x}(\bar{v})$ for $v \in \C \setminus \mathrm{supp}\,\nu_x = \C \setminus [0,\infty)$,
\[\Im_+(S_{\nu_x}(v_0\omega^{-j}) + S_{\nu_x}(v_0\omega^j) ) = 0 \]
for $j \neq 0$ and $j \neq p/2$ if $p$ is even. If $p$ is even, then
\[ \Im_+ S_{\nu_x}(v_0\omega^{p/2}) = \Im_+ S_{\nu_x}(-v_0) = 0\]
since $\supp\nu_x \subset [0,\infty)$. By (\ref{eq:UpS}), this proves the lemma.
\end{proof}

By Lemma \ref{lem:ups}, we are left to find $\Im_+ \Ups_x(v) = \rho_x(v)$ and change variables $v = e^{-y}$ to compute $\partial_y \cH(x,y)$. To this end we consider the following equation
\begin{align} \label{eq:pcompeq}
\cG_x(z)^p = v^p.
\end{align}
The solutions of (\ref{eq:compeq}) are exactly solutions of
\begin{align} \label{eq:jeq}
\cG_x(z) = \omega^j v
\end{align}
for some $j \in [[0,p-1]]$. A solution to the companion equation (\ref{eq:compeq}) is a solution to (\ref{eq:jeq}) with $j = 0$ and $v = e^{-y}$.

\begin{definition}
We say that $z(x,v)$ is a \emph{root function of} (\ref{eq:pcompeq}) if $z(x,v)$ is a root of (\ref{eq:pcompeq}) for each $(x,v)$ and continuous in $(x,v)$.
\end{definition}

Let $z(x,v)$ be a root function of (\ref{eq:pcompeq}). Note that as $v \to +\infty$, $z(x,v)$ converges to a pole of $\cG_x(z)^p$, equivalently a branch pole of $\cG_x(z)$.

\begin{definition}
If $\fp$ is a branch pole of $\cG_x(z)$, let us say that $\fp$ is the \emph{source} of a root function $z(x,v)$ of (\ref{eq:pcompeq}) if $\lim_{v \to +\infty} z(x,v) = \fp$.
\end{definition}

Let $\fZ_x$ be the multiset of $\fP_x$ where the multiplicity of an element $\fp \in \fP_x$ in $\fZ_x$ is the multiplicity of the pole in $\cG_x(z)^p$. We will abuse notation and treat $\fp \in \fZ_x$ as an element of $\fP_x$ in statements such as ``$\lim_{v \to +\infty} z(x,v) = \fp$''. Let $\cM$ be the set of $v \in \C$ such that $\cG_x(z)^p - v^p$ has a root of order $>1$.

\begin{lemma} \label{lem:upsform}
For $v \in \R_{> 0}$, we have that
\begin{align} \label{eq:upsroot}
\Ups_x(v) = \log \left( 1 - \frac{\cG_x(0)}{v} \right) -\sum_{\fp \in \fZ_x} (\log z_{\fp}(x,v) - \log \fp)
\end{align}
where $z_{\fp}(x,v)$ is a root function of (\ref{eq:pcompeq}). Furthermore the root functions $z_{\fp}(x,v)$ satisfy the following properties:
\begin{itemize}
    \item[(i)] For $v \in \R_{> 0} \setminus \cM$, the root functions $z_{\mathfrak{p}}(x,v)$ are distinct from one another.
    \item[(ii)] If $z_{\mathfrak{p}}(x,v_0)$ is a root of (\ref{eq:jeq}) for $j = 0$, then $z_{\mathfrak{p}}(x,v)$ is a root of (\ref{eq:jeq}) for $j=0$ for all $v > 0$.
    \item[(iii)] If $z_{\mathfrak{p}}(x,v_0) \in \cD_x^{\C}$ is a root of (\ref{eq:jeq}) for some $j \ne 0$, then $z_{\mathfrak{p}}(x,v) \in \cD_x^{\C}$ for all $v > 0$.
    \item[(iv)] If $v > 0$ and $z_{\fp}(x,v)$ is either a double root of (\ref{eq:pcompeq}) or a root of (\ref{eq:jeq}) for $j \ne 0$ such that $z_{\fp}(x,v) \in \C \setminus \R$, then there exists $\fq \in \fZ_x$ different from $\fp$ such that $z_{\fp}(x,v) = \overline{z_{\fq}(x,v)}$.
\end{itemize}
\end{lemma}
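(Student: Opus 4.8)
\textbf{Proof proposal for Lemma \ref{lem:upsform}.}

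The plan is to derive the formula \eqref{eq:upsroot} by deforming the contour $\gamma$ in the definition of $\Ups_x(v)$ and then to establish the four properties by tracking the root functions as $v$ varies. First I would rewrite
\[
\Ups_x(v) = \log\left(1 - \frac{\cG_x(0)^p}{v^p}\right) + \frac{1}{2\pi\i}\oint_\gamma \log\left(1 - \frac{\cG_x(z)^p}{v^p}\right)\frac{dz}{z},
\]
and for $v \in \R_{>0}$ large push $\gamma$ outward to a large circle. Since $\cG_x(z)^p$ is \emph{meromorphic} (the $p$th powers kill the branch cuts) with poles exactly at $\fZ_x$ (the branch poles of $\cG_x$ taken with the appropriate multiplicities) inside $\gamma$, the function $\log(1 - \cG_x(z)^p/v^p)$ has logarithmic singularities at each pole $\fp \in \fZ_x$ and at each root of $\cG_x(z)^p = v^p$ lying inside $\gamma$; the contribution of the large circle vanishes as its radius grows because $\cG_x(z)^p \to 0$ at $\infty$ (as $\cG_x$ has no pole at $\infty$; recall $V_0 > -\infty$ by the reduction). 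Evaluating the winding-number contributions of $\frac{1}{2\pi\i}\oint \log(\cdots)\frac{dz}{z}$ — equivalently integrating by parts to get $\frac{1}{2\pi\i}\oint \log z \, d\log(1 - \cG_x(z)^p/v^p)$ and collecting residues — produces a term $-\sum_{\fp \in \fZ_x}(\log z_\fp(x,v) - \log \fp)$, where $z_\fp(x,v)$ is the root of $\cG_x(z)^p = v^p$ emanating from $\fp$ as $v \to +\infty$, together with $\log(1 - \cG_x(0)^p/v^p)$ from the explicit pole at $0$ that was already singled out. The identity $\log(1 - \cG_x(0)^p/v^p) = \sum_{j=0}^{p-1}\log(1 - \omega^j\cG_x(0)/v)$ then matches the first term on the right of \eqref{eq:upsroot} after we note that only the $j=0$ branch contributes to the $\Im_+$ computation downstream (but at the level of the identity we just keep $\log(1 - \cG_x(0)/v)$; the other $j$ give purely real contributions for $v > 0$ — this needs a small argument, or one states \eqref{eq:upsroot} with the understanding that equality holds modulo the ambiguity that is resolved in Lemma \ref{lem:ups}). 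By analytic continuation in $v$, the identity extends from large $v$ to all of $\R_{>0}$ away from $\cM$, and then to $\cM$ by continuity of both sides.

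For property (i), for $v \in \R_{>0}\setminus\cM$ the polynomial-type equation $\cG_x(z)^p = v^p$ (after clearing denominators) has only simple roots by the very definition of $\cM$, so the $|\fZ_x|$ root functions emanating from distinct sheets/poles cannot collide; a continuity/monodromy argument shows the labeling by source $\fp$ is consistent. For property (ii), the set of roots of \eqref{eq:jeq} with a \emph{fixed} $j$ is locally constant in $v$ as long as roots do not collide, and collisions only occur on $\cM$; since $\cG_x(z) = \omega^j v$ and $\cG_x(z) = \omega^{j'} v$ have the same solution set only when $j = j'$ (for $v > 0$, $z$ a given value), the root function cannot jump from the $j=0$ sheet to another, so if $z_\fp(x,v_0)$ solves the companion equation then $z_\fp(x,v)$ does for all $v > 0$. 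Property (iii) is the analogous statement with ``solves \eqref{eq:jeq} for some $j \neq 0$ and lies in $\cD_x^\C$'' as the invariant: one uses that $\cD_x^\C$ is (the closure of) the region swept by $\zeta(x',y')$ for $x' \le x$, that $\zeta$ is a diffeomorphism onto $\HH$ by Theorem \ref{thm:homeo}, and Lemma \ref{lem:deform} which says $\cD_x^\C$ contains neighborhoods of precisely the $J_{V_\ell}$ with $V_\ell < x$ and avoids those with $V_\ell > x$; together with the fact that the sources $\fZ_x \subset \fP_x$ are exactly branch poles coming from $J_V$ with $V < x$, continuity forces a root function that starts inside $\cD_x^\C$ to remain there (it can only exit through $\partial\cD_x^\C$, which by \eqref{eq:Dbdry} consists of the $\zeta(x,\cdot)$ and $\overline{\zeta(x,\cdot)}$ curves and some real points — and crossing into the $j=0$ regime is excluded by (ii)).

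The main obstacle I expect is property (iv): relating a double root of $\cG_x(z)^p = v^p$ (respectively a nonreal root of \eqref{eq:jeq} with $j \neq 0$) to a \emph{conjugate pair} of the listed root functions. The mechanism is that $\cG_x$ has real coefficients in the appropriate sense — $\overline{\cG_x(z)} = \cG_x(\bar z)$ on each branch where defined — so the nonreal roots of $\cG_x(z)^p = v^p$ for real $v$ come in conjugate pairs; the content is to show that \emph{both} members of such a pair are among the $z_\fp(x,v)$ with $\fp$ ranging over $\fZ_x$, i.e. that neither is a spurious root of the cleared-denominator polynomial and that the two conjugate roots have \emph{distinct} sources $\fp \neq \fq$. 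The distinctness $\fp \neq \fq$ follows because if $\fp = \fq$ then $z_\fp(x,v)$ and $\overline{z_\fp(x,v)}$ would be the same root function evaluated at the same real $v$, forcing $z_\fp(x,v) \in \R$, contradicting nonreality. That both lie in $\fZ_x$-indexed families requires a degree count: the number of roots of $\cG_x(z)^p = v^p$ inside $\gamma$, counted with multiplicity, equals $|\fZ_x| + (\text{order of the zero at } 0)$, and the $z=0$ contribution is handled separately, so exactly $|\fZ_x|$ root functions account for all relevant roots; a nonreal root and its conjugate, both being relevant (inside $\gamma$), must therefore each carry a source label. I would organize (iv) by first proving this counting statement, then the conjugation symmetry, then the $\fp\neq\fq$ dichotomy, using (i)–(iii) to rule out degenerate collisions.
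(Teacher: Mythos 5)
Your high-level plan matches the paper's: derive \eqref{eq:upsroot} for large $v$ by deforming $\gamma$ and collecting residues of the meromorphic function $\cG_x^p$, then analytically continue to all $v > 0$, and establish (i)--(iv) by tracking roots of $\cG_x(z)^p = v^p$. Two places need to be made precise.

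First, the analytic continuation of the $z_\fp(x,v)$ is not canonical, and you leave the issue unresolved. The root functions are multivalued in $v$: paths from a large base point to a given $v$ that wind differently around points of $\cM$ will permute the labels, and in (i) you merely gesture at ``a continuity/monodromy argument'' without supplying it. The paper fixes a canonical branch by extending $z_\fp$ along paths confined to a strip $U = \{0 \le \Im z < \delta\}$ with $\delta$ small enough that $\cM \cap U \subset \R$; this works because $\cM$ is finite, being the zero set of the discriminant of $\cG_x(z)^p - v^p \in (\R[v])[z]$, so all admissible paths in $U \setminus \cM$ are homotopic. Without this or an equivalent construction, (i)--(iv) are not assertions about well-defined objects, and the properties do depend on the choice.

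Second, your route to (iv) via a degree count is genuinely different from the paper's, and it has a gap that you yourself flag as the main obstacle but do not fill. You would need to show that the number of roots of $\cG_x(z)^p = v^p$ inside $\gamma$ is $v$-independent (it is only obviously $|\fZ_x|$ for $v$ large, by the argument principle) and that each $z_\fp(x,v)$ remains inside $\gamma$ for all $v > 0$; neither is automatic, since roots can cross $\gamma$ as $v$ varies and $\gamma$ is a fixed contour. The paper sidesteps the contour entirely: using (iii) it first shows that the root functions solving \eqref{eq:jeq} for some $j \in [[1,p-1]]$ are \emph{exactly} the roots of those equations lying in $\cD_x^{\C}$, by checking this at $v \to +\infty$ and propagating to all $v$ by (iii). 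Then (iv) follows at once from the conjugation symmetry of $\cD_x^{\C}$ and of the family \eqref{eq:jeq} over real $v$, with the double-root case handled as a limit. Your $\fp \neq \fq$ dichotomy is correct, but I would recommend proving that $\cD_x^{\C}$ characterization rather than pursuing the count.
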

\begin{proof}
Since the poles of $1 - \frac{G_x(z)^p}{v^p}$ in $\gamma$ occur exactly in $\fP_x$, we have
\begin{align} \label{eq:upscont}
\Ups_x(v) &= \log \left( 1 - \frac{\cG_x(0)}{v} \right) -\frac{1}{2\pi\i} \oint_\gamma (\log z)\frac{d}{dz}\log \left( 1- \frac{\cG_x(z)^p}{v^p} \right) \, dz \\
&= \log \left( 1 - \frac{\cG_x(0)}{v} \right) -\sum_{\fp \in \fZ_x} (\log z_{\fp}(x,v) - \log \fp)
\end{align}
for $z_{\fp}(x,v)$ a root function with source $\fp$, where this formula is initially valid for $v$ large. For large enough $v$, there is no ambiguity in the choice of $z_{\fp}(x,v)$. We want to extend this formula to $v \in (0,+\infty)$. To do this, we require care in the choice of analytic extension to avoid ambiguity near double roots.

The set $\cM$ is finite. Indeed, the discriminant of $\cG_x(z)^p - v^p \in (\R[v])[z]$ is a polynomial in $\R[v]$, and the set of roots of the discriminant is exactly the set $\cM$. Thus, from some large $v_0 > 0$, $z_{\fp}(x,v)$ can be extended to any $v \in \R_{> 0} \setminus \cM$ via a path from $v_0$ to $v$ which avoids $\cM$. This extension is not unique, as windings around $\cM$ may change the value. This ambiguity is lost if we enforce the path from $v_0$ to $v$ to lie in a strip $U = \{ 0 \le \Im z < \delta\}$ where $\delta$ is chosen so that $\cM \cap U \subset \R$. For $v\in \R \setminus \cM$, let $z_{\fp}(x,v)$ denote the root function extended via a path from some large $v_0$ to $v$ within the strip $U \setminus \cM$, see Figure \ref{fig:analext}. We extend the definition to all $v \in \R$ by continuity.

\begin{figure}[ht]
    \centering
    \includegraphics[trim=0 0 0 365,clip, scale=0.7]{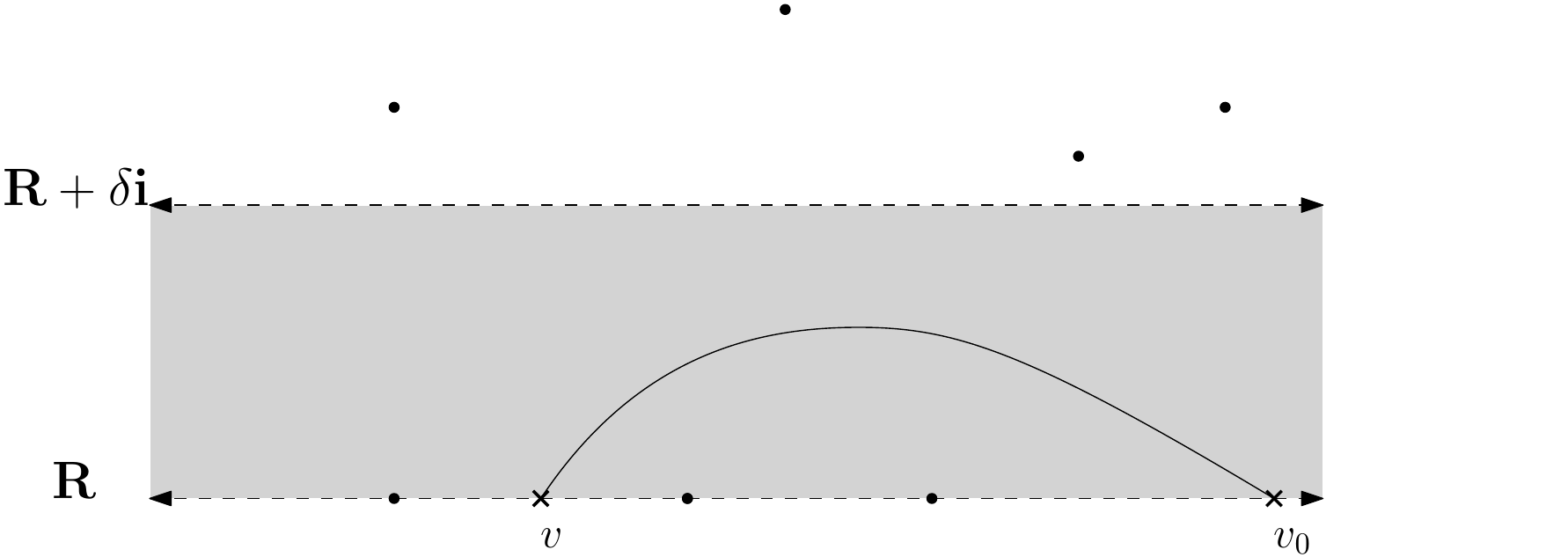}
    \caption{Dots represent points of $\cM$. We extend from $v_0$ to $v$ along a path that remains in the grey strip.}
    \label{fig:analext}
\end{figure}

We now prove that the root function $z_{\fp}(x,v)$ satisfy properties (i)-(iv). Property (i) follows from the definition of our root function, and since the roots must remain separated on $U \setminus \cM$.

For properties (ii) and (iii), we note that a point $z \in \C$ cannot be both a root of (\ref{eq:jeq}) for $j_1 = 0$ at $(x,v_1)$ and $j_2 \in [[1,p-1]]$ at $(x,v_2)$; the special position of $j = 0$ is a consequence of our choice of branch cut for $\cG_x$. This immediately implies property (ii). For property (iii), we use the additional fact that $z_{\mathfrak{p}}(x,v_1) \in \cD_x^{\C}$ and $z_{\mathfrak{p}}(x,v_2) \notin \cD_x^{\C}$ implies that $z_{\mathfrak{p}}(x,v)$ crosses the boundary of $\cD_x^{\C}$ at some $v_3$. The boundary of $\cD_x^{\C}$ is the closure of
\[\{\zeta(x,y):y \in \R, (x,y) \in \sL\}\cup\{\overline{\zeta}(x,y):y \in \R, (x,y) \in \sL\}\]
which means that $z_{\mathfrak{p}}(x,v_3)$ is a root of (\ref{eq:jeq}) for $j= 0$. By Property (ii), this implies $z_{\fp}(x,v)$ is a root of (\ref{eq:jeq}) for $j=0$ for all $v > 0$.

For property (iv), observe that the set of root functions $z_{\fp}(x,v)$ of (\ref{eq:jeq}) for $j \ne 0$ is exactly the set of all roots of (\ref{eq:jeq}) for $j \in [[1,p-1]]$ contained in $\cD_x^{\C}$. Indeed, this is the case as $v \to +\infty$, and by property (iii) this is therefore the case for all $v > 0$. If $z_{\fp}(x,v) \in \cD_x^{\C} \setminus \R$ for some $\fp \in \fZ_{x,1}$, then $\overline{z_{\fp}(x,v)} \in \cD_x^{\C}$ is a root of (\ref{eq:jeq}) for some $j \in [[1,p-1]]$. Property (iv) follows from this fact; note that the double root case is a limiting case.
\end{proof}

Given Lemma \ref{lem:upsform}, we may define the sub(multi)set $\fZ_{x,0}$ of $\fZ_x$ consisting of $\fp$ such that $z_{\fp}(x,v)$ is a root of (\ref{eq:jeq}) for $j = 0$. Similarly, let $\fZ_{x,1}$ be the sub(multi)set of $\fZ_x$ consisting of $\fq$ such that $z_{\fq}(x,v)$ is a root of (\ref{eq:jeq}) for $j \in [[1,p-1]]$. 

We rewrite (\ref{eq:upsroot}) as
\begin{align*}
\Ups_x(v) = \log\left(1 - \frac{\cG_x(0)^p}{v^p} \right) - \sum_{\fp \in \fZ_{x,0}} (\log z_{\fp}(x,v) - \log \fp) - \sum_{\fp \in \fZ_{x,1}} (\log z_{\fp}(x,v) - \log \fp).
\end{align*}
By property (iv) of Lemma \ref{lem:upsform}, this expression for $\Ups_x(v)$ implies
\begin{align} \label{eq:upsz0}
\Im_+ \Ups_x(v) = \pi \cdot \1_{v < \cG_x(0)} - \sum_{\fp \in \fZ_{x,0}} \arg z_{\fp}(x,v).
\end{align}
The argument branch is taken so that $\arg z_{\fp}(x,v) = 0$ for $v$ sufficiently large. This formula is valid except where $v = \cG_x(0)$ and $z_{\fp}(x,v) = 0$ for some $\fp \in \fZ_{x,0}$. Since we are evaluating the density of a measure, these finitely many exceptional points are immaterial.

By our choice of $\wzeta$, we have $\fp_0 = \min \cJ_x \in \fZ_{x,0}$ such that $\wzeta(x,y) = z_{\fp_0}(x,v)$ for some $v > 0$ where $e^{-y} = v$. From Remark \ref{rem:2roots}, we have that $z_{\fp_0}(x,v_0) = \wzeta(x,y_0)$ at some point $v_0 = e^{-y_0}$ where $\wzeta(x,y) \in \HH$; recall that nonreal pair of roots first appear (as we increase $y$) when the two roots in the external component coalesce. Moreover, we can find $y_0$ sufficiently negative so that the other $z_{\fp}(x,v)$ for $\fp \in \fZ_x \setminus \{\fp_0\}$ are confined in their respective components (which are not the external component). We note that $z_{\fp_0}(x,v_0) = \wzeta(x,y_0)$, rather than its conjugate, because we must have $0 \le \frac{1}{\pi} \Im_+ \Ups_x = \rho_x \le 1$ by the positivity of $\partial_y \cH$ and $1$-Lipschitz condition on $\cH$. Define the set
\begin{align*}
\begin{multlined}
\sC = \{(x,y) \in \sL: y \in \R, \wzeta(x,y) = z_{\fp}(x,e^{-y})~\mbox{for some}~\fp \in \fZ_{x,0},\\
~\mbox{and}~\wzeta(x,y) \neq \overline{z_{\fp}}(x,e^{-y})~\mbox{for all}~\fp \in \fZ_{x,0}\}.
\end{multlined}
\end{align*}
Since $(x,y_0) \in \sC$, we have $\sC$ is nonempty. We use a connectedness argument to show that $\sC = \sL$. Since the measures $\nu_x$ are compactly supported and bounded, and the moments depend continuously in $x$, we have that the Stieltjes transforms $S_{\nu_x}(v)$ are continuous in $(x,v)$ for $x \in I$ and $v \in \C \setminus \R$. Then $\Ups_x(v)$ is continuous in $(x,v)$ for $x\in I$ and $v \in \{ |\arg z| < 2\pi/p\} \setminus \supp\nu_x$. By the form (\ref{eq:upscont}), this continuity can be extended to $(x,v)$ where $v \in \R_{> 0}$ as long as $z_{\fp}(x,v) \neq 0$ for all $\fp \in \fZ_x$ and $v \ne \cG_x(0)$ (which is generically the case). It follows that $\sC$ must be both an open and closed subset of $\sL$, where the closed condition is immediate and the open condition follows from continuity. Since we showed that $\sC$ is nonempty, we deduce that $\sC = \sL$ since $\sL$ is homeomorphic to the simply connected set $\HH$ by Theorem \ref{thm:homeo}. With this fact and (\ref{eq:upsz0}), we have (for $v = e^{-y}$)
\begin{align}
\frac{1}{\pi} \Im_+ \Ups_x(v) = \1_{v < \cG_x(0)} - \frac{1}{\pi}\arg \wzeta(x,-\log v)
\end{align}
where the argument is chosen so that $\arg z = 0$ when $z > 0$. Indeed, the indicator takes value $1$ whenever $\wzeta$ is nonreal (see the discussion following Remark \ref{rem:wd}) and we require $0 \le \frac{1}{\pi} \Ups_x \le 1$, so this determines the choice of argument. Changing variables, we have
\[ \partial_y \cH(x,y) = \1_{e^{-y} < \cG_x(0)} - \frac{1}{\pi} \arg \wzeta(x,y).\]

\noindent
\textbf{Computation of Density for $\partial_x \cH(x,y).$} We follow a similar argument, and have already done most of the work for it. Notice that
\begin{align}
\frac{\partial_x \cG_x(z)}{\cG_x(z)} = \frac{1}{p} \sum_{\sigma \in \cS} \frac{\sigma e^{-x}z}{1 - \sigma e^{-x} z}
\end{align}
so that by (\ref{eq:hemom}), we have
\begin{align}
\int \partial_x \cH(x,y) e^{-ky} = \frac{1}{2\pi\i} \oint_\cC \frac{\cG_x(z)^{k-1}\partial_x\cG_x(z)}{kz} \, dz = \frac{1}{p} \sum_{\sigma \in \cS} \frac{1}{2\pi\i} \oint_\cC \frac{\cG_x(z)^k}{k} \frac{\sigma e^{-x}}{1 - \sigma e^{-x} z} \, dz
\end{align}
Recall that we had split $\cC$ into $\gamma$ and $\gamma_0$ before in the computation of $\partial_y \cH(x,y)$. This time, we can simply deform $\cC$ to $\gamma$ which encircle $\fP_x$ and not $0$ since there is no pole at $0$. Let $\xi_x = E(\partial_x \cH(x,\cdot))$. As before, we have for large $v$,
\begin{align*}
S_{\xi_x}(v) &= -\sum_{k=1}^\infty \frac{1}{p} \sum_{\sigma \in \cS} \frac{1}{2\pi\i} \oint_\gamma \frac{\cG_x(z)^k}{v^k k} \frac{\sigma e^{-x}}{1 - \sigma e^{-x} z} \, dz = \frac{1}{p} \sum_{\sigma \in \cS} \frac{1}{2\pi\i} \oint_{\gamma} \log\left(1 - \frac{\cG_x(z)}{v} \right) \frac{\sigma e^{-x}}{1 - \sigma e^{-x}z} \,dz
\end{align*}
We consider, also for large $v$,
\begin{align} \label{eq:Xieq}
\begin{multlined}
\Xi_x(v) = \frac{1}{p} \sum_{\sigma \in \cS} \frac{1}{2\pi\i} \oint_\gamma \log \left(1 - \frac{\cG_x(z)^p}{v^p} \right) \frac{\sigma e^{-x}}{1 - \sigma e^{-x}z} \,dz \\
= \frac{1}{p}\sum_{\sigma \in \cS} \frac{1}{2\pi\i} \oint_\gamma \log(1 - \sigma e^{-x} z) \frac{d}{dz} \log\left(1 - \frac{\cG_x(z)^p}{v^p} \right)\,dz.
\end{multlined}
\end{align}
As before with $\Ups_x(v)$, we can extend $\Xi_x(v)$ to $\{ |\arg z| < 2\pi/p\} \setminus \supp\xi_x$, and we have an analogue of Lemma \ref{lem:ups}
\begin{align}
\frac{1}{\pi} \Im_+ \Xi_x(v) = \frac{1}{\pi} \Im_+ S_{\xi_x}(v) = \varrho(v)
\end{align}
where $\varrho(v)$ is the density of $\xi_x$ with respect to Lebesgue. From (\ref{eq:Xieq}), we have the following formula
\begin{align}
\Xi_x(v) = \frac{1}{p} \sum_{\sigma \in \cS} \sum_{\fp \in \fZ_x} (\log(1 - \sigma e^{-x} z_{\fp}(x,v)) - \log(1 - \sigma e^{-x} \fp))
\end{align}
for $v > 0$ where $z_{\fp}(x,v)$ are defined as in Lemma \ref{lem:upsform}. The log branches here are chosen so that $\log(1 - \sigma e^{-x} z_{\fp}(x,v)) - \log(1 - \sigma e^{-x} \fp) = 0$ for $v$ large (so that $z_{\fp}(x,v)$ is near $\fp$). Define for $\sigma \in \cS$
\begin{align}
f_\sigma(x,v) & = \sum_{\fp \in \fZ_{x,1}} (\arg(1 - \sigma e^{-x} z_{\fp}(x,v)) - \arg(1 - \sigma e^{-x} \fp)) \\
g_\sigma(x,v) &= \sum_{\fp \in \fZ_{x,0}} (\arg(1 - \sigma e^{-x} z_{\fp}(x,v)) - \arg(1 - \sigma e^{-x} \fp))
\end{align}
so that
\[ \Im_+ \Xi_x(v) = \frac{1}{p}\sum_{\sigma \in \cS} (f_\sigma(x,v) + g_\sigma(x,v)). \]
Given the following lemma, we can compute that
\[ \frac{1}{\pi} \Im_+ \Xi_x(e^{-y}) = \frac{1}{\pi}\sum_{\sigma \in \cS} \frac{1}{p} \log\left( 1 - \sigma^{-1} e^x \wzeta(x,y) \right).\]
Changing variables gives us $\partial_x \cH(x,y)$ and completes the proof of the Theorem.

\begin{lemma}
For any $\sigma \in \cS$, both sets $\{y \in \R:  f_\sigma(x,e^{-y}) = 0\}$ and $\{y \in \R: g_\sigma(x,e^{-y}) = \arg(1 - \sigma e^{-x} \wzeta(x,y))\}$ are equal to $\R$.
\end{lemma}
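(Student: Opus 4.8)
The plan is to prove the two displayed identities separately, drawing on the description of the root functions $z_\fp(x,v)$ of $\cG_x(z)^p = v^p$ recorded in Lemma~\ref{lem:upsform} and on two facts already established in the computation of $\partial_\y\cH$ above: that $\sC = \sL$, and that $\wzeta$ coincides with the distinguished root function $z_{\fp_0}$ attached to $\fp_0 = \min\cJ_x \in \fZ_{x,0}$. Throughout one may restrict to $x \in (V_0,V_n)$, since otherwise there are no nonreal roots and the relevant slices of $\sL$ are empty; one may also discard the discrete set of $x$ for which poles and zeros of $\cG_x$ cancel, recovering those by continuity in $x$ at the end.

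For the identity $f_\sigma(x,e^{-\y}) \equiv 0$, the first step is to observe that for every $v > 0$ the multiset $\{z_\fp(x,v)\}_{\fp\in\fZ_{x,1}}$ is invariant under complex conjugation. Indeed, by Lemma~\ref{lem:upsform}(iii) it is exactly the set of roots of $\cG_x(z) = \omega^j v$, $j \in [[1,p-1]]$, lying in $\cD_x^{\C}$; the set $\cD_x^{\C}$ is conjugation-symmetric by construction, while $\overline{\cG_x(z)} = \cG_x(\bar z)$ off the branch cuts (Schwarz reflection), so conjugation carries a root of $\cG_x(z) = \omega^j v$ to a root of $\cG_x(z) = \omega^{p-j}v$ and hence permutes these roots with multiplicity. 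Since the sources $\fp \in \fZ_{x,1}$ are real branch poles of $\cG_x$ (so in particular $\fp \ne \sigma^{-1}e^x$), the product $\prod_{\fp\in\fZ_{x,1}}(1-\sigma e^{-x}\fp)$ is real, and pairing conjugate roots makes $\prod_{\fp\in\fZ_{x,1}}(1-\sigma e^{-x}z_\fp(x,v))$ real as well; moreover no factor $1-\sigma e^{-x}z_\fp(x,v)$ vanishes, because $\cG_x(z_\fp(x,v)) = \omega^j v \ne 0$ forces $z_\fp(x,v)$ to avoid the branch zero $\sigma^{-1}e^x$ of $\cG_x$. Hence $\exp(\i f_\sigma(x,v))$, which differs from the ratio of these two real products by a positive factor, lies in $\{\pm 1\}$, so $f_\sigma(x,v) \in \pi\Z$. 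As $f_\sigma(x,\cdot)$ is continuous on $(0,\infty)$ and tends to $0$ as $v \to +\infty$ (each $z_\fp(x,v) \to \fp$), it is identically $0$.

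For the identity $g_\sigma(x,e^{-\y}) = \arg(1-\sigma e^{-x}\wzeta(x,\y))$, I would split the defining sum into the $\fp_0$-term and the terms with $\fp \in \fZ_{x,0}\setminus\{\fp_0\}$. Each such $z_\fp(x,v)$, $\fp\neq\fp_0$, is a root of the companion equation, so by Proposition~\ref{prop:2roots} a nonreal value would equal $\wzeta(x,-\log v)$ or its conjugate; the first is excluded because the roots are distinct for $v \notin \cM$ (Lemma~\ref{lem:upsform}(i)) and $z_{\fp_0}(x,v) = \wzeta(x,-\log v)$, the second because $\sC = \sL$ forbids any $\fZ_{x,0}$-root from equalling $\overline{\wzeta}$ on $\sL$. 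Thus $z_\fp(x,v)$ is real (and, lying in an internal component of $\hatR\setminus\cJ_x$, positive) for all $v \notin \cM$, hence for all $v$ by continuity, and as in the previous paragraph the corresponding term is a continuous $\pi\Z$-valued function of $v$ vanishing at $+\infty$, so $\equiv 0$. For the $\fp_0$-term, since $\fp_0 = \sigma_1^{-1}e^{V_0}$ with $\sigma_1 = \max\cS$ and $V_0 < x$, we have $1-\sigma e^{-x}\fp_0 = 1 - (\sigma/\sigma_1)e^{V_0-x} > 0$, so that term equals $\arg(1-\sigma e^{-x}z_{\fp_0}(x,v)) = \arg(1-\sigma e^{-x}\wzeta(x,-\log v))$ with the branch normalised to vanish on the positive reals; on $\sL$ the value $1-\sigma e^{-x}\wzeta(x,\y)$ stays in the lower half-plane, so the continued branch agrees with this normalisation, and for $\y$ sufficiently negative both sides vanish. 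Summing the contributions gives the claim.

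The hard part will be the branch bookkeeping: one must verify that the continued arguments implicit in the definitions of $f_\sigma$ and $g_\sigma$ are consistent with the normalisation ``$\arg z = 0$ for $z>0$'' used in $(\ref{eq:htarg})$, and that the finitely many degenerate configurations — $z_\fp(x,v)$ meeting $\sigma^{-1}e^x$, root coalescences at $v \in \cM$, and the discrete exceptional set of $x$ — do not break the continuity arguments. Since $\rho_x$ and $\varrho$ are densities of measures, these exceptional $(x,v)$ are immaterial, but dispatching them cleanly is where the care lies; the conceptual content is the conjugation symmetry of $\{z_\fp(x,v)\}_{\fp\in\fZ_{x,1}}$ extracted from Lemma~\ref{lem:upsform}.
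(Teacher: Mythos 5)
Your argument is correct and rests on exactly the same ingredients as the paper's — Lemma~\ref{lem:upsform}, the fact that $\sC = \sL$, confinement of real root functions to components of $\hatR\setminus\cJ_x$ — but you package them as a direct evaluation rather than as a connectedness argument. The paper fixes $y_0\in A_i$, pairs $z_\fp$ with $z_\fq = \overline{z_\fp}$ for $\fp,\fq\in\fZ_{x,1}$ (resp.\ uses confinement for $\fZ_{x,0}$) to show $A_i$ is open, then concludes $A_i=\R$ because $A_i$ is nonempty and closed. You instead observe that the \emph{global} conjugation symmetry of $\{z_\fp(x,v)\}_{\fp\in\fZ_{x,1}}$ (a consequence of the same identity, since conjugation carries $j$-roots in $\cD_x^{\C}$ to $(p-j)$-roots in $\cD_x^{\C}$, and these are exactly the $\fZ_{x,1}$-roots by the proof of property (iv)) forces the product $\prod_{\fp\in\fZ_{x,1}}(1-\sigma e^{-x}z_\fp(x,v))$ to be real, so that $e^{\i f_\sigma}\in\{\pm1\}$ and the continuous $\pi\Z$-valued function $f_\sigma$ vanishing at $v=\infty$ must be $\equiv 0$; similarly you isolate the $\fp_0$ term of $g_\sigma$ and kill the rest by reality. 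This is a genuinely cleaner way to extract the same conclusion, and makes explicit what the paper's local pairing encodes implicitly. Two caveats you should still tie down: (a) you need $\fq\in\fZ_{x,1}$, not merely $\fq\in\fZ_x$, in the conjugation pairing — this is immediate since $\overline{z_\fp}$ solves \eqref{eq:jeq} with $j'=p-j\neq 0$, but it should be said; (b) the branch bookkeeping you flag at the end is a real concern, concentrated at the possibility that $z_{\fp_0}(x,v)$ sweeps through $\infty$ on the external component before the coalescence point, which would make the analytically continued branch and the ``$\arg z=0$ for $z>0$'' normalisation disagree by $\pm\pi$; the paper's proof glosses over the same point via the unelaborated assertion that $(-\infty,y']\subset A_2$. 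Since both $\rho_x$ and $\varrho$ are only needed almost everywhere, neither version is fatally wounded, but this is where a fully rigorous treatment would need to say more.
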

\begin{proof}
We use a connectedness argument. Fix $\sigma \in \cS$, let $A_1 = \{y \in \R:  f_\sigma(x,e^{-y}) = 0\}$ and $A_2 = \{y \in \R: g_\sigma(x,e^{-y}) = \arg(1 - \sigma e^{-x} \wzeta(x,y))\}$. Both $A_1$ and $A_2$ are nonempty and closed subsets of $\R$. It suffices to prove that $A_1$ and $A_2$ are open subsets of $\R$.

Suppose $y_0 \in A_1$. We show that $[y_0,y_0 + \delta) \subset A_1$ for some $\delta > 0$. The argument for $(y_0-\delta,y_0]$ is identical, and so this shows that $A_1$ is open. Let $\fp \in \fZ_{x,1}$. If $z_{\fp}(x,e^{-y})$ is real for $y \in [y_0,y_0+\delta)$ for some $\delta > 0$, then $z_{\fp}(x,e^{-y})$ must be in some connected component of $\hatR \setminus \cJ_x$. Note that it cannot be in the external component since $\arg \cG_x = 0$ on the external component. Then $\arg(1 - \sigma^{-1} z_{\fp}(x,e^{-y}))$ is constant on $[y_0,y_0 +\delta)$.

Otherwise, for sufficiently small $\delta > 0$, we have $z_{\fp}(x,e^{-y}) \in \C \setminus \R$ for all $y \in (y_0,y_0 + \delta)$. This means that $z_{\fp}(x,e^{-y_0})$ is either a nonreal root of (\ref{eq:jeq}) or a double root. In either case, by property (iv) of Lemma \ref{lem:upsform}, there exists some $\fq \in \fZ_{x,1}$ different from $\fp$ such that $z_{\fq}(x,e^{-y_0}) = \overline{z_{\fp}}(x,e^{-y_0})$. If $z_{\fp}(x,e^{-y}) \in \C \setminus \R$ for $y \in (y_0,y_0+\delta)$, then $z_{\fq}(x,e^{-y}) = \overline{z_{\fp}}(x,e^{-y})$ for $y \in (y_0,y_0 + \delta)$ by continuity and property (i) of Lemma \ref{lem:upsform}. Then
\begin{align*}
\begin{multlined}
(\arg (1 - \sigma e^{-x}z_{\fp}(x,y)) - \arg (1 - \sigma e^{-x} z_{\fp}(x,y_0))) \\
+ (\arg (1 - \sigma e^{-x}z_{\fq}(x,y)) - \arg (1 - \sigma e^{-x} z_{\fq}(x,y_0))) = 0
\end{multlined}
\end{align*}
for $y \in (y_0,y_0+\delta)$. Summing over all $\fp \in \fZ_{x,1}$, we see that $f_\sigma(x,e^{-y}) - f_\sigma(x,e^{-y_0})= 0$ for $y \in [y_0,y_0+\delta)$. Thus $A_1$ is open.

The argument for the openness of $A_2$ is similar in structure. Before proceeding, note that from the argument that $\sC = \sL$ in the computation of $\partial_y \cH(x,y)$, we have that $(-\infty,y'] \subset A_2$ where $y'$ is the minimal $y$ such that (\ref{eq:compeq}) has a double root. We have that $\wzeta(x,y)$ is contained in the external component of $\hatR \setminus \cJ_x$ if and only if $y \in (-\infty,y']$ (see Remark \ref{rem:2roots+}).

Take $y_0 \in A_2 \setminus (-\infty,y')$. As before, we show that $[y_0,y_0 + \delta) \subset A_2$ for some $\delta > 0$, and the argument for $(y_0-\delta,y_0]$ is identical for $y_0 \in A_2 \setminus (-\infty,y']$. If $z_{\fp}(x,e^{-y})$ is real for $y \in [y_0,y_0 + \delta)$, then it must be confined to some component of $\hatR \setminus \cJ_x$ which is not the external component. As before, this implies $\arg(1 - \sigma e^{-x} z_{\fp}(x,e^{-y}))$ is constant for $y \in [y_0,y_0 + \delta)$.

Otherwise, there exists $\delta > 0$ small enough so that $z_{\fp}(x,e^{-y}) \in \C \setminus \R$ for $y \in (y_0,y_0 + \delta)$. Since $\sC = \sL$, this corresponds exactly to the case where $\wzeta(x,y) \in \C \setminus \R$, for $y \in (y_0,y_0 + \delta)$. Moreover, we must have $\wzeta(x,y) = z_{\fp}(x,e^{-y})$ for some $\fp \in \fZ_{x,0}$ and $z_{\fq}(x,e^{-y})$ are real for $\fq \in \fZ_{x,0}$, $\fq \neq \fp$ for $y \in [y_0,y_0 + \delta)$. In particular, $z_{\fq}(x,e^{-y})$ are confined to some junction component of $\hatR \setminus \cJ_x$ which is not the external component. Then
\[ g_\sigma(x,e^{-y}) - g_\sigma(x,e^{-y_0}) = -\arg(1 - \sigma e^{-x} \wzeta(x,y)) + \arg(1 - \sigma e^{-x} \wzeta(x,y_0)). \]
Thus $A_2$ is open.
\end{proof}

\subsection{Fluctuations: Proof of Theorem \ref{thm:MACGFF}} \label{sec:GFF}
\begin{proof}[Proof of Theorem \ref{thm:macGFF}]
For $x \in I$ and $k \in \Z_{> 0}$, by Proposition \ref{prop:aht} we have for $x(\e) \in I^\e$
\begin{align*}
\alpha \ft \sqrt{\pi} \int \! \left( h\left( x(\e), \frac{\y}{\e} \right) - \E h\left( x(\e), \frac{\y}{\e} \right) \right) e^{-k \ft \y} \, d\y &= \frac{\e \ft \sqrt{\pi} t^{kB(x)}}{k^2 (\log t)^2} \left( \wp_k(\pi^x;q,t) - \E \wp_k(\pi^x;q,t)\right) \\
&= \frac{\sqrt{\pi} t^{kB(x)}}{k^2 \ft \e} \left( \wp_k(\pi^x;q,t) - \E \wp_k(\pi^x;q,t)\right).
\end{align*}
Let $\x_1 \le \cdots \le \x_m$ in $I$ and $k_1,\ldots,k_m \in \Z_{> 0}$. Let $x_1(\e) \le \cdots \le x_m(\e)$ be points in $I^\e$ such that $x_i(\e)$ is $k_i\ft$-separated from singular points. By Theorem \ref{thm:gauss}, the random vector indexed by $i \in [[1,m]]$ whose components are given by
\[ \alpha \ft \sqrt{\pi} \int \left( h\left( x_i(\e), \frac{\y}{\e} \right) - \E h\left( x_i(\e), \frac{\y}{\e} \right) \right) e^{-k_i \ft \y} \, d\y\]
converges to the Gaussian vector whose $i$th and $j$th component for $i < j$ has the covariance
\begin{align} \label{eq:cov1}
\frac{\pi}{(2\pi \iota)^2k_1k_2 \ft^2} \oint_{\cC_j} \! \oint_{\cC_i} \! \frac{\cG_{\x_i}(z)^{k_i\ft} \cG_{\x_j}(w)^{k_j\ft}}{(z-w)^2} dz \, dw
\end{align}
where $\cC_1,\ldots,\cC_m$ are contours meeting the criteria in Theorem \ref{thm:gauss}, in particular $\cC_i$ is enclosed by $\cC_j$ for $i < j$ and the set of branch poles of $\cG_{x_i}$ enclosed by $\cC_i$ is exactly $\fP_{x_i}$ as defined in \eqref{eq:fPx}.

Suppose $\x_i < \x_j$ so that the contours $\cC_i$ and $\cC_j$ are separated. By Lemma \ref{lem:deform}, we can deform $\cC_i$ to the boundary of $\cD_{\x_i}^{\C}$ in (\ref{eq:cov1}). From (\ref{eq:Dbdry}), we can express the boundary as the union of
\[ \{\zeta(\x_i,\y): \y \in \R, (\x_i,\y) \in \sL\} \cup \{\overline{\zeta(\x_i,\y)}: \y \in \R, (\x_i,\y) \in \sL\} \]
up to finitely many points in the set difference. Let $Y_{\x_i} = \{\y \in \R: (\x_i,\y) \in \sL\}$ which is a union of finitely many open intervals. Parametrizing the boundary of $\cD_{\x_i}^{\C}$ by $\zeta(\x_i,\y)$ and its conjugate, we may rewrite (\ref{eq:cov1}) as the sum
\begin{align} \label{eq:cov2}
\begin{multlined}
- \frac{1}{4\pi k_1k_2 \ft^2} \int_{Y_{\x_j}} \! \int_{Y_{\x_i}} \! \frac{e^{-k_i\ft \y_1} e^{-k_j \ft \y_2}}{(\zeta(\x_i,\y_1)-\zeta(\x_j,\y_2))^2} \frac{\partial \zeta}{\partial \y_1}(\x_i,\y_1) \frac{\partial \zeta}{\partial \y_2}(\x_j,\y_2) \, d\y_1 \, d\y_2 \\
+ \frac{1}{4\pi k_1k_2 \ft^2} \int_{Y_{\x_j}} \! \int_{Y_{\x_i}} \! \frac{e^{-k_i\ft \y_1} e^{-k_j \ft \y_2}}{(\zeta(\x_i,\y_1)-\overline{\zeta}(\x_j,\y_2))^2} \frac{\partial \zeta}{\partial \y_1}(\x_i,\y_1) \frac{\partial \overline{\zeta}}{\partial \y_2}(\x_j,\y_2) \, d\y_1 \, d\y_2 \\
+ \frac{1}{4\pi k_1k_2 \ft^2} \int_{Y_{\x_j}} \! \int_{Y_{\x_i}} \! \frac{e^{-k_i\ft \y_1} e^{-k_j \ft \y_2}}{(\overline{\zeta}(\x_i,\y_1)-\zeta(\x_j,\y_2))^2} \frac{\partial \overline{\zeta}}{\partial \y_1}(\x_i,\y_1) \frac{\partial \zeta}{\partial \y_2}(\x_j,\y_2) \, d\y_1 \, d\y_2\\
- \frac{1}{4\pi k_1k_2 \ft^2} \int_{Y_{\x_j}} \! \int_{Y_{\x_i}} \! \frac{e^{-k_i\ft \y_1} e^{-k_j \ft \y_2}}{(\overline{\zeta}(\x_i,\y_1)-\overline{\zeta}(\x_j,\y_2))^2} \frac{\partial \overline{\zeta}}{\partial \y_1}(\x_i,\y_1) \frac{\partial \overline{\zeta}}{\partial \y_2}(\x_j,\y_2) \, d\y_1 \, d\y_2.
\end{multlined}
\end{align}
Integrate by parts on $\y_1$ and $\y_2$ for each summand in (\ref{eq:cov2}), observing that the boundary terms cancel since the value of $\zeta(\x,\cdot)$ at the end points of any connected component of $Y_\x$ is real, to obtain
\begin{align*}
\begin{multlined}
- \frac{1}{4\pi} \int_{Y_{\x_j}} \! \int_{Y_{\x_i}} \! e^{-k_1 \ft \y_1} e^{-k_2 \ft \y_1} \bigl( \log(\zeta(\x_i,\y_1) - \zeta(\x_j,\y_1)) - \log(\zeta(\x_i,\y_1) - \overline{\zeta}(\x_j,\y_1)) \\
- \log(\overline{\zeta}(\x_i,\y_1) - \zeta(\x_j,\y_1)) + \log(\overline{\zeta}(\x_i,\y_1) - \overline{\zeta}(\x_j,\y_1)) \bigr) \, d\y_1 \, d\y_1 \\
= -\frac{1}{2\pi} \int_{Y_{\x_j}} \! \int_{Y_{\x_i}} \! e^{-k_1 \ft \y_1} e^{-k_2 \ft \y_1} \log \left| \frac{\zeta(\x_i,\y_1) - \zeta(\x_j,\y_1)}{\zeta(\x_i,\y_1) - \overline{\zeta}(\x_j,\y_1)} \right|
\end{multlined}
\end{align*}
where the final equality follows from the fact that this covariance is real so there is no imaginary part arising from branch considerations of the logarithms. The latter term is exactly the covariance
\[ \cov\left( \int \fH(\zeta(\x_i,\y)) e^{-k_i\ft \y} \, d\y, \int \fH(\zeta(\x_j,\y)) e^{-k_j\ft \y} \, d\y \right). \]
The case $\x_i = \x_j$ follows from taking the limit.
\end{proof}

\appendix \section{} \label{sec:appendix}
We recall the notion of cumulants and some basic properties.

\begin{definition} \label{def:cum1}
For any positive integer $\nu$, let $\Theta_\nu$ be the collection of all set partitions of $[[1,\nu]]$, namely
\[ \Theta_\nu = \left\{ \{U_1,\ldots,U_d\}: d > 0, ~\bigcup_{i=1}^d U_i = [[1,\nu]], ~U_i \cap U_j = \emptyset~\forall i \ne j,~U_i \ne \emptyset ~\forall i \in [[1,d]] \right\}. \]
For a random vector $\vec{u} = (u_1,\ldots,u_m)$ and any $v_1,\ldots,v_\nu \in \{u_1,\ldots,u_m\}$, define the \emph{(order $\nu$) cumulant} $\kappa(v_1,\ldots,v_\nu)$ as
\begin{align} \label{eq:cum1}
\kappa(v_1,\ldots,v_\nu) = \sum_{\substack{d > 0 \\ \{U_1,\ldots,U_d\} \in \Theta_\nu}} (-1)^{d-1} (d-1)! \prod_{\ell=1}^d \E \left[ \prod_{i \in U_\ell} v_i \right].
\end{align}
\end{definition}

From the definition we see that for any random vector $\vec{u}$, the existence of all cumulants of order up to $\nu$ is equivalent to the existence of all moments of order up to $\nu$. Note that the cumulants of order $2$ are exactly the covariances:
\[ \kappa(v_1,v_2) = \cov(v_1,v_2). \]

We have the following alternative definition for cumulants.

\begin{definition} \label{def:cum2}
Let $\vec{u} = (u_1,\ldots,u_m)$ be a random vector. For any $v_1,\ldots,v_\nu \in \{u_1,\ldots,u_m\}$, define the \emph{(order $\nu$) cumulant} $\kappa(v_1,\ldots,v_\nu)$ as
\begin{align} \label{eq:cum2}
\kappa(v_1,\ldots,v_\nu) = (-i)^\nu \left. \frac{\partial^\nu}{\partial t_1 \cdots \partial t_m} \log \E \left[ \exp\left( \i \sum_{j=1}^\nu t_j v_j \right) \right] \right|_{t_1 = \cdots = t_\nu = 0}.
\end{align}
\end{definition}

For further details see \cite[Section 3.1, Section 3.2]{PT} wherein the agreement between Definitions \ref{def:cum1} and \ref{def:cum2} is shown by taking the second definition and proving (\ref{eq:cum1}).

As a consequence of the second definition we have the following lemma.

\begin{lemma} \label{lem:gausscum}
A random vector is Gaussian if and only if all cumulants of order $\ge 3$ vanish.
\end{lemma}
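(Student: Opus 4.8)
The statement to be proved is that a random vector $\vec u = (u_1,\ldots,u_m)$ is Gaussian if and only if $\kappa(v_1,\ldots,v_\nu) = 0$ whenever $\nu \ge 3$ and $v_1,\ldots,v_\nu \in \{u_1,\ldots,u_m\}$; here ``Gaussian'' means that every linear combination $\sum_{j} t_j u_j$ is a one-dimensional Gaussian, and the hypothesis on cumulants tacitly presupposes that all moments of $\vec u$ are finite. The plan is to reduce both implications to the one-dimensional case via multilinearity of cumulants. Concretely, I would first record that for $t = (t_1,\ldots,t_m) \in \R^m$ and $w := \sum_{j=1}^m t_j u_j$, Definition \ref{def:cum2} applied to $s \mapsto \log\E[\exp(\i s w)] = \log\E[\exp(\i\sum_j (st_j) u_j)]$ together with the chain rule gives, for every $n \ge 1$,
\[
\kappa_n(w) := \kappa(\underbrace{w,\ldots,w}_{n})
= \sum_{j_1,\ldots,j_n = 1}^m t_{j_1}\cdots t_{j_n}\,\kappa(u_{j_1},\ldots,u_{j_n}).
\]
Consequently the hypothesis ``$\kappa(v_1,\ldots,v_\nu) = 0$ for all $\nu\ge 3$'' is \emph{equivalent} to ``$\kappa_\nu(\sum_j t_j u_j) = 0$ for all $\nu \ge 3$ and all $t \in \R^m$'': one direction is immediate from the identity, and for the converse one views the right-hand side as a polynomial in $t$ whose identical vanishing forces all of its coefficients, hence (up to multinomial factors) all mixed cumulants of order $\nu$, to vanish.

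For the ``only if'' direction, suppose $\vec u$ is Gaussian. Then $w = \sum_j t_j u_j \sim N(\mu_w,\sigma_w^2)$ with characteristic function $\exp(\i s\mu_w - \tfrac{1}{2} s^2\sigma_w^2)$, so $s \mapsto \log\E[\exp(\i s w)]$ is a polynomial of degree $\le 2$; hence $\kappa_\nu(w) = 0$ for all $\nu \ge 3$, and by the equivalence above all cumulants of $\vec u$ of order $\ge 3$ vanish.

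For the ``if'' direction, suppose all cumulants of $\vec u$ of order $\ge 3$ vanish. By the equivalence, fixing $t \in \R^m$ and $w = \sum_j t_j u_j$, we have $\kappa_\nu(w) = 0$ for all $\nu \ge 3$ while $w$ has all moments finite; it suffices to show $w$ is Gaussian. Here I would invoke the moment--cumulant inversion formula (the inverse of the relation in Definition \ref{def:cum1}; see \cite{PT}): each moment $\E[w^n]$ is a fixed universal polynomial in $\kappa_1(w),\ldots,\kappa_n(w)$. Since $\kappa_\nu(w) = 0$ for $\nu \ge 3$, this polynomial takes the same value as it would on the cumulant sequence $(\mu_w,\sigma_w^2,0,0,\ldots)$ of a Gaussian $g \sim N(\mu_w,\sigma_w^2)$, where $\mu_w = \kappa_1(w)$ and $\sigma_w^2 = \kappa_2(w) = \mathrm{Var}(w) \ge 0$; hence $\E[w^n] = \E[g^n]$ for every $n$. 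The normal law is determined by its moments (Carleman's condition), so $w$ and $g$ have the same distribution and $w$ is Gaussian. Since $t$ was arbitrary, $\vec u$ is Gaussian.

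The one genuine obstacle is the final step of the ``if'' direction: the hypothesis only asserts that cumulants of order $\ge 3$ vanish, \emph{not} that the cumulant generating function is globally a polynomial, so the passage from ``the cumulants agree with those of a Gaussian'' to ``the distribution is Gaussian'' requires a determinacy input — either the moment--cumulant inversion combined with moment-determinacy of the normal law as above, or, equivalently, Marcinkiewicz's theorem that a characteristic function of the form $e^{P}$ with $P$ a polynomial has $\deg P \le 2$. Everything else is bookkeeping with Definitions \ref{def:cum1} and \ref{def:cum2}.
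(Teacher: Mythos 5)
Your proof is correct. The paper itself gives no proof of this lemma, stating it only as ``a consequence of the second definition,'' so there is no author argument to compare against; what you supply is a full and careful justification. The multilinearity/polarization reduction to the one-dimensional case is exactly the right mechanism, and you correctly identify the one genuine subtlety: vanishing of the third- and higher-order derivatives of the log-characteristic function at the origin does \emph{not} by itself force the cumulant generating function to be a quadratic polynomial, so the converse direction really does need a determinacy input. Your moment--cumulant inversion plus Carleman argument handles this cleanly (the side remark about Marcinkiewicz is a bit loose as stated, since that theorem begins from the hypothesis that the characteristic function has the global form $e^{P}$, which is not yet known at that point, but you do not rely on it). Altogether the proposal is a correct and complete proof of the lemma.
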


The next lemma, used in the proof of Theorem \ref{thm:gauss}, allows the replacement of one sequence of random vectors with another without affecting the cumulants (up to some order) given that the two sequences vanish in the limit in $L^2$ norm.

\begin{lemma} \label{lem:replace}
Consider two families of random vectors $\vec{u}^\e = (u_1^\e,\ldots,u_m^\e)$ and $\vec{v}^\e = (v_1^\e,\ldots,v_m^\e)$ indexed by $\e > 0$. Fix an integer $\eta > 0$ and suppose that we have the following limit
\begin{align} \label{eq:applem1}
\lim_{\e\to 0} \kappa(u_{i_1}^\e, \ldots, u_{i_\nu}^\e) = \kappa_{i_1,\ldots,i_\nu}
\end{align}
for some $\kappa_{i_1,\ldots,i_\nu}$ (symmetric in the indices) which holds for any $i_1,\ldots,i_\nu \in [[1,m]]$ and $\nu \in [[1,2\eta]]$. If $\E|\vec{u} - \vec{v}|^2 \to 0$, then
\begin{align} \label{eq:applem2}
\lim_{\e\to 0} \kappa(v_{i_1}^\e, \ldots, v_{i_\nu}^\e) = \kappa_{i_1,\ldots,i_\nu}
\end{align}
for any $i_1,\ldots,i_\nu \in [[1,m]]$ and $\nu \in [[1,\eta]]$.
\end{lemma}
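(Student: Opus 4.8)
The plan is to pass from cumulants to joint moments, exploit the multilinearity of cumulants in their arguments, and control the error created by substituting $v^\e_i$ for $u^\e_i$ using Cauchy--Schwarz together with uniform moment bounds. First I would reduce to moments. By (\ref{eq:cum1}) a cumulant is a universal polynomial $P$ in the joint moments of its arguments, and conversely each joint moment is a polynomial in cumulants, the two families of relations being triangular in the total order; hence the hypothesis that $\kappa(u^\e_{i_1},\ldots,u^\e_{i_\nu})\to\kappa_{i_1,\ldots,i_\nu}$ for all $\nu\le 2\eta$ is equivalent to the statement that every joint moment $\E[u^\e_{i_1}\cdots u^\e_{i_\nu}]$ with $\nu\le 2\eta$ converges to some limit $M_{i_1,\ldots,i_\nu}$, with $\kappa_{i_1,\ldots,i_\nu}=P(M_{\cdot})$. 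In particular $\sup_\e\|u^\e_i\|_p<\infty$ for each $i$ and each $p\le 2\eta$. It therefore suffices to prove that $\E[v^\e_{i_1}\cdots v^\e_{i_\nu}]\to M_{i_1,\ldots,i_\nu}$ for every $\nu\le\eta$ and all indices, since then $\kappa(v^\e_{i_1},\ldots,v^\e_{i_\nu})=P(\text{moments of }v^\e)\to P(M_{\cdot})=\kappa_{i_1,\ldots,i_\nu}$.

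Next, set $w^\e_i:=v^\e_i-u^\e_i$, so that $\E|\vec u^\e-\vec v^\e|^2\to0$ gives $\|w^\e_i\|_2\to0$ for each $i$. Expanding $\prod_{j=1}^\nu v^\e_{i_j}=\prod_{j=1}^\nu(u^\e_{i_j}+w^\e_{i_j})$ and taking expectations,
\[
\E\Big[\prod_{j=1}^\nu v^\e_{i_j}\Big]=\sum_{S\subseteq[[1,\nu]]}\E\Big[\prod_{j\notin S}u^\e_{i_j}\prod_{j\in S}w^\e_{i_j}\Big].
\]
The term $S=\emptyset$ is $\E[u^\e_{i_1}\cdots u^\e_{i_\nu}]\to M_{i_1,\ldots,i_\nu}$ (using $\nu\le\eta\le2\eta$), so the task is to show that every term with $S\neq\emptyset$ tends to $0$. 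For such a term fix $j_0\in S$, apply Cauchy--Schwarz to peel off the factor $w^\e_{i_{j_0}}$, and bound the remaining $L^2$-norm of a product of $\nu-1\le\eta-1$ variables by the generalized H\"older inequality:
\[
\Big|\E\Big[\prod_{j\notin S}u^\e_{i_j}\prod_{j\in S}w^\e_{i_j}\Big]\Big|\le\|w^\e_{i_{j_0}}\|_2\prod_{j\notin S}\|u^\e_{i_j}\|_{2(\nu-1)}\prod_{j\in S\setminus\{j_0\}}\|w^\e_{i_j}\|_{2(\nu-1)}.
\]
Because $2(\nu-1)\le 2\eta-2\le 2\eta$, the factors $\|u^\e_{i_j}\|_{2(\nu-1)}$ are uniformly bounded by the reduction step; if the factors $\|w^\e_{i_j}\|_{2(\nu-1)}$ (equivalently $\|v^\e_{i_j}\|_{2\eta-2}$, via $w=v-u$) are likewise uniformly bounded, the right-hand side is $O(\|w^\e_{i_{j_0}}\|_2)\to0$. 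This yields $\E[v^\e_{i_1}\cdots v^\e_{i_\nu}]\to M_{i_1,\ldots,i_\nu}$ for all $\nu\le\eta$, completing the argument.

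The one point that is not automatic — and the main obstacle — is the uniform bound $\sup_\e\|v^\e_i\|_{2\eta-2}<\infty$: $L^2$-smallness of $\vec u^\e-\vec v^\e$ together with control of the moments of $u^\e$ does not by itself bound higher moments of $v^\e$. In the setting where the lemma is applied the needed bound is available, since the relevant $v^\e_i$ are built from the $\wp_{k}(\pi^{x})$, whose joint moments are given by the explicit contour integrals of Theorem \ref{thm:obs} and are controlled on the scale at hand; alternatively one may simply include a uniform moment bound on $\vec v^\e$ among the hypotheses. With that input the estimate above is routine. An essentially equivalent bookkeeping — telescoping $\prod_j v^\e_{i_j}-\prod_j u^\e_{i_j}$ one factor at a time and using H\"older with conjugate exponents at each step — leads to the same requirement.
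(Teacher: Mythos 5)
Your argument is, in substance, the same as the paper's own proof: reduce the convergence of cumulants to the convergence of joint moments (using that the two are related by universal polynomials with a triangular dependence on the order), then control moment differences by Cauchy--Schwarz. The paper performs a one-at-a-time telescoping replacement $u_{i_1}\mapsto v_{i_1}$, then $u_{i_2}\mapsto v_{i_2}$, etc., while you expand $\prod(u_{i_j}+w_{i_j})$ all at once; these bookkeepings are equivalent, and you explicitly note the telescoping version yourself. More importantly, the obstruction you flag is a genuine gap, and in fact the lemma \emph{fails as stated} once $\eta\geq 3$: after the first replacement, the Cauchy--Schwarz factor $\E[v_{i_1}^2 u_{i_3}^2\cdots u_{i_\nu}^2]$ already involves the unhypothesized quantity $\|v_{i_1}\|_{2(\nu-1)}$, and the $L^2$-smallness of $v-u$ says nothing about it. For a concrete counterexample with $m=1$ and $\eta=3$, take $u^\e\sim\mathrm{Unif}[-1,1]$ for all $\e$, and let $v^\e=u^\e+w^\e$ with $w^\e$ independent of $u^\e$, $w^\e=\e^{-1}-\e^2$ with probability $\e^3$ and $w^\e=-\e^2$ otherwise. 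Then $\E w^\e=0$, $\E(w^\e)^2=\e-\e^4\to 0$, but $\E(w^\e)^3=(1-\e^3)(1-2\e^3)\to 1$, so by additivity of cumulants under independence $\kappa_3(v^\e)=\kappa_3(u^\e)+\kappa_3(w^\e)\to 0+1\neq 0=\kappa_3^{\lim}(u)$, even though all cumulants of $u^\e$ of every order are trivially convergent. The hypothesis on orders up to $2\eta$ for $u^\e$ buys you $\sup_\e\|u^\e_i\|_{2\eta}<\infty$, but by Minkowski that transfers to $v^\e$ only in $L^2$, not in higher $L^p$.

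The correct fix is exactly what you propose: add the hypothesis $\sup_\e\|v^\e_i\|_{2\eta-2}<\infty$ (or work with the telescoping sum and Hölder as you do, which requires precisely this). Then for $\nu\leq\eta$ the error factor $\bigl[\E(\prod_{j<k}v_{i_j}^2\prod_{j>k}u_{i_j}^2)\bigr]^{1/2}\leq\prod_{j<k}\|v_{i_j}\|_{2(\nu-1)}\prod_{j>k}\|u_{i_j}\|_{2(\nu-1)}$ is bounded since $2(\nu-1)\leq 2\eta-2$, and the argument closes. Note that the paper's proof silently assumes the same thing at the phrase ``by iterating this argument'': the second step of that iteration produces the factor $\E[v_{i_1}^2 u_{i_3}^2\cdots]$ and the bound (\ref{eq:cs}), which only concerns moments of $u$, does not cover it. So your diagnosis applies equally to the paper's own argument, and when the lemma is invoked in the proof of Theorem~\ref{thm:gauss} the uniform moment bound on $\vec v^\e=(D^\e_{k_i}(x_i))_i$ does require a separate justification beyond what is written.
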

\begin{proof}
The hypothesis (\ref{eq:applem1}) implies the existence of limits
\begin{align} \label{eq:mulim}
\lim_{\e \to 0} \E u_{i_1}^\e \cdots u_{i_\nu}^\e = \mu_{i_1 \cdots i_\nu}
\end{align}
for some $\mu_{i_1 \cdots i_\nu}$ (symmetric in the indices) which holds for any $i_1,\ldots,i_\nu \in [[1,m]]$ and $\nu \in [[1,2\eta]]$. Then
\begin{align} \label{eq:replace}
\E[v_{i_1} u_{i_2} \cdots u_{i_\nu}] = \E[u_{i_1} u_{i_2} \cdots u_{i_\nu}] + \E[(v_{i_1} - u_{i_1}) u_{i_2} \cdots u_{i_\nu}].
\end{align}
If $\nu \in [[1,\eta]]$, then
\begin{align} \label{eq:cs}
\left| \E[(v_{i_1} - u_{i_1}) u_{i_2} \cdots u_{i_\nu}] \right| \le [\E(v_{i_1} - u_{i_1})^2]^{\frac{1}{2}} \cdot [\E(u_{i_2}^2 \cdots u_{i_\nu}^2)]^{\frac{1}{2}}
\end{align}
Since
\[ \E(v_{i_1} - u_{i_1})^2 \le \E|\vec{v} - \vec{u}|^2 \to 0, \]
(\ref{eq:cs}) and (\ref{eq:mulim}) imply that the right hand side of (\ref{eq:replace}) converges to $\mu_{i_1\cdots i_\nu}$. By iterating this argument in replacing $u_{i_2}$ with $v_{i_2}$, then $u_{i_3}$ with $v_{i_3}$, and so on, we have that
\[ \lim_{\e \to 0} \E v_{i_1}^\e \cdots v_{i_\nu}^\e = \mu_{i_1 \cdots i_\nu} \]
for $\nu \in [[1,\eta]]$. This implies (\ref{eq:applem2}), proving the lemma.
\end{proof}

We have the following formal versions of (\ref{eq:cum1}) and (\ref{eq:cum2}). Let $E_{\nu_1,\ldots,\nu_n} \in \C$ with $E_{0,\ldots,0} = 1$. Define the following formal power series
\begin{align*}
E(t_1,\ldots,t_\nu) & = \sum_{n_1,\ldots,n_\nu \ge 0} \frac{E_{n_1,\ldots,n_\nu}}{n_1! \cdots n_\nu!} t_1^{n_1} \cdots t_\nu^{n_\nu} \\
K(t_1,\ldots,t_\nu) &= \log E(t_1,\ldots,t_\nu) =: \sum_{n_1,\ldots,n_\nu \ge 0} \frac{K_{n_1,\ldots,n_\nu}}{n_1! \cdots n_\nu!} t_1^{n_1} \cdots t_\nu^{n_\nu}.
\end{align*}
Let $E(U) = E_{\nu_1,\ldots,\nu_n}$ where $\nu_j = 1$ if $j \in U$ and $0$ otherwise, and likewise define $K(U)$. Letting $\Theta_U$ be the collection of all set partitions of $U$, we have
\begin{align}
K(U) = \sum_{\substack{d > 0 \\ \{U_1,\ldots,U_d\} \in \Theta_U}} (-1)^{d-1} (d-1)! \prod_{\ell=1}^d E(U_\ell)
\end{align}
By exponentiating $K(t_1,\ldots,t_\nu)$ we also obtain
\begin{align}
E(U) = \sum_{\substack{d > 0 \\ \{U_1,\ldots,U_d\} \in \Theta_U}} \prod_{\ell=1}^d K(U_\ell)
\end{align}
where the sum is over all set partitions of $U$. This gives us the following lemma.

\begin{lemma} \label{lem:cuminv}
Suppose that $K$ and $E$ are functions which take values on nonempty subsets of $[[1,\nu]]$. Further suppose that
\begin{align} \label{eq:TU}
E(U) = \sum_{\substack{d > 0 \\ \{U_1,\ldots,U_d\} \in \Theta_U}} \prod_{i=1}^d K(U_i)
\end{align}
Then
\begin{align} \label{eq:LU}
K(U) = \sum_{\substack{d > 0 \\ \{U_1,\ldots,U_d\} \in \Theta_U}} (-1)^{d-1} (d-1)! \prod_{i=1}^d E(U_i)
\end{align}
\end{lemma}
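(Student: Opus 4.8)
The plan is to derive Lemma~\ref{lem:cuminv} from the two generating-function identities displayed immediately before its statement, combined with a short uniqueness observation. First I would record that the hypothesis \eqref{eq:TU} already determines $K$ uniquely in terms of $E$, by induction on $|U|$. Indeed, among all partitions in $\Theta_U$ the partition $\{U\}$ contributes the single term $K(U)$ to the right-hand side of \eqref{eq:TU}, while every partition $\{U_1,\ldots,U_d\}$ with $d\ge 2$ has all blocks of size strictly less than $|U|$; hence
\[
K(U) \;=\; E(U)\;-\;\sum_{\substack{d\ge 2\\ \{U_1,\ldots,U_d\}\in\Theta_U}}\ \prod_{i=1}^d K(U_i),
\]
so $K(U)$ is expressed through $E(U)$ and values $K(U_i)$ with $|U_i|<|U|$. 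Consequently any two functions satisfying \eqref{eq:TU} with the same $E$ agree on singletons and then, inductively, on every nonempty $U\subseteq[[1,\nu]]$.

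Next I would exhibit one such function via a formal power series, so that its closed form can be read off. Choose any formal power series $E(t_1,\ldots,t_\nu)=\sum_{n_1,\ldots,n_\nu\ge 0}\frac{E_{n_1,\ldots,n_\nu}}{n_1!\cdots n_\nu!}t_1^{n_1}\cdots t_\nu^{n_\nu}$ with $E_{0,\ldots,0}=1$ whose squarefree coefficients realize the given data, i.e.\ $E(U)=E_{n_1,\ldots,n_\nu}$ with $n_j=\1[j\in U]$ (the coefficients on non-squarefree monomials are irrelevant and may be chosen freely). Put $K(t_1,\ldots,t_\nu)=\log E(t_1,\ldots,t_\nu)$ with coefficients $K_{n_1,\ldots,n_\nu}$, and set $\widehat K(U):=K_{n_1,\ldots,n_\nu}$ with $n_j=\1[j\in U]$. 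Extracting the coefficient of $\prod_{j\in U}t_j$ from $E=\exp K$ — where empty blocks contribute nothing because $K_{0,\ldots,0}=\log 1=0$, and an unordered partition into $m$ blocks arises from $m!$ ordered tuples — gives precisely the first displayed identity preceding the lemma, namely that $\widehat K$ satisfies \eqref{eq:TU} with the same $E$. By the uniqueness established above, $\widehat K=K$. Likewise, extracting the coefficient of $\prod_{j\in U}t_j$ from $K=\log E=\sum_{m\ge 1}\frac{(-1)^{m-1}}{m}(E-1)^m$ — where now $E-1$ has zero constant term and a partition into $d$ blocks appears with multiplicity $d!$, turning $\frac{(-1)^{m-1}}{m}$ with $m=d$ into $(-1)^{d-1}(d-1)!$ — yields the second displayed identity,
\[
\widehat K(U)=\sum_{\substack{d>0\\ \{U_1,\ldots,U_d\}\in\Theta_U}}(-1)^{d-1}(d-1)!\,\prod_{i=1}^d E(U_i).
\]
Combining $K=\widehat K$ with this formula is exactly \eqref{eq:LU}.

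There is no genuinely hard step: this is the standard moment–cumulant inversion on the partition lattice. The only point requiring care is the bookkeeping in the two coefficient extractions — the factor $m!$ relating ordered $m$-tuples of blocks to unordered set partitions of $U$, and the vanishing of empty-block terms because $K_{0,\ldots,0}=0$ and $E-1$ has zero constant term. Since those two identities are precisely the ones already asserted in the display preceding the lemma, in the final write-up I would simply invoke them and supply the uniqueness argument above; alternatively, if a self-contained proof is preferred, one substitutes \eqref{eq:LU} into \eqref{eq:TU} and reduces to the known value $\sum_{\sigma\ge\pi}\mu(\pi,\sigma)=\1[\pi=\hat 1]$ of the Möbius function of $\Pi_U$, but the generating-function route is shorter and avoids importing that fact.
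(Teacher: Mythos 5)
Your proof is correct and follows essentially the same generating-function route the paper uses: encode the data in formal power series $E(t_1,\ldots,t_\nu)$ and $K=\log E$, and extract coefficients of squarefree monomials from $E=\exp K$ and from $K=\sum_{m\ge1}\frac{(-1)^{m-1}}{m}(E-1)^m$ to obtain the two set-partition identities. The one genuine addition you make — the induction-on-$|U|$ observation that \eqref{eq:TU} already determines $K$ uniquely from $E$ — is the step that lets you pass from ``these two identities hold for the particular pair $(E,\log E)$'' to the abstract statement of the lemma, which concerns \emph{any} pair $(K,E)$ satisfying \eqref{eq:TU}. The paper leaves that bridge implicit (``This gives us the following lemma''), so your explicit uniqueness argument is a welcome tightening rather than a detour; the bookkeeping you flag (the factor $m!$ matching ordered $m$-tuples of blocks to unordered partitions, and the vanishing of empty-block contributions because $K_{0,\ldots,0}=0$ and $E-1$ has zero constant term) is exactly the content behind the paper's two displayed identities.
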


\end{document}